\newlength{\halfbls}\setlength{\halfbls}{.5\baselineskip}
\newcommand{\changed}[1]{{#1}}
\def\part{\@startsection{part}{0}%
  \z@{\linespacing\@plus\linespacing}{.5\linespacing}%
  {\normalfont\bfseries\centering}}
\renewcommand{\@bibtitlestyle}{%
  \@xp\part\@xp*\@xp{\refname}%
}
\newcommand{\ZZ}{\mathbb{Z}}   \newcommand{\CC}{\mathbb{C}}
\newcommand{\PP}{\mathbb{P}} \newcommand{\NN}{\mathbb{N}} \newcommand{\DD}{\mathbb{D}} \newcommand{\HH}{\mathbb{H}}
\newcommand{\GG}{\mathcal{E}}  \newcommand{\LL}{\mathbb{L}} 
\newcommand{\QQ}{\mathbb{Q}} \newcommand{\RR}{\mathbb{R}}  
   \newcommand{\End}{{\rm End}} 
\newcommand{\ord}{\O}   
    \def\df{\mathfrak d}  \def\fracb{\mathfrak b}
\newcommand{\fraca}{\mathfrak{a}}        \newcommand{\fracL}{\mathfrak{L}}
\newcommand{\cAA}{{\mathcal A}}
\newcommand{\cFF}{{\mathcal F}}   
\newcommand{\cKK}{{\mathcal K}} \newcommand{\cLL}{{\mathcal L}}
   \newcommand{\cMM}{{\mathcal M}}  
 \newcommand{\tr}{{\rm tr}} \newcommand{\diag}{{\rm diag}}
  \def\M{\cMM}  \def\OmM{\Omega\M}
 \def\O{\mathcal O}     
\newcommand{\SLO}{{\rm SL}(\O^\vee\oplus\O)} 
\newcommand{\SLA}{{\rm SL}(\fraca^\vee \oplus\fraca)} 
\newcommand{\SLOD}{{\rm SL}(\O_D^\vee\oplus\O_D)}
   \def\SLOA#1{{\rm SL}(#1 \oplus\O)}  
 \newcommand{\Sp}{{\rm Sp}}
\newcommand{\gr}{\nabla}   
    \newcommand{\PSL}{{\rm PSL}} \newcommand{\Jac}{{\rm Jac}}
 \newcommand{\GL}{{\rm GL}}  \newcommand{\N}{{\rm N}} \newcommand{\Teichmuller}{Teich\-m\"uller}
\newcommand{\ssm}{\smallsetminus}    
  \newcommand{\e}{{\bf e}}  \newcommand{\ve}{{\varepsilon}}
\newcommand{\ol}{\overline}  \newcommand{\ul}{\underline}
\newcommand{\Tchi}[2]{_{(#1,#2)}}
\newcommand{\TchiP}[3]{_{(#1,#2),#3}}
\newcommand{\PTD}{{\mathcal D\theta}} 
\newcommand{\PTDa}{{\mathcal D\theta_\fraca}} 
\newcommand{\PTDDD}{{\mathcal D^3\theta}}
\newcommand{\PTDDDa}{{\mathcal D^3\theta_\fraca}} 
\newcommand{\Wcov}{$W_\Pi$}
\newtheorem{Defi}{Definition}[section]  
    \newtheorem{Prop}[Defi]{Proposition}
\newtheorem{Lemma}[Defi]{Lemma}    \newtheorem{Cor}[Defi]{Corollary}
\newtheorem{Thm}[Defi]{Theorem}    
\def\={\;=\;}  \def\+{\,+\,}      \def\h{\tfrac12}  \def\d{\partial}
\def\i{^{-1}}         \def\c{\circ} 
   \def\wt#1{\widetilde{#1}}     
  \def\tJ{\wt J}  \def\tf{\wt f} 
\def\tA{\wt A} \def\tB{\wt B}  \def\tQ{\wt Q}  \def\ty{\wt y} \def\tx{\wt x} 
\def\tq{\wt q}  \def\tom{\wt\om}  
\def\tL{\wt L}   \def\hm{\hat{m}}  \def\sd{\sqrt{17}}
\newcommand\R{\mathbb R}  \newcommand\C{\mathbb C}  \newcommand\Z{\mathbb Z}  \newcommand\Q{\mathbb Q} 
\renewcommand\N{\mathbb N}  \renewcommand\P{\mathbb P}   
\renewcommand\H{\mathbb H}  \newcommand\Hm{\H^-}    
\def\a{\alpha} \def\b{\beta}  \def\th{\theta}   \def\v{\varepsilon} \def\g{\gamma}  \def\w{\delta} \def\la{\lambda} \def\l{\ell} \def\k{\kappa}
\def\om{\omega}  \def\s{\sigma}   \def\z{\zeta}  \def\p{\varphi}  \def\ph{\varphi} \def\2{\pi_2}
\def\G{\Gamma}   \def\D{\Delta}  \def\Th{\Theta}     \def\sq{\sqrt{17}} \def\sD{\sqrt D}
\def\zz{\mathbf z}  \def\vv{\mathbf v}  \def\uu{\mathbf u}  
\def\bfom{{\boldsymbol{\omega}}}
\def\Pred{standard}
\def\SL#1{{\rm SL}(2,#1)}  \def\PSL#1{{\rm PSL}(2,#1)}    \def\GL#1{{\rm GL}(2,#1)} \def\GLp#1{\text{GL}^+(2,#1)}
\def\sm#1#2#3#4{\bigl(\smallmatrix#1&#2\\#3&#4\endsmallmatrix\bigr)} 
\def\mat#1#2#3#4{\begin{pmatrix}#1&#2\\#3&#4\\ \end{pmatrix}}    \def\MT{\frac{az+b}{cz+d}}   \def\Tei{\Teichmuller}
\def\be{\begin{equation}}   \def\ee{\end{equation}}     \def\bes{\begin{equation*}}    \def\ees{\end{equation*}}
\def\ba{\be\begin{aligned}} \def\ea{\end{aligned}\ee}   \def\bas{\bes\begin{aligned}}  \def\eas{\end{aligned}\ees}
\begin{document}{\large}
\title[Theta derivatives]{Modular embeddings of Teichm\"uller curves}  
\author{Martin M\"oller and Don Zagier}
\maketitle
\tableofcontents

\newpage

\begin{abstract}
Fuchsian groups with a modular embedding have the richest
arithmetic properties among non-arithmetic Fuchsian groups.
But they are very rare, all known examples being related
either to triangle groups or to \Teichmuller\ curves.

In Part I of this paper we study the arithmetic properties of the
modular embedding and develop from scratch a theory of
twisted modular forms for Fuchsian groups with a modular embedding,
proving dimension formulas, coefficient growth estimates and
differential equations.

In Part II we provide a modular proof for an Ap\'ery-like integrality
statement for solutions of Picard-Fuchs equations. We illustrate
the theory on a worked example, giving explicit Fourier expansions
of twisted modular forms and the equation of a \Teichmuller\ curve
in a Hilbert modular surface.

In Part III we show that genus two \Teichmuller\ curves are
cut out in Hilbert modular surfaces by a product of theta derivatives.
We rederive most of the known properties of those \Teichmuller\ curves 
from this viewpoint, without using the theory of flat surfaces.
As a consequence we give the modular embeddings for all
genus two \Teichmuller\ curves and prove that the Fourier developments
of their twisted modular forms are algebraic up to one
transcendental scaling constant. Moreover, we prove that Bainbridge's
compactification of Hilbert modular surfaces is toroidal.
The strategy to compactify can be expressed using continued fractions
and resembles Hirzebruch's in form, but every detail is different.
\end{abstract}

\part*{Introduction} \label{sec:intro}

Modular forms are certainly best understood for the full modular group~$\SL\Z$, 
closely followed by those for its congruence subgroups and other arithmetic 
groups. Among the non-arithmetic Fuchsian groups, the groups having the best
arithmetic properties are those admitting a modular embedding. Here, {\em 
modular embedding} refers
to the existence of a map $\varphi: \HH \to \HH$ intertwining the
action of a Fuchsian group $\Gamma$ and its Galois conjugate.  
The notion of modular embedding (in this sense) appears for the first
time in work of Cohen and Wolfart~(\cite{CoWo90}). They show that triangle
groups admit modular embeddings, and for more than a decade these remained
the only examples. An infinite collection of new examples were found
with the discovery of new \Teichmuller\ curves by Calta~\cite{Ca04} and 
McMullen~\cite{McM03}. To find the modular embeddings for them is
one of the motivations for this paper.
\par
For a reader whose main focus is modular forms this paper wants to 
advertise an interesting new class of modular forms. For example, 
we explain an integrality phenomenon for the coefficients of a solution of a 
Picard-Fuchs differential equation, like Beukers's modular proof of the
corresponding phenomenon for Ap\'ery's famous differential equations, 
except that this time the explanation requires using a {\em pair} of ``$q$-coordinates.''  
For a reader with main focus on \Teichmuller\ curves, we show how to rediscover
many of their properties without referring to the theory of flat surfaces.
\par
The paper has three parts, linked by the aim to describe
modular embeddings. In Part~I we set up a general theory of modular forms 
for Fuchsian groups admitting a modular embedding. We call these
{\em twisted modular forms} and we prove the twisted analogs
of the properties that appear in most textbooks about modular
forms in the untwisted case. In Part~II we start from an
example of a Fuchsian group with modular embedding where
the Picard-Fuchs differential equations can be explicitly
computed. We invite the reader to discover the properties 
of Fourier coefficients of the modular embeddings and of
twisted modular forms via this worked example. In Part~III
we give explicitly the Fourier expansions of the modular
embedding for the genus two \Teichmuller\ curves found 
by Calta~\cite{Ca04} and McMullen~\cite{McM03}. In the rest of the
introduction we describe the results in more detail, highlighting
the main theorems (not necessarily in the same order) with bullet points. 
\par
\medskip
\medskip
{\bf Modular embeddings and twisted modular forms.} Suppose that the
Fuchsian group $\Gamma \subset \SL\RR$  has coefficients in a quadratic 
number field $K$ with Galois group generated by $\sigma$. The essential 
ingredient of a modular embedding for $\Gamma$ is a holomorphic function 
$\varphi: \HH \to \HH$ with the $\Gamma$-equivariance property
\bes \p\Bigl(\MT\Bigr)\=\frac{a^\s \p(z)+b^\s }{c^\s \p(z)+d^\s }\,. \ees
We show that such a modular embedding always has a ``Fourier expansion,"
and an old theorem of Carath\'eodory gives us a good estimate for its
Fourier coefficients. Analyzing $\varphi$ leads us to the definition of 
\changed{ a new kind of modular forms that we call}
{\em $\varphi$-twisted modular form} of bi-weight $(k,\ell)$. This is a 
holomorphic function $f: \HH \to \CC$ with the transformation property
$$ f\Bigl(\MT\Bigr) \ = (c\changed{z}+d)^k (c^\sigma \varphi(z) + d^\sigma)^\ell f(z)\,. $$
For example, direct calculation shows that $\varphi'(z)$ is a twisted modular form 
of bi-weight $(2,-2)$.
\par
We develop a theory of twisted modular forms from scratch, analyzing to
which extent classical topics of modular forms generalize to this new notion.
Our first topic is the coefficient growth.
\begin{itemize}
\item[$\bullet$] For $\ell > 0$ and $k+\ell >2$ the Fourier coefficients 
of a twisted modular form $f(z) = \sum_{n \geq 0} a_n q^n$ of $(k,\ell)$ satisfy the
estimate $a_n =\text{O}(n^{k+\ell -1})$.
\end{itemize}
Similar estimates are given for other bi-weights as well; see 
Theorem~\ref{thm:EstimateFC} for the complete statement. The proofs combine
the well-known Hecke argument in the untwisted case and the mechanism underlying  
the equidistribution of long horo\-cycles.
\par
The next classical topic is the dimension of the space of modular forms.
A modular embedding comes with one basic invariant $\lambda_2$, that one can view
is several ways: as an integral over a conformal density \eqref{Deflambda}, 
as a ratio of the degrees of the natural line bundles
whose sections are twisted modular forms, or as a Lyapunov exponent
for the \Teichmuller\ geodesic flow in the case of \Teichmuller\ curves.
\begin{itemize}
\item[$\bullet$] For $k+\ell$ even  and $k + \lambda_2 \ell > 2$ the dimension
of the space of twisted modular forms of bi-weight $(k,\ell)$ is
the sum of $(k+\lambda_2\ell -1)(g-1)$ and contributions from the
cusps and elliptic fixed points. Here $g$ denotes the genus of $\HH/\G$.  
\end{itemize}
For a torsion-free Fuchsian group this is of course a classical Riemann-Roch
calculation. Hence the main point is to determine the elliptic
fixed point contributions in the twisted case. See~\eqref{eq:defcharbx} for the
definition of the characteristic quantities of elliptic fixed points and
Theorem~\ref{thm:dimmodgeneral} for the complete statement.
\par
Finally, the statement that modular forms expressed in terms of a modular function
satisfy differential equations also carries over to the twisted case.
\begin{itemize}
\item[$\bullet$] If $f(z)$ is a twisted modular form of bi-weight 
$(k,\ell) \geq (0,0)$ 
and $t$ a modular function, then the function $y(t)$ defined locally 
by $y(t(z)) = f(z)$ satisfies a linear differential equation of 
order $(k+1)(\ell+1)$ with algebraic coefficients
(Theorem~\ref{thm:MFDE}).
\end{itemize}
\par
\medskip
{\bf Modular embeddings via differential equations.} The starting point of
the whole project was a worked example, the \Teichmuller\ curves for $D=17$, 
that we present in Section~\ref{sec:BouwMoeller}--\ref{sec:eqfromdiffeq}. (The definition of 
\Teichmuller\ curves along with a summary of the classification results
for \Teichmuller\ curves in genus two is given in \S\ref{sec:defTeich}
and \S\ref{sec:topoTeich}.) Starting from the flat geometry definition we 
briefly explain the derivation of the equation of the  \Teichmuller\ curve 
as family of hyperelliptic curves and computation of the Picard-Fuchs 
differential equations, following~\cite{BM07}.
\par
In this way, we present in \S\ref{sec:DEmodular} the Fourier expansion
of twisted modular forms explicitly. The corresponding group $\Gamma$ is 
\changed{in thise case} neither arithmetic nor commensurable to a 
triangle group, and the \changed{differential equations expressing the
modular forms in terms of a hauptmodule for $\G$ (= a
suitably scaled modular function $t:\H/\G \to \PP^1$
of degree 1) is not hypergeometric.}
\par
The twisted modular forms have two curious properties.
\begin{itemize}
\item[$\bullet$] The twisted modular forms do not have a power series expansion in
$K[[q]]$  for the standard modular parameter $q = e^{2\pi i z/\alpha}$, 
where $\alpha$ is the width of the cusp, but lie instead in $K[[Aq]]$, where
$A$ is a transcendental number of Gelfond-Schneider type (i.e., a number of
the form $\b_1^{\vphantom{\b_4}}\b_2^{\b_3}$ with all~$\b_i$ algebraic).
\end{itemize}
\begin{itemize}
\item[$\bullet$] If $f$ is a twisted modular form and $t$ a suitably
scaled modular function, \changed{then the
function $y(t)$ locally defined by $y(t(z))=f(z)$, with
$t(z)$ as above, has $\O_K$-integral Taylor coefficients.}
\end{itemize}
\par
The second of these observations was already proved in~\cite{BM07} using
$p$-adic differential equations. In Section~\ref{sec:eqfromdiffeq} we
will give a ``modular'' proof of both statements.  The surprising feature
here is that, while the classical proof by Beukers of the integrality of
the Ap\'ery coefficients using modularity relies on the integrality of
the Fourier coefficients of the~$q$-expansions of modular forms on arithmetic
groups, here the expansions of both~$f(z)$ and $t(z)$ with respect to~$Aq$
have coefficients with infinitely many prime factors in their denominators, and
yet the integrality of~$f$ with respect to~$t$ still holds.  To give a modular
argument for this integrality, we have to use the relationship between twisted
modular forms on the Teichm\"uller curve and Hilbert modular forms on the ambient surface.

\par
For $D=17$ the full ring of symmetric Hilbert modular forms has been determined
by Hermann~(\cite{He81}). We recall and use his construction to write down  
explicitly the equation of the \Teichmuller\ curves for $D=17$ on the
(rational) symmetric Hilbert modular surface in Theorem~\eqref{eq:TeicheqUV}.
\par
\begin{itemize}
\item[$\bullet$] \changed{There} exist coordinates $U$, $V$ on the Hilbert modular
surface $X_{17}$, explicitly given in terms of theta functions,  
such that the two Teichm\"uller curves on $X_{17}$ are cut out by the 
quadratic equations~\eqref{Eq1} and~\eqref{Eq2}.
\end{itemize}
\par
\medskip
{\bf Modular embeddings via derivatives of theta functions.} The concrete
example $D=17$ led us to the discovery of a general construction of
the modular form cutting out  \Teichmuller\ curves.
\begin{itemize}
\item[$\bullet$] The vanishing locus of the Hilbert modular form $\PTD$ of weight
$(3,9)$, given as a product of derivatives of odd theta functions, is precisely
the union of one or two \Teichmuller\ curves on the Hilbert modular surface
$X_D$ (Theorem~\ref{thm:TeichviaDTH}).
\end{itemize}
\par
Starting from the theta function viewpoint we prove the characterizing
properties of genus two \Teichmuller\ curves without relying either on the
geodesic definition or on any flat surface properties. Maybe these ideas
can be used to construct new \Teichmuller\ curves. Given the length of the
paper, we simplify our task and prove the following results only for
fundamental discriminants $D$. With appropriate care, the proofs can certainly
be adapted to the general case.
\begin{itemize}
\item[$\bullet$] The vanishing locus of $\PTD$ is transversal to one of the
two natural foliations of the Hilbert modular surface $X_D$ 
(Theorem~\ref{thm:transversal}).
\end{itemize}
\par
\begin{itemize}
\item[$\bullet$] The vanishing locus of $\PTD$ is disjoint from the reducible
locus (Theorem~\ref{thm:disjred}).
\end{itemize}
\par
On the compactified Hilbert modular surface, the reducible locus and the
vanishing locus of a Hilbert modular form always intersect and the number
of intersection points is proportional to the volume of the Hilbert modular
surface. So the claim is that all these intersection points lie on the
boundary of the Hilbert modular surface, hence at cusps of the 
vanishing locus of $\PTD$. While for the second statement we also 
have a proof using theta functions, we give proofs of both these statements 
relying on the following description of the cusps.
\par
\begin{itemize}
\item[$\bullet$] The cusps of the vanishing locus of $\PTD$ are in bijection
with pairs consisting of a \Pred\ quadratic form \changed{$ax^2+bxy+cy^2$} 
of discriminant $D$ 
and a class $r \in \ZZ/{\rm gcd}(a,c)\ZZ$ (Theorem~\ref{thm:cuspsWD}).
\end{itemize}
\par
Here an indefinite quadratic form \changed{$ax^2+bxy+cy^2$} is called 
{\em standard} if $a>0>c$ and $a+b+c<0$.  As a statement about cusps 
of \Teichmuller\ curves, this result already appears in 
\cite{McM06} and~\cite{Ba07}. Our proof, however, is completely different. 
We now explain the main idea. Suppose that a power series 
$f = \sum c_\nu q_1^{\nu} q_2^{\sigma(\nu)}$ has to vanish on a branch of a
curve parameterized by $q_1 = q^{\alpha_1} e^{\ve_1(q)}$ and $q_2 = q^{\alpha_2} e^{\ve_2(q)}$.
After these parametrizations are plugged into $f$, the lowest order
exponent (in $q$) has to appear twice, since otherwise the terms cannot cancel. 
In the concrete case of $f = \PTD$ we are led to the following notion.
Given a fractional $\O_D$ ideal $\fraca$, we say that a primitive element
$\alpha \in (\fraca^2)^\vee$ is a {\em multiminimizer} for $\fraca$ if 
the quadratic form  $x \mapsto \tr(\alpha x^2)$ on $\fraca$ takes its minimum 
on one of the three non-trivial cosets $\tfrac 12 \fraca/\fraca$ at least (and then, 
as we show, exactly) twice (with $x$ and $-x$ not distinguished). We show on 
the one hand that multiminimizers for $\fraca$ are in bijection with \Pred\ 
quadratic forms in the wide ideal class. (\changed{See e.g.~\cite{Z81} for the
correspondence between ideal classes and quadratic forms.}) On the
other hand, on any branch of the vanishing locus the local parameter can
be chosen such that $\alpha=\alpha_1=\sigma(\alpha_2)$ is a multiminimizer
and that the multiminimizers (up to multiplication by the
square of a unit) determine the branch uniquely up to an element of 
$\ZZ/{\rm gcd}(a,c)\ZZ$.
\par
We have given the definition of multiminimizers and the description of branches 
of the vanishing locus in detail since this notion and construction reappears
twice in the rest of the paper. First, multiminimizers appear prominently in
the discussion of Bainbridge's compactification below and, secondly,
this description of the branches immediately 
gives the Fourier expansion of the modular embedding of the uniformizing
group of the genus two \Teichmuller\ curves for any $D$ (see Theorem~\ref{thm:arithphi}).
Moreover, both ``curious properties'' mentioned in the case $D=17$ hold in general.
In particular, we have:
\par
\begin{itemize}
\item[$\bullet$] For any $D$, any cusp of the vanishing locus of $\PTD$
with corresponding Fuchsian group $\Gamma$ and modular embedding $\varphi$, 
the $\varphi$-twisted modular forms of bi-weight $(k,\ell)$ have a basis
with Fourier expansions of the form $\sum_{n \geq 0} a_n (Aq)^n$ with $a_n$
algebraic and $A$ transcendental of Gelfond-Schneider type 
(Theorem~\ref{thm:FCtwistedmodgeneral}).
\end{itemize}
\par
As another application of the description of \Teichmuller\ curves via theta derivatives,
we give in Theorem~\ref{QuadDiff} a description of the quadratic differentials on the leaves
of the natural foliation of a Hilbert modular surface whose integral measures the
flat distance between the two zeros of the eigenform for real multiplication.
These quadratic differentials can be packaged together to a meromorphic modular form of weight~$(-2,4)$
that we give as the quotient of theta series and their derivatives. Our result
has been used by McMullen~\cite{McM12} to describe the beautiful and complicated
flat structure on the leaves visually (``snow falling on cedars").
\par
\smallskip
{\bf Hirzebruch's compactification and  Bainbridge's compactification.}
Hir\-zebruch constructed a minimal smooth compactification of Hilbert modular
surfaces. His compactification is remarkable in many ways. First, it is
the prototype of what is nowadays a called a toroidal compactification, i.e.,
it is given by a {\em fan} of decreasing slopes, periodic under the action of
the squares of units. The fan is given for each cusp $\fraca$ of the 
Hilbert modular surface by the corners of the lower convex hull of 
$(\fraca^2)^\vee$ in $\RR^2_+$.
\par
Bainbridge (\cite{Ba07}) observed that the closure in the Deligne-Mumford 
compactification of the Torelli preimage of Hilbert modular surfaces provides another
compactification. This compactification is in general neither smooth nor
minimal, but it was useful in his calculation of Euler characteristics
of \Teichmuller\ curves.  It is amusing to compare the two types of 
compactifications  and to discover that they are parallel in spirit, 
but different in every concrete detail.
\begin{itemize}
\item[$\bullet$] Bainbridge's compactification is a toroidal compactification, 
given for each cusp $\fraca$ by the fan of multiminimizers (lying in $(\fraca^2)^\vee$)
for $\fraca$. (Theorem~\ref{thm:bainviamultimin}).
\end{itemize}
\par
The second remarkable property of Hirzebruch's compactification is that
it can easily be computed using a continued fraction algorithm.
\begin{itemize}
\item[$\bullet$] Hirzebruch's compactification is driven by the ``fast minus"
continued fraction algorithm, while Bainbridge's compactification is driven by
a ``slow plus'' continued fraction algorithm.
\end{itemize}
The reader will find the precise description of the algorithms in \S\ref{sub:HirzComp}
and \S\ref{sec:compMMM} respectively. The bijection between \Pred\ and reduced 
indefinite quadratic forms induces a subtle relationship between the number of boundary
components of Hirzebruch's 
and Bainbridge's compactification. In
particular, the number of curves in the Bainbridge compactification of any cusp is always 
the same as the number for the Hirzebruch compactification of some cusp, but not necessarily 
the same one! The \changed{definitions and} details, and several examples,
are given in Section~\ref{sec:HirzBain}.
\par
\medskip
\medskip
{\bf Acknowledgements.} 
The first named author is partially supported
by the ERC starting grant~257137 ``Flat surfaces.'' He would also like to 
thank the Max Planck
Institute for Mathematics in Bonn, where much of this work was done. 
\newpage

\part*{Part I: Modular embeddings and twisted modular forms} 

The notion modular embedding in the sense used here appears for the first
time in a paper by Cohen and Wolfart~(\cite{CoWo90}). They study holomorphic maps 
$\HH \to \HH$ equivariant with respect to a Fuchsian group
and its Galois conjugate. 
\par
In particular Cohen and Wolfart show that all triangle groups
admit modular embeddings. Subsequent work of Schmutz-Schaller and Wolfart~(\cite{ScWo00})
gave some necessary conditions for a group to admit a modular embeddding.
Some Fuchsian quadrangle groups were shown in~\cite{Ri02} not to admit modular embeddings, 
but it took more than a decade until new examples of modular embeddings were discovered.
\par
The first new examples arose from the \Teichmuller\ curves discovered by Calta
and McMullen (see \cite{Ca04} and~\cite{McM03}, and~\cite{Mo04} for the modular viewpoint). 
All \Teichmuller\ curves give rise to modular embeddings. We summarize the 
known results of
\Teichmuller\ curves (and thus the known groups admitting a modular embedding)
briefly at the end of Section~\ref{sec:hme}.
\par
Here, in Part~I, we think of the group Fuchsian group $\Gamma$ as given (e.g.\ in
terms of a presentation) and study properties of the modular embeddings as 
holomorphic maps. We \changed{define} an extension of the notion of modular 
forms to this context that we call twisted modular forms. The aim of the
first part is to study this new object and to derive the analogues of the standard results on modular forms 
(Fourier coefficients, dimension, differential equations) from scratch
in the context of twisted modular forms.
\par

\section{Hilbert modular embeddings} \label{sec:hme}

The term {\em modular embedding} is used in the literature both for equivariant maps 
from $\HH \to \HH^g$ (starting with~\cite{CoWo90}) and from $\HH^g \to \HH_g$ (already
in~\cite{Ha66}) . To distinguish,  we call them ``Hilbert modular 
embeddings'' and ``Siegel modular embeddings,'' respectively, according to the range 
of the corresponding map. We will be interested mostly in the quadratic case $g=2$ 
and refer to~\cite{ScWo00} for basic notions is the general case.
\par 

Throughout this paper we denote by $K$ a real quadratic field, with a fixed embedding $K\subset\RR$. We
use the letter~$\s$ to denote the Galois conjugation of~$K$ or the second embeddding of $K$ into $\RR$,
writing  $\s(x)$ or $x^\s$ interchangeably for $x\in K$.  By a {\it Hilbert
modular group} for $K$ we will mean any subgroup $\G_K$ of $\SL K$ commensurable with $\SL\O$ for some
order $\O\subset K$. (Later we will make specific choices.)  Such a group acts discretely and cofinitely on 
$\H^2$ by $(z_1,z_2)\mapsto\Bigl(\dfrac{az_1+b}{cz_1+d}\,,\;\dfrac{a^\s z_2+b^\s }
{c^\s z_2 +d^\s }\Bigr)$. Here $\H$ denotes the upper half-plane. 

We will be interested only in {\em Hilbert modular embeddings} for which the first coordinate in $\H^2$ is 
a local coordinate everywhere. A modular embedding of this type is described by the data $(\G,\ph)$, where
\begin{itemize}
\item $\G$ is a subgroup of some Hilbert modular group $\G_K\subset\SL K$ which, viewed as a subgroup 
of $\SL\RR$, is Fuchsian (i.e., discrete and cofinite).
\item $\p:\H\to \H$ is a holomorphic map satisfying $\p\c\g=\g^\s \c\p$ for all $\g\in\G$.
\end{itemize}
\noindent For such a pair $(\G,\p)$, the map $z\mapsto(z,\p(z))$ defines a map from the curve $\H/\G$
to the Hilbert modular surface $\H^2/\G_K\,$.

Written out explicitly, the condition on $\ph$ means that we have
\be \label{eq:phiequivar} \p\Bigl(\MT\Bigr)\=\frac{a^\s \p(z)+b^\s }{c^\s \p(z)+d^\s } \ee 
for all $\sm abcd\in\G$ and $z\in\H$.  We will use this transformation property in~\S\ref{sec:twistedMF} to
define a 1-cocycle on~$\G$ and hence a new type of modular form (``twisted'' form) on~$\G$.  Just as for usual
modular forms, these must satisfy suitable growth conditions at the cusps of~$\G$, and to formulate these we need to know 
how~$\p$ behaves near the cusps.  Assume first that one of these cusps is at~$\infty$, with the stabilizer of~$\infty$ 
in the image $\bar\G$ of~$\G$ in~$\PSL\R$ being generated by $\pm\sm1\a01$ with $\a\in K\cap\R_+\,$. Then we have:

\begin{Prop} \label{prop:phiFourier} Suppose that the stabilizer of~$\infty$ in~$\G$ is generated by $z\mapsto z+\a$ 
with $a\in K$, $\a>0$. Then $\a^\s$ is also positive and $\p(z)$ has an expansion of the form
  \be\label{eq:phiatinfinity} \p(z)\=\dfrac{\a^\s }\a z\+\sum\limits_{n=0}^\infty B_n\,\e\bigl(\dfrac{nz}\a\bigr)
      \qquad\bigl(\,\forall z\in\HH\,\bigr)\,, \ee
where $\e(x):=e^{2\pi ix}$ and where the coefficients $B_n$ satisfy the inequalities
   \be\label{Carat}  |B_n| \; \le \; 2\,\Im(B_0)\qquad\text{for all $n\ge1$.}  \ee
\end{Prop}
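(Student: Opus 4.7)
The plan is as follows. Applying the equivariance \eqref{eq:phiequivar} to the generator $\pm\sm1\alpha01$ of the stabilizer of $\infty$ (the sign is irrelevant since $-I$ acts trivially on $\HH$) gives the quasi-periodicity $\varphi(z+\alpha)=\varphi(z)+\alpha^\sigma$. Hence the auxiliary function
\[ F(z)\;:=\;\varphi(z)\,-\,\tfrac{\alpha^\sigma}{\alpha}\,z \]
is invariant under $z\mapsto z+\alpha$ and descends, via $q=\e(z/\alpha)$, to a single-valued holomorphic function on the punctured disk $\DD^*=\{0<|q|<1\}$ with Laurent expansion $F(q)=\sum_{n\in\ZZ}B_n q^n$. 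The proposition then reduces to three claims: (a)~$\alpha^\sigma>0$, (b)~$B_n=0$ for $n<0$, and (c)~$|B_n|\le 2\,\Im(B_0)$ for $n\ge 1$.

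The key tool for (a) is the Schwarz--Pick inequality for the holomorphic self-map $\varphi:\HH\to\HH$. Applied to a pair $(z,\,z+n\alpha)$ with $n\in\ZZ\setminus\{0\}$ and combined with the quasi-periodicity (which makes the two target points share the same imaginary part), the hyperbolic distance formula for two points of equal imaginary part yields
\[ \frac{|n\alpha^\sigma|}{\Im\varphi(z)}\;\le\;\frac{|n\alpha|}{\Im z}, \qquad\text{hence}\qquad \Im F(q)\;\ge\;\frac{|\alpha^\sigma|-\alpha^\sigma}{\alpha}\,\Im z. \]
If $\alpha^\sigma<0$ (the case $\alpha^\sigma=0$ is impossible since $\sigma$ is a field automorphism and $\alpha\neq 0$), the right-hand side tends uniformly to $+\infty$ as $|q|\to 0$. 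But no single-valued holomorphic function on $\DD^*$ can have $\Im F\to +\infty$ uniformly at the puncture: a pole forces $\Im F$ to oscillate through values of both signs on small circles, while an essential singularity makes $\Im F$ dense in $\RR$ by Casorati--Weierstrass. Hence $\alpha^\sigma>0$, and the displayed inequality collapses to $\Im F\ge 0$ on $\DD^*$.

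Claim (b) is then immediate: $|e^{iF(q)}|=e^{-\Im F(q)}\le 1$, so Riemann's removable singularity theorem extends $e^{iF}$ holomorphically across $q=0$; the extended value cannot vanish (that would again force $\Im F\to+\infty$), so $F$ itself is bounded near the puncture and the Laurent series reduces to the Taylor series in \eqref{eq:phiatinfinity}. Claim~(c) is the classical Caratheodory coefficient estimate for the resulting holomorphic map $F:\DD\to\HH$ (which maps into the open half-plane by the open mapping theorem, unless $F$ is constant, in which case \eqref{Carat} is trivial). Concretely, combining the Cauchy integral formula $B_n r^n = \tfrac{1}{2\pi}\int_0^{2\pi}F(re^{i\theta})e^{-in\theta}d\theta$ with the vanishing of $\tfrac{1}{2\pi}\int_0^{2\pi}\overline{F(re^{i\theta})}\,e^{-in\theta}d\theta$ for $n\ge 1$ gives
\[ B_n r^n \= \tfrac{i}{\pi}\int_0^{2\pi}\Im F(re^{i\theta})\,e^{-in\theta}\,d\theta, \]
and \eqref{Carat} follows by using $\Im F\ge 0$ together with the mean-value identity $\tfrac{1}{\pi}\int_0^{2\pi}\Im F(re^{i\theta})\,d\theta=2\,\Im(B_0)$, then letting $r\to 1^-$.

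The main obstacle is the middle step: extracting \emph{both} the sign of $\alpha^\sigma$ and the non-negativity of $\Im F$ from a single Schwarz--Pick estimate. The delicate point is that the bound $\Im\varphi\ge(|\alpha^\sigma|/\alpha)\Im z$ is symmetric in the sign of $\alpha^\sigma$, and ruling out $\alpha^\sigma<0$ requires the rigidity of Laurent expansions at an isolated singularity. Once $\Im F\ge 0$ is secured, removability of the singularity and the Caratheodory bound are routine.
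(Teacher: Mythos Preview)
Your proof is correct and takes a somewhat different route from the paper's. For the positivity of $\alpha^\sigma$ and the holomorphic extension across $q=0$, the paper exponentiates immediately: with $G(q)=\e\bigl(f(q)/|\alpha^\sigma|\bigr)$ (where $f$ is your $F$), the raw inequality $\Im\varphi>0$ already gives $|q^{\pm1}G(q)|<1$ with the sign of $\pm\alpha^\sigma$, so $G$ is meromorphic at~$0$; the single-valuedness of $\log G$ on $\DD^*$ then forces its order there to be~$0$, yielding $\alpha^\sigma>0$ and the holomorphy of~$f$ in one stroke. You instead apply Schwarz--Pick to the pair $(z,z+\alpha)$ to obtain the sharper pointwise bound $\Im\varphi(z)\ge(|\alpha^\sigma|/\alpha)\,\Im z$, and then rule out a genuine singularity of~$F$ by analyzing its imaginary part. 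This has the pleasant side effect of producing the lower bound in Corollary~\ref{Cor:estimateY} as an intermediate step rather than as a consequence. For~\eqref{Carat}, the paper cites Carath\'eodory's theorem (after sketching an elementary argument for the weaker estimate $|B_n|\le 2n\,\Im B_0$), whereas you supply the short standard proof via Cauchy's formula and the mean-value identity for the nonnegative harmonic function $\Im F$, making the argument self-contained.
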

\begin{proof} From~\eqref{eq:phiequivar} we have $\p(z+\a)=\p(z)+\a^\s$, so the function $\,\p(z)-{\a^\s}z/\a\,$ 
is invariant under $z\mapsto z+\a$ and hence equals $f(\e(z/\a))$ for some holomorphic function~$f(q)$ in the punctured 
disc~$\DD^*=\{q\,:\,0<|q|<1\}$. Define a second holomorphic function~$F$ in~$\DD^*$ by $F(q)=\e\bigl(f(q)/|\a^\s|\bigr)$.
From $\p(\HH)\subseteq\HH$ we deduce that $|q^{\pm1}F(q)|<1$ in~$\DD^*$, where the sign is chosen so that $\pm\a^\s>0$. 
It follows that $F(q)$ extends to a meromorphic function in $\DD=\{q\,:\,|q|<1\}$
with at most a simple pole at~$q=0$.  But then the fact that~$F$ has a single-valued
logarithm in~$\DD^*$ implies that its order of vanishing at~$0$ must be zero, so $\a^\s$ must be positive and $f$ extends
holomorphically to the full disc and hence has a convergent Taylor expansion $\sum_{n=0}^\infty B_nq^n$, proving the first
claim.  For the second, we note first that the estimate $|F(q)|\le|1/q|$ and the holomorphy of~$F$ at~$0$ imply by the
maximum principle that $|F(q)|\le1$ in the disk~$\DD$ (this is just the Schwarz lemma, applied to the function $qF(q)$),
so $B_0$ has positive imaginary part and the function $f(q)/B_0$ takes on values in the right half-plane.  An elementary
argument then gives the estimate $|B_n|\le2n\Im(B_0)$. (Write $f(q)=(B_0-\bar B_0\la(q))/(1-\la(q))$ where $\la$ sends
$\DD$ to~$\DD$ and 0 to~0; then $B_n=2i\Im(B_0)\sum_{m=1}^n[\la^m]_n$, where $[\la^m]_n$ denotes the coefficient of~$q^n$
in~$\la(q)^m$, which is bounded in absolute value by~1 because $\la$ is.) The stronger estimate $|B_n|\le 2\Im(B_0)$ follows 
from a theorem of Carath\'eodory~\cite{Car}, which says precisely that a holomorphic function mapping~$\DD$ to the right
half-plane and sending 0 to~1 has all its Taylor coefficients at~0 bounded by~2 in absolute value.
\end{proof}
\begin{Cor} \label{Cor:estimateY} The imaginary part of~$\p(z)$ satisfies the inequalities
\be \label{estimateY}  \dfrac{\a^\s }\a y \;\le\; \Im(\p(z))  \;\le\;\dfrac{\a^\s }\a y \+ \changed{C\bigl(1 \+\frac 1y\bigr)} \ee
for all $z=x+iy\in\H$, where $C$ is a constant independent of~$z$. 
\end{Cor}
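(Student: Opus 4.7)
The strategy is to deduce both estimates directly from the Fourier expansion \eqref{eq:phiatinfinity} and the Carath\'eodory bound \eqref{Carat} already furnished by Proposition~\ref{prop:phiFourier}. Writing $q=\e(z/\a)$ and $f(q)=\sum_{n\ge0}B_nq^n$, the expansion gives
\[
\Im\p(z)\;=\;\dfrac{\a^\s}\a\,y\+\Im f(q),\qquad |q|\=e^{-2\pi y/\a},
\]
so the two inequalities in \eqref{estimateY} are equivalent to showing $0\le\Im f(q)\le C(1+1/y)$ for $q\in\DD$ under this identification.

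For the lower bound I would recycle an observation already present in the proof of Proposition~\ref{prop:phiFourier}: the auxiliary function $F(q)=\e(f(q)/\a^\s)$ was shown there, via a Schwarz lemma applied to $qF(q)$, to satisfy $|F(q)|\le 1$ on the entire disc $\DD$. Since $\a^\s>0$, the identity $|F(q)|=\exp(-2\pi\Im f(q)/\a^\s)$ forces $\Im f(q)\ge0$, which is exactly the left-hand inequality in \eqref{estimateY}. An equivalent route is to read this off directly from the representation $f(q)=(B_0-\bar B_0\la(q))/(1-\la(q))$ used later in that same proof, which yields $\Im f(q)=\Im(B_0)(1-|\la(q)|^2)/|1-\la(q)|^2\ge0$.

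For the upper bound I would plug $|B_n|\le2\Im(B_0)$ into the Fourier series to obtain
\[
\Im f(q)\;\le\;\Im(B_0)\+2\Im(B_0)\sum_{n\ge1}e^{-2\pi ny/\a}\;=\;\Im(B_0)\+\dfrac{2\Im(B_0)}{e^{2\pi y/\a}-1},
\]
and then dominate the right-hand side by $C(1+1/y)$ by a short case split: for $y\ge 1$ the geometric tail is uniformly bounded, while for $0<y<1$ the elementary inequality $e^t-1\ge t$ turns the tail into something of order $1/y$. I do not anticipate any serious obstacle here; the corollary is essentially a computation once Proposition~\ref{prop:phiFourier} is in hand, and the only mildly delicate point is recognizing that the bound $|F(q)|\le1$ proved en route to \eqref{Carat}, combined with the positivity of~$\a^\s$, already encodes the sharp lower bound $\Im\p(z)\ge\a^\s y/\a$ with no further complex-analytic input.
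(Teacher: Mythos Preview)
Your proof is correct and follows essentially the same approach as the paper: the lower bound is deduced from the inequality $|F(q)|\le1$ established in the proof of Proposition~\ref{prop:phiFourier}, and the upper bound from the Carath\'eodory estimate~\eqref{Carat} together with the elementary bound $\sum_{n\ge0}|q|^n\ll 1+1/y$. Your write-up is simply a more detailed unpacking of the paper's two-line argument.
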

\begin{proof} The first statement is just the inequality $|F(q)|\le1$ established in the course of the above proof, and \changed{because $\sum_{n=0}^\infty|q|^n\ll 1 + 1/y$ the second follows from~\eqref{Carat} .} \end{proof}

Exactly similar statements hold for all of the other cusps of~$\G$.  Recall that by definition, 
such a cusp is an element $\nu\in\PP^1(K)$ whose stabilizer $\bar\G_\nu$ in $\bar\G$ is infinite 
cyclic, say $\bar\G_\nu=\langle\pm\g_\nu\rangle$.  Choose $g\in\SL K$ with $g(\infty)=\nu$. Then $g\i\g_\nu g=\pm\sm1\a01$
for some positive element $\a$ of $K$. Equation~\eqref{eq:phiequivar} implies that the function $\p_g={g^\s }\i\c\phi\c g$ 
satisfies $\p_g(z+\a)=\p_g(z)+\a^\s$, because $\p_g\c\sm1\a01=\p_gg\i\g_\nu g
={g^\s }\i\p\,\g_\nu g={g^\s }\i\g_\nu^\s \p\,g={g^\s }\i\g_\nu^\s g^\s \p_g=(g\i\g_\nu g)^\s \p_g=\sm1{\a^\s }01\c\p_g\,$.
Then the same proof as for the case $g=\text{Id}\,$ shows that $\a^\s$ is positive and that
  \be\label{eq:phiabsexpansion} \p_g(z)=\dfrac{\a^\s }\a z\+\sum\limits_{n=0}^\infty B_n\,\e\bigl(\dfrac{nz}\a\bigr) \ee
for some constants $B_n\in\C$, satisfying the same estimate~\eqref{Carat} as before, and for all \hbox{$z\in\HH$}. Of course 
$\a$ and $B_n$ depend on~$\nu$, and also on the choice of~$g$, but the expansions for $\G$-equivalent cusps are the same up 
to trivial rescalings, because $\p_{\g g}=\p_g$ for $\g\in\G$, so that there are only finitely many essentially distinct expansions. 

Another basic property of~$\p$, obtained by applying the Schwarz lemma to the map $I_{\p(z')}\circ\p\circ I_{z'}^{-1}:\DD\to\DD$,
where $I_a$ for $a\in\H$ denotes the standard isomorphism $(\H,a)\to(\DD,0)$ given by $z\mapsto(z-a)/(z-\bar a)$, is that one has the inequalities
 \be \label{contract} \biggl|\frac{\p(z)-\p(z')}{\p(z)-\overline{\p(z')}}\biggr| \;\le\;\biggl|\frac{z-z'}{z-\overline{z'}}\biggr|\,,
  \qquad \frac{|\p(z)-\p(z')|^2}{\Im(\p(z))\,\Im(\p(z'))} \;\le\;\frac{|z-z'|^2}{\Im(z)\,\Im(z')} \ee
or equivalently $d(\p(z),\p(z'))\le d(z,z')$ for any $z,\,z'\in\H$, where $d:\H\times\H\to\R_{\ge0}$ denotes the Poincar\'e metric.
(This is of course a standard property of any holomorphic map from the upper half-plane to itself.)  Fixing~$z$ and letting $\Im(z')$ tend
to infinity, or fixing $z'$ and letting $\Im(z)$ tend to~0, we obtain second proofs of the two inequalities in~\eqref{estimateY}, while
letting $z'$ tend to $z$ we obtain the estimate
 \be\label{estKappa} |\p'(z)| \;\le\; \frac{\Im(\p(z))}{\Im(z)}\qquad(z\in\H)\,. \ee
or equivalently $|\k(z)|\le1$, where $\k:\H\to\C$ is the  map defined by 
 \be\label{defKappa} \k(z) \= \frac{\Im(z)}{\Im(\p(z))}\,\p'(z)\;. \ee
From the equivariance property~\eqref{eq:phiequivar} we obtain the formulas
 \be\label{eq:invariance} \Im(\p(\g z)) \,=\, \frac{\Im(\p(z))}{|c^\s \p(z)+d^\s|^2}\,, \quad
    \p'\bigl(\MT\bigr) \,=\, \frac{(cz+d)^2}{(c^\s \p(z)+d^\s )^2}\,\p'(z)\,, \ee
and these together with the standard formula $\Im(\g z)=\Im(z)/|cz+d|^2$ imply that the function~$\k$ is $\G$-invariant.
We can therefore introduce a basic invariant $\la_2\= \la_2(\G,\p) $ 
of the pair $(\G,\p)$ by
 \be\label{Deflambda}  \la_2 \= \frac1{\text{vol}(\G\backslash\H)}\,\iint_{\G\backslash\H}\,\frac{|\p'(z)|^2}{\Im(\p(z))^2}\,dx\,dy 
  \=  \frac{\iint_\H |\k|^2\,d\mu}{\iint_\H \,d\mu} \;\,\in\;\, (0,1]\,,\ee
where $d\mu=y^{-2}dx\,dy$ (with $z=x+iy$ as usual) is the standard $\SL\R$-invariant measure on~$\H$, and where the integral can be taken over 
any fundamental domain for the action of~$\G$ on~$\H$ and is absolutely convergent because~$|\kappa|\le1$.
The invariant~$\la_2$, whose values are always rational numbers, 
can be interpreted  either as a ratio of the intersection numbers 
on the Hilbert modular surface \changed{with the line bundles~$\cLL$ 
and~$\wt\cLL$ defined in~\S\ref{sec:dimvan},} or as the second 
Lyapunov exponent~(\cite{Mo11}), which explains the notation. 
\changed{It is a commensurability invariant of the Fuchsian group $\Gamma$}. 

We close this section by describing briefly the four known types of Hilbert modular embeddings.  The first two will
play a role in this paper and will be discussed in more detail in Section~\ref{sec:MCTC}. The other two are
mentioned only for the sake of completeness.  
 
{\it Type 1: Modular curves.}  Here $\G=\G_A=\{\g\in\G_K\mid A\g=\g^\s A\}$ where $A$ is a suitable element of $\GLp K$,
and the map $\p$ is defined by $\p(z)=Az$.  The corresponding curves $(1,\p)(\H/\G)\subset \H^2/\G_K$ in this 
case are the irreducible components of the curves $T_N$ studied in~\cite{HZ76} and~\cite{HZ77} and reviewed
in~\S\ref{sec:redloc}.  In particular, there are infinitely many curves of this type on each Hilbert modular surface, 
and conversely each of these curves lies on infinitely many Hilbert modular surfaces in $\HH_2/\Sp(4,\ZZ)$.

{\it Type 2: Teichm\"uller curves.} These are defined abstractly as the algebraic curves in the moduli space
$\M_g$ of curves of genus~$g$ that are totally geodesic submanifolds for the \Tei\ metric.  In genus~2, they
always lie on Hilbert modular surfaces and (apart from one exception for the field $\Q(\sqrt5)$) have a
modular interpretation as the components of the moduli space of genus~2 curves whose Jacobian has real multiplication by
an order in a real quadratic field and such that the unique (up to a scalar) holomorphic form on the curve 
that is equivariant with respect to this action\footnote{i.e. $\,(\mu\circ)^*\p=\mu\p$ for all $\mu$ in the order;
cf.~\S\ref{sec:HMGHMS} (``first eigendifferential") for details}  has a double zero. There 
are at most two curves of this type on each Hilbert modular surface~$X_K$, and conversely each
\Teichmuller\ curve lies on exactly one~$X_K$. The proof that these
curves have a modular embedding comes from~\cite{Mo04}. (See also the proof of Proposition~\ref{Prop5.6}.)

There exists a variant of these curves, not used in this paper but studied in detail by Wei\ss\ in~\cite{We12}, called
 ``twisted \Tei\ curves,'' obtained as the images of \Tei\ curves under the action of elements of $\GL K^+$. They are still
geodesic for the Kobayashi metric in $X_K$, but no longer for the Kobayashi (= \Tei) metric in $\M_2$.  There are in
general infinitely many of these curves on any Hilbert modular surface.

{\it Types 3 and 4: Curves related to Prym varieties.}  Recall that a {\it Prym variety} is the kernel  
of the map $\Jac(C)\to\Jac(C_0)$ induced by a double cover $C\to C_0$ of curves.
By the Riemann-Hurwitz formula, it is 2-dimensional if and only if the genus~$g$ of~$C$
lies between~2 and~5. For the cases~$g=3$ or~4, there is a construction of
\Tei\ curves in the moduli space~$\M_g$ corresponding to certain Prym varieties having real 
multiplication by an order in a real quadratic field~\cite{McM06b}.  
Our cases~3.\ and~4.\ are these two cases, in the order~$g=4$, $g=3$.  

The four types 1.--4.~are distinguished by the invariant~$\la_2$, which takes 
on the values $1$ for Type~1, $\frac13$ for Type~2, $\frac15$ for Type~3, 
and~$\frac17$ for Type~4 (\cite{Ba07}, \changed{Theorem~15.1} 
and~\cite{Mo11}, Proposition~5.1). 
For each of these values, there is an infinite
number of commensurability classes of Fuchsian groups with this invariant.
\par 
The exceptional Teichm\"uller curve over $\Q(\sqrt5)$ corresponds to 
$\la_2 = 1/2$, and at the time of writing this is the only known 
commensurability class of a Fuchsian group with this value of the 
invariant $\la_2$.
\par

\section{Twisted modular forms}  \label{sec:twistedMF}

For any function $\p:\H\to\H$ satisfying~\eqref{eq:phiequivar} we can define two factors $J(g,z)$ 
and $\tJ(g,z)$ for $g\in\SL K$ and $z\in\H$ by
\be J(g,z)\=cz+d\,,\qquad \tJ(g,z)\=c^\s \p(z)+d^\s \qquad \text{if $g=\sm abcd\,$.} \ee
The calculation
  \bas \tJ(\g_1\g_2,z)&\=(c_1a_2+d_1c_2)^\s \,\p(z)\+(c_1b_2+d_1d_2)^\s \\
   &\=\Bigl(c_1^\s \,\frac{a_2^\s \p(z)+b_2^\s }{c_2^\s \p(z)+d_2^\s }\+d_1^\s \Bigr)\,\bigl(c_2^\s \p(z)+d_2^\s \bigr) \\
   &\=\Bigl(c_1^\s \,\p\bigl(\frac{a_2z+b_2}{c_2z+d_2}\bigr)\+d_1^\s \Bigr)\,\bigl(c_2^\s \p(z)+d_2^\s \bigr) 
   \qquad\quad\text{(by eq.~\eqref{eq:phiequivar})}\\
   &\= \tJ(\g_1,\g_2z)\,\tJ(\g_2,z) \qquad\text{for $\g_1,\,\g_2\in\G\;$ }  \eas
shows that $\tJ$ is a cocycle for $\G$.  (The corresponding statement for $J$, which follows from the same
calculation with $\p\equiv\text{Id}$, is, of course, standard.) It follows that the map $f\mapsto f|_{(k,\l)}g$
of the space of holomorphic functions in~$\H$ to itself defined for $k,\,\l\in\Z$ and $g\in\GLp K$ by
\be \bigl(f|_{(k,\l)}g\bigr)(z)\=J(g,z)^{-k}\,\tJ(g,z)^{-\l}\,f(gz) \ee
is a group action when restricted to~$\G$. 

 We now define a {\em $\p$-twisted modular form of bi-weight $(k,\l)$}
on~$\G$ to be a holomorphic function $f:\H\to\C$ satisfying $f|_{(k,\l)}\g=f$ for all $\g\in\G$
together with the growth requirement that the function  $f_g$ is bounded as $\Im(z)\to\infty$ for every
$g\in\SL K$, where $f_g(z) = (cz+d)^{-k}(c^\s\p_g(z)+d^\s)^{-\l}f(gz)$.  The function $f_g$ depends only 
on the coset $\G g$. 
\par
\changed{To describe its Fourier expansion, we need to distinguish cases. If 
$-1 \in \G$, then the modular transformation property implies that $k+\l$ is even 
if $M_{k,\l}(\G)$ is to be non-zero. If $-1  \not\in \G$, then there may exist
non-zero twisted modular forms for both parities of $k+\l$. Let $\a$ as before be
the totally positive element of $K$ with $\bigl(g\i\G g\bigr)_\infty=\bigl\langle\pm\sm1\a01\rangle$. Recall that the cusp 
$g\infty$ is called {\it irregular} if $\G$ does not contain~$-1$ and 
if $\bigl(g\i\G g\bigr)_\infty=\bigl\langle-\sm1\a01\rangle$.  Now there are 
two cases. If the cusp is regular or if $k+l$ is even, then  $f_g(z+\a)= f_g(z)$ 
and the Fourier expansion of~$f_g$ has the form 
\be \label{eq:FE}
f_g(z) \ = \sum_{n\ge0} a_n\,\e(nz/\a) \quad \text{as} \quad \Im(z)\to\infty\,, \ee
where $n$ ranges over non-negative integers. Only if the cusp is irregular and $k+\l$ is odd, 
then   $f_g(z+\a)= - f_g(z)$ and the Fourier expansion is as in~\eqref{eq:FE}, 
but now with $n$ ranging over $\Z_{\ge0}+\h$ instead of $\Z_{\ge0}$.}
%
\par
If the Fourier coefficient $a_0$ is 0 for all cusps (a condition that is automatically satisfied at regular cusps if $k+\l$ is odd),
we call $f$ a {\em cusp form}.   The spaces of $\p$-twisted modular forms and cusp forms of bi-weight $(k,\l)$ 
will be denoted by $M_{k,\l}(\G,\p)$ and $S_{k,\l}(\G,\p)$, respectively. We will often omit the ``$\p$'' when no 
confusion can result.

Obviously, ordinary modular forms of weight~$k$ on~$\G$ are $\p$-twisted modular forms of bi-weight $(k,0)$ for any~$\p$,
and in fact $M_{k,0}(\G,\p)=M_k(\G)$.  We give three examples of twisted modular forms with $\l\ne0$. 

\smallskip
i) We always have $\p'\in M_{2,-2}(\G,\p)$, by virtue of the second equation in~\eqref{eq:invariance} and the expansions of $\p$ at 
the cusps given in Section~\ref{sec:hme}. This example shows that the weights $k$ and $\l$ of a holomorphic twisted modular form do not both have to be positive. 
A similar example, which follows from the calculations given at the
end of Section~\ref{sec:twistedDE}, is that $2\p'\p''' - 3\p''^2$ 
belongs to $M_{8,-4}(\G,\p)$.
Note the quotient of this by $\p'^2$ is the Schwarzian derivative of $\p$.
\par
\smallskip
ii) In the case of modular curves, when the map $\p:\H\to\H$ is given by a fractional linear transformation $A\in\GLp K$, the calculation
 $$ \tJ(\g,z)=c^\s Az+d^\s =J(\g^\s ,Az)=\frac{J(\g^\s A,z)}{J(A,z)}=\frac{J(A\g,z)}{J(A,z)}=\frac{J(A,\g z)}{J(A,z)}\,J(\g,z)$$
 for $\g\in\G$ shows that if $f$ belongs to $M_{(k,\l)}(\G,\p)$, then the function $f_A(z)=J(A,z)^{-\l}f(z)$
 belongs to $M_{k+\l}(\G)$ in the usual sense, so that here we do not get a new kind of modular forms.
For \Tei\ curves, on the other hand, the automorphy factor $\tJ$ cannot be reduced to an automorphy factor 
of the classical sort, and the twisted forms are a genuinely new type of modular form.

\smallskip
iii) If $\G_K$ is a Hilbert modular group containing~$\G$ and $F$ is a Hilbert modular form of weight $(k,\l)$ on $\G_K$
(i.e., $F:\H^2\to\C$ is a holomorphic map satisfying $F(\g z_1,\g^\s z_2)=(cz_1+d)^k(c^\s z_2+d^\s )^\l F(z_1,z_2)$ 
for all $\g\in\G_K$ and all $z_1,\,z_2\in\H$), then the restriction of $F$ to $\H$ under the embedding 
$(1,\p):\H\to\H^2$ is an element of $M_{k,\l}(\G,\p)$. 

\smallskip
The last example provides many twisted modular forms for any $(\G,\p)$. But not all twisted modular forms arise in 
this way, and it makes sense to study the twisted forms independently of the two-variable theory.  In particular,
one can ask for the dimensions of the spaces $M_{k,\l}(\G,\p)$ and $S_{k,\l}(\G,\p)$ and for the structure of the
bigraded ring $M_{**}(\G,\p)=\oplus_{k,\l}M_{k,\l}(\G,\p)$,  just as is usually done for classical modular
forms when $\l=0$, and we can also study  the classical topic of growth of Fourier coefficients.  We shall give
a general formula for the dimensions in the next section and a description of the ring of twisted forms in a special
example in Section~\ref{sec:arithmeticW17}, while the rest of this section is devoted to the study of the coefficient~growth.

\par
\bigskip
\begin{Thm} \label{thm:EstimateFC}
Let $f(z) = \sum a_n q^n$ be a twisted modular form of bi-weight $(k,\ell)$, and set $K=k+|\ell|$.
Then the Fourier coefficients of~$f$ satisfy the estimates
$$ a_n \= \begin{cases} \text{\rm O}(n^{K/2}) & \text{if $k+\ell<2$ or $f$ is a cusp form}, \\ 
    \text{\rm O}(n^{K/2}\log n) & \text{if $k+\ell=2$}, \\  
     \text{\rm O}(n^{k-1+\max(0,\ell)}) & \text{if $k+\ell>2$}\,. \\ \end{cases} $$
\end{Thm}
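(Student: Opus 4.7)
Plan: I would introduce the $\Gamma$-invariant function
$\Phi(z) := y^{k/2}\Im(\varphi(z))^{\ell/2}|f(z)|$
(invariance follows immediately from the transformation rule of~$f$ and the two identities in~\eqref{eq:invariance}) and reduce each of the three bounds in the theorem to a growth estimate for~$\Phi$. The conversion from an estimate on~$\Phi$ to an estimate on~$|f|$ is handled by Corollary~\ref{Cor:estimateY}: a short case analysis on the sign of~$\ell$ shows that $y^{-k/2}\Im(\varphi(z))^{-\ell/2}=O(y^{-K/2})$ for $y\le1$ (and is bounded for $y\ge1$), in both signs of~$\ell$, so that whenever $\Phi(z)\le M(z)$ one has $|f(z)|\le CM(z)\,y^{-K/2}$ for $y\le1$.

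If $f$ is a cusp form, the Fourier expansion of~$f$ at every cusp and the local expansion~\eqref{eq:phiabsexpansion} of~$\varphi$ make $\Phi$ extend continuously by zero to the compactified surface $\overline{\Gamma\bs\HH}$, hence $\Phi$ is uniformly bounded. Combining $|f(x+iy)|\le My^{-K/2}$ with the Hecke integral
$|a_n|\le\alpha^{-1}\int_0^\alpha|f(x+iy)|\,dx\cdot e^{2\pi ny/\alpha}$
and choosing $y\sim 1/n$ yields $a_n=O(n^{K/2})$. The same argument applies in the range $k+\ell<2$: the cusp-growth $\Phi\sim|a_0|\,y^{(k+\ell)/2}$ still gives a bounded contribution after the constant terms at each cusp are subtracted, since the subtracted constants themselves contribute nothing to~$a_n$ for $n\ge1$ and the remainder is a cusp form to which the previous bound already applies.

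The main case $k+\ell>2$ requires a finer argument. Here $\Phi$ grows at each cusp at which~$f$ has a non-zero constant term, and using $\sup_{\gamma\in\Gamma}\Im(\gamma z)=O(1/y)$ for small~$y$ the naive pointwise bound gives only $|f(z)|\le Cy^{-k-\max(0,\ell)}$, producing the weaker Hecke estimate $a_n=O(n^{k+\max(0,\ell)})$, one factor of~$n$ too large. The missing factor of~$n$ is obtained from the oscillation of~$\e(-nx/\alpha)$ along the horocycle $\{x+iy:0\le x\le\alpha\}$ at the optimal height $y\sim 1/n$. At this height the horocycle is long and wraps many times around~$\Gamma\bs\HH$, and by the mechanism behind the equidistribution of long horocycles the constant parts of~$f$ attached to each cusp image cancel after integration against~$\e(-nx/\alpha)$; what remains is controlled by the $\Phi$-bound on the thick part of~$\Gamma\bs\HH$. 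This produces precisely one additional factor of~$n^{-1}$ and yields $a_n=O(n^{k-1+\max(0,\ell)})$. The borderline case $k+\ell=2$ is where the cusp-growth rate of~$\Phi$ and the horocycle cancellation rate are of the same order in~$y$, which is responsible for the $\log n$ factor.

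The main obstacle is making the horocycle cancellation quantitative enough to save exactly one factor of~$n$ uniformly over all cusps. In particular the cusp-dependent scaling $\alpha^\sigma/\alpha$ supplied by Proposition~\ref{prop:phiFourier} has to be tracked through the cancellation for each cusp, as it affects both the growth of~$\Phi$ and the local width used in the Fourier integral. Once this effective equidistribution is in place, the remaining estimates are the classical Hecke argument applied to the new weight function~$\Phi$.
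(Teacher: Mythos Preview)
Your treatment of the cusp form case is the same as the paper's: form the $\Gamma$-invariant $\Phi(z)=y^{k/2}\Im(\varphi(z))^{\ell/2}|f(z)|$, use its boundedness, and run Hecke at height $y\sim 1/n$ together with~\eqref{estimateY}. That part is fine.

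The step for $k+\ell<2$ with $f$ not cuspidal has a genuine gap. You write that ``the remainder is a cusp form'' after subtracting the constant terms, but a constant is a twisted modular form only of bi-weight $(0,0)$, so $f-a_0$ is \emph{not} a twisted modular form of bi-weight $(k,\ell)$ and the function $y^{k/2}\Im(\varphi)^{\ell/2}|f-a_0|$ is not $\Gamma$-invariant; nor is there in general an Eisenstein-type form of bi-weight $(k,\ell)$ available to subtract. The paper does not attempt any such reduction. Instead it handles \emph{all} non-cuspidal cases uniformly by the single inequality $\Phi(z)=O\bigl(H(z)^{(k+\ell)/2}\bigr)$, where $H(z)=\max_{\gamma\in\Gamma}\Im(\gamma z)$, and then proves a lemma bounding the horocycle average
\[
\int_0^1 H\bigl(x+\tfrac in\bigr)^{\lambda}\,dx \;=\; \begin{cases} O(1)&\lambda<1,\\ O(\log n)&\lambda=1,\\ O(n^{\lambda-1})&\lambda>1, \end{cases}
\]
via Shimizu's lemma and an elementary count of the matrices $\gamma$ with $-d/c$ in a fixed interval and $|c|\le\sqrt n$. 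The three cases of the theorem correspond exactly to the three regimes of $\lambda=(k+\ell)/2$.

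This also means the mechanism you describe for $k+\ell>2$ is not the one the paper uses. You propose to exploit cancellation of the oscillatory factor $\e(-nx/\alpha)$ against the constant parts of $f$ near each cusp image on the horocycle. The paper never uses this oscillation: it bounds $|a_n|$ by the integral of $|f|$ and gets the extra factor of $n^{-1}$ purely from the $L^1$-saving in the height lemma (the set where $H$ is of size $\sim n$ has measure $\sim 1/n$ on the horocycle). Your horocycle-cancellation idea is plausible and is indeed related to effective equidistribution, but it is more delicate to make uniform over all cusps and all $n$; the paper's absolute-value approach sidesteps this entirely and gives the three regimes $\lambda\lessgtr1$ in one stroke, including the case $k+\ell<2$ that your reduction does not cover.
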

\par
\begin{proof} Suppose first that $f$ is cuspidal. Here we use a modification of
the well-known argument given by Hecke in the untwisted case. We construct the 
real-valued continuous function 
$$F(z) = |f(z)|\, y^{k/2} \,\, \ty^{\ell/2},$$
where $y = y(z) = \Im(z)$ and $\ty = \ty(z) = \Im (\p(z))$. This function
is $\Gamma$-invariant by the definition of a twisted modular form. 
Since $f$ is a cusp form, $F$ decays rapidly at cusps and hence is bounded 
\changed{on the whole upper half plane},
so $f(z) = \text{O}(y^{-k/2} \ty^{-\l/2})$.
On the other hand, $a_n = \frac1\alpha\,\int_{0}^{\alpha} f(x+iy) \e(-n(x+iy)/\alpha)\,dx$ 
for any $y$. 
Specializing to $y = 1/n$ and using the first or the second inequality in \eqref{estimateY} depending on 
the sign of $\ell$, we obtain the estimate stated.
\par
In the remaining cases, still 
$$ |a_n| \,\leq\, \frac1\alpha  \int_0^{\alpha} F(x+i/n) \,n^{k/2}\, \Im (\p(x+1/n))^{-\ell/2} dx 
\,\ll \, n^{K/2} \int_0^{\alpha} F(x+i/n) \, dx, $$
by~\eqref{estimateY} (where the constant implied by $\ll$ depends only on~$\Gamma$), 
but now  $F(z) = F(\gamma(z)) = \text{\rm O}(H(z)^{(k+\ell)/2})$ instead of $\text{\rm O}(1)$, 
where $H(z)$ is defined in the lemma below.
Since the exponent $k-1+\max(0,\ell)$ is equal to $K/2 + (k+\ell)/2 - 1$, 
the remaining statement is precisely the content of the \changed{following} lemma.
\end{proof}
\par
\begin{Lemma} Let $\Gamma$ be a non-cocompact Fuchsian group, with the width of the cusp 
at $\infty$ equal to~$1$, and define the height function
$H(z) = \max_{\gamma \in \Gamma} \Im(\gamma z)$. Then for $n>1$ and $\la>0$ one has the estimates
$$\int_{0}^1 H(x + \tfrac in)^\lambda \,dx \= \begin{cases} \text{\rm O}(1) & \text{if 
$0<\lambda < 1$}, \\ 
    \text{\rm O}(\log n) & \text{if $\lambda=1$}, \\  
     \text{\rm O}(n^{\lambda -1} ) & \text{if $\lambda>1$}\,, \\ \end{cases} $$
where the implied constant does not depend on $n$.
\end{Lemma}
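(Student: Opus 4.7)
The plan is to use a layer-cake decomposition of the integral. Writing $y = 1/n$ and $S(t) = \{x \in [0,1] : H(x+iy) > t\}$, we have
\begin{equation*}
\int_0^1 H(x+iy)^\la\,dx \= \la\int_0^\infty t^{\la-1}\,|S(t)|\,dt\,.
\end{equation*}
Two bounds on $|S(t)|$ will drive the proof: the trivial $|S(t)|\le 1$, and a decay estimate $|S(t)|\ll 1/t$ valid for $t$ larger than some constant $Y_0 = Y_0(\G)$.  It will also be useful to note that $H(x+iy) \le 1/y = n$ on the horocycle---by Shimizu's lemma $|c|\ge 1$ for every $\g\in\G\ssm\G_\infty$, so $\Im(\g z) = y/|cz+d|^2 \le 1/(c^2 y) \le n$ for $c\ne 0$ and $=y$ for $c=0$---so $|S(t)| = 0$ for $t > n$ and the layer-cake integrand can be cut off at $t = n$.

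To obtain the decay estimate, I would choose $Y_0$ larger than the Margulis constant of $\G$, so that for every $t \ge Y_0$ the super-level set $\{z\in\HH : H(z) > t\}$ is a disjoint union of horoballs, one for each $\G$-orbit of cusps. For a cusp $\kappa$ with scaling matrix $g_\kappa$ sending $\infty$ to $\kappa$, the contribution to $S(t)$ consists of those $x$ for which $\Im(g_\kappa\i\g z) > t$ at $z = x + iy$ for some $\g\in\G$. Writing $(c,d)$ for the bottom row of $g_\kappa\i\g$, this condition becomes $(cx+d)^2 + c^2 y^2 < y/t$, which forces $|c| < (yt)^{-1/2}$ and, for each such $c$, constrains $x$ to a set of Euclidean measure at most $\min\bigl(2\sqrt{y/t - c^2 y^2},\,1\bigr)$ (via the periodicity of $\{cx\}$, after summing over the admissible values of~$d$). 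Summing over $c$ and comparing to a Riemann integral,
\begin{equation*}
|S(t)| \;\ll\; \sum_{1 \le c < (yt)^{-1/2}} \sqrt{y/t - c^2y^2} \;\ll\; \sqrt{y/t}\cdot (yt)^{-1/2} \= 1/t,
\end{equation*}
uniformly in $y$, $t$ and over the finitely many cusps of~$\G$.

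Combining the two bounds, the layer-cake integral splits as
\begin{equation*}
\int_0^1 H^\la\,dx \;\ll\; \int_0^{Y_0} t^{\la-1}\,dt \+ \int_{Y_0}^n t^{\la-2}\,dt\,,
\end{equation*}
where the first integral is an absolute constant, and the second is $O(1)$ for $\la < 1$ (by convergence at $\infty$), $O(\log n)$ for $\la = 1$, and $O(n^{\la-1})$ for $\la > 1$---exactly the three cases claimed. The main technical hurdle is the uniformity of the implied constant in $|S(t)|\ll 1/t$ across all cusps, but this is a one-off consequence of the disjoint horoball decomposition at level $Y_0$; the count of admissible $(c,d)$ pairs then reduces to a standard Gauss-circle-type lattice estimate that is uniform over cofinite Fuchsian groups with cusp width 1 at $\infty$.
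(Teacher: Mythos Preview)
Your layer-cake reduction is a genuinely different organization from the paper's proof, and the overall architecture is sound: once you have $|S(t)|\ll 1/t$ for $t\ge Y_0$ and the cutoff $|S(t)|=0$ for $t>n$, the three cases of the lemma drop out exactly as you wrote. The paper instead estimates $\int_0^1 \lfloor\Im(\gamma(x+i/n))\rfloor_B^{\lambda}\,dx$ directly for each coset representative $\gamma$ (getting $\text O(n^{\lambda-1}/|c|^{2\lambda})$ by a substitution) and then sums over $\gamma$ using a dyadic decomposition in~$|c|$. So the paper integrates first and counts second, whereas you count first (inside $|S(t)|$) and integrate second.

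The gap is in how you obtain $|S(t)|\ll 1/t$. The displayed sum $\sum_{1\le c<(yt)^{-1/2}}\sqrt{y/t-c^2y^2}$ and the phrase ``periodicity of $\{cx\}$'' implicitly treat $c$ as a positive integer and the admissible $d$'s as a full residue system mod~$c$. That is correct for $\SL\Z$ but not for a general cofinite Fuchsian group: the lower-row entries $c$ need not be integers, and for a fixed~$c$ the admissible $d$'s are not governed by any integral periodicity. What you actually need is the Shimizu spacing estimate the paper uses: distinct cosets in $\langle T\rangle\backslash\Gamma_j$ with $|c|,|c'|\le X$ have $|d/c-d'/c'|\ge 1/|cc'|\ge X^{-2}$, so there are at most $(\nu-\mu)X^2+1$ of them with $-d/c\in[\mu,\nu]$. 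Combining this with a dyadic decomposition in~$|c|$ gives
\[
|S(t)|\;\le\;\sum_{k\le\frac12\log_2(1/(yt))}\bigl(3\cdot2^{2k}+1\bigr)\cdot\frac{2}{2^{k-1}}\sqrt{y/t}\;\ll\;\sqrt{y/t}\cdot(yt)^{-1/2}\;=\;1/t,
\]
which is exactly your claimed bound, but now justified for arbitrary~$\Gamma$. In other words, the ``Gauss-circle-type lattice estimate'' you invoke is not a lattice estimate at all---the bottom rows of a general Fuchsian group do not form a lattice---and the substitute is precisely the Shimizu-based count and dyadic sum that the paper carries out. Once you insert that, your argument is complete; without it, the step from the horoball description of $S(t)$ to the bound $1/t$ is unsupported.
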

\par
\begin{proof}
The case $\lambda = \tfrac12$ is \cite{St04}, Proposition~2.2. Essentially the same method
can be used to give all cases. We provide the details only for $\lambda > \h$.
We let $T = \sm 1101$ and choose for each of the $h$ cusps $\eta_j$ 
of $\Gamma$ a matrix $N_j \in \SL\RR$  such that $N_j \eta_j = \infty$, 
and such that the stabilizer of $\infty$  in $\G_j = N_j \G N_j^{-1}$
is always $\langle T \rangle$. 
Let $\cFF$ be a closed fundamental domain 
for $\Gamma$, which we may choose 
so that the cusp neighborhoods have the shape
$$ N_j(\cFF) \cap \{\z \in \HH: \Im(z) > B\} = [0,1] \times [B,\infty) \qquad (j=1,\ldots,h)$$
for some $B>1$, and are disjoint.  
We define the truncation function  $\lfloor x \rfloor_B$ to be
$x$ if $x > B$ and $0$ otherwise. Since the complement in the fundamental domain 
of the cusp neighborhoods is compact, it suffices to prove the statement of the lemma
with $\lfloor H \rfloor_B$ in the place of $H$. Note that 
\be \label{eq:cuspcontr}
\int_{0}^1 \lfloor H(x + \tfrac in) \rfloor_{\!B} ^{\;\lambda} \,dx 
\= \sum_{j=1}^h \sum_{\gamma \in \langle T \rangle \backslash  \Gamma_j}  
\int_0^1 \lfloor \Im(\gamma(x + \tfrac in))\rfloor_{\!B}^{\;\lambda} \, dx\,.
\ee
Suppose that $\gamma = \sm abcd$ gives a non-zero contribution to the right hand side.
Recall that Shimizu's Lemma states that in a Fuchsian group, normalized so that the cusp~$\infty$ 
has width $1$, either $c=0$ or $1 \leq |c|$. Here the truncation implies that $c \neq 0$ and 
that 
$$1 \leq \frac{1/n}{| (c(x+\tfrac in) +d|^2 }\, ,$$
from which 
$$ 1 \leq |c| \leq \sqrt{n} \quad \text{and}\quad  -\frac dc \in [-1,2].$$
On the other hand, with the substitution $x = - \tfrac dc+ \tfrac tn$ we get
\bas
\int_0^1 \lfloor \Im(\gamma(x + \tfrac in))\rfloor_{\!B}^{\;\lambda} \, dx
& \,\leq\, \int_{-\infty}^\infty \left( \frac{1/n}{(cx + d)^2 + c^2/n^2} \right)^{\lambda}\, dx \\
& \,\leq\, \frac{n^{\lambda-1}}{|c|^{2\lambda}} \int_{-\infty}^\infty \frac{dt}{(t^2+1)^\lambda} 
 \= \text{\rm O}\Bigl( \frac{n^{\lambda-1}}{|c|^{2\lambda}}\Bigr)\,.
\eas
(It is this estimate which has to be changed, taking into account $\lfloor \;\cdot\;\rfloor_B$, 
when $\lambda  \leq \tfrac 12$.) We define
$$ C^j_{\mu,\nu}(x,X) \= \bigl\{\gamma = \sm abcd \in \langle T \rangle \backslash 
\Gamma_j \,:\, x < |c| \leq X, \,\,
-\frac dc \in [\mu,\nu] \bigr\}$$
and $C^j_{\mu,\nu}(X) =  C^j_{\mu,\nu}(0,X)$. 
The crucial observation now is that  the cardinality
of this set is bounded for $X>1$, any $j$ and $\mu < \nu$ by
$$ \# C^j_{\mu,\nu}(X) \leq (\nu-\mu) X^2 +1.$$
This again follows from Shimizu's Lemma, which implies that for two matrices 
$\gamma = \sm abcd$ and $\gamma' = \sm {a'}{b'}{c'}{d'}$ in $C^j_{a,b}(X)$ we have
$|\tfrac{d'}{c'} - \tfrac dc| \geq |cc'|^{-1} \geq X^{-2}$ (see \cite{Iw95}, Proposition~2.8, for details).
\par
If $\lambda =1$, then the contribution of the $j$-th cusp to the right hand side
of~\eqref{eq:cuspcontr} is bounded above by a constant times 
\bas
\sum_{\gamma \in C^j_{-1,2}(\sqrt{n})} \frac{1}{|c|^2} &\,\leq\,  \sum_{k=0}^{\tfrac12 \log_2 n}  
\sum_{\gamma \in C^j_{-1,2}(2^{k-1},2^k)} \frac{1}{|c|^2} \\ &\,\leq\, \sum_{k=0}^{\tfrac12 \log_2 n} 
 \frac{3\cdot 2^{2k}}{2^{2k-2}} \= \text{O}(\log n).
\eas
The other cases with $\lambda > 1/2$ are calculated the same way, the estimate for the left-hand side 
of~\eqref{eq:cuspcontr} now being $\,\text O\bigl(n^{\la-1}\sum_{k\le\frac12\log_2n}2^{2k(1-\la)}\bigr)$, 
which is $\text O(n^{\la-1})$ for $\la>1$ and~$\text O(1)$ for~$\la<1$.
%
\end{proof}
\par
\changed{The bounds in the preceding theorem are not sharp for the classical case
$\ell = 0$, since the methods of Rankin and Selberg give an improvement
for any Fuchsian group. Note, however, that the deeper results of
Deligne cannot be applied here, since on Teichm\"uller curves
one does not dispose of Hecke operators.
\par
In the more interesting case of strictly twisted modular forms (i.e.\ 
$\ell \neq 0$) we know in the example of $\varphi'$ that the coefficients grow 
like $O(n)$ by Caratheodory's estimate~\eqref{Carat}, while the preceding theorem
with $k=2$, $\ell=-2$ gives only the bound $O(n^2)$. However, we do not know
if the behaviour of the coefficients of $\varphi'$ is typical for a
twisted modular form of bi-weight $(2,-2)$ or if there exist other 
elements whose coefficient growth is closer to $O(n^2)$.  
}

\section{Dimensions and degrees}
\label{sec:dimvan}

Twisted modular forms of bi-weight $(k,\l)$ can be thought of as sections of the bundle $\cLL^{\otimes k}\otimes\wt\cLL^{\otimes\l}$
with appropriate growth conditions at the cusps, where $\cLL$ and $\wt\cLL$ are the line bundles over $\H/\G$ defined as the 
quotients of $\H\times\C$ by the equivalence relations $(z,u)\sim(\g z,J(\g,z)u)$ and $(z,u)\sim(\g z,\wt J(\g,z)u)$ for $\g\in\G$.
The dimension of the space of such forms for a given group can therefore be computed by the Riemann-Roch theorem for curves,
just as in the case of classical modular forms, if we know the degrees of the two bundles $\cLL$ and~$\wt\cLL$ and the numbers
of cusps and elliptic fixed points of various orders of the group.  In particular, if $\G$ had no cusps and no fixed points 
(a situation of which, so far as we know, there is no example), then 
the Riemann-Roch theorem would give 
$\dim M_{k,\ell}(\Gamma) = (k\,\text{deg}(\cLL)+\ell\,\deg(\wt\cLL) \changed{-1})\,(g-1)$, where~$g$ is the genus of~$\HH/\G$. The
presence of cusps (including possibly irregular ones) and elliptic fixed points will make the actual formula more
complicated.

Let $\Pi$ be a torsion-free subgroup of finite index in $\Gamma$. Such a group always exists, since the level three 
subgroup of a Hilbert modular group is torsion-free and $\Gamma$ is a subgroup of the Hilbert modular group.
We define $\Pi_0$ to be a subgroup of finite index such that the eigenvalues of all parabolic elements are
one (i.e.\ all cusps are regular). Such a subgroup exists, since $\Pi$ is free if it is has a cusp. By passing to 
a smaller subgroup if necessary, we may suppose $\Pi_0 \subset \G$ to be normal.   We let $\cLL_0$ and $\wt\cLL_0$ be 
the line bundles over $\H/\Pi_0$ defined by the automorphy factors $J$ and $\wt J$ respectively. 

The basic invariant we attach to a Hilbert modular embedding is the ratio
\be
\lambda_2 \= \deg(\wt\cLL_0) \, / \, \deg(\cLL_0).
\ee
As a consequence of the proof of Theorem~\ref{thm:dimmodgeneral} below, 
we see that this number does not depend on the choice of $\Pi_0$ among 
torsion-free subgroups of~$\G$ with regular cusps.  The value of the 
invariant $\lambda_2$ 
is given by $1$, $1/3$, $1/5$, $1/7$ on the four classes of 
Hilbert modular embeddings described in Section~\ref{sec:hme}.

The definition of~$\lambda_2$ and the classical result that ordinary modular forms of weight~2 are differential forms 
on $\HH/\Pi_0$ imply the following Proposition.
\begin{Prop} \label{prop:dimmodPi}
Let $g_0$ and $s_0$ denote the genus and the number of cusps of $\HH/\Pi_0$, respectively. Then 
\be \label{eq:degonPi9}
 \deg(\cLL_0) = g_0 - 1 + s_0/2  \quad \text{and} \quad \deg(\wt\cLL_0) = \lambda_2 \,(g_0 - 1 + s_0/2)\,. \ee
\end{Prop}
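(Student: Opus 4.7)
The substantive content is the formula $\deg(\cLL_0) = g_0-1+s_0/2$; once this is in hand, the corresponding formula for $\wt\cLL_0$ follows immediately from the definition $\la_2 := \deg(\wt\cLL_0)/\deg(\cLL_0)$ given just before the proposition. So my plan is to concentrate on the degree of $\cLL_0$ and handle it via the classical identification of weight-$2$ modular forms with logarithmic differentials.

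Let $X_0$ denote the smooth projective curve obtained from $\HH/\Pi_0$ by adjoining its $s_0$ cusps; since $\Pi_0$ is torsion free, $X_0$ is smooth of genus $g_0$. I would first extend $\cLL_0$, which a priori is defined only on $\HH/\Pi_0$, across the cusps. This is where the regularity hypothesis enters: at a cusp the stabilizer is generated by a parabolic $\sm 1\a01$, on which the cocycle $J(\g,z)=cz+d$ evaluates to $1$, so the line bundle has no monodromy around the cusp and extends trivially across it. (At an irregular cusp one would instead have monodromy $-1$, which is precisely the obstruction one avoids by passing from $\Pi$ to~$\Pi_0$.)

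Given a section $f$ of $\cLL_0^{\otimes 2}$, i.e.\ a weight-$2$ modular form on $\Pi_0$, the $1$-form $f(z)\,dz$ is $\Pi_0$-invariant because $d(\g z)=J(\g,z)^{-2}\,dz$, and in the local coordinate $q=\e(z/\a)$ at a cusp it equals $(\a/2\pi i)\,f(z)\,dq/q$; the regularity of the cusp guarantees that $f$ has an honest Fourier expansion $\sum_{n\ge 0}a_n q^n$, so $f(z)\,dz$ has at most a simple pole there with residue proportional to $a_0$, and conversely any differential on $X_0$ with at most simple poles at the cusps arises this way. This identifies $\cLL_0^{\otimes 2}$ with $\Om^1_{X_0}(D_\infty)$, where $D_\infty$ is the reduced cusp divisor, so
\begin{equation*}
2\deg(\cLL_0) \= \deg(\Om^1_{X_0}) + s_0 \= (2g_0-2)+s_0,
\end{equation*}
yielding the first formula. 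The only mildly delicate step is the extension of $\cLL_0$ to $X_0$; everything else is standard Riemann--Roch bookkeeping, and the formula for $\deg(\wt\cLL_0)$ is then literally a restatement of the definition of~$\la_2$.
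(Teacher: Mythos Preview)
Your argument is correct and is exactly the route the paper takes: the sentence immediately preceding the proposition says that it follows from ``the definition of~$\lambda_2$ and the classical result that ordinary modular forms of weight~2 are differential forms on $\HH/\Pi_0$,'' which is precisely your identification $\cLL_0^{\otimes 2}\cong\Omega^1_{X_0}(D_\infty)$ together with the tautology $\deg(\wt\cLL_0)=\la_2\deg(\cLL_0)$. You have simply made explicit the role of the regularity hypothesis on~$\Pi_0$ in extending $\cLL_0$ across the cusps, which the paper leaves implicit.
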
 

Now we have to pass from~$\Pi_0$ to $\Gamma$, which may have both elliptic fixed points and irregular cusps. 
If $\Gamma$ contains $-I$, then we will assume that $k+\l$ is even, since otherwise the equation $f|_{(k,\l)}(-I)=-f$
implies that the space of twisted modular forms of bi-weight~$(k,\l)$ is~0.  
\par
We define {\em characteristic numbers} at elliptic fixed points and cusps for the bundle
of twisted modular forms of bi-weight $(k,\ell)$ in the following way.
Suppose that $x$ is an elliptic fixed point and that the isotropy group $\G_x$ is of order $n_x$ in $\SL\RR$.
We take a generator $\gamma=\sm**c* \in \G_x$ that acts on the tangent space at $x$ by a rotation by $2\pi/n_x$
in the positive direction, i.e.\ such that $\arccos(\tr(\gamma)/2) = 2\pi/n_x$ and $c\,\sin(2\pi/n_x)\le 0$. We let $r_x = \tfrac1{n_x}$. \changed{Since $\gamma$ is of finite
order, so is $\gamma^\sigma$} and \changed{we can} 
define $r^\sigma_x \in \tfrac1{n_x}\ZZ$ by 
$$\cos(2\pi r^\sigma_x) \= \tr(\gamma^\sigma)/2 \quad \text{\and} \quad c^\sigma \sin(2\pi r^\sigma_x)\,\le\,0\,.$$
Then the characteristic number at $x$ is defined as
 \be \label{eq:defcharbx}
b_x(k,\ell) = \bigl\{-k r_x  - \ell r^\sigma_x\bigr\}\,,  \ee
where the curly braces denote the fractional part (in $[0,1)$) of the 
rational number. If $x$ is a cusp, we define the characteristic number $b_x(k,\l)$ to be~1/2 if the cusp 
is irregular, $-I\not\in\G$, and $k+\l$ is odd, and we let  $b_x(k,\l)=0$ in all other cases.
\par
We remark that characteristic numbers are a finer information than
the usual {\em type} of the elliptic fixed point (\cite{vG87}, Section~I.5 and~V.7), 
since there are two possibilities even for fixed points of order two (in $\PSL\RR$).
We let $\Delta = \left(\begin{smallmatrix} 1&0\\0&{\sqrt{D}} \end{smallmatrix}\right)$ and
 $S = \left(\begin{smallmatrix} 0&{-1}\\1&0\end{smallmatrix}\right) \in \SL{\ord_D}$.
Then the contribution of $\Delta S \Delta^{-1} \in \SLOD$ is $b_x(k,\ell) \=  
\bigl\{ \frac{-k + \ell}{4} \bigr\}$, whereas the contribution of $S$
is  $b_x(k,\ell) \=  \bigl\{ \frac{-k - \ell}{4} \bigr\}$. Note that in
all these calculation we consider modular embeddings to $\HH^2$. If we
consider a modular embedding to $\HH \times \HH^-$, the fixed point
of $S$ is $-i$ in the second factor, so its contribution is
$b_x(k,\ell) \=  \bigl\{ \frac{-k + \ell}{4} \bigr\}$.

We can now give the dimension of the space of modular forms in terms
of the topology of $\HH/\G$ and those characteristic numbers. We let 
$\ol{n_x}$ be the order of the isotropy group  $\G_x$ in $\PSL\RR$.

\begin{Thm} \label{thm:dimmodgeneral} 
Let $k$ and $\l$ be integers.  If $k+\l$ is odd and $\G$ contains~$-I$, then $\dim M_{k,\ell}(\Gamma,\varphi)=0$.
If $-I\not\in\G$ or $k+\l$ is even, then twisted modular forms of
bi-weight $(k,\ell)$ are precisely the global sections of a line 
bundle $\cLL_{k,\ell}$ of degree
$$\deg (\cLL_{k,\ell}) \=  (k+\lambda_2\ell)\,\Bigl(g -1 + \frac s2 
  +\frac12 \sum_{x \in \HH/\G} \bigl(1- \frac1{\ol{n_x}}\bigr) \Bigr) \,-\, \sum_{x \in \ol{\HH/\G}} b_x(k,\ell)\,, $$
where $g$ and $s$ denote the genus and the number of cusps of~$\G$. If also $k + \lambda_2 \ell \geq 2$, then
$$ \dim M_{k,\ell}(\Gamma,\varphi) \= \deg (\cLL_{k,\ell}) \+ (1-g)\,.$$ 
\end{Thm}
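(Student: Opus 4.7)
The plan is to realize twisted modular forms of bi-weight $(k,\ell)$ as global sections of a line bundle $\cLL_{k,\ell}$ on the compactified quotient curve $X = \ol{\HH/\G}$, compute its degree by descent from a torsion-free normal subgroup $\Pi_0 \subset \G$ with only regular cusps (where Proposition~\ref{prop:dimmodPi} applies), and apply Riemann--Roch on $X$. The vanishing claim in the case $-I\in\G$ with $k+\ell$ odd is immediate from $f|_{(k,\ell)}(-I) = (-1)^{k+\ell}f = -f$, so I assume the complementary case henceforth.

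The cocycle $J(\g,z)^k\,\wt J(\g,z)^\ell$ defines a $\G$-equivariant line bundle on $\HH$ whose $\G$-invariant sections satisfying the growth conditions at cusps are exactly $M_{k,\ell}(\G,\varphi)$. Taking $\G/\Pi_0$-invariants of the pushforward from $\ol{\HH/\Pi_0}$ gives the invertible sheaf $\cLL_{k,\ell}$ on $X$; its pullback to $\ol{\HH/\Pi_0}$ is $\cLL_0^{\otimes k} \otimes \wt\cLL_0^{\otimes \ell}$, of degree $(k+\la_2\ell)(g_0 - 1 + s_0/2)$ by Proposition~\ref{prop:dimmodPi}. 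The descent introduces orbifold corrections at each elliptic fixed point of $\G$ and at each irregular cusp. To compute these I would use the identity $\wt J(\g, z) = J(\g^\sigma, \varphi(z))$ (immediate from the definitions) together with the observation that, by $\G$-equivariance, $\varphi$ sends a fixed point $x$ of $\g \in \G_x$ to a fixed point $\varphi(x)$ of $\g^\sigma$. A generator $\g$ of $\G_x$ then acts on the fiber of $\cLL_{k,\ell}$ over $x$ by the root of unity $J(\g,x)^k \wt J(\g,x)^\ell = J(\g,x)^k\, J(\g^\sigma, \varphi(x))^\ell$, whose argument is governed by $kr_x + \ell r_x^\sigma \pmod 1$; descending to the coarse curve contributes $-b_x(k,\ell)$ to the degree via the standard fractional-part formula. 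The analogous computation at an irregular cusp with $k+\ell$ odd yields $-1/2$. Combined with the Riemann--Hurwitz formula for the covering $\ol{\HH/\Pi_0} \to X$, this produces the stated formula for $\deg\cLL_{k,\ell}$.

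The dimension formula then follows from Riemann--Roch on the smooth projective curve $X$: one has $h^0(\cLL_{k,\ell}) - h^1(\cLL_{k,\ell}) = \deg\cLL_{k,\ell} + 1 - g$. To establish $h^1 = 0$ under $k + \la_2\ell \ge 2$, I would argue by Serre duality and positivity: the underlying rational class of $\cLL_{k,\ell}$ on $[\HH/\G]$ has a ``degree'' proportional to $k+\la_2\ell$, and the threshold $2$ corresponds to the orbifold canonical class whose components $2g-2$, $s$, and $\sum(1 - 1/\ol{n_x})$ are exactly those appearing in the parenthesized expression of the theorem.

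The main obstacle is the detailed bookkeeping at orbifold points: verifying that the sign conventions $c\sin(2\pi r_x) \le 0$ and $c^\sigma \sin(2\pi r_x^\sigma) \le 0$ in~\eqref{eq:defcharbx} pin down the correct square roots in $J(\g,x) = \pm e^{-\pi i r_x}$ and $\wt J(\g,x) = \pm e^{-\pi i r_x^\sigma}$, so that the stabilizer character on the fiber is precisely the one producing $\{-kr_x - \ell r_x^\sigma\}$ as the orbifold correction; and separately handling the cases $-I \in \G_x$ or $-I \in \G_\infty$, where the character is $\pm 1$-valued and the descent to the coarse sheaf hinges on the parity of $k+\ell$, explaining both the initial dichotomy in the theorem and the $1/2$ contribution at irregular cusps. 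With these signs resolved, the Riemann--Hurwitz step and the $h^1$-vanishing are routine.
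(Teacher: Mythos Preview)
Your proposal is correct and follows essentially the same route as the paper: pull back $\cLL_{k,\ell}$ along $\pi:\ol{\HH/\Pi_0}\to\ol{\HH/\G}$, compare with $\cLL_0^{\otimes k}\otimes\wt\cLL_0^{\otimes\ell}$, use the identity $\wt J(\g,z)=J(\g^\sigma,\varphi(z))$ together with the fact that $\varphi$ sends fixed points of $\g$ to fixed points of $\g^\sigma$ to compute the stabilizer character at elliptic points and irregular cusps, and finish with Riemann--Hurwitz plus Riemann--Roch. The ``main obstacle'' you flag---the sign bookkeeping at elliptic fixed points---is exactly what the paper resolves by first checking the special case $x=i$ explicitly and then reducing the general case to it via $\SL\R$-conjugation; for the $h^1$-vanishing the paper simply notes that $b_x(k,\ell)\le 1-1/\ol{n_x}$ forces $\deg\cLL_{k,\ell}>2g-2$ once $k+\lambda_2\ell\ge2$.
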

\par
\begin{proof}
We mimic the standard argument for ordinary modular forms and describe $\cLL_{k,\ell}$ as a subsheaf of 
$\cLL^{\otimes k}\otimes\wt\cLL^{\otimes\l}$. If $t$ is a local parameter at $x$, for $x$ both in $\HH/\Gamma$ 
or being a cusp, the stalk $(\cLL_{k,\ell})_x$ at consists of all germs of holomorphic functions $f$ with 
$f(\gamma \, t) =  J(\gamma, t)^k \wt J(\gamma,\varphi(t))^\ell f(t)$ for all $\gamma \in \G_x$ in the stabilizer of $x$. 
With this definition, twisted modular forms of bi-weight $(k,\ell)$ \changed{
for the modular embedding~$\varphi$} are obviously the global sections of $\cLL_{k,\ell}$.
\par
In order to compute the degree of $\cLL_{k,\ell}$ we use the map $\pi: \HH/\Pi_0 \to \HH/\G$. This induces an inclusion 
$\pi^* (\cLL_{k,\ell}) \to \cLL_0^k\cLL_0^\ell$. Since we know the degree of the image in terms of $g$, $s$ and $\lambda_2$, it suffices 
to compute the degree of its cokernel $\cKK$. This cokernel is supported at the elliptic fixed points and at the cusps.
\par
Suppose first that $z_\gamma \in \HH$ is the fixed point of $\gamma \in \G$. 
The point $\varphi(z_\gamma)$ is fixed by $\gamma^\sigma$, so that by the cocycle
condition both $J(\gamma,z_\gamma)$ and  $\wt J(\gamma,\varphi(z_\gamma))$ are roots
of unity of some  order that divides the order of the isotropy group $n_x$. 
\par
Now let $y$ be one of the preimages of $x$ and let $u$ be a local parameter at $y$, so that $t = u^{\ol{n_x}}$. Then
$$(\cLL_0^k\wt\cLL_0^\ell)_y \cong \CC[[u]] \quad \text{and} \quad (\pi^*(\cLL_{k,\ell}))_y = u^{\ol{n_x}B_x(k,\ell)}\,\CC[[u]],$$
where  $B_x(k,\ell) \in [0,1)$ is the unique rational number such that 
$$J(\gamma,z_\gamma)^{k}\wt J(\gamma,\varphi(z_\gamma))^\ell  \= \e({\,B_x(k,\ell))}\,.$$
Consequently, $\dim \cKK_y = \ol{n_x} \, B_x(k,\ell)$ for each of the $\deg(\pi)/\ol{n_x}$ points $y$ above $x$.
\par
Next we want to show that $B_x(k,\ell) = b_x(k,\ell)$. If $x = i$, then
the generator of $\G_x$ specified above is $\gamma =  \left(\begin{smallmatrix} \cos(2\pi/n_x) & \sin(2\pi/n_x)\\
-\sin(2\pi/n_x)&\cos(2\pi/n_x)\end{smallmatrix}\right)$ and hence 
$J(\gamma_1,i)^k = \e(-kr_x)$ and $\wt J(\gamma_1,i)^\ell = \e(- \ell r_x^\sigma)$ by definition of $r_x$ and $r_x^\sigma$. This proves the claim in the special case  $x = i$. For the general case note first that for any $\alpha \in \SL\RR$ the cocycle property implies 
$J(\gamma,z_\gamma) = J(\alpha \gamma \alpha^{-1}, z_{\alpha \gamma \alpha^{-1}})$, where
$z_{\alpha \gamma \alpha^{-1}} = \alpha z_\gamma$ is the fixed point of $\alpha \gamma \alpha^{-1}$. The equivariance property~\eqref{eq:phiequivar} implies that $\varphi(z_\gamma)$ is the fixed point of $\gamma^\sigma$ and hence 
$\wt J(\gamma, z_\gamma) = J(\gamma^\sigma,z_{\gamma^\sigma})$. If $\alpha$ takes the fixed point of $\gamma$ to $i$, then $\alpha^\sigma$
takes the fixed point of $\gamma^\sigma$ to $i$ and so 
\begin{equation} \label{eq:Jcong}
\begin{aligned}
& J(\gamma,z_\gamma)^k\, \wt J(\gamma,\varphi(z_\gamma))^\ell = 
J(\gamma,z_\gamma)^k\,J(\gamma^\sigma,z_{\gamma^\sigma})^\ell 
\\ & = 
J(\alpha \gamma \alpha, i)^k\, J((\alpha^\sigma \gamma^\sigma \alpha^\sigma)^{-1},i)^\ell = J(\alpha \gamma \alpha, i)^k \, \wt J((\alpha \gamma \alpha)^\sigma,i)^\ell
\end{aligned}
\end{equation} 
reduces to the case already considered.
\par
Now suppose that $x$ is a cusp and let $y$ be one of the cusps of $\Pi_0$ above $x$. If $\ol{n_x}$ denotes
the degree of the covering $\pi$ at $y$, then there are $\deg(\pi)/\ol{n_x}$ cusps above $x$ since $\pi$ is Galois. 
\par
We start with the case $-I \not\in \G$. Then the stabilizer $\Gamma_x$ is infinite cyclic. Let $\gamma$ be a generator.
The same argument as for~\eqref{eq:Jcong} allows us to assume that the fixed point $z_\gamma = \infty$. 
Note that $\gamma^\sigma$ also fixes $\infty$, so that $J(\gamma,\infty) = \wt J(\gamma, \varphi(\infty)) = \gamma_{2,2}$,
the lower right entry of $\gamma$. Since the cusp is irregular if and only if the generator $\gamma$ has $\gamma_{2,2} = -1$, we deduce 
$$J(\gamma,\infty)^k \, \wt J(\gamma, \varphi(\infty))^\ell = \e(b_x(k,\ell))$$
for $b_x(k,\ell)$ defined above. On the other hand, let $a$ be the width of the cusp $\infty$ of $\Gamma_0$, 
so that $q = \e(z/a)$ is a local parameter at $y$.  Then 
$$(\cLL_0^k\wt\cLL_0^\ell)_y \cong \CC[[q]] \quad \text{and} \quad 
(\pi^*(\cLL_{k,\ell})_y = q^{\ol{n_x}b_x(k,\ell)}\,\CC[[q]],$$ 
so that in total $\dim \cKK_y = \ol{n_x}\, b_x(k,\ell)$. 
\par
With the same local calculation one checks that if $-I \in \G$ always
$\pi^*(\cLL_{k,\ell})_y  = (\cLL_0^k\wt\cLL_0^\ell)_y$. Hence in 
this case, too, $\dim \cKK_y = \ol{n_x}\, b_x(k,\ell) = 0$ holds by definition.
\par
Altogether, this implies 
$$ \deg(\cLL_{k,\ell}) \= \frac{1}{\deg(\pi)} \Bigl((k+\lambda_2\ell) \deg(\cLL_0)\Bigr)
- \sum_{x \in \ol{\HH/\G}}  b_x(k,\ell) .$$
The number of cusps of $\G$ is
$ s_0 = \deg(\pi) \, \sum_{x \in \partial(\HH/\G)} \frac1{\ol{n_x}}.$
Together with \eqref{eq:degonPi9} and the Riemann-Hurwitz formula
$$ \frac{g(\HH/\Pi_0)-1}{\deg(\pi)} = g(\HH/\G) -1 + \frac12 \sum_{x \in \ol{ \HH/\G}}  
\Bigl(1-\frac1{\ol{n_x}}\Bigr)$$ 
this implies the degree claim. The dimension statement then follows from Riemann-Roch
\changed{since the $H^1$-term vanishes for $\deg(\cLL_{k,\ell})>2g-2$, which is guaranteed
by the hypotheses on $k+\lambda_2\ell$ and the fact that 
$b_x(k,\ell) \leq 1-\frac{1}{\ol{n_x}}$ for all cusps and elliptic fixed points.} 
\end{proof}

\section{Differential equations coming from twisted modular forms}  \label{sec:twistedDE}

A basic fact about modular forms, whose proof will be recalled below, is that for any Fuchsian group $\G\subset\SL\R$, 
any modular function~$t$ on~$\G$ and any modular form $f$ of integral weight $k\ge1$ on~$\G$, the function $y(t)$ defined locally by
$f(z)=y(t(z))$ satisfies a linear differential equation of order $k+1$ with algebraic coefficients (and even
with polynomial coefficients if $\H/\G$ has genus~0 and $t$ is a hauptmodule\footnote{Recall that a ``hauptmodule"
(or ``Hauptmodul'' if one retains the German spelling) is a modular function $t$ giving an isomorphism between
$\overline{\H/\G}$ and $\P^1(\C)$ if the former has genus~0.}). In this subsection we prove the
corresponding statement for twisted modular forms.  This statement will give one of the two approaches used in
this paper to describe \Tei\ curves explicitly on Hilbert modular surfaces, by comparing the differential equations
coming from their geometric definition (Picard-Fuchs differential equations) with the differential equations
satisfied by suitable twisted modular forms on them.

\begin{Thm} \label{thm:MFDE} Let $f(z)$ be a twisted modular form on $(\G,\p)$ of bi-weight $(k,\l)$, with $k,\,\l\ge0$, 
and $t(z)$ a modular function with respect to the same group~$\G$. Then the function $y(t)$ defined locally by $f(z)=y(t(z))$
satisfies a linear differential equation of order $(k+1)(\l+1)$ with algebraic coefficients. \end{Thm}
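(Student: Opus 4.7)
The plan is to mimic the classical proof for ordinary modular forms, where a form $f$ of weight~$k$ and a modular function $t$ yield, via $y(t(z)) = f(z)$, a linear ODE of order $k+1$ in $d/dt$, whose solution space is the $(k+1)$-dimensional $\G$-stable space spanned by $f,\,zf,\,\ldots,\,z^k f$. In the twisted setting the natural analogue is the subspace
$$ V \;=\; \mathrm{span}_\CC\bigl\{\, z^i\,\p(z)^j\,f(z) \;:\; 0\le i\le k,\; 0\le j\le \ell \,\bigr\} $$
of holomorphic functions on~$\HH$, of dimension at most $(k+1)(\ell+1)$.

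First I would verify that $V$ is preserved by the pullback action $h\mapsto h\c\g$ of~$\G$. For $\g=\sm abcd\in\G$, the transformation law of~$f$ combined with the equivariance~\eqref{eq:phiequivar} of~$\p$ gives
$$ \bigl(z^i\p^j f\bigr)(\g z) \;=\; (az+b)^i(cz+d)^{k-i}\,(a^\s\p(z)+b^\s)^j(c^\s\p(z)+d^\s)^{\ell-j}\,f(z), $$
which is a polynomial in $z$ of degree~$\le k$ times a polynomial in $\p(z)$ of degree~$\le\ell$ times $f(z)$, so it lies in~$V$. Thus $V$ is a finite-dimensional, $\G$-stable subspace.

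Next I would form the Wronskian ODE associated to~$V$. Choose a $\CC$-basis $g_1,\ldots,g_d$ of~$V$, with $d\le(k+1)(\ell+1)$. Locally on~$\HH$ (away from the ramification of~$t$) the $g_i$ can be written as functions of~$t$, and the unique monic linear operator of order~$d$ in $d/dt$ whose kernel is spanned by them is
$$ L\,y \;=\; \frac{W_t(g_1,\ldots,g_d,\,y)}{W_t(g_1,\ldots,g_d)}. $$
Since $t$ is $\G$-invariant, the operator $d/dt$ commutes with pullback by~$\G$, and $\G$ acts on~$V$ by a linear change of basis; hence $\g^*L$ is again a monic order-$d$ operator with solution space~$V$, and therefore equals~$L$. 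It follows that the coefficients of~$L$ are $\G$-invariant meromorphic functions on~$\HH$, i.e.\ meromorphic functions on $\overline{\HH/\G}$, and so are algebraic in~$t$. If $d$ is strictly less than $(k+1)(\ell+1)$---which can happen, e.g.\ when $\p$ is a M\"obius transformation as in the modular curve case---composing $L$ on the left with $(d/dt)^{(k+1)(\ell+1)-d}$ produces an operator of exactly the stated order that still annihilates $y(t)=f(z)$.

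The only substantive computation is the $\G$-invariance of~$V$ in the first step; everything else is a standard Wronskian-with-parameter argument. Conceptually, the reason the order bound is $(k+1)(\ell+1)$ rather than $k+\ell+1$ is the product structure of the bi-weight: the cocycles $J(\g,z)$ and $\tJ(\g,z)$ depend linearly on $z$ and on $\p(z)$ respectively, so the stable span is naturally bi-graded of bi-degree $\le(k,\ell)$ in the two ``coordinates'' $z$ and $\p(z)$, rather than singly graded.
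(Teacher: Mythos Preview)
Your argument is correct and takes a more conceptual route than the paper. The paper reduces to the base cases $(k,\ell)=(1,0)$ and $(0,1)$ and in each exhibits, by direct computation, a weight-$6$ combination of derivatives (a Rankin--Cohen-type bracket) yielding an explicit second-order operator $(A(t)y')'+B(t)y=0$; the general case then follows from the symmetric/tensor power construction, to which the paper only alludes via the dimension count $\dim\,\mathrm{Sym}^k(V_1)\otimes\mathrm{Sym}^\ell(V_2)=(k+1)(\ell+1)$. Your space $V=\mathrm{span}\{z^i\p(z)^jf\}$ is precisely this tensor product made explicit, and the Wronskian argument handles all bi-weights in one stroke without computation. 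The trade-off is that the paper's explicit formulas~\eqref{eq:tprime} and~\eqref{eq:tprimephi} in the base cases are used later (e.g.\ in \S\ref{sec:BMDE} and in the proof of Proposition~\ref{Prop71}) to identify Picard--Fuchs operators and orders of vanishing, which your approach does not supply. One point to tighten: the step from ``$\G$-invariant meromorphic on~$\HH$'' to ``meromorphic on $\overline{\HH/\G}$'' is not automatic. It holds here because, by Proposition~\ref{prop:phiFourier}, each $z^i\p^jf$ is a polynomial in $\log q$ with coefficients holomorphic in~$q$ near a cusp; the Wronskian minors are then polynomials in $\log q$ with meromorphic coefficients, and since the parabolic monodromy acts unipotently on~$V$ they are single-valued in~$q$, which forces the $\log q$ part to drop out and the minors to be meromorphic.
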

\begin{proof} It suffices to prove this for the two cases $(k,\l)=(1,0)$ and $(0,1)$, since the general
case follows from these. (The number $(k+1)(\l+1)$ arises as the dimension of $\rm{Sym}^k(V_1)\otimes\rm{Sym}^k(V_2)$
where $\dim V_1=\dim V_2=2$.) The first case is the classical theorem mentioned above, of which several proofs are
known (see e.g.~\S5.3 of~\cite{123}). We reproduce one of them here since it generalizes directly to the more complicated
case of bi-weight $(0,1)$.

Let, then, $f(z)$ be an ordinary modular form of weight~1 and $t(z)$ a modular function on $\G$.  By definition we
have the two transformation equations
$$ t\Bigl(\frac{az+b}{cz+d}\Bigr) \= t(z)\,,\;\quad  f\Bigl(\frac{az+b}{cz+d}\Bigr)\=(cz+d)\,f(z)$$
for all matrices $\sm abcd\in\G$. Differentiating these equations gives the further transformation equations
\bas   t'\Bigl(\frac{az+b}{cz+d}\Bigr)&\=(cz+d)^2t'(z)\,,\\
  f'\Bigl(\frac{az+b}{cz+d}\Bigr)&\=(cz+d)^3f'(z)\+c(cz+d)^2\,f(z)\,,\\
  f''\Bigl(\frac{az+b}{cz+d}\Bigr)&\=(cz+d)^5f''(z)\+4c(cz+d)^4f'(z)\+2c^2(cz+d)^3f(z)\,.   \eas
The first of these equations says that $t'$ is a (meromorphic) modular form of weight~2,
and by combining the others we find that the expression $2{f'}^2-ff''$ is a modular form of weight~6.  It follows that
  \be \label{eq:tprime}
 \frac{t'(z)}{f(z)^2}\=A\bigl(t(z)\bigr),\qquad\frac{2f'(z)^2-f(z)f''(z)}{t'(z)f(z)^4}\=B\bigl(t(z)\bigr) \ee
for some rational (or, if $t$ is not a hauptmodule, algebraic) functions $A(t)$ and $B(t)$. A direct calculation shows that
  $$ \frac1{t'}\biggl(\frac{t'}{f^2}\,\frac1{t'}\,f'\biggr)'\+\frac{2{f'}^2-ff''}{t'f^4}\,f\=0\,,$$
It follows that the function $y(t)$ defined parametrically by the equation $y(t(z))=f(z)$ (which of course can only
hold locally, since $t(z)$ is $\G$-invariant and $f(z)$ isn't) satisfies the second order linear differential equations
  \be\label{diffeq10} \bigl(A(t)\,y'(t)\bigr)'\+B(t)\,y(t)\=0\,, \ee
or $Ay''+A'y'+By=0$. This proves the theorem in the case $(k,\l)=(1,0)$.  

Now suppose that $f$ is a twisted modular form of bi-weight (0,1), i.e., $f$ satisfies the transformation
equation $f\bigl(\frac{az+b}{cz+d}\bigr)=(c^\s \p(z)+d^\s )\,f(z)$ for $\sm abcd\in\G$.  From this equation,
and from equation~\eqref{eq:phiequivar} and its derivative (= second equation in~\eqref{eq:invariance}),
we find by further differentiating the transformation equations
\bas  \p''\Bigl(\frac{az+b}{cz+d}\Bigr)&\=\frac{(cz+d)^4}{(c^\s \p(z)+d^\s )^2}\p''(z)
     \+\frac{2c(cz+d)^3}{(c^\s \p(z)+d^\s )^2}\,\p'(z) \\
      &\quad\,-\,\frac{2c^\s (cz+d)^4}{(c^\s \p(z)+d^\s )^3}\,\p'(z)^2 \,, \\
   f'\Bigl(\frac{az+b}{cz+d}\Bigr)&\=(cz+d)^2(c^\s \p(z)+d^\s )\,f'(z)\+c^\s (cz+d)^2\p'(z)f(z)\,,\\
  f''\Bigl(\frac{az+b}{cz+d}\Bigr)&\=(cz+d)^4(c^\s \p(z)+d^\s )\,f''(z)\\
   &\quad\+\bigl[2c(cz+d)^3(c^\s \p(z)+d^\s )\+2c^\s (cz+d)^4\p'(z)\bigr]\,f'(z)\\
 &\quad \+\bigl[c^\s (cz+d)^4\p''(z)\+2cc^\s (cz+d)^3\p'(z)\bigr]\,f(z)\,.  
\eas
From these equations it follows that the combination $(2{f'}^2-ff'')\p'+ff'\p''$ is a modular form of weight~6.  But 
we have already seen that $t'$ and $\p'$ are twisted modular of bi-weights $(2,0)$ and $(2,-2)$, respectively.  It follows 
that 
  \ba \label{eq:tprimephi}
\frac{t'(z)}{\p'(z)f(z)^2} & \=A\bigl(t(z)\bigr),\quad\; \\
 \frac{(2f'(z)^2-f(z)f''(z))\,\p'(z)+f(z)f'(z)\p''(z)}{t'(z)\p'(z)^2f(z)^4} & \=B\bigl(t(z)\bigr)\, \ea
for some algebraic (resp.~rational if $t$ is a hauptmodule) functions $A(t)$ and $B(t)$,
and since by direct calculation we have
 \bes \frac1{t'}\Bigl(\frac{t'}{\p'f^2}\,\frac1{t'}\,f'\Bigr)'\+\frac{(2{f'}^2-ff'')\p'-ff'\p''} {t'{\p'}^2f^4}\,f\=0 
 \ees
in this case, we deduce that $f$ satisfies a  second order linear differential equation of the same form~\eqref{diffeq10} as before.
\end{proof}

{\bf Remark.}  The two weight 6 modular forms $2{f'}^2-ff''$ (for $f\in M_{1,0}(\G)$) and $(2{f'}^2-ff'')\p'+ff'\p''$ 
(for $f\in M_{0,1}(\G)$) used above, which are easily checked to be holomorphic at the cusps, are special
cases of the classical Rankin-Cohen bracket and of twisted versions of it, respectively.  Without going into details, we mention
that the twisted Rankin-Cohen brackets of two twisted modular forms $f_i\in M_{k_i,\l_i}(\G,\p)\;(i=1,\,2)$ can be 
defined as the product of the usual Rankin-Cohen brackets of ${\p'}^{\l_1/2}f_1$ and ${\p'}^{\l_2/2}f_2$ (which by example~i) 
of Section~\ref{sec:twistedMF} are ordinary modular forms of weight $k_1+\l_1$ and $k_2+\l_2$ on~$\G$) with a suitable power of~$\p'$.
\par
\bigskip
\bigskip
\bigskip
\newpage
\part*{Part II: Modular embeddings via differential equations}

In Section~\ref{sec:twistedDE} we have seen abstractly how classical or twisted modular forms 
give rise to differential equations. In Part II we show conversely, in a 
specific example, how to obtain from these differential equations the Hilbert modular embedding~$\varphi$. The example 
that we will consider in detail is  $D=17$, for which the differential equations needed were computed in~\cite{BM07}.  
In Section~\ref{sec:BouwMoeller}  we will sketch how these were obtained, 
referring to that paper for the full details. In Section~\ref{sec:arithmeticW17} 
we discuss the arithmetical properties of the solutions of these differential
equations and compute the Fourier expansions of the corresponding modular forms
at all cusps. We turn Theorem~\ref{thm:dimmodgeneral} into a concrete description of the ring of
modular forms (Theorem~\ref{thm:dimmodWD} for the general result and Propositions~\ref{Prop71} 
and~\ref{prop:dimGamma} for the special case $D=17$),
since the corresponding local invariants can be computed for \Teichmuller\ curves.
In Section~\ref{sec:eqfromdiffeq} we will show how to use these solutions to obtain 
an explicit embedding of the \Tei\ curve in the Hilbert modular surface. 
The introductory Section~\ref{sec:MCTC} provides the necessary background on
Hilbert modular surfaces and \Teichmuller\ curves in genus~$2$.

\section{Curves on Hilbert modular surfaces} \label{sec:MCTC}

As we have already said, there are two basic examples of the situation described 
in Section~\ref{sec:hme}: modular curves and \Tei\ curves.  In this section we describe 
both of these, the first relatively briefly since it is well known and the second in more detail. 
We begin with a preliminary subsection specifying more precisely the Hilbert modular surfaces 
that will be used in this paper. The main new result in this section is the dimension
formula Theorem~\ref{thm:dimmodWD}.

\subsection{Hilbert modular groups and Hilbert modular surfaces} \label{sec:HMGHMS}

As before, we denote by $K$ be a real quadratic field, together with a fixed embedding $K\subset\RR$, and
denote by~$\s$ both the Galois conjugation and the second embedding of~$K$ into~$\RR$.  In  \S\ref{sec:hme}
we briefly defined Hilbert modular groups and Hilbert modular surfaces, denoting them generically by $\G_K$ and
$\H^2/\G_K$.  Now we want to be more specific.  Our general reference are Hirzebruch's seminal paper~\cite{Hi73} and
the book \cite{vG87} by van der Geer.

Usually when one speaks of ``the'' Hilbert modular group for~$K$ one means the group $\SL{\O_D}$,
where $D$ is the discriminant
of an order $\O=\O_D \subset K$. However, since we want principally polarized abelian surfaces, 
we need to work instead with the modified  Hilbert modular group
$$\SLOA{\O^\vee} \= \mat \O{\O^\vee}{({\O^\vee})\i}\O \;\cap\;\SL K\,,$$
where  $\O^\vee$ denotes the set of $x\in K$
for which $xy$ has integral trace for all $y\in\O$. One has $\O^\vee=\df\i$, where in the case of quadratic fields
the ideal $\df$, called the {\it different} of~$K$, is simply the principal ideal $(\sD)=\sD\,\O$. Note that the two groups
$\SL\O$ and $\SLOA{\O^\vee}$ are conjugate in $\GL K$ by the action of the diagonal matrix $\Delta=\sm100\sD$, and in
particular are isomorphic as abstract groups.  But the action of $\D$ interchanges the upper and lower half-planes
in the second factor (since the Hilbert modular group acts on the second factor via its Galois conjugates and the
Galois conjugate of the determinant of $\Delta$ is negative), so the quotient is the
Hilbert modular surface
$$X_D \ = \H^2/\SLOA{\O^\vee},$$ 
which is isomorphic to $X_\O^-=\H\times\Hm/\SL\O$ and not in general isomorphic to
 the ``standard'' Hilbert modular surface $X_\O=\H^2/\SL\O$. (They do not even necessarily have the same Euler characteristic.)  
If $\O$ contains a unit $\ve$ of negative norm, which happens, for instance, when $D$ is prime, then $\df$ is
principal in the narrow sense and the varieties $X_\O$ and $X_\O^-$  are isomorphic via $(z_1,z_2)\mapsto(\ve z_1,\,\ve^{\changed{\sigma}} z_2)$.

To a point $\zz=(z_1,\,z_2)\in\H^2$ we associate the polarized abelian surface $A_{\zz}=\C^2/\fracL_\zz$,
where $\fracL_\zz\subset\C^2$ is the lattice 
  \be \label{eq:latticeemb} \fracL_\zz\=\bigl\{(az_1\+b,\, a^\s z_2+b^\s)\mid a\in\O^\vee,\;\,b\in\O)\bigr\}\,, \ee
with the action of $\O$ on $A_\zz$ induced from the action $\la(v_1,v_2) = (\la v_1,\la^\sigma v_2)$ of $\O$ on~$\C^2$
and with the polarization induced from the antisymmetric pairing
 \be\label{eq:tracepairing} \langle\,(a,b),\,(a',b')\,\rangle =
 \tr_{K/\Q}\bigl(ab'-a'b\bigr) \qquad(a,\,a'\in (\O^\vee)^{-1},\;\,b,\,b'\in\O)\,. \ee
This pairing is unimodular and the polarization is principal, which is why that case is of special interest.

We observe that the action of $\O$ on $A_\zz$ gives a canonical splitting of the 2-dimensional space of holomorphic 
1-forms on~$A$ into two 1-dimensional eigenspaces, generated by the differential forms $\om=dv_1$ and $\tom=dv_2$, which we 
will call the {\it first} and {\it second eigendifferential}, respectively.  If $A_\zz$ is the Jacobian of a curve~$C$ 
of genus~2, then by the canonical identification of the spaces of holomorphic 1-forms on~$C$ and on~$A_\zz$ we obtain 
corresponding eigendifferentials on~$C$.  These will be used in the definition of \Tei\ curves in \S\ref{sec:defTeich}.  

Since the isomorphism class of $A_\zz$ depends only on the image of $\zz$ in $X_D$, and since
polarized abelian surfaces are parametrized by points in the quotient of the Siegel upper half-space $\H_2$ by
$\Sp(4,\Z)$  we get an embedding of the Hilbert modular surface $X_D$ into the quotient $\H_2/\Sp(4,\Z)$,
a {\it Siegel modular embedding}. Explicitly, let $\psi$ be the map from $\H^2$ to $\H_2$ given by 
\be \label{eq:SME} \psi\,: \quad \zz\,=\,(z_1,z_2)\;\mapsto\; B\mat{z_1}00{z_2} B^T \ee 
where, for some $\Z$-basis  $(\om_1,\om_2)$ of $\O$ we let
  \be \label{eq:SMEA}  B = \mat{\om_1}{\om_1^\s}{\om_2 }{\om_2^\s } 
\; \quad \text{and} \quad A=B^{-1}\, .\ee 
We define a homomorphism $\Psi:\SLOA{\O^\vee} \to\Sp(4,\Z)$ by
\be \label{eq:SMEG}   \Psi\,: \quad  \mat abcd\mapsto\mat {B^T}00A\mat{\widehat a}{\widehat b}
{\widehat c}{\widehat d}\mat {A^T}00B\,,\ee
where $\widehat a$ for $a\in K$ denotes the diagonal matrix $\,\diag(a,a^\sigma)$. 
Then the map $\psi$ is equivariant with respect to the actions of $\SLOA{\O^\vee}$ on 
$\HH^2$ and of $\Psi(\SLOA{\O^\vee})$ on $\HH_2$, so it induces a map, also denoted by~$\psi$, on the level of quotient spaces.  
\par
{\bf Remark on notation.} We will use the letters $\psi$ and $\Psi$ for Siegel modular embeddings and 
$\p$ and $\Phi$ for Hilbert modular embeddings. The capital letter will denote the map on the level of
modular groups and the small letter the map on the level of symmetric spaces or quotient spaces.
\par
\smallskip
More generally, for any invertible $\O$-ideal $\fraca$ the trace pairing~\eqref{eq:tracepairing} on the 
lattice $\fraca^\vee \oplus \fraca$ is unimodular and consequently, the abelian surface 
$\fracL_{\zz,\fraca}$, defined as in~\eqref{eq:latticeemb} with $\O$ replaced by $\fraca$, is principally
polarized. This implies that the Hilbert modular surfaces
$$ X_{D,\fraca} \= \HH/\SLA,$$ 
where
\be \label{eq:defSLA}
\SLA \= \mat \O{\fraca^\vee(\fraca)^{\i}}
{\fraca ({\fraca^\vee})\i}\O \;\cap\;\SL K\,,
\ee
also parametrize principally polarized abelian varieties with real multiplication by~$\O$. The
only difference is that now \changed{the} cusp at $\infty$ \changed{of $X_{D,\fraca}$} 
is in general a different one than \changed{the cusp at $\infty$} for~$X_D$. 
We will use these variants $X_{D,\fraca}$ 
when we discuss cusps of Hilbert modular surface in Section~\ref{sec:cuspTheta0}. If we construct
$B$ using some $\ZZ$-basis $(\om_1,\om_2)$ of $\fraca$, then~\eqref{eq:SME} defines a map $\psi$ 
that is equivariant with respect to a homomorphism 
\be \label{eq:Psi_fraca}
\Psi:\SLA \to\Sp(4,\Z) \ee
given by the same definition~\eqref{eq:SMEA}. Hence the pair $(\psi,\Psi)$ defines a Siegel modular embedding of $X_{D,\fraca}$.  
\par

\subsection{Modular curves}  \label{sec:redloc}

We already defined the modular curves in Section~\ref{sec:hme} as the quotients of $\H$ by 
subgroups of $\SL\R$ of the form $\G_A=\{\g\in\G_K\mid A\g=\g^\s A\}$ where $A$ is ``a suitable element''
of $\GL K$, embedded into appropriate Hilbert modular surfaces via $z\mapsto(z,Az)$. Here ``suitable'' 
means that the adjoint of $A$ equals its Galois conjugate, so that $A=\sm{\la^\s}{-b\sD}{a\sD}\la$ 
for some $(a,b,\la)\in\Q\times\Q\times K$, which after multiplying $A$ by a suitable scalar in $\Q^\times$ 
(which does not change the definition of $\G_A$) we can assume belongs to $\Z\times\Z\times \O_K$.  The 
corresponding embedded curve in $\HH^2$ is defined as $az_1z_2+\nu^\s z_1+\nu z_2+b=0$, where $\nu=\la/\sD\in\O^\vee=\df\i$, 
and the union of these curves (or rather, of their images in the Hilbert modular surface) when $A$ ranges 
over all matrices as above with given determinant~$N\in\N$ is denoted by~$T_N$.  These curves $T_N$ were 
studied in detail (for the Hilbert modular surface $X_\O$) in~\cite{HZ76} and~\cite{HZ77}. We recall a
few results that we will use.  The curve $T_N$ is non-empty if and only if $N$ is congruent modulo~$D$ to 
the norm of an element of~$\O$, and is non-compact (i.e., passes through the cusps) if and only if $N$ 
is the norm of an integral ideal $\fraca$ of $K$, in which case each of its components is non-compact. 
It is not in general irreducible, for three reasons. First, we have $T_N=\bigcup_{d^2|N}F_{N/d^2}$, where 
$F_N$ is defined like~$T_N$ but with the additional requirement that $(a,b,\nu)$ is primitive in the lattice
$\Z\times\Z\times\df\i$.  Secondly, the $F_N$ are in general not irreducible either, but decompose as
$\bigcup_\a F_N(\a)$, where $\a$ ranges over the elements of $\df\i/\O$ with $N(\a)\equiv N/D\!\pmod1$
and $F_N(\a)$ is defined by requiring $\nu\equiv\a\pmod\O$ (\cite{Z75}, p.~4, Remark~1).  Finally, even
the $F_N(\a)$ need not be irreducible. (For instance, if $D=p$ is prime and $p^2|N$, then the two Legendre
symbols $(a/p)$ and $(b/p)$ cannot both vanish or have opposite values, so $F_N=F_N(0)$ has two components 
distinguished by the invariant $\ve\in\{\pm1\}$ defined by $(a/p)=\ve$ or $(b/p)=\ve$; cf.~\cite{Fr77}.)  
However, if $N$ is ``admissible'' in the sense of the proposition on p.~57 of~\cite{HZ77} (i.e., $N$ is the 
norm of a primitive ideal in the principal genus), then that proposition says that each $F_N(\a)$ is 
irreducible, as one shows by counting the number of cusps of each component separately and of the whole curve~$F_N$.

The same results apply to the curves in $X_\O^-$ defined (and denoted) in the same way but with the determinant 
of~$A$ now being~$-N$ rather than~$N$.  Using the identification $(z_1,z_2)\mapsto(\sD z_1,-\sD z_2)$ of $X_\O^-$ 
with $X_D$, we can consider these as curves in~$X_D$, the defining equation now being 
\be \label{TNeq}  ADz_1z_2 + \la^\s z_1 + \la z_2 + B \= 0 \qquad (A,\,B\in\Z,\;\la\in\O,\; ABD-\la\la^\s=N). \ee
A special union of these curves will be play a role a in characterization of \Teichmuller\ curves
below. In the moduli space of principally polarized abelian surfaces $\cAA_2$ we denote by $P$ the {\em product  
locus} (also called {\em reducible locus}), i.e.~the locus of abelian varieties that split, as polarized 
abelian varieties, into a product of elliptic curves. The Torelli map gives an isomorphism 
  $$ t: \M_2 \to \cAA_2 \ssm P\,.$$ 
The intersection of $P$ with the Hilbert modular surface $X_D$ will be denoted by $P_D$. It is a union of 
modular curves, as described in the following proposition.

\begin{Prop} \label{prop:redunionFN}
The decomposition into irreducible components of $P_D$ is given by
$$ P_D \= \bigcup_{\nu\in\df\i,\;\nu\gg0,\;\text{\rm Tr}(\nu)=1} P_{D,\nu}
 \= \bigcup_{r\equiv D\;(\text{\rm mod 2}),\,|r|<2\sD} P_{D,\frac{r+\sD}{2\sD}}\,, $$
where $P_{D,\nu}$ is the image in~$X_D$ of the curve $(\nu, \nu^\s)\HH \subset \HH^2\,$.
\end{Prop}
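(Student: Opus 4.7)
The plan is to use the Siegel modular embedding $\psi$ of~\eqref{eq:SME} to detect when the polarized abelian surface $A_\zz$ splits as a product of elliptic curves, and to match the resulting components with the curves $P_{D,\nu}$.

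\emph{Forward direction.} For $\zz=(\nu w,\nu^\s w)$ with $\nu\in\df\i$, $\nu\gg0$, $\tr(\nu)=1$, formula~\eqref{eq:SME} gives
$\psi(\zz)\,=\,w\cdot B\,\diag(\nu,\nu^\s)\,B^T\,=\,wM$
with $M_{ij}=\tr(\om_i\om_j\nu)$. The entries of $M$ are integers because $\om_i\om_j\in\O$ and $\nu\in\df\i$; choosing the $\Z$-basis of $\O$ with $\om_1=1$ gives $M_{11}=\tr(\nu)=1$, while $\det(M)=D\,\nu\nu^\s>0$ follows from $\det B=\pm\sD$ and $\nu\gg0$. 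A positive-definite integer symmetric matrix with a $1$ on the diagonal is $\GL\Z$-equivalent, by completing the square, to $\diag(1,\det M)$. The corresponding block-diagonal element $\sm{P^T}00{P\i}$ of $\Sp(4,\Z)$ sends $wM$ to $\diag(w,\,D\nu\nu^\s\,w)\in\H\times\H\subset\H_2$, so $A_\zz$ is a principally polarized product of elliptic curves; hence $\zz\in P_D$.

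\emph{Converse direction.} The reducible locus in $\H_2$ is the $\Sp(4,\Z)$-orbit of the diagonal locus $\{\tau_{12}=0\}$, and $\psi$ is linear in the pair $(z_1,z_2)$, so the preimage in $\H^2$ of any translate is cut out by an equation of the shape~\eqref{TNeq}. Pulling back the single equation $\tau_{12}=0$ via $\psi$ yields $\om_1\om_2\,z_1+\om_1^\s\om_2^\s\,z_2=0$, a line of the form $\{(\nu w,\nu^\s w):w\in\H\}$ for a specific $\nu\in K^\times$. Using an $\Sp(4,\Z)$-transformation to reduce each component to this case, and an element of the Hilbert modular group to kill the constant term in~\eqref{TNeq}, one sees that every component of the reducible locus lying in the image of $\psi$ has the claimed linear form; the condition that the image stays in $\H^2$ forces $\nu\gg0$.

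\emph{Parameterization.} There is a residual scaling freedom $w\mapsto\la w$, $\nu\mapsto\nu/\la$ for $\la\in\Q_{>0}$, fixed uniquely by imposing $\tr(\nu)=1$, and the discreteness of the elliptic quotients of the lattice~\eqref{eq:latticeemb} pins down $\nu\in\df\i$. Writing $\nu\,\sD=\alpha\in\O$ and $\alpha=(r+s\sD)/2$ with the integrality condition $r\equiv sD\pmod2$, the requirement $\tr(\nu)=1$ forces $s=1$ and the condition $\nu\gg0$ translates into the range of $r$ in the second formula.

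The main obstacle is the converse direction: one must verify that the pullback of the full reducible locus in $\H_2$, after absorbing all $\Sp(4,\Z)$-translates, consists only of the linear components $\om z_1+\om^\s z_2=c$ of~\eqref{TNeq} (i.e., the quadratic coefficient $AD$ is always eliminated by the reduction), and that the action of the Hilbert modular group identifies these components exactly in accordance with the parameterization by $\nu$. This step effectively reduces the proposition to the description of the irreducible decomposition of $F_1(\alpha)$ from~\S\ref{sec:redloc} and the bookkeeping of cusps carried out in~\cite{HZ76}, \cite{HZ77}.
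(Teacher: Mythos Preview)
Your forward direction is correct and cleanly done: the matrix $M$ with entries $\tr(\om_i\om_j\nu)$ is integral with a unit diagonal entry (taking $\om_1=1$), hence $\GL\Z$-equivalent to $\diag(1,D\nu\nu^\s)$, and this shows $P_{D,\nu}\subset P_D$.

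The converse direction has a real gap, which you partly flag but do not close. The sentence ``using an $\Sp(4,\Z)$-transformation to reduce each component to this case'' is the problem: $\Sp(4,\Z)$ does not act on $X_D$, only the image of the Hilbert modular group under $\Psi$ does, and you give no argument that this smaller group suffices to kill the quadratic coefficient $A$ in the pulled-back equation~\eqref{TNeq}. Indeed, as the paragraph after~\eqref{HMSemb} computes, a general $\Sp(4,\Z)$-translate of $\{\tau_{12}=0\}$ pulls back to an equation with quadratic term $-dDz_1z_2$, and $d\ne0$ occurs. You also never address the word \emph{irreducible}: even granting $P_D=\bigcup P_{D,\nu}$ set-theoretically, you would still need each $P_{D,\nu}$ to be an irreducible component, and distinct $\nu$ to give distinct components.

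The paper's proof avoids exactly this difficulty by taking a different route. It quotes Corollary~3.5 of~\cite{McM07}, which already gives $P_D=\bigcup_{r^2+4N=D}T_N\bigl(\tfrac{r+\sD}{2\sD}\bigr)$, and then observes that each such $N$ is \emph{admissible} in the sense of~\cite{HZ77}, p.~57 (it is the norm of the primitive principal ideal generated by $\tfrac{r+\sD}2$), so that $T_N(\nu)=F_N(\nu)$ is irreducible. Since the linear curve $P_{D,\nu}$ is visibly contained in $T_N(\nu)$, they coincide. In other words, rather than eliminating the quadratic term by hand, the paper uses the known irreducibility of the whole component $T_N(\nu)$ to conclude that the linear subcurve $P_{D,\nu}$ already exhausts it.
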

\begin{proof} This is essentially Corollary~3.5 of~\cite{McM07}, which states that
\be\label{RedLoc} P_D = \bigcup_{N,\,r\in \ZZ,\;N>0,\;D=r^2+4N} T_N\Bigl(\frac{r+\sD}{2\,\sD}\Bigr)\,. \ee
Since each $N$ occurring is admissible (it is the norm of the primitive principal ideal
generated by $\frac{r+\sD}2$), we have that $T_N(\nu)=F_N(\nu)$ is irreducible for each
$\nu=\frac{r+\sD}{2\sD}$ and hence coincides with its subcurve $P_{D,\nu}$.
\end{proof}

We would like to say a few words to explain where the equation $D=r^2+4N$ in~\eqref{RedLoc} comes from.
A point of $P_D$ corresponds to a product $E\times E'$ of elliptic curves having real multiplication by~$\O$,
i.e., for which there is an endomorphism $\Phi=\sm a{\la'}\la b$ of $E\times E'$ satisfying a quadratic
equation of discriminant~$D$ over~$\Z$. Since for generic points the elliptic curves $E$ and $E'$ do not
have complex multiplication, we have $a\in\End(E)=\Z$, $b\in\End(E')=\Z$, and $\la\la'=\la'\la=N$ with 
$N=\deg\la\in\N$ and hence $\Phi^2-(a+b)\Phi+(ab-N)=0$, $D=(a+b)^2-4(ab-N)=(a-b)^2+4N$.
We should also mention that the statement $P_D\subseteq\bigcup_{r^2+4N=D}T_N$ is just the special case $D'=1$ 
of the general statement that the intersection of two Humbert surfaces $\mathcal H_D$ and $\mathcal H_{D'}$
in the moduli space $\cAA_2=\H_2/\Sp(4,\Z)$ is contained in the union of $T_N$ with $DD'=r^2+4N$ for some
$r\in\Z$, $N\in\N$.  This statement is well known, and is given implicitly in the proof of Prop.~XI.2.8, p.~215,
of~\cite{vG87}, but since we could not find a convenient reference and since the proof is easy, we give
it here.  We recall (cf.~\cite{vG87}, Chapter~IX) that the Humbert surface $\mathcal H_D$ is defined as the 
image in $\cAA_2$ of the union of the curves
\be\label{Humbert}  \bigl\{\sm{\tau_1}{\tau_2}{\tau_2}{\tau_3}\in\H_2 \;\bigl|\;
      a\tau_1+b\tau_2+c\tau_3+d(\tau_2^2-\tau_1\tau_3)+e\,=\,0\bigr\} \ee
with $(a,b,c,d,e)\in\Z^5$, $b^2-4ac-4de=D$.  If~$D$ is a fundamental discriminant, then $\mathcal H_D$ is irreducible
and hence can be given by any one of the equations in~\eqref{Humbert}.
The locus of products of elliptic curves in~$\cAA_2$ is $\mathcal H_1$, because the standard embedding
$(\H/\rm{SL}_2(\Z))^2\to\H_2/\Sp_4(\Z)$ is given by the equation $\tau_2=0$ in~$\H_2$, which has the form~\eqref{Humbert}
with $(a,b,c,d,e)=(0,1,0,0,0)$, $b^2-4ac-4de=1$. The Hilbert modular surface~$X_D$ can be identified with~$\mathcal H_D$, because 
if we write $N(x\om_1+y\om_2)=Ax^2+Bxy+Cy^2$ ($A,\,B,\,C\in\Z$, $B^2-4AC=D$), then the map~\eqref{eq:SME} is given by
\be \label{HMSemb} \begin{pmatrix} \tau_1 & \tau_2 \\ \tau_2 &\tau_3\end{pmatrix} \= 
 \mat {\om_1^2z_1+{\om_1^\s}^2z_2}{\om_1\om_2z_1+\om_1^\s\om_2^\s z_2}{\om_1\om_2z_1+\om_1^\s\om_2^\s z_2}{\om_2^2z_1+{\om_2^\s}^2z_2}\,,\ee
which satisfies an equation of the form~\eqref{Humbert} with $(a,b,c,d,e)=(C,-B,A,0,0)$, $b^2-4ac-4de=D$.
In general, to compute the intersection $\mathcal H_D\cap\mathcal H_{D'}$ we substitute the expression in~\eqref{HMSemb} into the
an equation of the form~\eqref{Humbert} with $b^2-4ac-4de=D'$.  This gives the equation
$$ -dDz_1z_2 \+ (a\om_1^2+b\om_1\om_2+c\om_2^2)z_1 \+ (a\om_1^2+b\om_1\om_2+c\om_2^2)^\s z_2 \+ e \= 0\,,$$
which has the form~\eqref{TNeq} with 
$$ N = -deD - N(a\om_1^2+b\om_1\om_2+c\om_2^2) \= \frac{DD'\,-\, (2Aa+Bb+2Cc)^2}4  $$
as asserted. In the special case $D'=1$, we recover the equation $D=r^2+4N$ and also see that we are on the 
component $F_N\bigl(\frac{r+\sD}{2\sD}\bigr)$ of~$F_N$, as claimed in~\eqref{RedLoc}, since it is easily seen that
$a\om_1^2+b\om_1\om_2+c\om_2^2\equiv\frac{r+\sD}2\!\pmod\df$.

\subsection{Teichm\"uller curves and Veech groups}
\label{sec:defTeich}

A {\em Teichm\"uller curve} is an irreducible algebraic curve $W$ in the moduli space $\M_g$
of curves of genus~$g$ which is a totally geodesic submanifold for the \Teichmuller\ metric. Teich\-m\"uller curves are generated by a pair consisting of a curve $C$ and a non-zero holomorphic one-form 
$\omega\in H^1(C,\Omega_C^1)$.  Such pairs are called {\em flat surfaces}. 
\changed{An introductory text to flat surfaces is the survey \cite{zo06}, 
for example.} On the set of flat surfaces there is an action of $\GL{\RR}$ and 
Teich\-m\"uller curves are the projection to $\M_g$ of the orbit  $\GL{\RR}\cdot (C,\omega)$.
The uniformizing group $\G$ such that $W = \HH/\Gamma$, called a {\em Veech group},
 can be read off from the flat geometry of the pair $(C,\omega)$. Let $K_\Gamma$ be the trace field of $\Gamma$
and $r=[K_\Gamma:\QQ]$. Teich\-m\"uller curves with $r=g$ are called  {\em algebraically primitive}.
Under the Torelli map, algebraically primitive \Teichmuller\ curves map
to the locus of abelian varieties with real multiplication by $K$ 
(\cite{Mo04} Theorem~2.6). In particular for $g=2$
the universal covering of an algebraically primitive \Teichmuller\ curve defines a map
$$ (\varphi_0, \varphi) : \HH \to \HH^2$$ equivariant with respect to the action of the Veech group (acting
on the left in the obvious way and on the right via its embedding into $\SL K\hookrightarrow\SL\R^2\,$). The geodesic 
definition of \Teichmuller\ curves implies that $\varphi_0$ is a M\"obius
transformation.  Moreover we may suppose $\varphi_0={\rm id}$ using appropriate choices in the universal covering map.
Consequently, \Teichmuller\ curves define Hilbert modular embeddings in the above sense. (\cite{Mo04}, Section~3.)

The space of flat surfaces $(C,\omega)$ is naturally stratified
by the number and multiplicities of the zeros of $\omega$. In particular, for $g=2$ we have
two strata $\Omega \M_2(1,1)$ and  $\Omega \M_2(2)$, corresponding to $\om$ having two distinct zeros or one double zero,
respectively. For $g=2$ we have the following classification for algebraically primitive \Teichmuller\ curves (\cite{McM03}, \cite{McM05}, \cite{Mo06},~\cite{McM06}).

\begin{Thm} \label{thm:TMclassg2}
There is only one \Teichmuller\ curve in the stratum $\OmM_2(1,1)$, \changed{called the decagon curve}.  It lies in the Hilbert modular surface $X_5$.

The stratum $\OmM_2(2)$ contains infinitely many algebraically primitive 
\Teichmuller\ curves~\changed{$W_D$}, each lying in a unique
Hilbert modular surface. For each non-square discriminant $D \geq 5$ the 
Hilbert modular surface $X_D$ contains exactly
one \Teichmuller\ curve if $D\not\equiv 1\pmod 8$ and exactly two if~$D\equiv 1\pmod 8$.

The union $W_D$ of the \Tei\ curves in $X_D$ \changed{other than the decagon curve} 
is the locus in $\M_2$ of curves whose Jacobians have real multiplication by  
$\O_D$ and such that the eigendifferential on which $\O_D$ acts via the embedding $K \subset \RR$ has a double zero.
\end{Thm}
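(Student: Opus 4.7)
The plan is to reduce the theorem to three separate arguments, each drawing from one of the cited papers but organized around a single idea: the interplay between the flat-surface viewpoint on a Teichm\"uller curve and the modular viewpoint given by Proposition~\ref{Prop5.6} (cited above) that the Jacobian acquires real multiplication.

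First I would deduce part~(3), the modular characterization, because it provides the framework in which the counting statements live. Let $W\subset\M_2$ be an algebraically primitive Teichm\"uller curve generated by a flat surface $(C,\omega)$ with trace field~$K$. The theorem from~\cite{Mo04} stated in \S\ref{sec:defTeich} gives that the Torelli image of~$W$ lies in the locus of abelian surfaces with real multiplication by an order $\O_D\subset K$, and that $\omega$ is one of the two eigendifferentials for this action. Moreover, the Teichm\"uller disc is tangent to the leaf of the foliation defined by the chosen eigenspace. The cases $\omega\in\OmM_2(1,1)$ and $\omega\in\OmM_2(2)$ correspond respectively to the eigendifferential having two simple zeros or a double zero; in the latter case the curve sits in the Weierstrass locus $W_D$ as defined by McMullen~\cite{McM03}, since the double zero of a holomorphic differential on a genus~$2$ curve must be a Weierstrass point.

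Next I would handle part~(2), existence and counting in $\OmM_2(2)$. Existence for every non-square discriminant $D\ge5$ follows from McMullen's explicit L-shaped prototype construction in~\cite{McM03}: for each $D$ one writes down an L-shaped polygon whose edge identifications produce a genus~$2$ surface with a single conical singularity of angle~$6\pi$, whose affine group acts on cohomology through $\O_D$, and whose $\GLp\R$-orbit projects to an algebraically primitive Teichm\"uller curve in $X_D$. Uniqueness up to the spin invariant is the content of~\cite{McM05}: any algebraically primitive Teichm\"uller curve in $\OmM_2(2)\cap X_D$ is $\GLp\R$-equivalent to such a prototype. The component count then comes from the spin parity (Arf invariant) attached to the translation surface: the spin is locally constant on $\GLp\R$-orbits, and McMullen shows that for $D\equiv1\pmod8$ both parities occur while for the other residues modulo~$8$ the spin invariant is forced. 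This yields exactly two components when $D\equiv1\pmod8$ and one otherwise.

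For part~(1), the $(1,1)$ stratum, I would argue that having both zeros distinct forces an additional symmetry. On the Hilbert modular surface, a point of the Teichm\"uller curve corresponds to a curve~$C$ whose Jacobian has RM by~$\O_D$ and whose first eigendifferential~$\omega$ has two simple zeros. The hyperelliptic involution interchanges the two zeros of~$\omega$, and because both are simple the divisor class of the difference of zeros is $2$-torsion of a very restricted type; combined with the RM structure this forces the automorphism group of the flat surface to contain an order~$5$ rotation (the double cover of the hyperelliptic involution). This is the rigidity observation of~\cite{Mo06} and~\cite{McM06}, and $\Q(\sqrt{5})$ is singled out as the unique real quadratic field whose maximal order contains a primitive tenth root of unity up to sign. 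The resulting surface is the regular decagon with opposite sides identified; its $\GLp\R$-orbit gives the unique curve in $X_5$.

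The main obstacle, and the step requiring the most delicate flat-surface input, is the uniqueness/rigidity in parts~(1) and~(2): the construction of examples is by explicit polygons, but showing that no further Teichm\"uller curves exist in $X_D$ beyond the expected one or two components requires the classification of $\GLp\R$-orbits of eigenforms with a prescribed zero profile on a given Hilbert modular surface, which is where the bulk of the arguments of~\cite{McM05} and~\cite{Mo06} are concentrated.
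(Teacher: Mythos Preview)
The paper does not prove this theorem.  It is stated with the parenthetical citation ``(\cite{McM03}, \cite{McM05}, \cite{Mo06}, \cite{McM06})'' immediately before the statement and no proof or proof sketch follows; it is quoted as background from the literature.  So there is nothing to compare your proposal against in the paper itself.

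That said, your sketch for parts~(2) and~(3) is broadly in line with how those references proceed: the RM structure comes from \cite{Mo04}, the prototypes and spin dichotomy from \cite{McM03} and \cite{McM05}.  Your treatment of part~(1), however, contains a genuine error.  The sentence ``$\Q(\sqrt5)$ is singled out as the unique real quadratic field whose maximal order contains a primitive tenth root of unity up to sign'' is false as stated: a real quadratic field contains no roots of unity other than $\pm1$.  What actually singles out $\Q(\sqrt5)$ in \cite{McM06} and \cite{Mo06} is a torsion constraint: on a Teichm\"uller curve the difference of the two zeros of $\omega$ defines a torsion section of the Jacobian, and combining this with real multiplication forces the ratio of residues at the cusp to be a ratio of sines of rational multiples of~$\pi$ lying in~$K$.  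For $K$ real quadratic this trigonometric Diophantine condition has essentially one solution, corresponding to the regular decagon.  Your ``order~$5$ rotation'' heuristic is a consequence of this for the decagon surface, not the mechanism that isolates~it.
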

 
The two components in the case  $D \equiv 1 \pmod 8$  are distinguished by a spin invariant  $\delta \in \{0,1\}$ 
and will be denoted by $W_D^\delta=\HH/\G_D^\delta$, so that $W_D=W_D^0\cup W_D^1$ in this case.  The definition of the spin 
invariant is given in~\cite{McM05} and will not be repeated here, but in \S\ref{sec:WD_theta} we will be able
to give a new and equivalent definition in terms of our description of \Tei\ curves via theta functions.

The \Teichmuller\ curves in $\OmM_2(2)$ admit the following characterization, which is an adaptation
of the criterion in~\cite{Mo04}, Theorem~5.3. Let $\cFF_i$ ($i=1$ or~2) 
be the two natural foliations of a Hilbert modular surface~$X_D$ for which 
the $i$-th cooordinate is locally constant in the uniformization. 

\begin{Thm} \label{thm:charTeich}
An algebraic curve $W\subset X_D$ is a union of \Teichmuller\ curves if and only if 
\newline {\rm\phantom{i}(i)} $W$ is disjoint from the reducible locus and
\newline {\rm(ii)} $W$ is everywhere transversal to $\cFF_1$. 
\end{Thm}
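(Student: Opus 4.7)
The plan is to treat the two directions of the equivalence separately. The forward direction essentially unpacks the modular embedding structure already attached to any Teichm\"uller curve, while the reverse direction has to construct such a structure from the two geometric hypotheses and then invoke M\"oller's variation-of-Hodge-structure criterion.

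For the forward direction, suppose $W$ is a union of Teichm\"uller curves. Every component lies in $\M_2$ by definition, so the Torelli isomorphism $t:\M_2\to\cAA_2\smallsetminus P$ forces disjointness from $P_D=P\cap X_D$, establishing~(i). By the discussion at the end of \S\ref{sec:defTeich}, each component moreover admits a Hilbert modular embedding of the form $(\mathrm{id},\varphi):\HH\to\HH^2$ equivariant under its Veech group; its image is a graph over the first coordinate and hence transversal to the leaves $\{z_1=\mathrm{const}\}$ of~$\cFF_1$, establishing~(ii).

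For the reverse direction, let $W_0$ be an irreducible component of $W$, write its uniformization as $\HH/\Gamma_0$, and lift the inclusion $W_0\hookrightarrow X_D$ to a $\Gamma_0$-equivariant holomorphic map $(f_1,f_2):\HH\to\HH^2$. Transversality~(ii) makes $f_1$ a local biholomorphism, and its image is a non-empty open subset of $\HH$ invariant under the (cofinite) image of $\Gamma_0$ in $\mathrm{PSL}_2(\RR)$, hence all of $\HH$; so $f_1$ is a covering of $\HH$ by $\HH$, i.e.\ a M\"obius transformation. After composing the lift with $f_1^{-1}$ and conjugating $\Gamma_0$ accordingly, I may assume $f_1=\mathrm{id}$, so that $\varphi:=f_2$ satisfies~\eqref{eq:phiequivar} and defines a Hilbert modular embedding. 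Condition~(i) together with the Torelli theorem simultaneously identifies the family of principally polarized abelian surfaces over $W_0$ with a family of Jacobians of smooth genus~$2$ curves carrying real multiplication by~$\O_D$. At this point I would invoke M\"oller's criterion (\cite{Mo04}, Theorem~5.3) to conclude that $W_0$ is a Teichm\"uller curve: the modular embedding $(\mathrm{id},\varphi)$ just constructed provides precisely the holomorphic splitting of the variation of Hodge structures required by that theorem.

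The main obstacle is this last invocation. To match the hypotheses of~\cite{Mo04}, Theorem~5.3, one must identify the rank-$2$ uniformizing sub-VHS with the sub-bundle cut out by the first eigendifferential and verify that transversality to~$\cFF_1$ is equivalent to the Kodaira--Spencer map in the first eigen-direction being an isomorphism. Generically this forces the first eigendifferential to have a double zero and places $W_0$ in the stratum $\OmM_2(2)$, matching the description of $W_D$ in Theorem~\ref{thm:TMclassg2}.
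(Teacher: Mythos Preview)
Your forward direction is fine and matches the paper.  In the reverse direction you take a detour through an explicit M\"obius normalization of~$f_1$ that the paper avoids, and this detour has a real gap: you assert that the image of~$\Gamma_0$ in~$\mathrm{PSL}_2(\RR)$ via the first factor is cofinite, but this is precisely what distinguishes Teichm\"uller curves from arbitrary curves in~$X_D$, so the claim is circular.  (Separately, a surjective local biholomorphism $\HH\to\HH$ is not automatically a covering without a properness argument.)  The paper bypasses all of this: it translates condition~(ii) directly into the non-vanishing of the Kodaira--Spencer map for the eigen-local-system~$\LL$ at interior points, which is exactly the hypothesis of~\cite{Mo04}, Theorem~5.3.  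You acknowledge in your last paragraph that this identification is the real content, so your M\"obius step buys nothing.

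The more serious omission is at the boundary.  The criterion in~\cite{Mo04} requires the Kodaira--Spencer map to be nowhere vanishing on the \emph{compactification}~$\ol{W}$, not just on~$W$.  Transversality to~$\cFF_1$ is a condition inside~$X_D$ and says nothing about the cusps of~$W$.  The paper handles this by citing a local calculation (\cite{BM05}, Proposition~2.2) showing that Kodaira--Spencer non-vanishing at the cusps is automatic; you do not address the cusps at all.  Without this, the invocation of~\cite{Mo04}, Theorem~5.3 is incomplete.
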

\par
\begin{proof}[Sketch of proof] If $W$ is a \Tei\ curve, then (i) and (ii) hold
by definition and by the fact that we can use the first coordinate as a parameter, respectively.
\par
For the converse recall that over a Hilbert modular surface the relative first cohomology 
with  coefficients in~$K$ splits into two eigenspaces, two local systems over~$K$ that
we denote by $\LL$ and $\wt \LL$ and that are interchanged by the Galois group of~$K$.
Consequently, over any curve in a Hilbert modular surface the cohomology splits in the same way. 
\par
Condition~(i)  is equivalent to $W$ being in the image
of the locus of Jacobians with real multiplication under the Torelli map. To apply 
the criterion of~\cite{Mo04}, Theorem~5.3, we need to show that the Kodaira-Spencer map
for $\LL$ or $\wt \LL$ vanishes nowhere on $\ol{W}$. Condition~(ii) implies the 
non-vanishing of the Kodaira-Spencer map for the corresponding $\LL$ in the interior 
of $X_D$, while at the cusps non-vanishing is automatic, by
a local calculation as in \cite{BM05}, Proposition~2.2.
\end{proof}
\par
One can generalize this setup using algebraic curves in \changed{$\cAA_g$} that 
are {\em totally geodesic for the Kobayashi metric}.  See~\cite{MV08} 
for a characterization of these Kobayashi geodesics. 

\subsection{Twisted modular forms for $W_D$.} \label{sec:topoTeich}
The topology of $W_D$ and the ratio $\lambda_2$ are completely determined, combining the work 
of several authors. We summarize the results and combine them with Theorem~\ref{thm:dimmodgeneral} 
to determine the dimension of the space of twisted modular forms.
\par
\begin{Thm} \label{thm:topoTeich} For any non-square discriminant $D$, the
fundamental invariants of the curves $W_D$ are as follows.
\begin{itemize}
\item[(i)] The orbifold Euler characteristic of $W_D$  equals 
\be \label{eq:chiWD}
\chi(W_D) = -\tfrac92 \chi(X_D),
\ee  
where $X_D$ is the Hilbert modular surface $\HH^2/\SLOD$.
\item[(ii)] The cusps of $W_D$ are in bijection with \Pred\ quadratic forms
of discriminant~$D$ (see Section~\ref{sec:cuspTheta0}).
\item[(iii)] For $D=5$, the curve $W_D$ has two \changed{elliptic} fixed points, 
one of order two and one
of order five. For $D \neq 5$, there are $e_2(D)$ elliptic fixed points of order two
on $W_D$ and no other fixed points, where $e_2(D)$ is a sum of class numbers of imaginary 
quadratic orders (\cite{Mu11}, Table~1).
In particular for $D \equiv 1 \mod 8$, there are $e_2(D) = \frac12 h(-4D)$ elliptic
fixed points of order two.
\item[(iv)] The curves $W_D^0$ and $W_D^1$ are defined over $\QQ(\sqrt{D})$ and are Galois conjugate.
\item[(v)] The curves $W_D^0$ and $W_D^1$  are homeomorphic.
\item[(vi)]  For a torsion-free subgroup of the Veech group of any component $W_D^i$ of
$W_D$ the ratio $\lambda_2$ of the degrees of the line bundles $\cLL$ and $\wt\cLL$ equals~$1/3$.
\end{itemize}
\end{Thm}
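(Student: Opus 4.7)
The theorem collects six pieces of structural information about $W_D$ and its components, each of which is either established in the literature or becomes an almost formal consequence of previous work. My strategy would be to prove the six parts essentially independently, in the order (i), (iii), (ii), (vi), (iv), (v), since (v) follows from (iv) and the three ``quantitative'' parts (i), (ii), (iii) feed into the dimension formula via Theorem~\ref{thm:dimmodgeneral}. Nothing in the argument should need new geometric input beyond what is already proved in \cite{Ba07}, \cite{McM05}, \cite{McM06}, \cite{Mu11} and \cite{Mo11}.

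For (i), I would quote Bainbridge's explicit calculation of $\chi(W_D)$ in~\cite{Ba07}: he writes $\chi(W_D)$ as an explicit sum over prototypes and identifies it with $-\tfrac92\chi(X_D)$ via the Siegel volume formula. For (iii), the count of elliptic fixed points is exactly the content of Mukamel's paper~\cite{Mu11}, whose tables I would simply cite; the exceptional case $D=5$ is handled directly from the explicit description of the decagon curve. For (ii), I would just cite~\cite{McM06} and~\cite{Ba07}, noting that an independent proof from the theta-derivative viewpoint will be given later in Theorem~\ref{thm:cuspsWD}. For (vi), the value $\lambda_2 = 1/3$ is proved in~\cite{Ba07}, Theorem~15.1, and reinterpreted as the second Lyapunov exponent of the Kontsevich--Zorich cocycle in~\cite{Mo11}; by the commensurability invariance of $\lambda_2$ noted after~\eqref{Deflambda}, the same value holds for any torsion-free finite-index subgroup of the Veech group.

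Parts (iv) and (v) are the steps that require an actual argument rather than only a citation. For (iv), I would start from the modular characterization in Theorem~\ref{thm:TMclassg2}: $W_D$ is defined by the condition that a genus-two Jacobian admits real multiplication by $\O_D$ and that the first eigendifferential has a double zero. This condition is invariant under $\mathrm{Gal}(\overline\QQ/\QQ)$ up to interchanging the two embeddings $K \hookrightarrow \RR$, which exchanges the two eigendifferentials. Thus $W_D$ as a whole is defined over $\QQ$, while the choice of which eigendifferential has the double zero is defined over $\QQ(\sqrt D)$. When $D \equiv 1 \pmod 8$, McMullen's spin invariant $\delta$ from~\cite{McM05} distinguishes the two components, and I would verify (using his prototype description) that the nontrivial element of $\mathrm{Gal}(\QQ(\sqrt D)/\QQ)$ swaps $\delta = 0$ and $\delta = 1$; this is the content of proving $W_D^0$ and $W_D^1$ are Galois conjugates rather than individually defined over $\QQ$. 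Part (v) is then immediate: a Galois conjugation between smooth complex curves induces a homeomorphism of the underlying real surfaces.

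The main obstacle is the Galois-theoretic verification in (iv). McMullen defines $\delta$ via the parity of a certain quantity attached to a prototype $(a,b,c,e)$ of discriminant $D$, and one must check that the Galois involution on the set of $\O_D$-prototypes (sending a prototype to the one attached to the conjugate eigendifferential) changes the parity of this quantity whenever $D \equiv 1 \pmod 8$. This is a finite check of the combinatorial definition of the spin invariant, but it is the one place where ``just cite the literature'' will not suffice. Everything else in the theorem is a packaging of existing results, and once (iv) is in place, (v) and the homeomorphism statements follow with no further work.
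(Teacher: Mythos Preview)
Your proposal is correct and follows essentially the same route as the paper: both treat the theorem as a compilation of results already in the literature, citing \cite{Ba07} for~(i), \cite{McM05}/\cite{Ba07} for~(ii), \cite{Mu11} for~(iii), and \cite{Ba07}/\cite{BM07} for~(vi), with~(v) following from~(iv).

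The one place where you diverge is~(iv). You flag it as ``the one place where `just cite the literature' will not suffice'' and sketch a Galois-theoretic argument via McMullen's prototype description of the spin invariant. In fact the paper simply cites Theorem~3.3(b) of~\cite{BM07}, where this Galois-conjugacy statement is already established. So your proposed verification is not wrong, but it is unnecessary: the result is in the literature (indeed in a paper by one of the present authors), and the proof here is pure citation, just like the other five parts. Your sketch is presumably close to what \cite{BM07} actually does, so no harm done, but you should not present~(iv) as the main obstacle.
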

\par
\begin{proof} Statement (i) is the main result of~\cite{Ba07}, \changed{Theorem~1.1}. 
Statement (ii) is implicit
in \cite{McM05} and explicit in~\cite{Ba07}, \changed{Theorem 8.7(5)}. 
Statement (iii) is the main result of~\cite{Mu11}.
Statement (iv) is Theorem~3.3(b) of~\cite{BM07} and (v) follows directly. 
Statement~(vi) was shown in 
\cite{Ba07}, Corollary~12.4, and with a different proof in~\cite{BM07}, Corollary~2.4.
\end{proof}
\par
We recall that the value of $\chi(X_D)$ is known, and is given  for a fundamental discriminant $D$ by
\be \label{eq:chiformula}
\chi(X_D) \= 2\, \zeta_K(-1) \= \frac1{30}\sum_{D=b^2+4ac} a
\ee
(see \cite{Hi73}), and in general by a similar explicit formula. 
\par
Every curve of genus two is hyperelliptic and consequently, $-I$ is in the Veech group
for every \Teichmuller\ curve in genus two. The dimension of the space of
twisted modular forms can now be deduced from Theorem~\ref{thm:dimmodgeneral}.
\par
\begin{Thm} \label{thm:dimmodWD}
For $D>5$ the space of twisted modular forms $M_{k,\ell}$ on $W_D$ is zero for $k+\ell$ odd.
For  $k+\ell$ even and $D \not \equiv 1 \mod 8$
$$\dim M_{k,\l}(\G) \=  -\frac{1}{2} \Bigl(k+\frac{\l}{3}\Bigr)\, \chi(W_D) - 
\,\Bigl\{ \frac{-k + \ell}{4} \Bigr\}\,e_2(D),$$
where $\{x\}$ is the fractional part of $x$, and for each of the two components for 
$D  \equiv 1 \mod 8$
$$\dim M_{k,\l}(\G) \= -\frac{1}{4} \Bigl(k+\frac{\l}{3}\Bigr)\,  \chi(W_D) -\frac14\,\Bigl\{ \frac{-k + \ell}{4} \Bigr\} \,e_2(D),$$
where $\chi(W_D)$ and $e_2(D)$ is given in Theorem~\ref{thm:topoTeich}. 
\end{Thm}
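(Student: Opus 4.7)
The plan is to apply Theorem~\ref{thm:dimmodgeneral} directly, feeding in the structural information about the Veech group $\Gamma = \G_D$ (or $\G_D^\delta$) collected in Theorem~\ref{thm:topoTeich}. I begin with the vanishing statement: every smooth curve of genus two is hyperelliptic, so $-I$ lies in $\Gamma$ for every $D > 5$, and the first clause of Theorem~\ref{thm:dimmodgeneral} immediately gives $\dim M_{k,\ell}(\G) = 0$ for $k + \ell$ odd. In particular this also forces every cusp of $\G$ to be regular, so that the cusp contributions $b_x(k,\ell)$ all vanish and only elliptic fixed-point corrections survive in the degree formula.

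For $k+\ell$ even I substitute the inputs from Theorem~\ref{thm:topoTeich}: namely $\lambda_2 = 1/3$ from part~(vi) and, when $D > 5$, the presence of exactly $e_2(D)$ elliptic fixed points, all of $\mathrm{PSL}$-order $\overline{n_x}=2$, from part~(iii). The key local step is to identify the characteristic number at each such point. The generators of order four occurring in $\SL{\O_D}$ are conjugate either to $S$ or to $\Delta S\Delta^{-1}$, producing the two values $\{(-k-\ell)/4\}$ or $\{(-k+\ell)/4\}$ computed in the discussion following~\eqref{eq:defcharbx}. Because $W_D$ naturally sits in the avatar $\H\times\H^-$ of the Hilbert modular surface, the sign in $r_x^\sigma$ is fixed and, as noted at the end of that discussion, the $S$-type points contribute $b_x(k,\ell)=\{(-k+\ell)/4\}$ in this embedding. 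Feeding these data into the degree formula of Theorem~\ref{thm:dimmodgeneral} and using Gauss--Bonnet in the orbifold form
\[
\chi(W_D) \= -2\Bigl(g-1+\tfrac{s}{2}+\tfrac14 e_2(D)\Bigr)
\]
rewrites the degree (and hence the dimension, since the hypothesis $k+\lambda_2\ell\ge2$ kills $H^1$) in the compact shape asserted in the theorem.

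For $D\equiv 1\pmod 8$ the surface $W_D$ decomposes as the disjoint union of two Galois-conjugate components $W_D^0$ and $W_D^1$, which by Theorem~\ref{thm:topoTeich}(iv)(v) are homeomorphic. Each therefore carries half of the orbifold invariants $\chi(W_D)$ and $e_2(D)$, and repeating the previous computation component-wise produces the extra factor $\tfrac14$ on both terms. The main obstacle in the whole argument is the local identification of the characteristic number in step two: one has to rule out the appearance of the alternative value $\{(-k-\ell)/4\}$ uniformly across all $e_2(D)$ elliptic points, which is where the specific nature of the embedding $W_D\hookrightarrow X_D$ (rather than the abstract machinery of Theorem~\ref{thm:dimmodgeneral}) enters. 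Everything else is bookkeeping converting orbifold invariants into~$\chi(W_D)$.
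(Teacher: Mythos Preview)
Your outline is correct and matches the paper's approach: apply Theorem~\ref{thm:dimmodgeneral} with $\lambda_2=1/3$, the fixed-point data from Theorem~\ref{thm:topoTeich}, and the orbifold Euler characteristic, then halve for each component when $D\equiv1\pmod 8$. You also correctly identify the crux, namely showing that every order-two elliptic point contributes $b_x(k,\ell)=\{(-k+\ell)/4\}$ rather than $\{(-k-\ell)/4\}$.

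However, your resolution of that crux is not an argument. Passing to the $\H\times\H^-$ model does not collapse the two types of order-four elements into one: it merely swaps which of $S$ and $\Delta S\Delta^{-1}$ carries which contribution. In either model there remain two conjugacy classes of order-four elements in the ambient Hilbert modular group, distinguished by whether the induced automorphism of the abelian surface acts on the two eigendifferentials with equal or with opposite eigenvalues, and this dichotomy is intrinsic. So saying ``$W_D$ sits in $\H\times\H^-$, hence the sign in $r_x^\sigma$ is fixed'' does not rule anything out; you still need a geometric reason why the elliptic points of $W_D$ all fall into the ``opposite eigenvalues'' class.

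The paper supplies exactly this missing input. It invokes Mukamel's result that every order-two elliptic fixed point of $W_D$ lies on the locus $\M_2(D_8)$ of genus-two curves with dihedral automorphism group of order~$8$, and then computes on the explicit family $Y^2=(X^2-1)(X^2+aX+1)$: the order-four automorphism $J(X,Y)=(1/X,\,iY/X^3)$ has eigendifferentials $dX/Y\pm X\,dX/Y$ with eigenvalues $+i$ and $-i$, so the two eigenspaces are one-dimensional and distinct. That is what forces $b_x(k,\ell)=\{(-k+\ell)/4\}$ uniformly. Without this (or an equivalent) geometric argument, your proof has a genuine gap at precisely the point you flagged as the main obstacle.
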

\par
\begin{proof} The first statement holds because the Veech group contains~$-I$.
Given the general dimension calculation in Theorem~\ref{thm:dimmodgeneral} and the Euler characteristic in~\eqref{eq:chiWD}
it remains to show that for all the fixed points $x$ of order two the local contribution $b_x(k,\ell)$ is 
$\{\tfrac{-k + \ell}{4}\}$, not $\{\tfrac{-k - \ell}{4}\}$ for some of them.
\par
Suppose that $M=\sm abcd \in \SLOD$ is of order $4$ and stabilizes $\zz = (z_1,z_2) \in \HH^2$.  Then multiplication 
by the diagonal matrix with diagonal entries $((cz_1+d)^{-1}, (c^\sigma z_2 +d^\sigma)^{-1})$ 
defines a linear map $J$ of $\CC^2$ that stabilizes the lattice $\fracL_\zz$ from \eqref{eq:latticeemb}
(i.e.\ the corresponding abelian surface has complex multiplication by the ring generated by $\ord_D$ and $J$).
To show that  $b_\zz(k,\ell) = \{\tfrac{-k + \ell}{4}\}$ is hence equivalent to showing that $J^{-1}$ (or $J$)
has two eigenspaces of dimension one, rather than a two-dimensional eigenspace.
\par
Mukamel (\cite{Mu11}) studies, along with his classification of fixed 
points of \Teichmuller\ curves, 
the locus $\M_2(D_8)$ of genus two surfaces with automorphism group containing
the dihedral group of order $8$. He shows that all the fixed points of order two on the
\Teichmuller\ curves $W_D$ lie on the intersection (in \changed{$\cAA_2$}) 
of the Hilbert modular 
surface $X_D$ with $\M_2(D_8)$. The family of curves over \changed{$\M_2(D_8)$}  
is given by the hyperelliptic equation 
$$Y^2 = (X^2-1)(X^2+aX+1), \quad a \in \CC \setminus \{\pm 2\}.$$
The automorphism of order four is $J(X,Y) = (\tfrac1X,\tfrac{iY}{X^3})$ and the eigendifferentials
are $dX/Y + XdX/Y$ and $dX/Y - XdX/Y$, which lie in the eigenspace for $+1$ and for $-1$ respectively. This proves the claim on the $J$-eigenspaces.
\end{proof}
\par

\subsection{Gauss-Manin connection and Picard-Fuchs equation}  \label{sec:GMandPF}

Here we explain why \Tei\ curves give rise to twisted modular forms and how
to obtain the differential equations we attached to them in 
Section~\ref{sec:twistedDE} geometrically. For the moment, let $W$ be any curve 
in~$\M_2$ such that the corresponding family of Jacobians has real multiplication 
by an order in $K$. Then the vector bundle with fiber $H^1(C,\CC)$ over the point 
$[C] \in \changed{\M_2}$ splits (over $\RR$, and in fact over $K$), as in the proof of 
Theorem~\ref{thm:charTeich}, into rank two subbundles $\LL$ and $\widetilde\LL$. 
This vector bundle also comes with a flat (Gauss-Manin) connection $\nabla$.  
The bundles $\LL$ and~$\wt\LL$ come with holomorphic subbundles $\cLL$ and 
$\wt{\cLL}$ respectively, whose fibers over $X$ are the holomorphic one-forms 
on $C$ that are eigenforms for the real multiplication.
The bundles  $\cLL$ and $\wt{\cLL}$ naturally extend over the cusps $\ol{W} \setminus
W$,  where the fibers are stable forms. We denote them by the same letters.
(We recall that a form is called {\it stable} if in the limit as $t\to t_0$, where the 
genus~2 curve parametrized by~$t\ne t_0$ degenerates to a curve of genus~0 with 
double points, the corresponding differential on the normalization of this curve
has simple poles with opposite residues at the points that get identified.)

Suppose for simplicity that $W$ is a rational curve with parameter $t$. If
we choose sections $\omega(t)$ of $\cLL$ and $\wt\omega(t)$ of $\wt \cLL$, then 
$\{\omega(t), \nabla(\partial/\partial t)\omega(t),\nabla(\partial/\partial t)^2\omega(t)\}$
are linearly dependent in cohomology. Concretely, this means that if 
 $L$ is the corresponding second order differential linear operator, a quadratic polynomial in 
$\partial/\partial t$, then the image of $\omega(t)$ under $L$ is exact. Similarly, 
$\{\wt \omega(t), \nabla(\partial/\partial t)\wt \omega(t),\nabla(\partial/\partial t)^2\wt\omega(t)\}$ 
are linearly dependent and give a second order differential operator~$\wt L$ that makes $\wt \omega(t)$ exact.
It follows that the periods, defined as the integral of $\omega(t)$ and $\wt\omega(t)$
over any fixed element of $H_1(C,\CC)$ are annihilated by $L$ and $\wt L$ respectively. (Here
``fixed'' means that we use the property of being a local system to identify the homology
groups $H_1(C_t,\CC)=H_1(C_t,\Z)\otimes\CC$ with each other locally.)
These are the well-known {\em Picard-Fuchs differential equations} satisfied by periods.

Now assume that $W$ is a \Tei\ curve \changed{in $\M_2$ or, more generally, 
with quadratic trace field}. We show that the periods just described are (twisted)
modular forms of weight $(1,0)$ and $(0,1)$ respectively, with respect to a
modular embedding $\varphi$ as defined in Section~\ref{sec:hme}.
More precisely we have the following correspondence.
\par
\begin{Prop} \label{Prop5.6} 
Suppose that $W$ is a \Tei\ curve with uniformization $\HH/\G$ as above, and let~$L$ and~$\wt L$ be
the rank two differential operators associated with sections $\omega(t)$ and $\wt \omega(t)$
of~$\cLL$ and~$\wt\cLL$ as above. Then there is a rank-one submodule 
(in the rank-two $\mathcal{O}_{\ol{W}}(W)$-module of
solutions of~$L$) consisting of holomorphic modular forms of weight $(1,0)$,
and a rank-one submodule (in the rank-two $\mathcal{O}_{\ol{W}}(W)$-module of
solutions of~$\wt L$) consisting of twisted holomorphic modular forms of 
weight $(0,1)$. If $\omega(t)$ (resp.\ $\wt\omega(t)$) extends to a stable 
form over a 
cusp of $W$, then the corresponding (twisted) modular form is holomorphic at this cusp.
\par
This defines a $1\!:\!1$ correspondence between holomorphic sections of $\cLL$ over $\ol{W}$ and holomorphic 
twisted modular forms on~$\G$ of weight~$(1,0)$, and a $1\!:\!1$ correspondence between holomorphic sections of $\wt\cLL$
over $\ol{W}$ and holomorphic twisted modular forms on~$\G$ of weight~$(0,1)$.
\end{Prop}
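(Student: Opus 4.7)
The strategy is to interpret solutions of the Picard--Fuchs operators $L$ and $\wt L$ as period integrals on the universal cover $\HH$ of $W$, and to derive their transformation law under $\G$ from the monodromy representations of $\LL$ and $\wt\LL$ over a \Teichmuller\ curve. The key input, which I would import from \cite{Mo04}, is that on such a curve the monodromy of $\LL$ is the tautological inclusion $\G \hookrightarrow \SL\RR$ of the Veech group, while the monodromy of $\wt\LL$ is the Galois-conjugate image $\G^\s \subset \SL\RR$, and that the intertwiner between the two $\G$-actions on $\HH$ is precisely the modular embedding $\p$.

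First I would lift the picture to $\HH$. A section $\omega(t)$ of $\cLL$ pulls back to a holomorphic 1-form along the fibers of the pulled-back universal family, and for any locally flat section $\g$ of $\LL^\vee$ the period $\pi_\g(z) = \int_{\g(z)}\omega$ is a single-valued holomorphic function on $\HH$. As $\g$ varies, these periods span the rank-two local solution space of $L$, and similarly for $\wt L$ using cycles in $\wt\LL^\vee$ and sections of $\wt\cLL$. Choosing a flat basis $(\g_1,\g_2)$ of $\LL^\vee$ compatible with the symplectic form and setting $\pi_i(z) = \int_{\g_i}\omega$, an element $\g = \sm abcd \in \G$ permutes the two cycles by $\g$, so the periods transform correspondingly. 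The \Teichmuller\ uniformization fixes the normalization so that $\pi_2(z)/\pi_1(z) = z$, which forces $\pi_1(\g z) = (cz+d)\,\pi_1(z)$, i.e.\ $\pi_1 \in M_{1,0}(\G)$. The same bookkeeping for $\wt\LL$ gives $\wt\pi_2(z)/\wt\pi_1(z) = \p(z)$ by the very definition of $\p$, hence $\wt\pi_1(\g z) = (c^\s \p(z)+d^\s)\,\wt\pi_1(z)$, i.e.\ bi-weight $(0,1)$.

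Within the rank-two local solution space of $L$, only the line spanned by $\pi_1$ consists of modular forms, since the complementary generator $\pi_2 = z\pi_1$ transforms with an inhomogeneous extra term; the analogous statement holds for $\wt L$ with $\wt\pi_1$ and $\wt\pi_2 = \p(z)\wt\pi_1$. Letting $\omega$ vary over $H^0(\ol W, \cLL)$ gives the asserted bijection with $M_{1,0}(\G)$, the inverse being: given a weight-$(1,0)$ modular form $f$, take the unique section $\omega$ of $\cLL$ whose $\pi_1$-period with respect to the fixed flat basis equals $f$. The compatibility at cusps comes from the local form of the Gauss--Manin connection near a node: if $\omega$ extends to a stable form, then its period along the cycle that survives in the limit admits a convergent $q$-expansion at the cusp, matching the growth condition for a holomorphic modular form; the argument is reversible, so a modular form holomorphic at a cusp corresponds to a stable extension of $\omega$.

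The main obstacle is the compatibility between the abstract modular embedding $\p$ (defined via $\G$-equivariance on $\HH$) and the concrete monodromy representation of $\wt\LL$ (defined via parallel transport of cycles in the family of Jacobians). One must verify that the flat basis of $\wt\LL^\vee$ chosen to rigidify the period integrals is exactly the basis under which the monodromy of $\wt\LL$ is realized as $\G^\s$ through the prescribed $\p$, rather than through some twist by an element of $\PSL\RR$ that would only change $\p$ up to post-composition by a M\"obius transformation. Once this identification is pinned down, the remainder is routine bookkeeping with automorphy factors, parallel to the classical derivation of weight-1 modular forms from periods of elliptic integrals.
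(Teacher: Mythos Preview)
Your proposal is correct and follows essentially the same line as the paper's proof: both invoke \cite{Mo04} to identify the monodromy of $\LL$ and $\wt\LL$ with $\G$ and $\G^\s$, realize the solutions of $L$ and $\wt L$ as period integrals $f(z)=\int_\a\omega$, $zf(z)=\int_\b\omega$ (resp.\ $\tf(z)=\int_{\a^\s}\tom$, $\p(z)\tf(z)=\int_{\b^\s}\tom$), read off the automorphy factors from the matrix action, and handle the cusps by noting that the monodromy-invariant cycle pairs finitely with a stable form. The ``main obstacle'' you flag---that the period ratio $\wt\pi_2/\wt\pi_1$ agrees with the prescribed $\p$ rather than a M\"obius translate---is resolved in the paper by one sentence: the period map is $(\G,\G^\s)$-equivariant from $\HH$ to $\HH$, and the modular embedding is unique with this property (given the normalization $\p_0=\mathrm{id}$ already fixed), so the two must coincide.
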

\par

\begin{proof} In \cite{Mo04} it was shown that there exists an oriented basis $\beta,\,\alpha$ of the 
kernel of $\wt\omega$ in $H_1(C,\RR)$ such that the monodromy representation of
$\pi_1(W) = \Gamma$ on that subspace is the identity, and similarly a basis 
$\beta^\sigma,\,\alpha^\sigma$ of the kernel of $\omega$ with respect to which the monodromy representation
 is given by the Galois conjugate group $\Gamma^\sigma$. 
\par
Consequently, the period map $z \mapsto \int_{\beta} \omega(z)/\int_{\alpha} 
\omega(z)$ is equivariant with $\Gamma$ acting on domain and range, hence the 
identity after an appropriate conjugation by a M\"obius transformation. 
Moreover, the period map $z \mapsto  \int_{\beta^\sigma} \wt \omega(z)/\int_{\alpha^\sigma} 
\wt \omega(z)$ is equivariant with $\Gamma$ acting on the domain and 
$\Gamma^\sigma$ on the range. Hence this map agrees with $\p$ in the definition
of the modular embedding by the uniqueness of modular embeddings.
\par
As said above, the periods $f(z) = \int_{\alpha} \omega(z)$ and $f_1(z) = \int_{\beta} \omega(z)=zf(z)$ 
span the space of solutions of $L$ (pulled back to $\HH$ via $t$). The statement above about the monodromy implies that 
for all $\gamma = \sm abcd \in \Gamma$
$$ \left(\begin{matrix}f_1(\gamma z) \\ f(\gamma z) \end{matrix}\right)  
= \gamma \, \left( \begin{matrix}f_1(z) \\ f(z) \end{matrix}\right)=
\left( \begin{matrix}af_1(z) + bf(z) \\ cf_1(z) +df(z) \end{matrix}\right)=
\left( \begin{matrix}(az + b)f(z) \\ (cz +d)f(z) \end{matrix}\right).$$
The second row implies that $f$ is a twisted modular form for $\G$ of weight $(1,0)$. Similarly, the periods 
$\wt f(z) = \int_{\alpha^\sigma} \wt\omega(z)$ and $f_1(z) = \int_{\beta^\sigma} \wt\omega(z)=\p(z)\wt f(z)$ 
span the space of solutions of $\wt L$ and we have
$$ \left(\begin{matrix}\wt f_1(\gamma z) \\ \wt f(\gamma z) \end{matrix}\right)  
= \gamma^\sigma \, \left( \begin{matrix}\wt f_1(z) \\\wt f(z) 
\end{matrix}\right)=
\left( \begin{matrix}a^\sigma \wt f_1(z) + b^\sigma \wt f(z) \\ c^\sigma \wt f_1(z) +d^\sigma \wt f(z) \end{matrix}\right)=
\left( \begin{matrix}(a^\sigma \p(z) + b^\sigma)\wt f(z) \\ (c^\sigma \p(z) +d^\sigma ) \wt f(z) \end{matrix}\right).$$
Again, the second row implies that $\wt f$ is a twisted modular form for $\G$ of
weight $(0,1)$.
\par
Holomorphicity of $f$ and $\wt f$ in the interior of $\HH$ is obvious by the definition of a period. To show that 
they are holomorphic at the cusps, we may assume without loss of generality that $z_0=\infty$ and $t_0=0$.
There, it follows from the definition of the monodromy representation that $\alpha$ (resp.\ $\alpha^\sigma$) is characterized in 
$\langle \wt\omega\rangle^\perp$ (resp.~in $\langle \omega\rangle^\perp$) as the elements invariant under the local 
monodromy group. The period of a stable form along such a cycle is finite.
\par
To establish the last statement of the proposition, we just need to assign to every holomorphic twisted modular form 
of weight $(1,0)$ (resp.\ weight $(0,1)$) a section of $\cLL$ (resp.~of~$\wt\cLL$). This is well-known in the 
untwisted case and was done in both the untwisted and twisted cases in Section~\ref{sec:twistedDE} of~Part~I.
\end{proof}

We end with a remark on Galois conjugation and spin. We defined $\Gamma$ to be monodromy group of the local system~$\LL$. 
Then, of course, the monodromy group of the Galois conjugate $\wt \LL$ is $\Gamma^\sigma$.
We will see in the next section in an example, and at the end of the paper in general, that
the solutions $y$ and $\wt y$ also have coefficients in the field~$K$. However,
the Galois conjugate solution $y^\sigma$ is neither equal to $\wt y$ nor to any other solution of $\wt \LL$. 
In fact, $y^\sigma$ is naturally a solution of a differential operator associated with the Galois conjugate
\Teichmuller\ curve $W^\sigma$. For $D \equiv 1 \mod 8$ this is the curve
with the other spin invariant (see Theorem~\ref{thm:topoTeich}~iv)).  
For $D \not\equiv 1 \mod 8$ this Galois conjugate curve is isomorphic to the 
original curve by McMullen's classification recalled in Theorem~\ref{thm:TMclassg2}. 
The equation of this curve for $D = 13$ is given explicitly in~\cite{BM07}.

\section{Example: the curve $W_{17}^1$ and its associated differential equations}
\label{sec:BouwMoeller}

Our running example, from now until the end of Part~II, will be the \Teichmuller\ curve
$W_{17}^1$ on the Hilbert modular surface $X_{17}$. In this section we gather
the known results for this curve, computing the Veech group and summarizing the 
construction from~\cite{BM07} to compute the equation of 
the universal family and the corresponding Picard-Fuchs differential equations.


\subsection{The Veech group for $D=17$ and spin~1.} \label{generators}
For small values of $D$ the Veech groups $\G_D^\w = \G(W_D^\w)$ can be calculated using the algorithm in~\cite{McM03}. \changed{This is sufficient for our purposes, but we 
emphasize that a general algorithm to compute the Veech group for any Veech surface
has been developed and implemented by Mukamel~\cite{muk12}.} 
We describe this in detail for the case $D=17$, $\w=1$, i.e., for the \Tei{} curve of non-trivial spin.
McMullen's algorithm gives a subgroup of $\SL{\O_D}$, so the group that we will get (which we will denote
simply by $\G$, or by $\G_{17}$ when needed for clarity, with quotient~$W=\H/\G$) is actually the conjugate 
$\Delta\G_{17}^1\Delta\i$ of $\G_{17}^1$ by $\Delta=\sm100\sD$, and for the same reason the function
$\varphi$ used to make the modular embedding will go from $\HH$ to~$\HH^-$ rather than from~$\HH$ to~$\HH$.
Later, when we use this modular embedding explicitly to compare the twisted modular forms on~$W$ with
standard Hilbert modular forms for~$\Q(\sqrt{17})$, we will conjugate back to make the comparison easier.

We denote by $\a=(1+\sq)/2$ the standard generator of $\O=\O_{17}$ over~$\Z$, and for ease of reading will sometimes
use the abbreviated notation $[m,n]$ for $m+n\a\in\O$.  The group~$\G$ can be embedded into~$\SL\R$ by the standard
embedding of $\O$ into~$\R$ and then acts discretely.  (Note that the other embedding of~$\O$ into~$\R$ would lead
to a non-discrete subgroup of~$\SL\R\,$!)  A fundamental domain for this 
action is shown in Figure~\ref{cap:Ltable}\,(b),
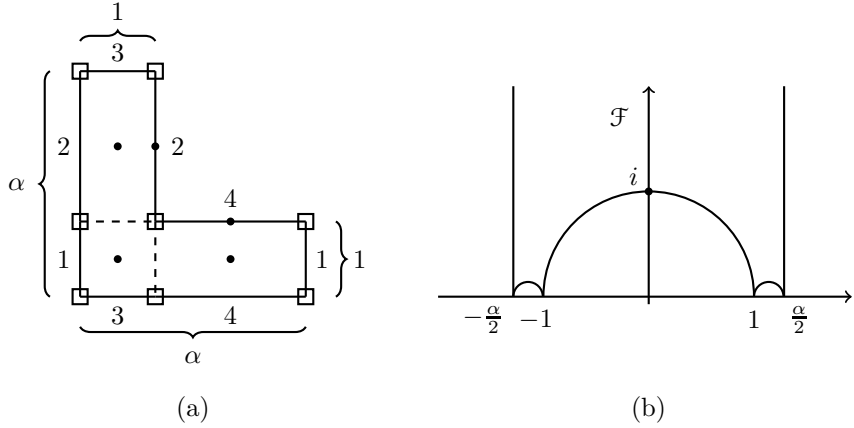
\begin{figure}[h]  
\begin{center}  
\begin{tikzpicture}[shift={(-1,0)}]
\fill (.5,.5) circle (1.5pt);
\fill (.5,2) circle (1.5pt);
\fill (1,2) circle (1.5pt);
\fill (2,.5) circle (1.5pt);
\fill (2,1) circle (1.5pt);
\draw (2,1.3) node {$4$};
\draw (1.3,2) node {$2$};
\draw[thick] (0,1) -- (0,3) node[midway,left] {$2$};
\draw[thick] (0,0) -- (0,1) node[midway,left] {$1$};
\draw[thick] (0,0) -- (1,0) node[midway,below] {$3$};
\draw[thick] (1,0) -- (3,0) node[midway,below] {$4$};
\draw[thick] (0,3) -- (1,3) node[midway,above] {$3$};
\draw[thick] (1,3) -- (1,1);
\draw[thick] (1,1) -- (3,1);
\draw[thick] (3,1) -- (3,0) node[midway,right] {$1$};
\draw[thick,dashed] (0,1) -- (1,1);
\draw[thick,dashed] (1,1) -- (1,0);
\draw[thick] (-.1,2.9) rectangle (.1,3.1);
\draw[thick] (-.1,.9) rectangle (.1,1.1);
\draw[thick] (-.1,-.1) rectangle (.1,.1);
\draw[thick] (.9,2.9) rectangle (1.1,3.1);
\draw[thick] (.9,.9) rectangle (1.1,1.1);
\draw[thick] (.9,-.1) rectangle (1.1,.1);
\draw[thick] (2.9,.9) rectangle (3.1,1.1);
\draw[thick] (2.9,-.1) rectangle (3.1,.1);
\draw [thick,decorate,decoration={brace,amplitude=4},xshift=-0.4cm,yshift=0cm] (0,0) -- (0,3)
 node [black,midway, left, xshift=-.2cm, yshift=0cm] {$\alpha$};
\draw [thick,decorate,decoration={brace,amplitude=4},xshift=0cm,yshift=-0.4cm] (3,0) -- (0,0)
 node [black,midway, below, xshift=0cm, yshift=-.2cm] {$\alpha$};
\draw [thick,decorate,decoration={brace,amplitude=4},xshift=0cm,yshift=0.4cm] (0,3) -- (1,3)
 node [black,midway, above, xshift=0cm, yshift=0.15cm] {$1$};
\draw [thick,decorate,decoration={brace,amplitude=4},xshift=0.4cm,yshift=0cm] (3,1) -- (3,0)
 node [black,midway, right, xshift=0.1cm, yshift=0cm] {$1$};
\draw (1.5,-1.5) node {(a)};
\end{tikzpicture}
\qquad
\begin{tikzpicture}
\draw[thick,->] (0,0) -- (5.5,0);
\draw[thick,->] (2.8,-0.1) -- (2.8,2.8);
\draw[thick,-] (4.6,0) -- (4.6,2.8);
\draw[thick,-] (1,0) -- (1,2.8);
\draw[thick] (1.4,0) arc (180:0:1.4);
\draw[thick] (4.2,0) arc (180:0:0.196);
\draw[thick] (1,0) arc (180:0:0.196);
\draw (.6,-.3) node {$-\frac{\alpha}{2}$};
\draw (1.3,-.3) node {$-1$};
\draw (4.2,-.3) node {$1$};
\draw (4.8,-.3) node {$\frac{\alpha}{2}$};
\draw (2.6,1.6) node {$i$};
\draw (2.4,2.4) node {$\mathcal{F}$};
\fill (2.8,1.4) circle (1.5pt);
\draw (2.8,-1.5) node {(b)};
\end{tikzpicture}

\end{center}
    \caption{(a) Flat surface generating the \Teichmuller\ curve $W_{17}$ when $\alpha=(1+\sq)/2$. The
     square is the double zero of $\omega$, the black points are the remaining $5$ Weierstrass points.
\newline
(b) A fundamental domain for $\Gamma$. }
 \label{cap:Ltable}
\end{figure} 
while Figure~\ref{cap:Ltable}\,(a) shows the explicit ``L-shaped region'' needed to apply the algorithm and obtain the fundamental domain. 
The group~$\G$ has three cusps, at~$z=\infty$,~1, and~$\a/2$, and an elliptic fixed point of order~2 at~$z=i$, where~$z$ is the
coordinate in~$\H$.  The stabilizers of the cusps are the infinite cyclic groups generated by the parabolic elements
$$  M_\infty=\mat1\a01, \;\,  M_1 \=\mat{-2\a-2}{2\a+3}{-2\a-3}{2\a+4},\;\, M_{\a/2}\=\mat{-2\a-3}{3\a+4}{-2\a-2}{2\a+5}\,,  $$
and the stabilizer of~$i$ is generated by the element $M_i=\sm01{-1}0$. The presentation of the group~$\G$ given by
McMullen's algorithm is then
$$ \G \= \bigl\langle M_\infty\,,\;M_i\,,\; M_1\,,\;M_{\a/2}\;\bigl|\; M_i^2\,=\,-1,\; M_\infty M_iM_1M_{\a/2}\,=\,1\bigr\rangle\;.$$

It will be useful in the sequel to deal not only with the Fuchsian group~$\G$ but also with a certain index~4 subgroup 
$\Pi$ of it, already mentioned in~\S\ref{sec:dimvan}. This group is more convenient for purposes of
calculation because it is free and also because the universal genus~2 curve over $\overline{\H/\Pi}$ has a stable model. 
We want that $\wt\Pi=\Pi\times\{\pm1\}$ has index two in $\G$. (This already implies that $\Pi$ has no torsion and
is thus free.) This group is not unique. We fix the choice 
$$ \Pi \= \langle M_\infty,\; M_{\a/2},\; M_1^2,\; -M_1^{-1} M_\infty M_1 \rangle\,.$$
The group $\wt\Pi=\Pi\times\{\pm1\}$ is the kernel of the homomorphism from 
$\G \to \{\pm1\}$ sending $M_\infty$ and $M_{\a/2}$ to~1 and $M_i$ and $M_1$ to~$-1$.  

Both curves $\H/\G$ and $\H/\Pi$ have genus~0, so there are modular functions $s(z)$ and $t(z)$ on~$\G$ and $\Pi$
giving isomorphism between their compactifications and~$\P^1(\C)$ (hauptmodules).  We can normalize them so that the
involution induced by $M_i\,:\;z\mapsto-1/z$ corresponds to $t\mapsto1/t$ and that the three cusps of $\H/\Pi$ are at~0, 1
and~$\infty$. Then the quotient map $t\mapsto s$ is given by
 \be\label{sandt}  s \=  -\frac{4\,\kappa_0\,t}{(t-1)^2}\,,\qquad\text{where}\quad \kappa_0=\frac{-895+217\sq}{256} \ee
and the values of~$s$ and $t$ at the cusps and elliptic fixed points are given, according to the calculations in~\cite{BM07}
(where a different parameter on $\H/\Pi$ was used), by the table 

\be\label{VALUES} \begin{tabular}{|c||c|cccccc|} \hline   & $z$ & $\infty$ & 0 &  1 & $\a/2$ & $-2/\a$ & $i$ \\
\hline $\Pi$ & $t=t(z)$ & $0$ & $\infty$ & 1 & $\la\i$ & $\la$ & $-1$ \\ 
\hline $\G$ & $s=s(z)$ & 0 & 0 & $\infty$ & $1$ & $1$  & $\kappa_0$ \\
\hline \end{tabular} \ee
\smallskip\noindent  where 
\be \label{deflambda} \la\,=\,t\Bigl(-\frac2\a\Bigr)\,=\,\frac{31-7\sq}2\,, \qquad \kappa_0\,=\,s(i)\,=\, -\frac{(\la-1)^2}{4\la}\,. \ee
For later use we emphasize that both $t$ and $s$ are local parameters of the Teich\-m\"uller curve
at the cusp $z=\infty$.  The whole situation is summarized by the following diagram.

$$\xymatrix{ \overline{W}_\Pi = \overline{\H/\Pi} \ar[r]^{{\phantom x}t} 
\ar[d]_{2:1\;} & \P^1(\C) \ar[d]^{\;s\,=\,-\frac{4\kappa_0t}{(1-t)^2}}\\
\overline{W} = \overline{\H/\G} \ar[r]^{{\phantom x}s}   & \P^1(\C)  \\}  $$

\subsection{The universal family over $W$.} \label{UnFa}
The modular curve $W=\HH/\G$ parametrizes a family of genus 2 curves with real 
multiplication by $\O=\Z+\Z\a$ on their Jacobians.  This family, and its associated Picard-Fuchs differential
equations, was determined explicitly in~\cite{BM07}.  In this subsection we review these results, and also give 
somewhat simpler equations by making suitable changes of coordinates. \changed{We remark
that meanwhile equations of more Teichm\"uller curves have been computed by
a different method by Kumar and Mukamel (\cite{KuMu14}).}

The explicit equation in \cite{BM07} was actually given for the family over the double cover $\HH/\Pi$
of~$W$, with the parameter~$t$, and has the form
 \ba \label{BMform}  & Y^2 \= P_5(X,t) \= \bigl(X\+(At+B)\bigr)\,\times\,\bigl(X\+(Bt+A)\bigr)\,\times \\
    & \quad\times\,\bigl(X^3\+C(t+1)X^2\+(D(t+1)^2+Et)X\+F(t+1)^3+Gt(t+1)\bigr)\,, \ea
with coefficients $A,\dots,G$ given (with our above notation $[m,n]=m+n\a$) by
\ba \label{BMcoeffs} & A=5\,[2,1]\,,\quad B=-2\,[5,3]\,,\quad C=[3,1]\,,\quad D=-\frac14\,[827,529]\,,\\
  & E=2^4\cdot17\,[3,2]\,,\quad F=-\frac12\,[4597,2943]\,, \quad G=2\cdot17\cdot[271,173]\,.\ea
(These coefficients are not quite as bad as they look since they all factor into small prime factors, 
e.g.~$D=-\2^{11}/4\ve^3$ and $F=-\pi_2^{14}\ve^4/2$ where $\ve=4+\sq$ is the fundamental unit of~$\Q(\sq)$ 
and $\2=(3+\sq)/2$ one of the prime factors of 2.)  We explain briefly how this equation is derived.

We can represent the fiber $C_t$ over~$t$ as a hyperelliptic curve $Y^2=P_6(X,t)$,
where $P_6$ is a polynomial of degree~6 in~$X$ whose roots correspond to the 
Weierstrass points of $C_t$.  From the action of the \changed{subgroup 
$\Pi\subset\G$ in} the Veech groups on these points (which can be
analyzed by looking at the Figure~\ref{cap:Ltable}{A}, in which the Weierstrass points are indicated by black points), 
we see that they break up into three orbits of size~1 and one of size~3, with two of the 1-element orbits being 
interchanged by the symmetry $t\mapsto t\i$. Placing the other 1-element orbit, the singularity of~$\om$,
at infinity, we get a new equation of the form $Y^2=P_5(X,t)$ where
$P_5$ factors into two linear and one cubic polynomial, and by degree computations together with the
symmetry under $t\mapsto t\i$ we find that these factors must have the form given in \eqref{BMform}
for some constants $A,\dots,G$. They are not unique, since we can make a change of variables $X\mapsto\a X+\b(t+1)$,
but become unique up to scaling if we assume that the two eigendifferentials $\om=dv_1$ and $\tom=dv_2$ are $dX/Y$ 
and $X\,dX/Y$, respectively.  To determine them, we note that at each cusp $t=c\in\{0,\,1,\,\la\}$ this polynomial must
acquire two double roots (the degeneracy of the genus~2 curve at infinity cannot consist of just two roots
coming together, because the real multiplication forces the subspace of $H_1(C;\Z)$ that collapses to be an $\O$-module 
and hence to have even rank over~$\Z$), so we have $P_5(X,c)=(X-X_0^c)(X-X_1^c)^2(X-X_2^c)^2$.  (The corresponding 
factorizations at the two other cusps $t=\infty$ and $\la\i$ are then automatic because of the $t\mapsto t\i$ symmetry.) 
These conditions do not yet suffice to determine the equation, but there is one further condition at each cusp.
This comes from the fact that a singular curve of the form $Y^2=(X-X_0)(X-X_1)^2(X-X_2)^2$ has genus~0. 
A parametrization with parameter $T$ is 
given by $X=(X_1-X_0)T^2+X_0$ and $Y=(X_1-X_0)^{5/2}T(T^2-1)(T^2-\rho^2)$, where 
$\rho^2=(X_2-X_0)/(X_1-X_0)$ denotes the cross-ratio of $X_0$, $X_1$, $X_2$ and $\infty$.  
The differential form $dX/Y$ corresponds under this map to a multiple of the differential form
$\Bigl(\frac1{T-1}-\frac1{T+1}+\frac{\rho\i}{T-\rho}-\frac{\rho\i}{T+\rho}\Bigr)\,dT$ on $\P^1$ having four simple poles 
with residues summing to~0 in pairs, and with the ratio of the non-paired residues being $\pm\rho$.
But for the cusps of the \Teichmuller\ curves we know {\it a priori} that these ratios of residues must equal the 
ratio of the top and bottom sides of the $L$-shaped region as it degenerates. From the horizontal sides
Figure~\ref{cap:Ltable}{A} we read off the value $\rho_0=\a$. Redrawing this figure decomposed into cylinders
in the direction of slope one (corresponding to the cusp $t=1$) and slope $\alpha/2$ 
(corresponding to the cusp $t=\la$ by the table in the preceding subsection) we find  
$\rho_1=\a/2$, $\rho_\la=(1+\a)/2$.  This information now suffices to determine all of the unknown
coefficients, up to the ambiguity already mentioned (in particular the second eigendifferential form $\tom=X\,dX/Y$ 
automatically has the correct ratio of residues, namely, the Galois conjugates of the ones for $\om$, so that there 
are no extra restrictions on the coefficients coming from this condition), and carrying out the calculation we
find the values given in~\eqref{BMcoeffs}.

We remark that equation \eqref{BMform} can be simplified considerably by substituting $(1+t)(1+X\sq)/4$ for $X$,
in which case $P_5(X,t)$, up to a factor $(\sq(1+t)/4)^5$, takes on the much simpler form 
$$ F_5(X,u)\=\bigl((X-1)^2\,-\,[4,5]\,u\bigr)\,\bigl((X+\sq)(X+1)^2\,-\,8u(2X+[9,5])\bigr)$$
with $u=4\ve\,\bigl(\frac{1-t}{1+t}\bigr)^2=\frac{4\ve}{1-s/\kappa_0}$.  This gives an 
explicit and relatively simple equation for the family of genus~2 curves over the \Teichmuller\ curve $\HH/\G$.

\subsection{The Picard-Fuchs equations for $W$ and their solutions.} \label{sec:BMDE}

As already discussed in Section~\ref{sec:dimvan}, even though we are considering only the single curve $W=\HH/\G$, 
there are {\it two} Picard-Fuchs differential equations, corresponding to the variation of the periods of the two 
eigendifferentials $\om$ and $\tom$ for the
action of $\O$ on the space of holomorphic differentials of the fibers.  It will be crucial for our
calculations to have both of them, since together they will tell us explicitly how the \Teichmuller\
curve $W$ is embedded in the Hilbert modular surface $X_{17}$.

Obtaining the Picard-Fuchs differential equations satisfied by the periods of the two eigendifferentials
$\om$ and $\tom$ is straightforward once the equation of the family of curves has been obtained.  
One has to find differential operators $L$ and $\wt L$ mapping the one-forms $\omega$ and $\wt\omega$ to exact forms.
The result, given in~\cite{BM07}, is a pair of differential operators of the same form as in~\eqref{diffeq10}, namely  
\be \label{BMDE} L \= \frac d{dt}\, A(t)\,\frac d{dt}\+B(t)\,,\quad \tL \= \frac d{dt}\, \tA(t)\,\frac d{dt}\+\tB(t)\,,  \ee
where $A(t)$ and $B(t)$ are the polynomials given by
\be \label{AB} \begin{aligned}
   A(t)& \=t\,(t-1)\,(t-\la)(t-\la\i)\=t^4\,-\,\b t^3\+\b t^2\,-t\,,\\ 
   B(t)&\=\frac34\, \bigl(3t^2\,-\,(\b+\g)\,t\+\g\bigr) \,,\end{aligned}  \ee
with $\la=(31-7\sq)/2$ as in~\eqref{deflambda} and $\b$ and $\g$ defined by
\be \label{defbetgam} \b=\la\+\la\i+1=\frac{1087-217\sq}{64}\,,\quad  \g=\frac{27-5\sq}4\;,   \ee
and where $\tA(t)$ and $\tB(t)$ are the rational functions
\ba \tA(t)&\=A(t)\Bigl/\Bigl(t^2\+\frac{137-95\sq}{128}\,t\+1\Bigr)\,,\\ 
  \tB(t)&\= \Bigl(\frac14\,t^4\+\frac{1113-399\sq}{512}\,t^3\,-\,\frac{260375-69633\sq}{16384}\,
  t^2 \\&\,-\,\frac{1387-301\sq}{128}\,t\+\frac{23-5\sq}8\Bigr)\Bigl/\Bigl(t^2\+\frac{137-95\sq}{128}\,t\+1\Bigr)^2\,.\ea
The differential operator $L$ has five singularities, at infinity and at the roots of $A(t)$.  The differential operator
$\tL$ has seven singularities, these five and two more at the poles of $\tA$, but these last two are only 
apparent singularities of the differential equation, i.e., all solutions of the equation are holomorphic at these points.

The unique solutions in $1\+t\,\CC[[t]]$ of the differential equations $Ly=0$ and $\tL\ty=0$ can easily be calculated
recursively. The first few terms are given by
  \bas y &\= 1 \+ \tfrac{81 - 15\sq}{16}\,t \+  \tfrac{4845- 1155\sq}{64}\,t^2  \+ \tfrac{3200225 - 775495\sq}{2048}\,t^3 \+\,\cdots \\
       & \;\approx\; 1 \+ 1.197\,t \+ 1.294\,t^2 \+ 1.356\,t^3 \+ 1.402\,t^4 \+ 1.439\,t^5 \+ \,\cdots  \\
   \ty &\= 1 \+ \tfrac{23 - 5\sq}{8}\,t \+ \tfrac{5561- 1343\sq}{128}\,t^2 \+ \tfrac{452759 - 109793\sq}{512}\,t^3 \+\,\cdots \\
       & \;\approx\;  1 \+ 0.2981\,t \+ 0.1849\,t^2 \+ 0.1384\,t^3 \+ 0.1131\,t^4 \+ 0.0973\,t^5  \+\,\cdots  \eas
There are also unique power series $y_1$ and $\ty_1$ without constant term such that $y\,\log(t)+y_1$ and $\ty\,\log(t)+\ty_1$ 
are solutions of the same differential equations as $y$ and $\ty$, respectively.  These series begin
   \bas y_1 &\= \tfrac{439 - 97\sq}{64}\,t \+ \tfrac{563089 - 135575\sq}{4096}\,t^2 
             \+ \tfrac{200641639 - 48642353\sq}{65536}\,t^3 \+ \,\cdots  \\
            &\;\approx\; 0.6103\,t \+ 1.001\,t^2 \+ 1.283\,t^3 \+ 1.504\,t^4 \+ 1.687\,t^5 \+ \, \cdots \\
      \ty_1 &\=  \tfrac{1575- 369\sq}{128}\,t \+ \tfrac{1749337- 423695\sq}{8192}\,t^2 
             \+ \tfrac{1764480419 - 427927381\sq}{393216}\,t^3 \+ \,\cdots\\
   &\;\approx\;  0.4185\,t \+ 0.2927\,t^2 \+ 0.2305\,t^3 \+ 0.1958\,t^4 \+ 0.1748\,t^5 \+ \, \cdots    \eas

We have given the numerical values of the first coefficients of each of these four power series 
to emphasize that they are quite small (and the same is true of the first few hundred, which we have computed).
In fact, the coefficients in each case grow like $\la^n$, where $\la=1.06913\cdots$ is the number defined 
by~\eqref{deflambda}, since the radius of convergence is the absolute value of the nearest singularity $t\ne0$, and
the singularities are at $t=0$, $\la\i$, $1$, $\la$ and $\infty$.  The growth in each case is quite regular,
with the coefficient of~$t^n$ being asymptotic to a constant times~$\la^n/n$.  It is perhaps worth mentioning that if we
took the Galois conjugates~$L^\s$ and $\tL^\s$ of the differential operators~$L$ and $\tL$, which give the Picard-Fuchs 
equations for the other \Tei\ curve $W_{17}^0$ (cf.~\cite{BM07} or \S\ref{sec:ThetaDer}), then the power series $y,\dots,\ty_1$
would also be replaced by their Galois conjugates and would look algebraically very similar to those above, but would
have completely different real coefficients and growth, e.g., the expansion of $y^\s$ begins
 \bas y^\s &\= 1 \+ \tfrac{81+15\sq}{16}\,t \+  \tfrac{4845+1155\sq}{64}\,t^2  \+ \tfrac{3200225+775495\sq}{2048}\,t^3 \+\,\cdots \\
       & \;\approx\; 1 \+ 8.928\,t \+ 150.11\,t^2 \+ 3123.9\,t^3 \+ 71667\,t^4 \+ 1738907\,t^5 \+ \,\cdots\,,  \eas
now with coefficients growing like $(\la^\s)^n/n$ with $\la^\s=29.93086\cdots$.  The corresponding Fuchsian group, although isomorphic 
to~$\Pi$ as an abstract group, is not conjugate to it in $\SL\R$, and the quotients of the upper half-plane by these two
groups, which are the curves $W_D^0$ and $W_D^1$, represent different points of the moduli space~$\M_{0,5}$.
 
Another very striking property of the expansions of~$y$ and $\ty$ given above (and then of course also 
of their conjugates $y^\s$ and $\ty^\s$) is that the only denominators one sees are powers of~2, i.e., 
the first few coefficients of these power series all belong to the ring $\O[\tfrac12]$.  A calculation to
higher accuracy shows that the same holds for the first few hundred coefficients, and in fact it is a 
theorem, proved in~\cite{BM07}, that it holds for all coefficients.  We will return to this question at
the end of the next section because it is has a very interesting aspect that was in fact the point of departure
for our whole investigation.  

\section{Arithmetic properties of modular forms for $W_{17}^1$}\label{sec:arithmeticW17} 

With the preparations in the preceding sections we can now compute in \S\ref{sec:DEmodular}
the modular embedding~$\varphi$ in the example $D=17$. In the process we compute
the Fourier expansions of some modular forms and later, in \S\ref{sec:Twisted} we 
completely determine the ring of twisted modular forms in this specific example. 
This arithmetic of the coefficients reveals two surprising phenomena, a transcendental
constant needed for the correct choice of the $q$-parameter and the integrality
statement mentioned above and proved in~\cite{BM07}, which cannot be explained using one modular
$q$-variable. We discuss these in \S\ref{sec:DEintegral}, and provide the explanations in Section~\ref{sec:eqfromdiffeq}.

\subsection{Modular parametrization of the differential equations} \label{sec:DEmodular}
We have already mentioned that the differential equations $Ly=0$ and $\tL\ty=0$ have the same form
as the differential equation~\eqref{diffeq10} satisfied by ordinary or twisted modular forms with respect
to a hauptmodule.  This is of course not a coincidence:  we have
$$  y(t(z)) \= f(z)\,, \;\quad \ty(t(z)) \= \tf(z) \qquad \text{for $\Im(z)$ large, $|t(z)|$ small} $$
where $t:\H/\Pi\to\C$ is the map defined in~\S\ref{generators} and $f(z)$ and $\tf(z)$ are
a modular form of weight~1 and a twisted modular form of weight~(0,1), respectively, on the same group~$\Pi$.
In this subsection we will work out this statement in more detail, obtaining in particular
a way to calculate the expansion \eqref{eq:phiabsexpansion} of the function $\p:\H\to\Hm$ whose graph gives 
the embedding of $\H/\Pi$ into $\H\times\Hm/\SL{\O_{17}}$ as discussed in Section~\ref{sec:hme}.  (Here we need $\Hm$,
rather than~$\H$ as in Section~\ref{sec:hme}, because we have conjugated the original Veech group by~$\D=\sm100{\sq}$ to embed it into 
$\SL{\O_D}$ and the Galois conjugate of~$\D$ has negative determinant.)  We will also calculate the $q$-expansion
of $\tf(z)$, obtaining our first explicit example of a twisted modular form. 
In Section~\ref{sec:eqfromdiffeq}
we will use this information to determine completely the rings of twisted modular forms for~$\G$ and $\Pi$ and the
algebraic description of the \Tei\ curve $W$ inside the Hilbert modular surface $X_{17}$.

We begin with the functions $y$ and~$f$.  As stated in \S\ref{generators}, the cusp at infinity 
for either~$\G$ or~$\Pi$ has width~$\a$, i.e., its stabilizer is generated by the transformation $z\mapsto z+\a$, 
where~$\a=(1+\sqrt{17})/2$ is our standard generator of~$\O_{17}$, so any modular function or modular form on 
either group can be written as a power series in the variable $q=e^{2\pi iz/\a}$.  On the other hand, we know that
the space of solutions of the differential equation satisfied by any weight~1 modular form $f(z)$ with respect 
to any modular function on the same group is spanned by $f(z)$ and~$zf(z)$.  Our first thought is thus that~$q$
coincides with the ``mirror parameter"
 \be \label{defQ}  Q \= Q(t) \= t\exp(y_1/y)\,, \ee
where $y=y(t)$ and $y_1=y_1(t)$ are the two power series in~$t$ defined in~\S\ref{sec:BMDE}.  This is indeed what happens 
in the case of the Ap\'ery or Ap\'ery-like differential equations (see~\cite{Z09}), at least if one normalizes the 
hauptmodule correctly.  Here, however, it is not quite true. We can see this numerically as follows.
The function $Q(t)$ has a Taylor expansion beginning 
 \be \label{Qwrtt} Q(t) \= t \+ \tfrac{439 - 97\sq}{64}\,t^2 \+ \tfrac{249125 - 60195\sq}{2048}\,t^3 \+ \cdots\,. \ee
We can invert this power series to obtain
 \be\label{twrtQ} t\= t(Q) \= Q \,-\, \tfrac{439 - 97\sq}{64}\,Q^2 \+ \tfrac{103549-24971\sq}{2048}\,Q^3 \+ \cdots \ee
and then substitute this into the expansion of $f(z)=y(t)$ to express $f(z)$ as a power series
 \ba \label{fwrtQ} f(z) &\=  1 \+ \tfrac{81-15\sq}{16}\,Q \+ \tfrac{8613-2019\sq}{512}\,Q^2 \+ \tfrac{726937-175823\sq}{16384}\,Q^3 \+ \cdots \\
  &\;\approx\; 1 + 1.197\,Q + 0.563\,Q^2 + 0.122\,Q^3 + 0.0082\,Q^4  -0.0011\,Q^5 - \,\cdots \ea
in the new local parameter~$Q$ at infinity.  Looking at the first few numerical coefficients in this
expansion, we see that they seem to be tending to~0 rapidly, suggesting that the radius of convergence of 
this power series is larger than~1, which is the value it would have to have if we were expanding with respect to~$q$.  
The point is that, although the function $\,\log Q=\log t\+y_1/y$ has the same behavior at infinity as~$2\pi iz/\a$,
namely, that it is well-defined up to an integer multiple of~$2\pi i$, this property determines it only up to
an additive constant. Therefore $q$ and $Q$ are related by 
 \be\label{eq:defA} Q \= A\,q \= A\,e^{2\pi iz/\a} \ee
for some constant $A\ne0$ that has no reason to be equal to~1.  The radius of convergence of the series in~\eqref{fwrtQ}
is then equal to the absolute value of this constant. 

We can use this idea, or a modification of it, to calculate~$A$ numerically.  First, by computing a few hundred coefficients of 
the series in~\eqref{fwrtQ} and calculating its radius of convergence by the standard formula $R=\liminf|a_n|^{-1/n}$,
where $a_n$ denotes the $n$th coefficient, we find that $|A|$ is roughly equal to 7.5.  However, this direct approach has
very poor convergence (because the coefficients of the expansion of $f(z)$ in~$Q$, unlike those of the same function when
written as a power series $y(t)$ in~$t$, do not behave in a regular way), and anyway gives only the absolute 
value of the scaling constant~$A$.  To find the actual value to high precision, we apply a simple trick.
From the data in the table~\eqref{VALUES}, we know that the value of $t(z)$ at $z=i$ equals~$-1$ and that this value
is taken on with multiplicity~1 (because $t$ is a hauptmodule for a group with no elliptic fixed points) and is
not taken on at any point in the upper half-plane with imaginary part bigger than~1 (because $i$ and its translates
by multiples of~$\a$ are the highest points in the $\Pi$-orbit of~$i$).   It follows that the function $1/(t(z)+1)$ has 
a simple pole at $z=i$ and that if we express this function as a power series in~$q$ (resp.~$Q$), then its 
singularity nearest the origin is a simple pole at $q_0=e^{-2\pi/\a}$ (resp.~$Q_0=Aq_0$).  In other words,
$1/(1+t)$ is the sum of $c/(1-Q/Q_0)$ for some non-zero constant~$c$ and a function holomorphic in a disc 
of radius strictly larger than~$|Q_0|$. This implies that if we expand  $1/(1+t)=\sum b_nQ^n$, then the coefficients $b_n$
are given by $b_n=cQ_0^{-n}(1+\text O(a^{-n}))$ for some~$a>1$, and hence that the quotients $b_{n+1}/b_n$ tend to~$Q_0$
with exponential rapidity.  Calculating a few hundred of the coefficients~$b_n$ numerically, we find from this the value
  $$  A \;\approx\; -7.48370822991173536914114556623211$$ 
to very high precision.  After some trial and error we can recognize this number ``in closed form" as
  \be \label{eq:A}  A \;\overset?=\; -2\,\bigl(3+\sq\,\bigr)\;\Bigl(\frac{5-\sq}{2}\Bigr)^{(\sq-1)/4}\,, \ee
and we will see later that this guessed value is indeed the correct one.  

Equations~\eqref{twrtQ}--\eqref{eq:A} now give as many terms as desired of the $q$-expansions of the 
modular function~$t(z)$ and modular form~$f(z)$.  We can (and of course did) then use this to check the correctness of
these equations numerically to high accuracy by verifying the invariance of $t(z)$, and the invariance of $f(z)$ up to 
an automorphy factor $cz+d$, under modular transformations $z\mapsto(az+b)/(cz+d)$ in the group~$\Pi$.
 Similarly, by inverting~\eqref{Qwrtt} we can also give the inverse of the uniformizing map $\H\to W_\Pi$ explicitly as
 \ba z(t) &\= \frac\a{2\pi i}\,\log \frac QA
 \= \frac\a{2\pi i}\,\Bigl(\log t  \+ \frac{y_1(t)}{y(t)} \,-\,\log A \Bigr)  \\
  &\=  \frac\a{2\pi i}\,\bigl(\log \frac tA  \+ \tfrac{439-97\sq}{64}\,t \+ \tfrac{321913 - 77807\sq}{4096}\,t^2 \+\cdots\bigr)\,. \ea

Exactly the same considerations apply to the second differential operator~$\tL$, with the difference that here the mirror parameter 
$$ \tQ \= t\,e^{\ty_1/\ty}\= t + \tfrac{1575- 369\sq}{128}\,t^2 \+ \tfrac{4814915- 1166773\sq}{16384}\,t^3 \+\cdots$$
is related to the variable $z$ in the upper half-plane by 
$$  \tQ \= \tA\,\tq \qquad\text{with}\qquad \tq \= e^{2\pi i\p(z)/\a^\s}\,, $$
where $\,\p:\H\to\Hm$ is the twisting map and~$\tA$ is some constant.  A calculation like the one for~$A$ gives the numerical value
$$  \tA \;\approx\;-40.9565407890298922716044572957685\,,$$ 
which we can recognize as the ``conjugate-in-the-exponent'' of the value in~\eqref{eq:A}:
\be\label{eq:tA}  \tA \;\overset?=\; -2\,(3+\sq)\,\Bigl(\frac{5-\sq}{2}\Bigr)^{(-\sq-1)/4}\;. \ee
We will show later that also this formula is indeed correct. 

We can now calculate the Fourier expansions of both the twisted modular form
$\tf(z)=y_1(t)$ and the twisting map $\varphi:\HH\to\HH^-$ as
$$ \tf(z) \=  1 \+ \tfrac{23-\sq}8\,A\,q \+ \tfrac{1951- 473\sq}{256}\,A^2\,q^2 \+ \tfrac{184453- 44739\sq}{8192}\,A^3\,q^3 \+ \cdots $$
and
\ba \label{phiExp} \p(z) &\= \frac{\a^\s}{2\pi i}\,\log \frac\tQ\tA
 \= \frac{\a^\s}{2\pi i}\,\Bigl(\log t \+ \frac{\ty_1(t)}{\ty(t)\,-\,\log \tA}\Bigr)  \\
  &\=  \frac{\a^\s}{2\pi i}\,\bigl(\log \frac t\tA \+ \tfrac{1575- 369\sq}{128}\,t 
      \+ \tfrac{1208617-292799\sq}{8192}\,t^2 \+\cdots\bigr)\\
 &\= \tfrac{-9+\sq}8z + \tfrac{1-\sq}{4\pi i}\Bigl(\tfrac{-\sq}2 \log(\tfrac{5-\sq}2) \+ \tfrac{697- 175\sq}{128}\,A\,q \\
 & \qquad \+\tfrac{-29767+7249\sq}{8192}\,A^2\,q^2\+\tfrac{3091637-749587\sq}{393216}\,A^3\,q^3\+\cdots\Bigr)\,.  \ea
Again these Fourier expansions, unlike the expansions of the same functions as power series in $t(z)$, converge
exponentially rapidly for all~$z$ in the upper half-plane and can be used to compute the functions $\tf(z)$
and $\p(z)$ numerically and to verify the modular transformation properties~\eqref{eq:phiequivar} and 
$\tf\bigl(\frac{az+b}{cz+d}\bigr)=(c^\s \p(z)+d^\s )\,\tf(z)$ numerically to high accuracy, giving us the
first explicit example of a non-classical twisted modular form on a \Tei\ curve.

\subsection{Modularity and integrality} \label{sec:DEintegral}
At the end of~\S\ref{sec:BMDE} we stated that all the coefficients of the expansions of~$y$ and $\ty$
as power series in~$t$ belong to the ring $\O[\tfrac12]$.  This integrality has a rather puzzling aspect, 
which we discuss here and resolve in~\S\ref{sec:puzzles}.

If we write $y$ as $\sum c_nt^n$, then the differential equation $Ly=0$ translates into the recursion
$$  (n+1)^2c_{n+1} \,=\, \bigl(\b(n^2+n)+\tfrac34\gamma\bigr)\,c_n \,-\,\bigl(\b(n^2-\tfrac14)+\tfrac34\gamma\bigr)\,c_{n-1} 
  \+ (n-\tfrac12)^2c_{n-2}  $$
for the coefficients $c_n$, where $\b$ and $\g$ are given by~\eqref{defbetgam}.  The integrality (away from~2)
of the $c_n$ is far from automatic from this recursion, because at each stage one has to divide a linear combination 
of previous coefficients by $(n+1)^2$, so that {\it a priori} one would only expect $n!^2c_n$ to be 2-integral. 
Divisibility properties of this type are familiar from well-known recursions like the recursion
  \be\label{apery} (n+1)^2\,A_{n+1} \= (11n^2+11n+3)\,A_n - n^2\,A_{n-1} \ee
used by Ap\'ery in his famous proof of the irrationality of~$\zeta(2)$, or the similar one he used
in his even more famous proof of the irrationality of~$\zeta(3)$.  However, they are extremely rare.
For instance, in~\cite{Z09} it was found that of the first~100,000,000 members of the three-parameter family 
of recursions obtained by varying the coefficients ``11,'' ``3'' and ``$-1$'' in~\eqref{apery}, only 7 (if one 
excluded certain degenerate families, and up to scaling) had integral solutions.  

Ap\'ery proved the integrality of the solution of his recursion~\eqref{apery} by giving the
explicit closed formula $A_n=\sum_{k=0}^n\binom nk^2\binom{n+k}n$.  We do not know a corresponding expression
in our case.  However, soon after Ap\'ery's original proof, a more conceptual explanation was found by 
Beukers~\cite{Beuk87}, who saw that the differential equation corresponding to Ap\'ery's recursion has a modular
parametrization $y=f(z)\in M_1(\G)$, $t=t(z)\in M_0^{\text{mer}}(\G)$ of the type discussed in~\S\ref{sec:twistedDE},
the group~$\G$ in this case being $\G_1(5)$, and this implies the integrality because we have $f(z)\in\Z[[q]]$, $t(z)\in q+q^2\Z[[q]]$
and hence~$y\in\Z[[t]]$. Similar statements hold for all seven of the ``Ap\'ery-like'' equations mentioned above, 
leading to the conjecture (which was made explicitly in~\cite{Z09}) that the integrality property for recursions
of this type occurs {\it precisely} when the corresponding differential equation has a modular parametrization.  

The surprise is now this.  In our case, just as in the seven ``Ap\'ery-like'' ones, the differential equation
(at least for~$y$) {\it is} modular, and {\it does} have power series solutions with integral coefficients 
(away from~2), but now the modularity does not explain the integrality in the same way as above, because here
the relevant Fuchsian group is not arithmetic and the $q$-expansions of $t$ and $y$ are not integral.
Indeed, as we saw in~\S\ref{sec:DEmodular}, the coefficients of these $q$-expansions are not even algebraic
numbers, since they involve powers of the scaling constant~$A$, which according to~\eqref{eq:A} and Gelfond's
theorem is a transcendental number.  But even if we rescale by replacing $q$ by $Q=Aq$, then, although the first
few coefficients as listed in equations~\eqref{twrtQ} and~\eqref{fwrtQ} have denominators that are powers of~2, 
this property fails if we compute more coefficients.  For example, the coefficient of $Q^{11}$ in $f(z)$ equals 
$$\frac{16063132006911958155776129 - 3895881761337356780171815\sq}{2^{53}\cdot3^3\cdot 5\cdot7}\,,$$
and calculating further we find that the first~100 coefficients contain in their denominators all primes less than~100 
that do not split in~$\Q(\sq)$, and similarly for~$t(z)$. Thus, although our differential equation~\eqref{BMDE} does
not actually contradict the hypothetical statement
$$ \text{``integrality occurs only when the differential equation is modular"} $$
mentioned above, the mechanism
\bas \text{$y(t)$ modular}\;\, &\Rightarrow \;\, \text{$y$ and $t$ both have integral $q$-expansions}\\
   &\Rightarrow \;\,\text{$y$ has an integral $t$-expansion} \eas
which previously explained that statement now breaks down completely.  This puzzle, which was in fact the original
motivation for the investigation described in this paper, will be solved in Section~\ref{sec:eqfromdiffeq}, where 
we will provide a purely modular explanation of the integrality property by expanding~$y$ and~$\ty$ with respect 
to {\it both} $q$ and~$\tq$, using Hilbert modular forms rather than modular forms in one variable.

\subsection{The ring of twisted modular forms for $W$ and \Wcov.} \label{sec:Twisted}

We can now calculate the rings $M_{*,*}(\G,\p)$ and $M_{*,*}(\Pi,\p)$ of twisted modular
forms on the \Tei\ curve $W= \H/\G$ and its double cover $W_\Pi =\H/\Pi$. 
This information will be used in the following section to embed the curve $W_{17}^1$ 
into the Hilbert modular surface~$X_{17}$.
\par
We already know two twisted modular forms on~$\Pi$, namely $f(z)=y(t(z))$ in $M_{1,0}(\Pi)$
and $\tf(z)=\ty(t(z))$ in $M_{0,1}(\Pi)$.  (From now on we omit the ``$\p$".) Any holomorphic
or meromorphic twisted modular form of weight $(k,\l)$ on~$\Pi$  is then equal to $f^k\tf^\l$ 
times a rational function of~$t=t(z)$. The next proposition tells us which ones are holomorphic.

\begin{Prop} \label{Prop71} For $k,\,\l\ge0$ the vector space of twisted modular forms of weight $(k,\l)$ on~$\Pi$ is given by
\be \label{eq:Mkl} M_{(k,\l)}(\Pi) \= \bigl\langle \,f(z)^k \tf(z)^\l\,t(z)^c \mid 3k+\l\geq 2c \ge0 \,\bigr\rangle_\C\;.\ee
\end{Prop}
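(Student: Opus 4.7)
The plan is to combine the dimension formula (Theorem~\ref{thm:dimmodgeneral}) with a direct verification that each candidate $f^k \tf^\l t^c$ is a holomorphic twisted modular form and that these are linearly independent.

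First, I would apply Theorem~\ref{thm:dimmodgeneral} to $\Pi$. The group $\Pi$ is torsion-free of genus~$0$, has five cusps, exactly one of which (namely $z=0$) is irregular---as witnessed by the trace~$-2$ generator $-M_1\i M_\infty M_1$ listed in \S\ref{generators}---and satisfies $\la_2 = 1/3$ by Theorem~\ref{thm:topoTeich}. Substituting into the degree formula gives
$$ \deg\cLL_{k,\l} \= \tfrac{3k+\l}{2} \,-\, \tfrac12\,[k+\l\text{ odd}] \= \lfloor(3k+\l)/2\rfloor\,,$$
and Riemann--Roch on $\P^1$ then yields $\dim M_{k,\l}(\Pi) = \lfloor(3k+\l)/2\rfloor + 1$.

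Next, I would check that the $\lfloor(3k+\l)/2\rfloor + 1$ candidates $f^k\tf^\l t^c$ (for $0\le 2c\le 3k+\l$) are linearly independent holomorphic forms. Transformation as a twisted modular form of bi-weight $(k,\l)$ follows from the factors having bi-weights $(1,0)$, $(0,1)$, $(0,0)$, and linear independence is clear from the non-constancy of $t$. Holomorphicity at $z=\infty$ is immediate from the $q$-expansions in \S\ref{sec:DEmodular}: both $f$ and $\tf$ have nonzero constant term while $t$ vanishes to first order. Holomorphicity at the three other regular cusps $z\in\{1,\a/2,-2/\a\}$ follows because at the corresponding values $t_0\in\{1,\la\i,\la\}$ both Picard--Fuchs operators $L$ and $\tL$ have indicial exponents $(0,0)$, forcing $f$ and $\tf$ (as globally holomorphic forms) to coincide locally with the non-log solutions, which are nonzero at $t_0$.

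The delicate point is holomorphicity at the irregular cusp $z=0$ (corresponding to $t=\infty$). A direct computation of the indicial equations shows that the exponents are $(-3/2,-3/2)$ for $L$ and $(-1/2,-1/2)$ for $\tL$, so $f\sim t^{-3/2}$ and $\tf\sim t^{-1/2}$; equivalently, in the local parameter $u = q'^{1/2}$ with $q'=1/t$, $f$ vanishes to $u$-order~$3$ and $\tf$ to $u$-order~$1$. Therefore $\text{ord}_u(f^k\tf^\l t^c) = 3k+\l-2c$, and since a section of $\cLL_{k,\l}$ at the irregular cusp is even or odd in $u$ according to the parity of $k+\l$, holomorphicity demands this quantity be $\ge 0$ when $k+\l$ is even and $\ge 1$ when $k+\l$ is odd; both conditions reduce to $c \le \lfloor(3k+\l)/2\rfloor$. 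With this the spanning set has the right cardinality to match the dimension from Step~1, so it is a basis.

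The main obstacle---and what produces the floor function in the statement---is the careful treatment of the single irregular cusp at $z=0$. The half-integrality of the indicial exponents of $L$ and $\tL$ at $t=\infty$ arises precisely from the irregularity of this cusp, and their fractional parts match exactly the forced vanishing $b_x(k,\l) = \tfrac12[k+\l\text{ odd}]$ appearing in Theorem~\ref{thm:dimmodgeneral}.
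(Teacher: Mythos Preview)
Your argument is correct and reaches the same conclusion as the paper, but the route to the vanishing orders of $f$ and $\tf$ at the five cusps is genuinely different.

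The paper first invokes Proposition~\ref{Prop5.6} (the forms $\om$ and $\wt\om$ were engineered in \S\ref{UnFa} to be stable at every cusp, so the associated periods $f,\tf$ are holomorphic modular forms everywhere), and then determines the exact orders at $t=\infty$ by two alternative methods: either the involution $S=M_i\in\G\smallsetminus\Pi$, which yields the explicit transformation laws $z^{-1}f(-1/z)=f(z)\,t(z)^{3/2}$ and $\p(z)^{-1}\tf(-1/z)=\tf(z)\,t(z)^{1/2}$; or the relation $t'(z)\propto f(z)^2A(t(z))$ (and its twisted analogue), from which the growth of~$A$ and~$\tA$ at infinity bounds the orders from above.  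Your approach instead reads off the orders directly from the indicial equations of $L$ and~$\tL$ at each singular point, which is cleaner and uniform across all five cusps.  The paper's involution argument, on the other hand, explains conceptually why half-integral exponents appear (via the non-normality of $\Pi$ in~$\G$) and gives equations~\eqref{Sonf}--\eqref{Sontf} as a byproduct.

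One point in your write-up needs sharpening.  At each cusp the two local solutions of the Picard--Fuchs equation are a power series and a power series times a logarithm; you must say why $f$ (resp.~$\tf$) is the former.  The phrase ``as globally holomorphic forms'' does not settle this: the log solution $y_0(t)\log(t-t_0)+y_1(t)$ becomes $y_0\cdot(2\pi i z/w_j)+\text{holomorphic}$ in the uniformizer~$z$, which is perfectly holomorphic on~$\H$.  The correct reason is monodromy: by the description in the proof of Proposition~\ref{Prop5.6}, the period $f=\int_\a\om$ is taken along the monodromy-invariant cycle at each cusp, so $f$ is the (unique up to scalar) monodromy-invariant solution, i.e.\ the non-log one.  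Equivalently, the modularity $f(\g z)=(cz+d)f(z)$ for the parabolic stabilizer forces $f$ to have a genuine Fourier expansion in $q_j$ (or $q_j^{1/2}$ at the irregular cusp), which the log solution lacks.  Once this is said, your computation of exponents $(0,0)$ at the four regular cusps and $(3/2,3/2)$, $(1/2,1/2)$ at $t=\infty$ gives exactly the orders needed.
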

\par
\begin{proof}
The group $\Pi$ has no elliptic fixed points and five cusps, one of which is irregular, so
Theorem~\ref{thm:dimmodWD} implies the dimension formula
\be \label{eq:dimMkl} \dim M_{(k,\l)}(\Pi) = 1\+ \left\lfloor \frac{3k+\l}2\right\rfloor\;. \ee
Since the right-hand side of this equals the number of monomials $f^k\tf^lt^c$
in~\eqref{eq:Mkl}, it suffices to prove that each of these monomials is holomorphic or
equivalently, that $f(z)$ and $\tf(z)$ are holomorphic everywhere (including at the cusps)
and vanish to orders $1/2$ and $3/2$, respectively, at the cusp $t=\infty$, where the
order is measured with respect to the local parameter~$1/t$. 
The holomorphy at the cusps is a special case of Proposition~\ref{Prop5.6},
since the construction of the defining equation~\eqref{BMform} of~$W$ given in~\S\ref{UnFa} was
based on choosing the coefficients in such a way as to make the two differential forms
$\om=dX/Y$ and $\wt\om=X\,dX/Y$ stable at all of the cusps of~$W$.  We therefore only need to check that
the order of vanishing of~$f$ and~$\wt f$ at~$z=0$ (corresponding to~$t=\infty$) are at most, and hence
exactly, equal to 3/2 and 1/2, respectively.  We will give two arguments to see this.
\par
The first way is to use the action of the element~$S=M_i$ of~$\G$, which corresponds to the involution $t\mapsto 1/t$ on the base of
the family \eqref{BMform}. This involution extends via $X \mapsto X/t$ and $Y \mapsto Y/t^{5/2}$ to 
an involution $\iota$ of the whole family, with $\iota^* \omega(t) = t^{3/2} \omega(t)$
and $\iota^* \wt \omega(t) = t^{1/2} \wt \omega(t)$. Near~$t=0$, the section $f$
is the period of $\omega$ along the unique cycle (up to scale) $\beta_0$
that is orthogonal to $\wt \omega(t)$ and extends across $t=0$. Near $t=\infty$, it
is the period of $\omega$ along the unique cycle (up to scale) $\beta_\infty$
that is orthogonal to $\wt \omega(t)$ and extends across $t=\infty$.
From these defining properties it follows that $i^* \beta_0$ is proportional
to $\beta_\infty$. The same argument applies for $\wt f$.  In modular terms, 
this translates into the statement that the function~$f(z)$ transforms via 
\be\label{Sonf}   \frac1z\,f\Bigl(-\frac1z\Bigr) \= f(z)\,t(z)^{3/2}   \ee
for some appropriate choice of the square-root of~$t(z)^{3/2}$ (which we need
only make at one point since this function has no zeros or poles in~$\H$),
and similarly 
\be\label{Sontf}   \frac1{\varphi(z)}\,\tf\Bigl(-\frac1z\Bigr) \= \tf(z)\,t(z)^{1/2}\,.   \ee
Equations~\eqref{Sonf} and~\eqref{Sontf} clearly imply the statement that the modular
forms $f^2t^3$ and $\tf^2t$ at~$\infty$ are holomorphic everywhere, as claimed. 
\par
The other approach, not using the accidental fact that the cusps~0 and~$\infty$ of~$\HH/\Pi$ happen to be 
interchanged by an element in the normalizer of~$\Pi$ and therefore applicable in other situations, is based on the equation
\be\label{tderiv} \frac\a{2\pi i}\,t'(z) \= f(z)^2\,A(t(z))\,, \ee
where $A(t)$ is the 4th degree polynomial given in~\eqref{AB}, see~\eqref{eq:tprime}.  
Since the polynomial~$A(t)$ is divisible by~$t$, we find that $f(z)^2$ multiplied by a cubic polynomial
in~$t(z)$ is equal to the logarithmic derivative of the modular function~$t(z)$, and hence
is holomorphic at the cusp~$t=\infty$, so the order of~$f$ at~$t=\infty$ is $\le3/2$, as desired. 
A similar argument, this time using~\eqref{eq:tprimephi}, applies also to $\tf$, with the
polynomial~$A(t)$ replaced by the rational function~$\wt A(t)$, which grows like~$t^2$ as~$t\to\infty$.
 \end{proof}

{\bf Remarks. i)} We make some comments about the half-integer order of $f$ and~$\wt f$ at~$\infty$
and about the appearance of the function $\sqrt{t(z)}$ in~\eqref{Sonf} and~\eqref{Sontf}. 
By the discussion in~\S\ref{generators}, the image $\bar\Pi$ of~$\Pi$ in 
$\bar\G=\G/\{\pm1\}$ has index~2 and is hence normal, so~$S\bar\Pi S^{-1}=\bar\Pi$
and hence $F|_kS$ must be a modular form on~$\Pi$ for any modular form~$F$ (twisted
or not) of~{\it even} weight~$k$ on~$\Pi$. But the subgroup $\Pi$ of~$\G$ is not
normal and is not normalized by~$S$, so the intersection of~$\Pi$ with 
$\Pi'=S\bar\Pi S^{-1}$ is a proper subgroup (of index~2) in~$\G$. The space
of holomorphic modular forms of weight~1 on this group is 4-dimensional, spanned by
the functions $ft^{j/2}$ with $0\le j\le3$, with the spaces of modular forms of weight~1
on $\Pi$ and $\Pi'$ separately being spanned by $(f,ft)$ and by~$(ft^{1/2},ft^{3/2})$,
respectively.  Notice that the group $\Pi\cap\Pi'$ does not contain the stabilizer~$M_\infty$
of~$\infty$ in~$\G$ (or~$\Pi$), but only its square, so that it has width~2 and hence
a local uniformizer~$q^{1/2}$ at~$\infty$. This group has genus~0, with $t(z)^{1/2}$ as a hauptmodule.

{\bf ii)} In the above proof we gave an implicit estimate of the period integrals 
that define~$f$ and~$\tf$ in the neighborhood of any cusp.  These periods are given by 
integrating $\omega = dX/Y$ and $\wt \omega = XdX/Y$ over a linear combination of paths 
that are invariant under the local monodromy around the given cusp. It is perhaps worthwhile 
giving a more explicit proof in the special case at hand, since this makes the argument 
clearer and also shows how to find the full expansion, and not just the order of vanishing, 
of $f$ and~$\tf$ at every cusp. We will just give the main formulas, without complete 
details. At each cusp we choose a local parameter~$\ve=\ve_j$. Then we can find the 
expansions of~$y(t)=f(z)$ and $\ty(t)=\tf(z)$ near~$t=t_j$ by looking at the
explicit form of the degenerations of the differentials $\om$ and~$\wt\om$ there,
as explained at the end of the discussion in~\S\ref{UnFa}.   
We consider the cusp $t=\infty$ here and the other cusps even more briefly in~iii) below.
Near $t=\infty$ we make the substitutions $(X,t)=\bigl(\frac{cT^2-A}\ve,\,\frac1\ve\bigr)$, with $c=\frac{17-3\sq}2$ and 
$A=5\,\frac{5+\sq}2$ as in~\eqref{BMcoeffs}, and where $\ve$ tends to~0.  Then by direct computation we find
$$P_5(X,t) \= \bigl(c/\ve\bigr)^5\,\bigl[T\,\bigl(T^2-\la_1^2\bigr)\,\bigl(T^2-\la_2^2\bigr)\bigr]^2
  \+ \text O\bigl(\ve^{-4}\bigr)\,,$$
with $\la_1 = \tfrac{ 5+\sqrt{17}}2$ and $\la_2 = \tfrac{3+\sqrt{17}}4$. The fact that the 
leading coefficient of the right-hand side as a Laurent series in~$\ve$ is a
square corresponds to the degeneration of the fiber over the cusp to a rational curve, and lets us
compute the differential form $\om=dX/\sqrt{P_5(X,t)}$ as
$$ \om \= \frac{2\,\ve^{3/2}}{c^{3/2}}\,\Biggl[ \frac1{\bigl(T^2-\lambda_1^2\bigr)\,\bigl(T^2-\lambda_2^2\bigr)} 
 \;+\; \cdots\,\Biggr]\,dT\,,$$
where the omitted terms contain higher powers of $\ve$ with coefficients that are rational functions of $T$
having poles only at $\pm\la_1$, $\pm\la_2$ that can easily be found explicitly 
with a suitable mathematical software program.  The factor $\ve^{3/2}$ gives the vanishing order 
we claimed, and the  rest of the expansion gives us the complete expansion of $f(z)$ near $z=0$.  
Specifically, the homology of 
$\PP^1\smallsetminus\{\pm\la_1,\pm\la_2,\infty\}$ is spanned by the four small loops $\gamma_i$
around the four poles $\pm\la_1$ and $\pm\la_2$, and the integral of the above form around
each such loop is given simply by the residue of the form at that pole, so that we can easily 
calculate the periods around each $\gamma_i$ to any order in~$\ve$.  When one does this 
calculation, one finds that these integrals are given, up to a constant, by
$$ \int_{\gamma_1,\gamma_2,\gamma_3,\gamma_4}\omega(\ve) \;\doteq\;\bigl(1,-1,\tfrac{1+\sd}2,-\tfrac{1+\sd}2\bigr)\,
  \Bigl(1 \+ \tfrac{81-15\sd}{16}\ve\+\tfrac{4845-1155\sd}{64}\ve^2\+\cdots \Bigr)\,.$$
The surprising observation that they are all proportional is explained by the fact the loops $\gamma_i$ 
correspond to the core curves (in both directions) of parallel cylinders in the generating flat surface, here concretely
the vertical cylinders in Figure~\ref{cap:Ltable}~(a). 
More precisely, these curves stay parallel in a neighborhood of
the cusp of the \Teichmuller\ curve by definition of these curves, and hence the periods remain proportional.
Another striking property of the above expansion, namely the integrality (up to powers of two) of its
coefficients as a power series in the local parameter $\ve=1/t$ at the cusp $t=\infty$, is obvious both 
from the proof in~\cite{BM07} and from the one that we will give in Section~\ref{sec:eqfromdiffeq}. 
\par

{\bf iii)} We now also indicate briefly how to find the Fourier expansions of~$f$ and~$\wt f$ at a cusp~$t_j$ other 
than~$t=0$.  To define them, we must first choose a matrix $M_j$ mapping the point~$z_j$ with~$t(z_j)=t_j$
to~$\infty$. For definiteness's sake we choose $M_1=$Id and $M_j=\sm0{-1}1{-z_j}$~for $j\ne1$.
The {\it width} $w_j$ of the cusp is defined as the smallest positive number~$w$ with 
$M_j^{-1}\sm1w01M_j\in\G$, and we define $q_j=e^{2\pi iM_j(z)/w_j}$ and $Q_j=A_jq_j$, where~$A_j$
is chosen so that $Q_j=\ve+\text O(\ve^2)$ as~$\ve\to0$.  Notice that both the width~$w_j$ and
the value of the scaling constant~$A_j$ depend on our choices of~$\ve_j$ and~$M_j$; with the
choices given above, they are given as in Table~\ref{tab:cusps} for representatives of $\Gamma$-equivalence 
classes of cusps. At each cusp we expand the integral as a power series in the local parameter, observing that each coefficient
is the integral of a rational function on a punctured Riemann sphere, and proceed just as we did above for $t\to\infty$. 
We omit the calculations and give only the results (for $f$; those for~$\wt f$ can be obtained in the same way):
\bas
f|_1M_2 &\= \frac18 (3+\sq)^{5/2} (4+\sq)^{1/2}\,\,  \Bigl(1 \,-\, \tfrac{3}{4} Q_3 
\+ \tfrac{3807+915\sq}{128} Q_3^2 \+ \cdots \Bigr)\;, \\
f|_1M_3 &\= \frac{-i}{2^{29/2}} (3+\sq)^{11} (4+\sq)^{-4}\, \, \Bigl(1 \+ \tfrac{-255+1959\sq}
{1024}Q_4 \+ \cdots \Bigr)\;.
\eas
 
  \begin{table}  \begin{tabular}{|c|c|c|c|c|}
  \hline  $j$ & $t$ & $z$ & $w_j$ & $A_j$ \\     \hline
  1 & 0 & $\infty$ & $\alpha= \tfrac{1+\sq}{2}$ & $-2\,(3+\sq)\,
\Bigl(\tfrac{5-\sq}{2}\Bigr)^{\tfrac{\sq-1}{4}}$ \\
  2 & 1 & 1 &  $8 + 2 \sq$ & $4 \,(3+\sq)\, (4+\sq)^{\tfrac{5+\sq}{4}}$ \\
  3 & $\lambda^{-1}$ & $\alpha/2$ & $\tfrac{-3+\sq}{2}$ & $2^{11}\, (4+\sq)^9\, 
\Bigl(\tfrac{3+\sq}{2}\Bigr)^{-\tfrac{39+\sq}{2}}$ \\
  \hline   \end{tabular} \caption{Cusps} \label{tab:cusps}  \end{table}
\par
\medskip

This concludes our discussion of the modular forms and twisted modular forms on the group~$\Pi$.
Proposition~\ref{Prop71} also allows us to describe the twisted modular forms on the 
\Tei\ curve itself, i.e.\ for the group $\G$. Since this group contains $-I$, there are no 
twisted modular forms of weight $(k,\ell)$ for $k+\ell$ odd. Theorem~\ref{thm:dimmodgeneral} 
and the arguments in the preceding proof immediately imply the following statement.
\begin{Prop} \label{prop:dimGamma}
The ring of weight modular forms of even total weight $k +\l$ for~$\G$ has the graded dimensions
$$\dim M_{(k,\l)}(\G) \=  1 \+ \Bigl\lfloor \frac{3k+\l}{4} \Bigr\rfloor\;.$$
It consists of the invariants of $M_{(k,\l)}(\Pi)$ as given in~\eqref{eq:Mkl}
under the involution
$$ t \mapsto t^{-1}, \quad f \mapsto f t^{3/2}, \quad \wt f \mapsto \wt f t^{1/2}.$$
\end{Prop}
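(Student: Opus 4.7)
The plan is to apply Theorem~\ref{thm:dimmodgeneral} directly to the group $\Gamma = \Gamma_{17}^1$ using the topological data computed in \S\ref{generators}, and then to deduce the ring-theoretic statement from the fact that $\bar\Pi$ is normal of index $2$ in $\bar\Gamma$.

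For the dimension, I collect the invariants: genus $g = 0$, three cusps, a single elliptic fixed point at $z = i$ of order $\bar n = 2$, and $\lambda_2 = 1/3$ by Theorem~\ref{thm:topoTeich}(vi). Since $-I \in \Gamma$, Theorem~\ref{thm:dimmodgeneral} yields zero for $k + \ell$ odd, while for $k + \ell$ even all cusp contributions $b_x(k,\ell)$ vanish. The elliptic contribution at $i$ equals $b_i(k,\ell) = \{(-k + \ell)/4\}$, which is the value dictated by the convention \eqref{eq:defcharbx} for a modular embedding into $\HH \times \HH^-$ (as explained in the discussion following that equation, and which is the relevant range here because of the conjugation by $\Delta$ in \S\ref{generators}). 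Substituting into the degree formula of Theorem~\ref{thm:dimmodgeneral} and adding $1 - g = 1$ from Riemann--Roch gives
\begin{equation*}
\dim M_{(k,\ell)}(\Gamma) \= 1 \+ \Bigl(k + \tfrac{\ell}{3}\Bigr)\Bigl(-1 + \tfrac{3}{2} + \tfrac{1}{4}\Bigr) \,-\, \Bigl\{\tfrac{-k+\ell}{4}\Bigr\} \= 1 \+ \tfrac{3k+\ell}{4} \,-\, \Bigl\{\tfrac{-k+\ell}{4}\Bigr\}.
\end{equation*}
Since $\{(3k+\ell)/4\}$ and $\{(-k+\ell)/4\}$ agree (their arguments differ by the integer $k$), the right-hand side equals $1 + \lfloor (3k + \ell)/4 \rfloor$, as asserted. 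The case $(k,\ell) = (0,0)$, where the Riemann--Roch hypothesis fails, is checked directly.

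For the ring-theoretic statement, the normality of $\bar\Pi$ in $\bar\Gamma$ with index $2$, observed in the first Remark following Proposition~\ref{Prop71}, shows that for any $F \in M_{(k,\ell)}(\Pi)$ with $k+\ell$ even the slash action $F \mapsto F|_{(k,\ell)} S$, with $S = M_i$ a representative of the non-trivial coset, defines an involution of $M_{(k,\ell)}(\Pi)$ whose fixed subspace is exactly $M_{(k,\ell)}(\Gamma)$. The action on the base is $t \mapsto 1/t$: from $Sz = -1/z$ and Table~\eqref{VALUES} one sees that the cusps $z = \infty$ and $z = 0$ of $\Pi$ (where $t = 0$ and $t = \infty$ respectively) are exchanged by $S$, while the pair $\alpha/2$ and $-2/\alpha$ (with $t$-values $\lambda^{-1}$ and $\lambda$) is also exchanged; since $t\mapsto 1/t$ is the unique non-trivial involution of $\P^1$ with these properties, this identifies the action on the hauptmodul. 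The action on $f$ and $\tilde f$ is precisely \eqref{Sonf} and \eqref{Sontf}, i.e.\ $f \mapsto f\, t^{3/2}$ and $\tilde f \mapsto \tilde f\, t^{1/2}$, so the involution is fully determined on the monomial basis of Proposition~\ref{Prop71}.

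The only delicate point, and essentially the reason the proposition restricts to $k + \ell$ even, is that the half-integer exponents $t^{3/2}$ and $t^{1/2}$ in the involution make sense only on modular forms of even total weight, where the ambiguous sign is absorbed by the choice of square root already fixed in \eqref{Sonf}--\eqref{Sontf}; on odd total weights this is consistent with the vanishing of $M_{(k,\ell)}(\Gamma)$ forced by $-I \in \Gamma$, and no additional verification is needed.
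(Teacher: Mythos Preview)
Your proof is correct and follows essentially the same approach as the paper, which simply says the result follows from Theorem~\ref{thm:dimmodgeneral} together with the arguments in the proof of Proposition~\ref{Prop71}. You have filled in exactly the details the paper leaves implicit: the explicit Riemann--Roch computation with the data $g=0$, $s=3$, one elliptic point of order~$2$ with contribution $\{(-k+\ell)/4\}$, and the identification of the $S$-involution on the monomial basis via~\eqref{Sonf}--\eqref{Sontf}. One minor remark: your caution about the case $(k,\ell)=(0,0)$ is harmless but unnecessary here, since for $g=0$ the Riemann--Roch argument in the proof of Theorem~\ref{thm:dimmodgeneral} requires only $\deg\cLL_{k,\ell}>2g-2=-2$, and $\lfloor(3k+\ell)/4\rfloor\ge0$ for all $k,\ell\ge0$.
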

\par
As a corollary, we see that that the ring of {\it parallel weight} twisted modular forms on~$\G$ is freely generated by the two forms 
\be \label{xiandeta} \xi \,=\,(1-t)^2\,f\tf\,, \qquad \eta\,=\,\frac{45-11\sd}8\,\,t\,f\tf \,=\,-\frac{19+5\sd}4\,s\,\xi\,, \ee
with $s$ as in~\eqref{sandt}.  (The numerical factors in the definition of~$\eta$ have been included for later convenience.)
In particular, when we embed the \Teichmuller\ curve into the Hilbert modular surface, then
the restriction of a Hilbert modular form of weight~$(k,k)$  is a homogeneous polynomial of degree~$k$
in $\xi$ and~$\eta$, or equivalently equals~$\xi^k$ times a polynomial in~$s$ of degree at most~$k$.  
We will use this in the next section to find an explicit description of this embedding.

\section{The Hilbert modular embedding of $W_{17}^1$}
\label{sec:eqfromdiffeq}

We continue to study the example of the special \Tei\ curve $W=W_{17}^1$, using the information given in the
previous section to give a complete description of the embedding of~$W$ into the Hilbert modular surface~$X_{17}$.

\subsection{Modular embedding via Eisenstein series.} \label{sec:MEviaES}
We can use our explicit knowledge of the twisting map $\p:\H\to\Hm$ and of the twisted modular forms on~$\Pi$
and on~$\G$ to embed $\H/\Pi$ and $\H/\G$ into $X_{17}$ by expressing the restrictions
of Hilbert modular forms as polynomials in~$f,\,\tf$, and $t$ (or simply in~$\xi$ and~$\eta$
if we restrict our attention to forms of parallel weight).  We begin by recalling the main points of the
theory of Hilbert modular forms, using $\SL{\O}$ rather than $\SLO$ since it is more familiar, and
then later transform our results back to~$\SLO$ for~$D=17$, using the isomorphism of the two groups in this case. \changed{A textbook reference for the notions
and claims in this section in e.g.~\cite{vG87}.}
\par
A {\em Hilbert modular form} of weight $(k,\l)$ on the full modular group 
$\SL{\O_D}$ is a holomorphic function $F:\HH^2\to\CC$ satisfying the transformation 
law $F(\g z_1,\,\g^\s z_2)=(cz_1+d)^k(c^\s z_2+d^\s)^\l F(z_1,z_2)$ for $g\=\sm abcd\in\SL{\O_D}$. 
If $k=\l$ (``parallel weight"), we call the weight simply~$k$. We denote the space of such forms by 
$M_{k,\l}(\SL{\O_D})$, or simply $M_k(\SL{\O_D})$ if~$k=\l$, and more generally write $M_{k,\l}(\G,\chi)$ 
for the corresponding space for forms with respect to a subgroup $\G$ of $\SL{\O_D}$ and character~\changed{$\chi:\SL\O_D\to\C^*$.}
On $M_k(\SL{\O_D})$ we have the involution induced by the symmetry $\iota:(z_1,z_2)\mapsto(z_2,z_1)$ of~$\HH^2$, 
so we can split this space into the direct sum of the subspaces $M_k^\pm(\SL{\O_D})$ of symmetric and 
antisymmetric Hilbert modular forms.

A Hilbert modular form~$F$ on~$\SL{\O_D}$ has a Fourier expansion of the form
  \be\label{HMFExp} F(\zz)\= F(z_1,z_2) \= b_0 \+ \sum_{\nu\in\O_D^\vee,\,\;\nu\gg0} b_\nu \; \e\bigl(\tr(\nu\zz)\bigr) \ee
where $\O_D^\vee$ is the inverse discriminant and $\tr(\nu\zz)$ for $\zz=(z_1,z_2)\in\HH^2$ means $\nu z_1+\nu^\s z_2$.
If we choose a $\Z$-basis for $\O_D^\vee$ and write $X$ and $Y$ for the corresponding exponential functions
$ \e(\tr(\nu \zz))$, then the right-hand side of~\eqref{HMFExp} becomes a Laurent series in~$X$ and~$Y$
(or even a power series if the basis is chosen appropriately).  In practice it is sometimes more convenient
to choose only a $\Q$-basis for $\O_D^\vee$, in which case we work with power series with congruence
conditions on the exponents of~$X$ and~$Y$. The simplest choice is
\be\label{defXY} X \= X(\zz) \= \e\Bigl(\frac{z_1+z_2}2\Bigr)\,,\qquad Y \= Y(\zz) \= \e\Bigl(\frac{z_1-z_2}{2\sD}\Bigr)\,,\ee
in which case $\,\bigl\{\e(\tr(\nu\zz))\}=\{X^mY^n\mid m\equiv n\pmod2\}\,$ and~\eqref{HMFExp} becomes
\be\label{XYexp}  F(\zz) \= \sum_{m=0}^\infty\,\Biggl(\sum_{|n|\le|m|\sD\atop n\equiv m\!\!\!\pmod2 }
   b_{m,n}\,Y^n\Biggr)\,X^m\quad\in\;\C[Y,Y\i][[X]]\;. \ee
\par
For Hilbert modular forms the same remarks as in \S\ref{sec:HMGHMS} for Hilbert modular surfaces apply 
concerning the (less standard) Hilbert modular groups $\SLOA{\O^\vee}$ or functions on $\HH \times \HH^-$, 
the only difference in the latter case being that the condition $\nu\gg0$ in~\eqref{HMFExp} must
be replaced by~$\nu>0>\nu^\s$.

\par
The simplest Hilbert modular forms to construct are the Eisenstein series of weight~$(k,k)$ ($k=2,\,4,\,6,\dots$), 
with Fourier expansion given by\footnote{For general discriminants there are several Hilbert-Eisenstein series for 
each value of~$k$, with sum~$\GG_k^D$. If the class number of~$D$ is~one, as is the case for $D=17$, there is only one.}
  \be \GG_k^D(\zz)\= \frac{\z_K(1-k)}4 \+ \sum_{\nu\in\O_D^\vee,\,\;\nu\gg0} \s_{k-1}^K\bigl(\nu\sD\bigr) \, \e(\tr(\nu \zz))\,. \ee
Here $\z_K(s)$ denotes the Dedekind zeta function of $K=\Q(\sD)$ and $\s_{k-1}^K(\nu\sD)$ for $\nu\in\O_D^\vee$ with $\nu\gg0$ is given by
$$ \s_{k-1}^K\bigl(\nu\sD\bigr) \;:=\; \sum_{\fracb|\nu\sD} N(\fracb)^{k-1} 
  \= \sum_{d|\nu\sD} d^{k-1}\,\s_{k-1}\Bigl(\frac{D\nu\nu'}{d^2}\Bigr)\,,$$
where the first sum runs over integral ideals~$\fracb$ of $K$ dividing the integral ideal $\nu\sD$ and the 
second sum (whose equality with the first is proved in~\cite{Z76}, Lemma on p.~66) runs over positive 
integers~$d$ such that~$d^{-1}\nu\in\O_D^\vee$, and where $\s_{k-1}(m)$ for $m\in\N$ has its usual meaning as
the sum of the $(k-1)$st powers of the (positive) divisors of~$m$. 
In particular, for $D=17$ the first three Eisenstein series $\GG_k=\GG_k^{17}$ have Fourier expansions beginning 
  \bas  \GG_2(\zz) &\= \frac1{12} \+ \bigl(3Y^3+7Y+7Y^{-1}+3Y^{-3}\bigr)\,X  \+ \bigl(Y^8+21Y^6+14Y^4\\ &\qquad\quad
       +45Y^2+18+45Y^{-2}+14Y^{-4}+21Y^{-6}+Y^{-8}\bigr)\,X^2 \+ \cdots\,, \\
  \GG_4(\zz) &\= \frac{41}{120} \+ \bigl(9Y^3+73Y+73Y^{-1}+9Y^{-3}\bigr)\,X \+ \bigl(Y^8+657Y^6 \\ &\qquad \quad
   +2198Y^4+5265Y^2+4914+5265Y^{-2}+\cdots+Y^{-8}\bigr)\,X^2 \+ \cdots\,, \\ 
   \GG_6(\zz) &\= \frac{5791}{252} \+ \bigl(33Y^3+1057Y+1057Y^{-1}+33Y^{-3}\bigr)\,X \+ \bigl(Y^8+34881Y^6 \\ &\qquad \quad
     +371294Y^4 + 1116225Y^2 + 1419858+\cdots+Y^{-8}\bigr)\,X^2 \+ \cdots\,, \eas
with $X$ and $Y$ as in~\eqref{defXY}. As a check, we can verify that if we set $X=q$ 
and $Y=1$, \changed{corresponding to the natural embedding 
$z \mapsto (z,z)$ of $\HH/\SL\ZZ$
into $\HH^2/\SL\O$}, then these
Fourier expansions agree to the accuracy computed (several hundred terms) with those of the classical $\SL\Z$
modular forms $\frac1{12}E_4$, $ \frac{41}{120}E_4^2$, and $ \frac{44}3E_4^3 + \frac{2095}{252}E_6^2$, respectively.
\par
We now compute the restrictions of these Eisenstein series to the \Teichmuller\ curve $W=\HH/\G$ (or rather, of its
double covering $\HH/\Pi$) that we studied in~\S\ref{sec:BouwMoeller}.  As explained there, the 
algorithm used produced~$\G$ as a subgroup of~$\SL\O$, rather than~$\SLO$, so that the twisting function~$\p(z)$ 
maps~$\HH$ to~$\HH^-$ (cf.~eq.~\eqref{phiExp}).  Hence we must use the 
embedding of~$W$ into $\H^2/\SL\O$ by
\be \label{TeiEmb} z\quad\mapsto\quad(z_1,z_2)\,=\,\bigl(\ve z,\,\ve^\s\p(z)\bigr) \qquad(z\in\HH)\,, \ee
where $\ve=4+\sd$ is the fundamental unit of~$\Q(\sd)$.  Using the expansions of~$Q$ 
and~$\tQ$ as power series in~$t$ that 
were given in~\S\ref{sec:DEmodular}, we find that the monomial $X^mY^n=\e(\tr(\nu\zz))$ for 
$\nu=\frac m2+\frac n{2\sd}\in\O^\vee$ has the $t$-expansion 
\begin{equation} \label{eq:NewXYexp}
\begin{split} X^mY^n &\= \e\bigl(\ve\nu z\+\ve^\s\nu^\s\p(z)\bigr) 
  \= (Q/A)^{\ve\a\nu}\,(\tQ/\tA)^{(\ve\a\nu)^\s}  \\
 &\= \bigl(-2(3+\sd)\bigr)^{-\frac{21m+5n}2}\,\biggl(\frac{5-\sd}2\biggr)^{-8m-2n}\, t^{\frac{21m+5n}2} \\
  &\;\quad \times\biggl(1\+\frac{16597m+3985n-(3827m+919n)\sd}{128}\,t\+\cdots\biggr)\,.
\end{split} 
\end{equation}
(More precisely, this is true under the assumption that the constants~$A$ and $\tA$ relating $q=\e(z/\a)$ and $\tq=\e(\p(z)/\a^\s)$ 
to $Q=t+\cdots$ and $\tQ=t+\cdots$ indeed have the values given in~\eqref{eq:A} and~\eqref{eq:tA}; we will return to this point below.)
Inserting the expansion~\eqref{eq:NewXYexp} into the Fourier development of~$\GG_k(\zz)$ or any other Hilbert modular form, we can compute its 
restriction to~$W$ as a power series in~$t$.  On the other hand, as we saw above, the restriction of any $F\in M_k(SL(2,\O))$ to~$W$
is a linear combination of monomials $\xi^i\eta^j$ with $i+j=k$, where ~$\xi$ and~$\eta$ are the functions defined 
in~\eqref{xiandeta}, whose expansions in~$t$ are known, so we can find the coefficients of this linear combination by 
linear algebra.  The result of the computation for the first three Eisenstein series is 
\[ \begin{split}
& 12\,\GG_2\bigr|_{W}  \= \xi^2 \,-\, \frac{11+\sq}4\,\xi\eta \+ \eta^2\;, \\
& 120\,\GG_4\bigr|_{W} \=41\,\xi^4 \,-\, \frac{1855+365\sd}2\,\xi^3\eta \+ \frac{18245+3979\sd}8\,\xi^2\eta^2 \\
& \qquad\qquad\qquad \+ \frac{151+35\sd}4\,\xi\eta^3 \+ 41\,\eta^4 \;,\\
& 252\,\GG_6\bigr|_{W}  \=  5791\,\xi^6 \,-\, \frac{867831+173541\sd}4\,\xi^5\eta \+ \frac{11350461+2429643\sd}{16}\,\xi^4\eta^2 \\
&  \qquad\qquad\qquad \+ \frac{1883335+652933\sd}{16}\,\xi^3\eta^3 \+  \frac{69270195+16881483\sd}{64}\xi^2\eta^4 \\
&  \qquad\qquad\qquad  \+ \frac{1983525+452397\sd}{32}\xi\eta^5 \+ 5791\,\eta^6 \;. 
\end{split} \]
(It was to simplify the coefficients in these polynomials that we introduced the factor~$c$ in~\eqref{xiandeta}. The
coefficient of $\xi^k$ here is just the constant term of~$\GG_k$, and in particular rational, and the 
coefficient of~$\eta^k$ has the same value because the matrix~$\sm1101$ belongs to $\SL\O$, though not to~$\G$, 
so that the constant terms at the two cusps $s(0)=0$ and $s(1)=\infty$ of the restriction of any Hilbert modular form
to~$W$ are the same up to scaling.)  Now by elimination we find a polynomial
\[ \begin{split} P(\GG_2,\,\GG_4,\,\GG_6) &\= \bigl(3465994203567 - 840620808790\,\sq\bigr)\,\GG_6^4 \\
  & \qquad +\; \cdots\;  \+ \bigl(7395484320944244318526129490625 \\
    &  \qquad -\,1711627845603248913114298550625\,\sq\bigr)\,\GG_2^{12} \end{split} \]
(in which we have omitted seventeen equally gigantic intermediate terms) whose restriction to $W$ vanishes.  
We have thus obtained an explicit algebraic equation cutting out the \Teichmuller\ curve $W$ on the Hilbert
modular surface $X_{17}$, but it is too big in the sense that its  vanishing locus is reducible and~$W$ is only
one of its components.  Indeed, we know from the results of Bainbridge~\cite{Ba07}
that there must be a symmetric Hilbert modular form of weight~12 which vanishes precisely
on the \Teichmuller\ curve and its image under the involution $\iota:(z_1,z_2)\mapsto(z_2,z_1)$, whereas
the above equation has weight~24.  The reason for this is twofold:
\begin{itemize}
\item the Eisenstein series generate only a subring of the full ring of symmetric Hilbert modular forms, and there
is no reason that the minimal defining equation of~$W$ should belong to this subring; and
\item we did not even use all the Eisenstein series, but only $\GG_2$, $\GG_4$ and $\GG_6$.
\end{itemize}
The second point can be dealt with by expressing the restriction of each $\GG_k$ to~$W$ as a polynomial in $\xi$ and $\eta$
and looking for the first weight in which some linear combination of monomials in these restricted Eisenstein series vanishes.
This weight, however, still turns out to be~14 rather than~12, and the answer is not even unique: there is a 2-dimensional
space of linear combinations of $\GG_2^7,\dots,\GG_{14}$ that vanish on~$W$, all having huge coefficients so that we do not
reproduce them here.  (As a side remark, it is actually surprising that there should be even one relation in such a low weight,
let alone two, since there are 15 monomials of weight~14 in $\G_2,\dots,\G_{14}$ and also 15 monomials of weight~14 in $\xi$ and~$\eta$,
so that one would not expect the former to lie in a non-trivial subspace, let alone a subspace of codimension~2. The first weight
in which there has to be a relation for dimensional reasons is~16.  Similarly, it is surprising that we found a relation among $\GG_2$,
$\GG_4$ and $\GG_6$ in as low a weight as~24, since {\it a priori} the first weight in which the number of monomials in these three
forms is larger than the number of monomials in~$\xi$ and~$\eta$ is~38.  This suggests that the restrictions of the Eisenstein series
to~$W$ have some non-generic property, but we do not know what it is.)  To address the first point, we need to have a full set of
generators of the ring of Hilbert modular forms.  Finding such a set of generators in general is a comparatively difficult (though always
algorithmically solvable) problem, but in the case $D=17$ the result has been obtained by Hermann~\cite{He81}.  We will describe
his results in the next subsection and use them to determine the symmetric Hilbert modular form of minimal weight~12 vanishing on~$W$. 

 Another pertinent remark is that, although we have so far only used Hilbert modular forms of ``parallel'' weight $(k,k)$,
whose restriction to~$W$ is a polynomial in~$\xi$ and~$\eta$, one can equally well consider Hilbert modular
forms of mixed weight~$(k,\l)$ with $k\ne\l$, in which case the restrictions become polynomials in~$f$, $\tf$ and 
$t^{\pm1}$. For example, the Rankin-Cohen bracket $2E_2'(\zz)E_4(\zz)-E_2(\zz)E_4'(\zz)$, where the prime denotes differentiation
with respect to the first variable~$z_1$, is a Hilbert modular form of weight~(8,6), and its restriction to $W$ could be computed 
explicitly as $f(z)^8\tf(z)^6$ times a Laurent polynomial in~$t(z)$. 
Such mixed weight forms will play a role in Part~III of this paper, e.g.~in Section~\ref{sec:ThetaDer}, where
we will use a different construction to find explicitly a non-symmetric Hilbert modular form of non-parallel
weight $(3,\,9)$ which vanishes precisely on $W$, again in accordance with the general results of Bainbridge.

\subsection{Hilbert modular forms for the discriminant 17.}
\label{sec:HMFdisc17}
We continue to work with the Hilbert modular group $\G_D=\SL\O$ for the case $D=17$, i.e., for
$\O = \ZZ[\a]$ with $\a=(1+\sq)/2$.  We will describe the structure of the ring of
symmetric Hilbert modular forms of parallel even weight, following Hermann~\cite{He81}, who obtains these 
modular forms by restriction of Siegel modular forms of genus~$2$.  Later we will look also at non-symmetric
 Hilbert modular forms and Hilbert modular forms of odd or non-parallel weight on~$\G_{17}$.

Hermann begins with the sixteen genus 2 Siegel theta series
\be \label{SiegelTheta} \Theta_{m,m'}(Z,v) \= \sum_{x \in \ZZ^2 + m} \e\Bigl(\tfrac12 xZx^t + x(v+m')^T \Bigr)\,. \ee
Here $m,\,m'\in\{0,\tfrac 12\}^2$ and the independent variables $Z$ and~$v$ are in the Siegel half-space $\HH_2$
and in~$\CC^2$, respectively.  Ten of these (those for which the theta characteristic $(m,m')$ is even,
i.e. $4 m\cdot m'\equiv0\pmod2$) are even functions of~$v$ and hence give Siegel modular forms of weight~$\tfrac 12$
after restricting to~$v=0$.  (The other six are odd and hence give~$0$ on restriction, but their derivatives
with respect to~$v$ give non-trivial restrictions that will play a crucial rule in the constructions of
Part~III of this paper.)  Using a modular embedding from $\HH^2$ to $\HH_2$ like the one described in the
previous sub-section, we get ten Hilbert theta series, all of weight~$\tfrac 12$ 
with respect to a suitable subgroup of~$\G_D$.

It is convenient to re-index the sixteen theta characteristics in a way that makes the action of $\G_D$ more transparent. 
This part works for any $D\equiv1\;(8)$, i.e., for $D$ such that the prime~$2$ splits as $\pi_2\pi_2^\s$ in 
$K=\QQ(\sqrt{D})$ for some prime ideal $\pi_2\ne\pi_2^\s$ (in our case, the principal ideal generated by $1+\a$).
Define sets $S$ and $\widehat S$ by   
$$ S \=   \{0,1,\infty\}\,, \qquad \widehat S \= S\cup \{X\} $$ 
where $X$ is a symbol, and let $\G_D$ act on $\widehat S\times\widehat S$ by fixing $X$ and identifying $S \times S$ 
with $\P^1(\O/\changed{\pi}_2) \times \P^1(\O/\s(\changed{\pi}_2))$. 
We match the usual indexing by characteristic with these symbols by
\begin{table}[h]
\begin{tabular}{|c|cccc|}
\hline
$m\,\bigl\backslash \,m' $ & X & 0& 1 & $\infty$ \\
\hline
X & $1111$ & $1011$ & $1010$ & $1110$ \\
0 & $0111$ & $0011$ & $0010$ & $0110$ \\
1 & $0101$ & $0001$ & $0000$ & $0100$ \\
$\infty$ & $1101$ & $1001$ & $1000$ & $1100$ \\
\hline
\end{tabular}\medskip
\caption{Reindexing of theta characteristics} \label{tab:reindex}
\end{table}
Table~\ref{tab:reindex}, in which the even characteristics correspond to the pairs $(a,b)\in{\widehat S}^2$
in which $a$ and $b$ are either both equal to or both different from $X$, and the odd characteristics to 
the pairs $(a,b)$ in which exactly one of $a$ and $b$ is equal to~$X$.  This gives us by restriction 10 
Hilbert modular forms $\th_{X,X}$ and $\th_{a,b}$ ($a,\,b\in S$) of weight~1/2.

Set $\Th=\th_{X,X}^2$ and for each permutation~$\pi$ of the set~$S$ set $\eta_\pi=\pm\prod_{s\in S}\th_{s,\pi(s)}$, where the sign
is~$+1$ if $\pi$ is the identity and~$-1$ otherwise.  Up to powers of~$i$, the Hilbert modular group preserves~$\Th$
and permutes the~$\eta_\pi$, preserving the parity of~$\pi$, so that if we set
$$  \eta_\pm \= \sum_{\text{$\pi$ even}} \eta_\pi \;\pm\; \sum_{\text{$\pi$ odd}} \eta_\pi  $$
then $\Th$, $\eta_+$ and $\eta_-$ are Hilbert modular forms, with multiplier systems, for the full Hilbert modular group.  
More precisely, we have
\be \label{hermann}  \Theta \in M_1(\G_D,v_0), \;\quad \eta_\pm^2 \in M_3(\G_D, v_0), \;\quad \eta_+\eta_- \in M_3(\G_D,v_0^{-1})\,, \ee
where 
\be \label{eq:defv0}
v_0:\G_D\to\mu_4, \quad \sm0{-1}10\mapsto -1, \quad \sm1x01\mapsto i^{\tr(x)} \,\, \text{for}\;\, x\in\O
\ee
is a character of order~4. The form~$\th_{X,X}$ is antisymmetric with respect 
to the involution~$\iota$ and hence vanishes on the diagonal 
 $\HH/\SL\ZZ\subset\HH^2/\SL{\O_D}$.
Moreover, in the case $D=17$ this is its full vanishing locus, so that any Hilbert modular form vanishing on the diagonal is
divisible by~$\th_{X,X}$ and any symmetric Hilbert modular form vanishing on the diagonal is divisible by~$\Th$. For example, 
since the restrictions of both $\eta_+^2$ and $\eta_-^2$ are proportional to~$\sqrt\D$ (where we use $E_4$, $E_6$ and
$\D=(E_4^3-E_6^2)/1728$ to denote the standard generators of $M_*(\SL\Z)$), some linear combination of them, which
turns out to be $\eta_-^2-4\eta_+^2$, vanishes on the diagonal and hence is divisible by~$\Th$.  This gives us the
following five symmetric Hilbert modular forms of even weight and trivial character:
\bes  G_2 = \frac{\eta_-^2 - 4\eta_+^2}{\Theta}, \quad G_4 = \eta_+ \eta_- \Theta, \quad H_4 = \Theta^4, 
\quad G_6 = \eta_-^2 \Theta^3, \quad H_6 = \eta_-^3 \eta_+ \,,\ees
where the index of each form indicates its weight. 

\begin{Thm}[Hermann, \cite{He81}] \label{thm:hermann}
The ring $M^+_{\rm ev}(\G_{17})=\bigoplus_{k \geq 0} M^+_{2k}(\G_{17})$ of symmetric Hilbert modular forms of even 
weight for $D=17$ is generated by the five Hilbert modular forms $G_2$, $G_4$, $H_4$, $G_6$ and $H_6$, with the relations
$$   G_4G_6 \= H_4H_6\,, \;\quad G_4^3 \= \frac14 G_6(H_6 + G_2G_4)\,,\;\quad  G_6^2 \= H_4\,(4G_4^2-G_2G_6)\,. $$
In particular, $M^+_{\rm ev}(\G_{17})$ is a free module of rank~$4$ over the algebra $\CC[G_2,H_4,H_6]$, with basis~$\{1,G_4,G_6,G_4^2\}$.
\end{Thm}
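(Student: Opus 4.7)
The plan is to prove the theorem by computing and comparing Hilbert--Poincaré series.

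First I would verify the three stated relations. Each identity equates two symmetric Hilbert modular forms of the same weight on $\G_{17}$ with the same (trivial) multiplier system, so the difference is a symmetric modular form of known weight whose vanishing can be checked by computing a finite number of Fourier coefficients (bounded by $\dim M^+_{2k}(\G_{17})$). These coefficients are computable from the expansions~\eqref{SiegelTheta} of the ten even Siegel theta constants pulled back via the Siegel modular embedding of~\S\ref{sec:HMGHMS}, together with the re-indexing of Table~\ref{tab:reindex} and the definitions of $\Theta$ and $\eta_\pm$. Alternatively, the three identities can be recognized as consequences of classical Jacobi-type relations among genus~$2$ theta constants, combined with the observation that, for $D=17$, the form $\th_{X,X}$ vanishes only on the diagonal so that any symmetric form vanishing on the diagonal is divisible by $\Theta$.

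Second, let $R\subseteq M^+_{\rm ev}(\G_{17})$ denote the graded subring generated by $G_2, G_4, H_4, G_6, H_6$. Using the three relations I would rewrite every monomial in the generators in the normal form $P(G_2, H_4, H_6)\cdot m$ with $m \in \{1, G_4, G_6, G_4^2\}$: the first relation eliminates all occurrences of the mixed product $G_4 G_6$ (replacing it by $H_4 H_6$), the third reduces $G_6^2$ directly to $4 H_4 G_4^2 - G_2 H_4 G_6$, and the second together with the first reduces $G_4^3$ to $\tfrac14 H_6(G_6 + G_2 H_4)$. A mechanical check of the nine products $m_i \cdot g_j$ ($g_j$ a ring generator) verifies that $\CC[G_2, H_4, H_6]\cdot\{1, G_4, G_6, G_4^2\}$ is closed under multiplication.

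Third, I would establish that $G_2, H_4, H_6$ are algebraically independent over~$\CC$. Since a graded algebra of modular forms on a two-dimensional Hilbert modular surface has Krull dimension three, three forms of distinct weights are algebraically independent iff the Jacobian of their first few Fourier coefficients does not vanish identically; this can also be verified by restricting to two transverse modular curves in $X_{17}$ (for example the diagonal $\HH/\SL\ZZ$ and the Teichm\"uller curve $W=W_{17}^1$ whose restriction of Hilbert modular forms was computed in~\S\ref{sec:MEviaES}) and checking linear independence degree by degree. Granting this, the fact that $R$ is generated as a $\CC[G_2, H_4, H_6]$-module by the four elements $1, G_4, G_6, G_4^2$ yields the upper bound
\[
H_R(u) \; \leq \; \frac{1 + u^2 + u^3 + u^4}{(1-u)(1-u^2)(1-u^3)} \qquad (u=t^2)
\]
for the Hilbert series of $R$, with equality iff the four module generators are $\CC[G_2, H_4, H_6]$-linearly independent.

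Finally, I would compute the Hilbert series of $M^+_{\rm ev}(\G_{17})$ independently via Shimizu's formula for $\dim M_{2k}(\SL\O_{17})$ together with the dimensions of the antisymmetric pieces (and the contributions of cusps and elliptic points on $X_{17}$), and compare the result with the upper bound above. If the two series coincide, then the inclusion $R \hookrightarrow M^+_{\rm ev}(\G_{17})$ is forced to be an equality in every weight, the upper bound is attained, and $R$ is automatically free with the stated basis. The main obstacle is the first step: an honest verification of the three relations from the theta-constant definitions requires tracking permutation sums over $\eta_\pi$ and their interaction with $\Theta$; the cleanest route is to lift each identity to a genus-$2$ Siegel-theta identity in the spirit of Igusa and then descend, rather than to expand Fourier coefficients by brute force.
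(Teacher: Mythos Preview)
Your dimension-count strategy has a genuine logical gap in the final step. You correctly note that generation of $R$ over $\CC[G_2,H_4,H_6]$ by $\{1,G_4,G_6,G_4^2\}$ gives only the \emph{upper} bound
\[
H_R(u)\;\le\;\frac{1+u^2+u^3+u^4}{(1-u)(1-u^2)(1-u^3)}\,,
\]
and that the inclusion $R\hookrightarrow M^+_{\rm ev}(\G_{17})$ gives $H_R\le H_{M^+_{\rm ev}}$. But then computing $H_{M^+_{\rm ev}}$ via Shimizu and finding it equal to the rational function gives you nothing new: both inequalities point the same way, so $H_R$ could still be strictly smaller and $R$ a proper subring. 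The implication ``if the two series coincide then $R=M^+_{\rm ev}$'' is simply false as stated. What you actually need is the \emph{lower} bound $H_R\ge\frac{1+u^2+u^3+u^4}{(1-u)(1-u^2)(1-u^3)}$, i.e.\ the $\CC[G_2,H_4,H_6]$-linear independence of $\{1,G_4,G_6,G_4^2\}$, established \emph{before} the comparison; only then does Shimizu force $R=M^+_{\rm ev}$. (One could also invoke Cohen--Macaulayness of $M^+_{\rm ev}$ to get freeness over a system of parameters, but you would still need to show that your four elements span the finite quotient $M^+_{\rm ev}/(G_2,H_4,H_6)$, which is again the missing generation statement.)

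The paper's approach avoids this by proving generation directly, and the key idea is one you do not use: the antisymmetric form $\th_{X,X}$ (hence $\Theta=\th_{X,X}^2$) vanishes \emph{only} on the diagonal $\HH/\SL\Z\subset X_{17}$. Thus any symmetric form vanishing on the diagonal is divisible by $\Theta$. Given $F\in M^+_{2k}(\G_{17})$, one restricts to the diagonal, matches the restriction by a polynomial $P_0(G_2,H_6)$ (since $G_2|_{\rm diag}\propto E_4$ and $H_6|_{\rm diag}\propto\Delta$ generate $M_{4*}(\SL\Z)$), peels off $\Theta$, and iterates through the four characters $v_0^j$. After four steps one has written $F=P_0+G_4P_1+G_4^2P_2+G_6P_3+H_4F_4$ with $F_4\in M^+_{2k-8}$, and induction on the weight finishes. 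This is an explicit division algorithm yielding surjectivity $R=M^+_{\rm ev}$ with no appeal to dimension formulas; freeness then follows. Your approach could be repaired, but the missing linear-independence step would likely require the same restriction-to-the-diagonal mechanism.
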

\begin{proof} [Sketch of proof (following Hermann)] The relations among Hermann's five forms, like any relations among modular forms,
can be verified algorithmically by looking at a finite part of the Fourier expansions of the functions involved, so
we only have to show that these forms generate the whole ring.  Let $F=F_0\in M_k(\G_{17})$ be a symmetric Hilbert modular form of
even weight~$k$ with trivial character.  Then the restriction of $F$ to the diagonal has weight~$2k$ divisible by~4, and
since the ring of modular forms on $\SL\Z$ of weight divisible by~4 is generated by the forms~$E_4$ and~$\D$, which are
multiples of the restrictions to the diagonal of~$G_2$ and $H_6$, there is a weighted homogeneous polynomial $P_0(G_2,H_6)$
in $G_2$ and $H_6$ whose restriction to the diagonal coincides with that of~$F_0$. By what we said above, we then have
$F_0=P_0(G_2,H_6)+\Th F_1$ for some Hilbert modular form $F_1\in M_{k-1}(\G_{17},v_0^{-1})$. The restriction of~$F_1$ to the
diagonal has the character of~$\sqrt\D$ and weight congruent to~2 modulo~4, so by the same argument as before coincides
with the restriction of~$\eta_+\eta_-P_1(G_2,H_6)$ for some weighted homogeneous polynomial $P_1(G_2,H_6)$. This implies
in turn $F_1=\eta_+\eta_-P_1(G_2,H_6)+\Th F_2$ for some $F_2\in M_{k-2}(\G_{17},v_0^2)$. A similar argument shows that
$F_2$ has the same restriction to the diagonal as $\eta_+^2\eta_-^2P_2(G_2,H_6)$ for yet a third polynomial~$P_2$,
so $F_2=\eta_+^2\eta_-^2P_2(G_2,H_6)+\Th F_3$ for some $F_3\in M_{k-3}(\G_{17},v_0)$, and a final iteration gives a
fourth polynomial $P_3$ such that $F_3=\eta_-^2P_3(G_2,H_6)+\Th F_4$ for some Hilbert modular form~$F_4$ of weight~$k-4$,
now again with trivial character.  Combining these successive identities we have written $F$ as
$P_0+G_4P_1+G_4^2P_2+G_6P_3+H_4F_4$ where each $P_i$ belongs to $\C[G_2,H_6]$ and $F_4\in M_{k-4}^+(\G_{17})$,
and now iterating the whole argument we see that $F$ is a linear combination of~1, $G_4$, $G_4^2$ and $G_6$ with coefficients 
in ~$\C[G_2,H_4,H_6]$ as claimed.
\end{proof}
\noindent{\bf Example.} The Fourier expansions of $\eta_+$,  $\eta_-$ and $\Th$ begin 
\[
\begin{split} 
 \eta_+ &\= -4\,X^{1/4}\,\bigl((Y+Y^{-1}) \+ (13Y^4-19Y^2-19Y^{-2}+13Y^{-4})\,X \+\cdots\bigr)\,, \\
 \eta_- &\= 16\,X^{1/4}\,\bigl(1\,-\,(Y^5+3Y^3-Y-Y^{-1}+3Y^{-3}+Y^{-5})\,X  \+\cdots\bigr)\,, \\
 \Th   &\= 4\,X^{1/2}\,\bigl((Y^2-2+Y^{-2})\,-\,2(Y^5-Y-Y^{-1}+Y^{-5})\,X \+ \cdots\bigr)\,.
\end{split}
\]
(As a check, if we set~$Y=1$ and $X=q$ then these reduce to~$-8\eta^6$, $16\eta^6$, and~0.)
Comparing with the expansions of the first three Eisenstein series $\GG_k$ given above, we find
that these forms are given in terms of Hermann's generators of $M^+_{\rm ev}$ by
\[
\begin{split}
 -192\,\GG_2 &\= G_2\,, \qquad  640\,\GG_4 \= \frac{41}{48}\,G_2^2\,-\,39\,G_4 \,-\, 57\,H_4\,, \\
  14336\,\GG_6 &= -\frac{5791}{72}\,G_2^3 \+ 8571\,G_4 G_2 \+ 11463\,H_4 G_2 \,-\, \frac{32865}4\,G_6 \,-\,6285\,H_6\,.
\end{split}
\]

\begin{Cor} \label{HermCor} The function field field of the symmetric Hilbert modular surface with $D=17$ 
is the rational function field is generated by the two functions
  \be\label{DefUV}  U \= \frac{H_4}{G_4} \quad \biggl(\= \frac{\Th^3}{\eta_-\eta_+}\biggr)\,,\qquad 
  V \= \frac{H_6}{H_6 -G_2G_4}\quad \biggl(\= \frac{\eta_-^2}{4\eta_+^2}\biggr)\,. \ee
\end{Cor}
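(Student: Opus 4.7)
The plan is to invert Hermann's relations from Theorem~\ref{thm:hermann} in order to write each of $H_4, H_6, G_6, G_4$ as a power of $G_2$ times a rational function of $U$ and $V$. The defining identities of $U$ and $V$ give $H_4 = UG_4$ and $H_6 = \tfrac{V}{V-1}G_2 G_4$ directly. Substituting these into the first Hermann relation $G_4 G_6 = H_4 H_6$ produces $G_6$ as a rational multiple (with coefficient in $\CC(U,V)$) of $G_2 G_4$. Feeding all three expressions into the second Hermann relation $4 G_4^3 = G_6(H_6 + G_2 G_4)$ and cancelling $G_4^2$ then yields an explicit formula of the shape
\[ G_4 \= \frac{UV(2V-1)}{4(V-1)^2}\, G_2^2\,, \]
and one checks that the third Hermann relation $G_6^2 = H_4(4G_4^2 - G_2 G_6)$ is an automatic consequence of the substitutions just made.

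Next I would deduce that every monomial of total weight $2k$ in $G_2, G_4, H_4, G_6, H_6$ equals $G_2^k$ multiplied by a rational function of $U$ and $V$. By Theorem~\ref{thm:hermann} the same is therefore true for every element of $M_{2k}^+(\G_{17})$, so the ratio of any two nonzero symmetric Hilbert modular forms of the same parallel even weight lies in $\CC(U,V)$. The graded ring $R = \bigoplus_k M_{2k}^+(\G_{17})$ is finitely generated by Hermann's theorem, and its Proj is a projective model of $X_{17}^+$ whose function field is the degree-zero part of the homogeneous localization of $R$, i.e., exactly the field of such ratios. This gives the inclusion $\CC(X_{17}^+) \subseteq \CC(U,V)$, while the reverse inclusion is obvious.

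Finally, since $X_{17}^+$ is a two-dimensional algebraic variety, the equality $\CC(X_{17}^+) = \CC(U,V)$ just established forces $\CC(U,V)$ to have transcendence degree two over $\CC$, so $U$ and $V$ are algebraically independent and the field is purely transcendental, as asserted.

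The main obstacle is entirely computational: the algebraic manipulation of the two weight-$12$ relations, which must be carried out carefully enough to isolate $G_4/G_2^2$ as a single rational function of $U$ and $V$ and to verify compatibility with the remaining relation. Once this step is done, the passage from Hermann's structure theorem to the description of the function field, and the algebraic independence of $U, V$, are both formal.
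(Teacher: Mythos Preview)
Your approach is essentially the same as the paper's: express the four ratios $G_4/G_2^2$, $H_4/G_2^2$, $G_6/G_2^3$, $H_6/G_2^3$ as explicit rational functions of $U$ and $V$ using Hermann's relations, and then read off the function field statement from the structure theorem. Your explicit value $G_4/G_2^2 = \tfrac{UV(2V-1)}{4(V-1)^2}$ differs from the paper's $\tfrac{UV}{4(V-1)^2}$ only because the second Hermann relation as printed has a sign typo (it should read $G_4^3 = \tfrac14\,G_6(H_6 - G_2G_4)$, as one verifies directly from the definitions $G_4=\eta_+\eta_-\Theta$, $G_6=\eta_-^2\Theta^3$, $H_6=\eta_-^3\eta_+$, $G_2\Theta=\eta_-^2-4\eta_+^2$); with the corrected sign your computation gives the paper's formula, and in either case the argument for the corollary goes through unchanged.
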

 \begin{proof} The relations among Hermann's generators imply
 $$ \Bigl(\frac{G_4}{G_2^2}\,,\;\frac{H_4}{G_2^2}\,,\;\frac{G_6}{G_2^3}\,,\;\frac{H_6}{G_2^3}\Bigr) 
  \= \Bigl( \frac{UV}{4(V-1)^2}\,,\;\frac{U^2V}{4(V-1)^2}\,,\;\frac{U^2V^2}{4(V-1)^3}\,,\;\frac{UV^2}{4(V-1)^3}\Bigr)\,.$$
 so the corollary follows immediately from the theorem.
\end{proof}

\subsection{The equation of the \Teichmuller\ curve.}  \label{sec:puzzles}
The corollary just given tells us that the Hilbert modular surface
for~$D=17$ is rational, with coordinates~$U$ and~$V$.  In particular, the image of the \Teichmuller\ curves 
$W_{17}^\pm$ on this surface must be given by polynomial equations in these coordinates.  In this subsection we will give 
these equations, which turn out to be several orders of magnitude simpler than the previously obtained equation 
$P(\GG_2,\GG_4,\GG_6)=0$.  We will also describe better systems of Fourier coordinates and will resolve two questions 
that we raised earlier by showing that the values of~$A$ and $\tA$ given in~\eqref{eq:A} and~\eqref{eq:tA} on the basis of 
numerical computations are indeed correct and by giving a purely modular proof of the integrality (away from the prime~2) of the 
Taylor expansions of~$y$ and~$\ty$ as power series in~$t$. 

\begin{Thm} \label{eq:TeicheqUV}
On the (rational) symmetric Hilbert modular surface with coordinates
$U$ and $V$, the \Teichmuller\ curves $W_{17}^1$ and $W_{17}^0$ are given by the equations
\be\label{Eq1} W_D^1: \quad  V \+ \frac{5+\sq}2\,U^2 \+ 3\,\frac{7+\sq}8\,U \+ \frac{1-\sq}8 \= 0 \ee
and 
\be\label{Eq2} W_D^0: \quad  V \+ \frac{5-\sq}2\,U^2 \+ 3\,\frac{7-\sq}8\,U \+ \frac{1+\sq}8 \= 0\;. \ee
\end{Thm}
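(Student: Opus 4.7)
The plan is to compute the restrictions of the coordinates $U$ and $V$ to the Teichm\"uller curve $W_{17}^1$ as explicit rational functions of the hauptmodule, and then verify the asserted quadratic identity directly. First, by Hermann's construction, both $U=\Theta^3/(\eta_-\eta_+)$ and $V=\eta_-^2/(4\eta_+^2)$ are ratios of theta expressions of equal parallel weight, and so define $\Gamma$-invariant functions when pulled back to $W_{17}^1$. Using the explicit Fourier expansions of $\Theta$, $\eta_+$, $\eta_-$ in the variables $X,Y$ of~\eqref{defXY}, I would pull each factor back via the modular embedding $z\mapsto(\varepsilon z,\varepsilon^\sigma\varphi(z))$ of~\eqref{TeiEmb}, applying~\eqref{eq:NewXYexp} to convert each monomial $X^m Y^n$ into a power series in the local parameter $t$. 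This gives $\Theta|_W$, $\eta_+|_W$, $\eta_-|_W$ as explicit Puiseux series in $t$ times suitable algebraic prefactors.

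Second, I would form the two ratios and obtain $U|_W$ and $V|_W$ as Laurent series in $t$, which by the dimension analysis of Proposition~\ref{Prop71} and~\ref{prop:dimGamma} must in fact be rational functions in the hauptmodule $s$ of controlled degree. Substituting these rational functions into $V+\tfrac{5+\sqrt{17}}{2}U^2+3\tfrac{7+\sqrt{17}}{8}U+\tfrac{1-\sqrt{17}}{8}$ and checking that the result vanishes identically (either by computing sufficiently many $t$-coefficients, or by performing the exact arithmetic in $\mathbb{Q}(\sqrt{17})(s)$) proves that $W_{17}^1$ is contained in the quadric~\eqref{Eq1}. Since this quadric is a rational curve (the equation presents $V$ as a polynomial in $U$, hence is irreducible of geometric genus zero) and $W_{17}^1$ is an irreducible curve of the same dimension, containment forces equality.

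For $W_{17}^0$ I would invoke Theorem~\ref{thm:topoTeich}(iv): $W_{17}^0$ and $W_{17}^1$ are Galois conjugate over $\mathbb{Q}(\sqrt{17})$. Since the Hilbert modular forms $G_2,G_4,H_4,G_6,H_6$, and hence the functions $U,V$, are defined over $\mathbb{Q}$, the nontrivial element of $\mathrm{Gal}(\mathbb{Q}(\sqrt{17})/\mathbb{Q})$ fixes the surface and sends~\eqref{Eq1} to~\eqref{Eq2}, which therefore cuts out the Galois conjugate curve.

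The main obstacle is the bookkeeping in the first two steps: the substitution~\eqref{eq:NewXYexp} involves the transcendental constants $A$ and~$\tilde A$, so {\it a priori} the coefficients of $U|_W$ and $V|_W$ in $t$ involve these transcendentals. The content of the calculation is that these transcendentals must cancel in the weight-$0$ combinations $U$ and $V$ to leave algebraic coefficients in $\mathbb{Q}(\sqrt{17})$; this cancellation is both what makes the quadratic identity possible and, read in reverse, gives a rigorous confirmation of the numerically guessed values~\eqref{eq:A} and~\eqref{eq:tA} for $A$ and $\tilde A$, since any deviation would yield a transcendental obstruction to a polynomial relation with algebraic coefficients.
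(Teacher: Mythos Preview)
Your approach is essentially the same as the paper's: pull back Hermann's theta expressions to $W$ via~\eqref{eq:NewXYexp}, recognize the restrictions of $U$ and $V$ as explicit rational functions of the hauptmodule~$s$, and verify the quadratic relation; then invoke Galois conjugacy for $W_{17}^0$. The paper carries this out slightly more concretely by first computing $\xi^{-k}\,F|_W$ for each Hermann generator $F\in\{G_4,H_4,G_6,H_6\}$ as an explicit polynomial in~$s$, obtaining the closed forms $U|_W=\kappa_1/(s-\kappa_1)$ and $V|_W=\tfrac{-1+\sqrt{17}}8\,s(s-1)/(s-\kappa_1)^2$, from which~\eqref{Eq1} is immediate. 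Your final paragraph slightly overstates what the weight-zero cancellation buys: the paper does \emph{not} deduce the values of $A$ and~$\tilde A$ from this computation, but instead confirms them by a separate resultant argument (point~3 following the theorem), so you should treat~\eqref{eq:NewXYexp} as already established input rather than as something the present calculation certifies.
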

\begin{proof} 
Since we gave the expressions for the Hilbert-Eisenstein series~$\GG_2$, $\GG_4$ and $\GG_6$ in the Hermann generators in the last 
subsection, we could derive~\eqref{Eq1} from the results of~\S\ref{sec:MEviaES} giving the restrictions to the curve~$W_{17}^1$ of 
these Eisenstein series.  However, it is much simpler to work directly with Hermann's generators, obtaining their Fourier expansions 
from those of~$\eta_\pm$ and~$\Theta$ as given above and then using~\eqref{eq:NewXYexp} to obtain the $t$-expansions of their 
restrictions to~$W$.  The results of the computations are given in the following table, in which we have used the results from the end 
of~\S\ref{sec:Twisted} to write the expansion of the restriction of each of $G_4$, $H_4$, $G_6$ and $H_6$ to~$W$ (that of $G_2=-192\GG_2$ 
was already given above) as a power of~$\xi$ times a polynomial in~$s$:
\bes \begin{tabular}{|c||c|c|c|c|} \hline $F$ & $\ve G_4/2^4\pi_2^8$ & $\ve H_4/2^4\pi_2^6$ & $\ve^2 G_6/2^6\pi_2^{12}$ & $\ve^2 H_6/2^6\pi_2^{14}$  \\
\hline $\xi^{-k}\,F|W$ & $s(s-1)(s-\kappa_1)$ & $s(s-1)$  & $s^2(s-1)^2$  & $s^2(s-1)^2(s-\kappa_1)$   \\
\hline \end{tabular} \ees
\smallskip\noindent  
Here $\kappa_1= \pi_2^{-2} = \frac{13-3\sd}8$.  From the definitions of the Hilbert modular functions $U$ and $V$ it then follows 
that their restrictions to~$W$ are given by
\be \label{UVonW} U\bigr|_W \ = \frac{\kappa_1}{s\,-\,\kappa_1}\,, \qquad V\bigr|_W \= \frac{-1+\sd}8\,\frac{s(s-1)}{(s-\kappa_1)^2}\,,  \ee
and equation~\eqref{Eq1} follows immediately.  Equation~\eqref{Eq2} is proved in a similar way using the Galois 
conjugate differential equation, as discussed in the remark at the end of \S\ref{sec:GMandPF}; the resulting expansions
are the Galois conjugates of those for~$W_{17}^1$ and since the Hermann generators have rational Fourier coefficients the
final equation is necessarily also the Galois conjugate of that of~$W_{17}^1$.  In fact, this Galois conjugation property
holds for all~$D$, as was already recalled in Theorem~\ref{thm:topoTeich}\,(iv) of~\S\ref{sec:topoTeich}. 
\end{proof}
 
We end this section by discussing four points related to the equations given in Theorem~\ref{eq:TeicheqUV}.

{\bf 1.} Equation~\eqref{Eq1} describes a Hilbert modular function that vanishes precisely on the curve~$W_{17}^1$. We can 
also ask for the holomorphic Hilbert modular form of smallest weight with the same property. If we 
multiply the left-hand
side of~\eqref{Eq1} through by $\eta_+^2\eta_-^2$, then by equation~\eqref{DefUV} the result is
\be \label{eq:F171}
F_{17}^1 \=  \frac14\,\eta_-^4 \+ \frac{5+\sq}2\,\Th^6 \+ 3\,\frac{7+\sq}8\,\Th^3\eta_+\eta_- \+ \frac{1-\sq}8\,\eta_+^2\eta_-^2\,, 
\ee
and by Theorem~\ref{eq:TeicheqUV} this vanishes precisely on $W_{17}^1$.
According to~\eqref{hermann}, $F_{17}^1$ is a holomorphic Hilbert modular form of weight~6 on the full Hilbert modular group, but 
with quadratic character~$v_0^2$, where $v_0$ is defined as in~\eqref{eq:defv0}.  If we further multiply 
it by~$\Th^2$, then
we get a Hilbert modular form of weight~8 on the full Hilbert modular group and with trivial character, given in terms of 
the Hermann generators by
$$ \Th^2F_{17}^1\= \frac14\,G_2G_6 \+ \frac{5+\sq}2\,H_4^2 \+ 3\,\frac{7+\sq}8\,G_4H_4 \+ \frac{9-\sq}8\,G_4^2\,, $$ 
but this form now vanishes not only on~$W_{17}^1$, but also (doubly) on the diagonally embedded modular curve $\H/\SL\Z\subset\H^2/\SL\O$.
On the other hand, if we multiply $F_{17}^1$ by its Galois conjugate~$F_{17}^0=(F_{17}^1)^\s$,
then the product $F_{17}$ vanishes precisely on the full \Teichmuller\ locus $W_{17}=W_{17}^0\cup W_{17}^1$, 
and this is now a holomorphic Hilbert modular form on the full modular group~$\SL\O$ and with trivial 
character, given in terms of the basis of $M_{12}(\SL\O)$ from Theorem~\ref{thm:hermann} by 
$$ F_{17} \= \text{(explicit polynomial in $G_2,\,G_4,\,H_4,\,G_6,\,H_6$ with rational coefficients)}\,. $$
We do not write out the polynomial, since it is a bit complicated, but observe that it involves only 11 of 
the 16 generators of $M_{12}(\SL\O)$.  The fact that here there is a single Hilbert modular form of 
weight~12 whose vanishing locus is precisely the union of the \Teichmuller\ curves on~$X_D$ is a special 
case of the theorem of Bainbridge,
already mentioned in~\S\ref{sec:MEviaES}, stating that such a form $F_D$ exists for every~$D$.  
We will give a different proof of this in Part~III by constructing~$F_D\in M_{12}(\SLO)$ in general as a product of twelve derivatives of theta series 
of weight $(\h,\frac32)$ or~$(\frac32,\h)$.
\par
{\bf 2.} The next point concerns the choice of coordinates for our Fourier expansions. We replace 
the previously used coordinates $X$ and~$Y$ from~\eqref{defXY} by the new Fourier variables 
$$X_1\= X^{-3}Y^{13}, \quad Y_1\= X^5Y^{-21}\,.$$ 
This has several advantages. First of all they form a $\Z$-basis of the group of Fourier 
monomials $\e(\tr(x\zz))$, whereas $X$ and~$Y$ generated a subgroup of index~2. Secondly, symmetric
Hilbert modular forms of even weight are symmetric in~$X_1$ and $Y_1$, as one sees using 
the action of~$\ve\,$. For instance the $(X_1,Y_1)$-expansions of the first two Hermann generators
begin
\bas
-\tfrac1{192}\, G_2 & \= \tfrac1{12} \+ Y_1X_1 \+ (9Y_1^2 + 3 Y_1^3)\,X_1^2
\+ (3Y_1^2 + 10Y_1^3+15 Y_1^4)\,X_1^3 \+ \cdots\,, \\ 
\tfrac1{256}\, G_4 & \= \phantom{\tfrac1{12}} \,-\, Y_1X_1 \+ (-9Y_1^2 + Y_1^3)\,X_1^2
\+ (Y_1^2 -14 Y_1^3 + Y_1^4)\,X_1^3 \+ \cdots\,, \\ 
\eas
in which the coefficients of $Y_1^3X_1^2$ and $Y_1^2X_1^3$ are the same.
Thirdly, and most important, both are holomorphic near the cusp of~$W$ 
and hence have power series expansions in~$t$, with valuations~1 and~0 there rather than $21/2$ 
and~$5/2$ as for~$X$ and~$Y\,$. This has to do with the  Hirzebruch resolution of the
cusp singularities, according the which different $\Z$-bases of the group just mentioned
are good coordinates in different parts of the resolution cycle. (We will discuss this in 
much more detail in Section~\ref{sec:HirzBain}.) Here we have to choose the coordinates
that are adapted to the point of the cusp resolution through which $W$ passes. 
Explicitly, these expansions are
  \ba \label{eq:X1Y1}
X_1 &\= X^{-3}Y^{13} \= -\tfrac{11+3\sd}{64}\,t \+ \tfrac{403-229\sd}{2048}\,t^2 \+\cdots\,, \\
  \qquad Y_1 &\= X^5Y^{-21} \= \tfrac{21-5\sd}2  - \tfrac{895-217\sd}8\,t \+ \cdots\,. \ea
(For comparison, the leading terms of $X$ and $Y$ are $\sqrt{\frac{524445-220267\sd}{2^{80}}}t^{21/2}$ 
and $\sqrt{-\frac{3+11\sd}{2^{20}}} t^{5/2}$, respectively.) It is then very easy to restrict 
a Hilbert modular form~$F$ given in its $(X_1,Y_1)$-expansion to~$W$: to get the expansion up 
to order~$t^n$ it suffices to expand~$F$ up to $X_1^n$ in $\C[Y_1][[X_1]]$.
\par
{\bf 3.} The third point concerns the correctness of the values of~$A$ and $\wt{A}$ in~\eqref{eq:A} 
and~\eqref{eq:tA}. We have been assuming throughout (for instance in~\eqref{eq:X1Y1})
that these guesses were correct, and now finally prove it. We write
\ba \label{eq:X1Y1orig}
X_1 & \= -\tfrac{11+3\sd}{64} \,c_1\, t\,\exp\Bigl(\ve\a\lambda \frac{y_1}{y}
+ (\ve\a\lambda )^\sigma \,\frac{\ty_1}{\ty}\Bigr)\,, \\
Y_1 & \= \phantom{-}\tfrac{21-5\sd}{2} \,c_2\, t\,\exp\Bigl(\ve\a \mu\, \frac{y_1}{y}
+ \bigl(\ve\a \mu\bigr)^\sigma \,\frac{\ty_1}{\ty}\Bigr)\,, \\
\ea
where $\lambda = \tfrac{13-3\sd}{2\sd}$ and $\mu = \tfrac{-21+5\sd}{2\sd}$ 
(the factor $\ve$ in the exponent takes care of the identification of $\HH\times\HH$ 
and $\HH\times\HH^-$, the factor~$\a$ is the cusp width, and the factors~$\la$ and $\mu$
come from the passage from $(X,Y)$ to $(X_1,Y_1)$) and
where $c_1$ and $c_2$ are numerical factors that are both equal to one if (and only if)
the formulas for $A$ and $\wt{A}$ in~\eqref{eq:A}  and~\eqref{eq:tA} are correct.
(One could---and we originally did---also do the whole calculation without including the prefactors
$-\tfrac{11+3\sd}{64}$ and $\tfrac{21-5\sd}{2}$ in~\eqref{eq:X1Y1orig}, but then the numbers
in the calculation would be even worse.) We substitute these expressions into $G_2$,
divide by $\xi^2$, with $\xi$ as in~\eqref{xiandeta}, and write the quotient as a power
series $\sum_i C_i s^i$, with $s = \tfrac{4t}{(1-t)^2}$ as usual. By Proposition~\ref{prop:dimGamma},
we know that this power has to reduce to a quadratic polynomial if $c_1$ and~$c_2$ have
the correct values. If we instead treat $c_1$ and $c_2$
as unknowns, then the coefficients of this power series are polynomials in $c_1$ and $c_2$
(with huge coefficients), and we have to show that the infinite system of polynomial
equations $C_i(c_1,c_2)=0$ for all $i>2$ has the unique solution $c_1=c_2=1$.
By computer calculation we find that the g.c.d.~of the resultants ${\rm Res}_{c_1}(C_3,C_4)$ 
and ${\rm Res}_{c_1}(C_4,C_5)$ is equal to $-2^{-111}c_2^8(c_2-1)$, and since $c_2$ cannot be~0,
it must be~1. Then substituting $c_2=1$ into the g.c.d.~of $C_3$ and $C_4$ gives $c_1=1$.  
\par
{\bf 4.} The final point concerns the integrality (away from the prime~2) of our Fourier expansions. The 
coefficients of the expansion of any Hilbert modular form in $X_1$ and~$Y_1$ with rational coefficients 
always has bounded denominators, like in the examples for $G_2$ and $G_4$ above. The same is true also 
for Hilbert modular functions, e.g.
\bas
U +1 &\= (Y_1 - Y_1^2)\,X_1 \+ (-Y_1 -15Y_1^2 + 17Y_1^3 -Y_1^4)\,X_1^2 \+ \cdots \,, \\
\tfrac14\, V & \=  Y_1\,X_1 \,-\,(22Y_1^2 + 2Y_1^3)\,X_2 \+ 
(-2Y_1^2 + 289Y_1^3 + 12 Y_1^4 + Y_1^5)\,X_1^3 \+ \cdots 
\eas
for the generators of the field of symmetric Hilbert modular functions given in Corollary~\ref{HermCor}
above.  We introduce new generators of this function
field, namely $U_1 =  1-\tfrac {4(U+1)}{V}$ and $V_1 = \tfrac14 V$. (Other choices would be
equally good.)
\par
\begin{Prop} In the new set of generators the equality 
$$\ZZ[[X_1,Y_1]]^{\rm sym}  \= \ZZ[[U_1,V_1]]$$ 
of power series rings holds.
\end{Prop}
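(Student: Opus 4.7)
The strategy is to prove both inclusions in the equality $\ZZ[[X_1, Y_1]]^{\rm sym} = \ZZ[[U_1, V_1]]$. Symmetry of $U_1$ and $V_1$ under the involution swapping $X_1$ and $Y_1$ follows from the $\iota$-invariance of the symmetric Hilbert modular functions $U, V$ combined with the $\ve$-action noted in point~2 following Theorem~\ref{eq:TeicheqUV}, so the content reduces to (i) integrality of the $(X_1, Y_1)$-expansions of $U_1$ and $V_1$, and (ii) the equality of the two rings given this integrality.

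For integrality, the approach is to express $U$ and $V$ in terms of Hermann's theta generators via Corollary~\ref{HermCor} as $U = \Theta^3/(\eta_-\eta_+)$ and $V = \eta_-^2/(4 \eta_+^2)$. The key fact needed is that the theta series $\Theta, \eta_\pm$ (equivalently, Hermann's generators $G_2, G_4, H_4, G_6, H_6$) have integer Fourier coefficients in the $(X_1, Y_1)$-variables. This is precisely why the Hirzebruch coordinates $X_1 = X^{-3} Y^{13}$, $Y_1 = X^5 Y^{-21}$ are chosen: they adapt to the cone of the Hirzebruch fan through which $W$ passes, eliminating the half-integer exponents that would appear in the naive $(X, Y)$-coordinates. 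Granted this, one factors $U + 1 = X_1 Y_1 \,\tilde{a}$ and $V/4 = X_1 Y_1\, \tilde{b}$ with $\tilde{a}, \tilde{b} \in \ZZ[[X_1, Y_1]]$ both having constant term $1$, so that $V_1 = X_1 Y_1 \tilde{b} \in \ZZ[[X_1, Y_1]]$ directly, while $U_1 = 1 - \tilde{a}/\tilde{b} = (\tilde{b} - \tilde{a})/\tilde{b}$ also lies in $\ZZ[[X_1, Y_1]]$ because $\tilde{b}$ is a unit in this local ring.

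For the equality of rings, I would use $\ZZ[[X_1, Y_1]]^{\rm sym} = \ZZ[[e_1, e_2]]$ with $e_1 = X_1 + Y_1$, $e_2 = X_1 Y_1$, and apply the formal inverse function theorem over $\ZZ$. Direct computation from the Fourier expansions in point~4 of Section~\ref{sec:puzzles} yields
\[
U_1 \= e_1 - 7 e_2 + \cdots, \qquad V_1 \= e_2 - 22 e_2^2 - 2 e_1 e_2^2 + \cdots,
\]
where $\cdots$ denotes higher-order symmetric terms. Hence the Jacobian $\partial(U_1, V_1)/\partial(e_1, e_2)$ at the origin is upper unitriangular with determinant $1$, a unit in $\ZZ$, and the formal inverse function theorem produces integral power series $E_1(U_1, V_1), E_2(U_1, V_1) \in \ZZ[[U_1, V_1]]$ recovering $e_1$ and $e_2$, giving the reverse inclusion $\ZZ[[e_1, e_2]] \subseteq \ZZ[[U_1, V_1]]$.

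The hard part is the full integrality assertion underlying the second paragraph: the low-order Fourier computations in the excerpt strongly suggest it, but a rigorous proof requires either direct verification via theta function identities (tracking the interplay between the Hermann relations and the $(X_1, Y_1)$-expansions) or an abstract argument using the toroidal description of the Hilbert modular surface at the cusp—showing that $X_1, Y_1$ parametrize a smooth open chart of the Hirzebruch resolution on which $\Theta, \eta_\pm$ extend to holomorphic functions with integer Taylor coefficients. Once this is established, the remainder of the argument is formal.
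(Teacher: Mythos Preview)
Your proposal is correct and follows essentially the same route as the paper: identify $\ZZ[[X_1,Y_1]]^{\rm sym}$ with $\ZZ[[e_1,e_2]]$ (the paper writes $S,P$ for your $e_1,e_2$), observe that $U_1,V_1$ lie in this ring with leading terms $e_1$ and $e_2$, and invert. Your use of the formal inverse function theorem with unit Jacobian is exactly the mechanism behind the paper's ``recursively compute $S$ and $P$'' step.

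The one place you are more careful than the paper is the integrality of $U_1,V_1$ in $\ZZ[[X_1,Y_1]]$. The paper simply displays the first few terms of $U+1$ and $\tfrac14V$ in $(X_1,Y_1)$ (just before the Proposition) and then asserts in the proof that the $(S,P)$-expansions of $U_1,V_1$ ``have integral expansions.'' Your factorization $U+1=X_1Y_1\,\tilde a$, $\tfrac14V=X_1Y_1\,\tilde b$ with $\tilde a,\tilde b$ units, deducing $U_1=(\tilde b-\tilde a)/\tilde b\in\ZZ[[X_1,Y_1]]$, is a genuine addition that the paper leaves implicit. Your caveat that the underlying integrality of $U+1$ and $\tfrac14V$ themselves needs justification is fair, though in context this follows from the explicit integer $(X,Y)$-expansions of $\eta_\pm$ and $\Theta$ given in~\S\ref{sec:HMFdisc17} together with the monomial change of variables to $(X_1,Y_1)$; the paper treats it as evident from the displayed computations rather than as a separate lemma.
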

\par
\begin{proof} We have $\ZZ[[X_1,Y_1]]^{\rm sym}=\ZZ[[S,P]]$, where $S = X_1+Y_1$ and~$P=X_1Y_1$.
We have already seen that $U$ and $V$, and hence also $U_1$ and $V_1$, are symmetric in $X_1$ and~$Y_1$. 
We can thus express $U_1$ and $V_1$ as a power series in $S$ and $P$. Concretely, these expansions start
\bas
U_1 &\= S \+ (-7+3S+3S^2)\,P \+(13-65S+ 37S^2+ 6S^3)P^2 \+  \cdots\,, \\
V_1 &\= P \+ (-22-2S)\,P^2  \+ (289+12S+S^2)\,P^3 \+ \cdots\,.
\eas 
They have integral expansions, so $\ZZ[[U_1,V_1]] \subseteq \ZZ[[S,P]]$.  Conversely, since the expansions
begin $S+\text O(P)$ and $P+\text O(P^2)$, we can recursively compute $S$ and~$P$ as power series 
in $U_1$ and~$V_1$, and these again have integral coefficients:
\bas
S &\=  U_1 \+ (7+3U_1+3U_1^2)\,V_1 \+ (120-20U_1-82 U_1^2+6U_1^3)\,V_1^2  \+ \cdots\,, \\
P &\= V_1 \+ (22+2U_1)\,V_1^2 \+ (693+158U_1+U_1^2)\,V_1^3  \+ \cdots\,.
\eas 
It follows that $\ZZ[[U_1,V_1]] \supseteq \ZZ[[S,P]]$, as desired.
\end{proof}
\par
\begin{Cor} The restriction to~$W_\Pi$ of any symmetric Hilbert modular form of even weight 
with integral Fourier coefficients belongs to $R[[t]]$, where $R = \ord_{17}[\tfrac12]$.
\end{Cor}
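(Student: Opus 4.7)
The plan is to use the preceding proposition together with the holomorphy of $F$ as a Hilbert modular form. Writing $F = \sum_{m,n \ge 0} d_{m,n} X_1^m Y_1^n$ with $d_{m,n} \in \ZZ$, the key observation is that for each fixed $m$ only finitely many $n$ contribute. Indeed, $X_1^m Y_1^n = X^{-3m+5n} Y^{13m-21n}$ corresponds to the element $\nu \in \O^\vee$ with $2\nu = (-3m+5n) + (13m-21n)/\sd$, and holomorphy forces $\nu \gg 0$, equivalently $(9-\sd)m/8 < n < (9+\sd)m/8$, a bounded range. Hence we may write $F = \sum_{m \ge 0} X_1^m P_m(Y_1)$ with each $P_m \in \ZZ[Y_1]$ an honest polynomial of degree $O(m)$; this finiteness is what avoids the convergence issue of an otherwise infinite substitution.

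Next I would verify $X_1|_{W_\Pi} \in tR[[t]]$ and $Y_1|_{W_\Pi} \in R[[t]]$. By the preceding proposition, $S = X_1 + Y_1$ and $P = X_1 Y_1$ belong to $\ZZ[[U_1, V_1]]$, with the coefficient of each $V_1^j$ a polynomial in $U_1$; so it suffices to establish $U_1|_{W_\Pi}, V_1|_{W_\Pi} \in R[[t]]$, whence $S|_W, P|_W \in R[[t]]$ by substitution, and the individual $X_1|_W, Y_1|_W$ are recovered as roots of $T^2 - S|_W T + P|_W$ (the discriminant having constant term $a_0^2$ with $a_0 = (21-5\sd)/2 \in R^\times$, hence a unit square in $R[[t]]$, and division by $2$ is legal since $2 \in R^\times$). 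The restrictions of $U_1$ and $V_1$ themselves follow from~\eqref{UVonW} and $s = -4\kappa_0 t/(1-t)^2$, which give
\[
U_1|_W \= 1 - \frac{32(s - \kappa_1)}{(-1+\sd)(s-1)}, \qquad V_1|_W \= \frac{-1+\sd}{32} \cdot \frac{s(s-1)}{(s-\kappa_1)^2};
\]
the constants $\kappa_0, \kappa_1, (-1+\sd)/32$ all lie in $R$, while $\kappa_1$ and $-1+\sd$ (appearing in denominators) are units in $R$ of norms $1/4$ and $-16$ respectively, both powers of~$2$. It follows that $U_1|_W \in R[[t]]$ with constant term $a_0 \in R^\times$, and $V_1|_W \in tR[[t]]$.

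The proof then concludes by substitution: $P_m(Y_1|_W) \in R[[t]]$ since $P_m$ has integer coefficients and $Y_1|_W \in R[[t]]$; $X_1|_W^m \in t^m R[[t]]$ since $X_1|_W \in tR[[t]]$; and for each $N$ the coefficient of $t^N$ in $F|_W = \sum_m X_1|_W^m P_m(Y_1|_W)$ is a finite sum of elements of $R$, so $F|_W \in R[[t]]$. The main obstacle lies in the second paragraph: the hands-on $2$-integrality checks and the unit verifications for $\kappa_1, -1+\sd, a_0$ in $R$ via norm computations. The holomorphy-based reduction to polynomials in $Y_1$ for each $m$ is the main conceptual input, bypassing the difficulty that $U_1|_W$ has a nonzero constant term and therefore cannot be substituted naively into an infinite power series in $\ZZ[[U_1]]$.
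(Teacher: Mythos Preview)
Your proof is correct and shares the same backbone as the paper's: verify $U_1|_W,\,V_1|_W \in R[[t]]$ from the explicit formulas~\eqref{UVonW}, pass via the preceding Proposition to $S|_W,\,P|_W \in R[[t]]$ (with $P|_W \in tR[[t]]$), and invoke the cone condition on Fourier exponents so that the substitution into~$F$ is a finite computation in each $t$-degree.

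The difference lies in the final step. The paper stays within the symmetric world throughout: for a symmetric $F$ each symmetrized monomial $X_1^rY_1^s+X_1^sY_1^r$ is divisible by $P^{\min(r,s)}$, and the cone condition forces $\min(r,s)\to\infty$, so only finitely many $(r,s)$ contribute to any fixed power of~$t$; since each such symmetrized monomial is a $\ZZ$-polynomial in $S$ and $P$, its restriction lies in $R[[t]]$. You instead go one step further and recover the \emph{individual} restrictions $X_1|_W\in tR[[t]]$ and $Y_1|_W\in R[[t]]$ as the two roots of $T^2 - S|_W T + P|_W$, using that the constant term $a_0^2$ of the discriminant is a unit square in~$R$ and that $2\in R^\times$. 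Once that is done, your decomposition $F=\sum_m X_1^m P_m(Y_1)$ (with $P_m\in\ZZ[Y_1]$ a genuine polynomial by the cone condition) finishes the argument without ever invoking the symmetry of~$F$. This route is slightly longer---the root extraction is extra work---but it buys a small strengthening: the same $R[[t]]$-integrality follows for \emph{any} Hilbert modular form with integral $(X_1,Y_1)$-expansion, not only symmetric ones.

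One point you should make explicit: after extracting the square root you obtain two roots in $R[[t]]$ with constant terms $0$ and $a_0$, and you must identify $X_1|_W$ with the former (e.g.\ from the leading terms displayed in~\eqref{eq:X1Y1}), since your concluding paragraph relies on $X_1|_W \in tR[[t]]$.
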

\par
\begin{proof} 
Since $\kappa_0 \in R^\times$, eq.~\eqref{sandt} implies that $R[[t]]=R[[s]]$.
Since $\kappa_1 \in R^\times$, eq.~\eqref{UVonW} implies that the restrictions
of $U_1$ and $W_1$ to $W$ belong to this ring. Since any Hilbert modular form
has a Fourier expansion with exponents in a cone strictly contained in the positive 
quadrant (explicitly, if $F = \sum_{r,s} a_{r,s}X_1^rY_1^s$, then $a_{r,s} =0$
unless $\tfrac{9-\sq}{8}r \leq s \leq \frac{9+\sq}{8}r\bigr)$, it contains only
finitely many monomials contributing to any fixed power of~$t$ in the 
$t$-expansion of its restriction to $W$. (Explicitly, $X_1^rY_1^s + X_1^s Y_1^r$
is divisible by~$P^{\text{min}(r,s)}$, and $P\bigl|_W = \text{O}(t)$.) The
corollary follows.
\end{proof}
\par
\begin{Prop}
The power series  $y(t)$  and $\ty(t)$ have $\ord_{17}$-integral expansions
up to denominator~$2$, i.e.\ $y(t), \ty(t) \in R[t]$, where $R = \ord_{17}[\tfrac12]$.
\end{Prop}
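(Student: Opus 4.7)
The plan is to derive the proposition from the preceding Corollary combined with the explicit descriptions of $f,\tilde f$ coming from Proposition~7.1. Because $1/2\in R$ and both series have constant term $1$, taking the square root of any element of $R[[t]]$ with constant term $1$ keeps us inside $R[[t]]$; consequently it is enough to show that $y(t)^2=f(z)^2$ and $\tilde y(t)^2=\tilde f(z)^2$ both lie in $R[[t]]$.

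The product $f\tilde f$ is handled immediately by the restriction $12\,\GG_2|_W=\xi^2-\tfrac{11+\sq}{4}\,\xi\eta+\eta^2$ computed earlier: substituting $\xi=(1-t)^2 f\tilde f$ and $\eta$ proportional to $tf\tilde f$, this equals $(f\tilde f)^2$ times a polynomial $Q(t)\in R[t]$ with $Q(0)=1$. The Hilbert-Eisenstein series $12\,\GG_2$ has $\Z$-integral Fourier coefficients, so by the Corollary its restriction lies in $R[[t]]$; inverting $Q(t)\in R[[t]]^\times$ and extracting the square root with constant term $1$ delivers $f\tilde f\in R[[t]]$.

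To separate $f$ from $\tilde f$ one needs a non-parallel-weight input, since all symmetric parallel-weight Hilbert modular forms restrict to polynomials in $\xi$ and $\eta$ and hence only control the product. The plan is to produce a Hilbert modular form $F$ of weight $(k_1,k_2)$ with $k_1\ne k_2$, integer $(X_1,Y_1)$-Fourier expansion, and non-vanishing restriction to $W_\Pi$; its restriction is $f^{k_1}\tilde f^{k_2}\,P(t)$ with $P\in R(t)$ a unit of $R[[t]]$ at $t=0$, and combining with $f\tilde f\in R[[t]]$ determines $f^{k_1-k_2}\in R[[t]]$, from which $f$ and $\tilde f$ are extracted by a final root (with denominator a power of $2$). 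The main obstacle is producing such a form with nonzero restriction: the natural construction from Part~III---the product $\mathcal{D}\theta$ of weight $(3,9)$ built from derivatives of odd Hilbert theta functions---vanishes identically on $W_\Pi$ by the main theorem of that part, so a workable substitute should come from a partial Rankin-Cohen bracket of Hermann's generators in the first variable only, or from hybridizing Hermann's generators with a single theta derivative, whose non-vanishing at $t=0$ can be checked by a finite Fourier-coefficient calculation using \eqref{eq:NewXYexp}.
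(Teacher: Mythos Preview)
Your first step---using the Corollary to restrict $12\,\GG_2$ and thereby obtain $f\tilde f\in R[[t]]$---is correct and follows the same logic as the paper's setup.

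The second step, however, is only a plan, not a proof. You correctly observe that separating $f$ from $\tilde f$ requires a non-parallel-weight input, and you correctly note that the obvious candidate $\PTD$ of weight $(3,9)$ vanishes identically on~$W$. But you never actually produce a working substitute: you only say one ``should come from'' a partial Rankin--Cohen bracket or a theta hybrid, with the non-vanishing at $t=0$ to be ``checked by a finite Fourier-coefficient calculation.'' Until that form is exhibited and the Laurent polynomial $P(t)$ is computed explicitly and shown to lie in $R[[t]]^\times$, there is no proof. There is also a smaller gap: the Corollary as stated applies to \emph{symmetric} Hilbert modular forms of even weight, and a form of weight $(k_1,k_2)$ with $k_1\ne k_2$ is never symmetric, so you would first have to extend the integrality statement to that case.

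The paper avoids the search for a suitable holomorphic non-parallel-weight form altogether. Instead of looking for such a form, it \emph{manufactures} meromorphic forms of exactly the weights $(2,0)$ and $(0,2)$ by applying the differential operators $D_1=\tfrac{\sq}{2\pi i}\,\partial/\partial z_1$ and $D_2=\tfrac{\sq}{2\pi i}\,\partial/\partial z_2$ to Hilbert modular \emph{functions}. The key point is that $D_1,D_2$ visibly preserve $R[[X_1,Y_1]]$ and hence $R[[U_1,V_1]]$. Applying them to the explicit functions $V/U^2$ and $1/U$ and restricting to $W_\Pi$ gives $y^2$ and $\tilde y^2$ times explicit rational functions of~$t$ whose denominators lie in $1+tR[t]$; from this $y^2,\tilde y^2\in R[[t]]$, and then $y,\tilde y\in R[[t]]$ by your own square-root argument. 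This differentiation trick is the missing idea: it replaces the search for a form with prescribed non-vanishing by a direct construction from the generators $U,V$ already in hand, and it bypasses the symmetry restriction in the Corollary because $D_i$ of a symmetric function is still controlled by $R[[U_1,V_1]]$.
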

\par
\begin{proof} The differential operators $D_1 = \tfrac{\sq}{2\pi i}\frac{\partial}{\partial z_1}$
and  $D_2 = \tfrac{\sq}{2\pi i}\frac{\partial}{\partial z_2}$ can be written as
\bes
 D_1 \=  \lambda\, X_1\, \frac{\partial}{\partial X_1} \+ \mu 
\,Y_1\, \frac{\partial}{\partial Y_1} \,, \quad
 D_2 \= \lambda^\sigma\, X_1\, \frac{\partial}{\partial X_1} \+ \mu^\sigma 
\,Y_1\, \frac{\partial}{\partial Y_1} \,,
\ees
where $\lambda = \tfrac{13-3\sq}{2}$ and $\mu = \tfrac{-21+5\sq}{2}$, and
hence map $R[[U_1,V_1]]$ to itself. On the other hand, they send Hilbert modular functions
to meromorphic Hilbert modular forms of weight~$(2,0)$ and~$(0,2)$ respectively. By
Proposition~\ref{prop:dimGamma}, the quotients of the restrictions to $W_\Pi$ of 
these derivatives by~$y^2$ (resp.~$\ty^2$) must be rational functions of $t$. We have to 
make the right choices of these functions in order not to introduce unwanted denominators.
Since $1/U$ and $V/U^2$ restrict to polynomials in~$s$ (cf.~eq.~\eqref{UVonW}), we choose these
as the Hilbert modular functions to be differentiated, finding
$$y^2 \= c\,\frac{(1-t)^4}{P(t)}\,\cdot\,D_1\Bigl(\frac{V}{U^2}\Bigr)\biggr|_{W_\Pi}\,,
\qquad \ty^2 = \tilde{c}\,\frac{(1-t)^2}{t(1+t)}\,\cdot\,D_2\Bigl(\frac{1}{U}\Bigr)\biggr|_{W_\Pi}\;,$$
where $c= \frac{29+7\sq}{2}$, $\tilde{c} = \tfrac{7+\sq}{2}$ and $P(t)$ is the polynomial
$$ P(t) \= t\,(1+t)\,\bigl(1-\tfrac{31-7\sq}{2}t\bigr)\,\bigl(1-\tfrac{31+7\sq}{64}t\bigr)
\,\bigl(1-\tfrac{647-153\sq}{8}t + t^2\bigr)\,.$$
Since $c$ and $\tilde{c}$ belong to $R$ and the polynomials in the denominators
are in $1+tR[t]$, it follows that $y^2$ and~$\ty^2$, and hence also $y$ and~$\ty$,
are in $R[[t]]$.
\end{proof}
\par
Although we have been working with $D=17$ all the time and using concrete generators of the field of
Hilbert modular functions, it is clear from the proof that the basic principle---the use of integral 
coefficients for Hilbert modular forms and the base change in two variables---can be applied for any $D$, 
giving the $\ord_D$-integrality (up to finitely many primes in the
denominator) of solutions $y(t)$ and $\ty(t)$ of the corresponding
Picard-Fuchs equations for any $D$.

\newpage

\part*{Part III\,: Modular embeddings via derivatives of theta functions}

In Part~II we deduced the equation of the \Teichmuller\ curve in the
Hilbert modular surface for $D=17$ and of a Hilbert modular form cutting out
the \Teichmuller\ curve starting from the differential
operators $L$ and $\wt L$, which in turn were deduced from the 
explicit algebraic model of the family as given in~\eqref{BMform}.
In Part~III we will show that there is a general construction of
a Hilbert modular form of mixed weight that cuts out the \Teichmuller\ curve.
The construction, given in Section~\ref{sec:ThetaDer}, uses derivatives of theta functions. 
The short proof depends on the description of \Teichmuller\ curves using eigenforms
for real multiplication with a double zero  (see Theorem~\ref{thm:TMclassg2}).
We also verify that in the case $D=17$ we get the same equation for
the \Teichmuller\ curve as the one already obtained in Section~\ref{sec:eqfromdiffeq}.
\par
Next, in Sections~\ref{sec:cuspTheta0} and~\ref{sec:disjointred}, we develop the theory of 
\Teichmuller\ curves in genus two ``from scratch'' starting from the new
definition as vanishing loci of theta derivatives. In particular we give new proofs from 
this point of view of the cusp classification and of the facts that these
curves are Kobayashi geodesics and are disjoint from the reducible locus
and hence are \Teichmuller\ curves. We do not know how to
reprove the irreducibility from the viewpoint of theta functions.
\par
Along the way, in Section~\ref{sec:HirzBain} we show that Bainbridge's 
compactification of Hilbert modular surfaces using the moduli space of
curves is indeed a toroidal compactification. Recall that this property 
of Hirzebruch's compactification was the model on which the notion of 
toroidal compactifications was developed.
\par

\section{\Teichmuller\ curves are given by theta derivatives}
\label{sec:ThetaDer}

Bainbridge has shown in \cite{Ba07}, \changed{Theorem~10.2,} 
that the \Teichmuller\ curves $W_D$ 
defined in Section~\ref{sec:defTeich} \changed{are} 
given as the vanishing locus of a modular form $(3,9)$ for all $D$.
We determine this form explicitly. It turns out to be a product of 
derivatives of theta series
restricted from the Siegel half space to Hilbert modular varieties. 

\subsection{Theta functions and their restrictions to Hilbert modular varieties}
\label{sec:recalltheta}

We recall the definition of the classical theta-functions and properties
of their derivatives. Although we are ultimately interested in $g=2$ only, 
we can keep $g$ general without effort when setting up the definitions.
\par
For $m, m' \in (\tfrac{1}{2}\ZZ)^g$ (considered as row vectors) we define
the (Siegel) theta function
$$\Theta \Tchi{m}{m'}: 
\left\{ \begin{array}{lcl} \CC^g \times \HH_g &\to& \CC \\
(v,Z)  &\mapsto & 
\displaystyle \sum_{x \in \ZZ^g + m} \e\left(\tfrac{1}{2}xZx^T + x(v+m')^T
\right). \end{array} \right.
$$
with characteristic $(m,m')$. The evaluation of a theta-function 
at $v=0$ is called a {\em theta constant}. The theta-function (and the
characteristic $(m,m')$) is called {\em odd} if $4m(m')^T$ is odd and {\em even} otherwise.
Odd theta-constants vanish identically as functions of~$Z$. Up to sign, $\Theta \Tchi{m}{m'}$
depends only on $m$ and~$m'$ modulo~$\ZZ^g$.
\par

The theta constants $\Theta \Tchi{m}{m'}(0,Z)$ are modular forms of
weight $\tfrac 12$ for some subgroup (in fact $\Gamma(4,8)$, see e.g.~\cite{Ig72}) 
of  $\Sp(2g,\ZZ)$. The partial derivatives with respect to any $v_i$ are 
not modular, but if we restrict to $v=0$ and consider the gradient (as column vector)
$$ \gr \Bigl(\Theta \Tchi{m}{m'}(0,Z)\Bigr) \= \left( \frac{\d}{\d v_i} \Theta \Tchi{m}{m'}(v,Z)
|_{v=0}\right)_{i=1,\ldots,g}\,, $$
then we get a vector-valued modular form. That is, if $\Theta \Tchi{m}{m'}(0,Z) =0$, one calculates that for any
$M = \left(\begin{smallmatrix} A & B \\  C & D \end{smallmatrix} \right)\in \Gamma(4,8)$
the gradient transforms as
$$  \gr\Bigl(\Theta \Tchi{m}{m'}\Bigr)(0,M \cdot Z) \= \zeta_8 \, 
\det(CZ+D)^{1/2} \, (CZ+D)\,  
 \gr\Bigl(\Theta \Tchi{m}{m'}\Bigr)(0,Z),
$$
where $\zeta_8$ is an $8$-th root of unity depending on $M$.
\par
If~$K$ is a totally real number field of degree~$g$ over~$\Q$ with ring of integers~$\O$, 
then just as in the special case~$g=2$ we can define a $g$-dimensional Hilbert modular variety
$X_K=\HH^g/\SLOA{\O^\vee}$ and a Siegel modular embedding $(\Psi, \psi)$ of~$X_K$ into
$\HH_g/{\rm Sp}(2g,\ZZ)$, given by a matrix $B \in {\rm GL}(g,\RR)$ as in~\eqref{eq:SMEA}. 
Recall that this means that $\Psi: \SLOA{\O^\vee} \to {\rm Sp}(2g,\ZZ)$ is a homomorphism 
and that $\psi(\zz) = B^T \text{diag}(z_1,\ldots,z_g) B$ is a map that is equivariant
with respect to $\Psi$.
We then denote by $\theta\Tchi{m}{m'}(\zz) = \Theta \Tchi{m}{m'}(0,\psi(\zz))$ 
and $\gr \theta \Tchi{m}{m'}(\zz) = \gr \Theta  \Tchi{m}{m'}(0,\psi(\zz))$ 
the restriction of the theta functions and their gradients to~$\HH^g$. 
We also write $\theta\TchiP{m}{m'}{\Psi}$ if we want to emphasize the dependence on the
modular embedding. The modularity of the Siegel theta functions imply that the (Hilbert) theta constants
$\theta\Tchi{m}{m'}(\zz)$ are modular forms with a character of order~$8$ of weight $(\tfrac 12,\ldots, \tfrac 12)$
for a subgroup of finite index of the Hilbert modular group $\SLOA{\O^\vee}$.
The theta constants are non-zero if and only if $(m,m')$ is even, while the theta
gradients are modular if and only if $(m,m')$ is odd.
\par
The modular transformation of the derivative of theta constants for $(m,m')$ odd now reads 
$$  \gr \theta \Tchi{m}{m'} (\gamma \cdot \zz ) \= 
\zeta_8 \, \det(\hat{c} \psi(\zz)+ \hat{d})^{1/2} \, B^{-1} J(\zz,\gamma) B\, \gr \theta \Tchi{m}{m'} (\zz) $$
for $\gamma = \sm abcd$ in the subgroup $\Psi^{-1}(\Gamma(4,8)) \subset \SLOA{\O^\vee}$,
where $\hat{e}$ for $e \in K$ denotes the diagonal matrix with entries
$\sigma_j(e)$ given by the different real embeddings $\sigma_j$ of $K$ and 
$J=J(\zz,\gamma)$ is the diagonal matrix with entries $\sigma_j(c)z_j+\sigma_j(d)$.
Consequently, the vector-valued modular form $B\,\gr\theta\Tchi{m}{m'}$ transforms with the automorphy
factor $\zeta_8 \det(\hat{c} \psi(\zz)+ \hat{d})^{1/2} J(\zz,\gamma)$, which is also a diagonal matrix. 
We will calculate the root of unity~$\z_8$ for $K=\Q(\sqrt D)$ with $D \equiv 1 \pmod 8$ in detail below.
\par
To summarize, the $i$-th entry $D_i\theta\Tchi{m}{m'}(\zz)$  of the column vector  
$B \, \gr \theta_\psi$ is a Hilbert modular form of 
multi-weight $(\tfrac 12,\ldots,\tfrac 12, \tfrac 32, \tfrac 12,\ldots, \tfrac 12)$
and a character stemming from the $8$-th root of unity.
\par
We can also express the functions $D_i\theta \Tchi{m}{m'}$ as derivatives in certain 
eigendirections. For this purpose replace the original coordinates $\vv = (v_1,\ldots,v_g)\in\C^g$
by the ``eigendirection coordinates'' $\uu = B \vv= (u_1,\ldots,u_g)$. Then we may write  
$$ D_i\theta \Tchi{m}{m'}(\zz) = \frac{\d}{\d u_i} \Theta \Tchi{m}{m'}(\vv,\psi(\zz)) |_{\uu=0}.$$
\par

\subsection{$W_D$ is the vanishing locus of the theta-derivatives}
\label{sec:WD_theta}

With this preparation we can now determine Bainbridge's  modular form
for general $D$. 
\par

\begin{Thm} \label{thm:TeichviaDTH}
The function 
\be  \label{eq:defofDtheta}
 \PTD(\zz) \= \prod_{(m,m')\;\; {\rm odd}} D_2\theta\Tchi{m}{m'}(\zz) 
\ee
is a modular form of weight $(3,9)$ (with character) for the full Hilbert modular group $\SLOD$.
Its vanishing locus is precisely the \Teichmuller\ curve~$W_D$.
\end{Thm}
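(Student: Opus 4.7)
My plan is to establish three successive claims---the weight, the $\SLOD$-modularity, and the identification of the vanishing locus with $W_D$. The first two are straightforward bookkeeping. By the computation at the end of Section~\ref{sec:recalltheta}, each factor $D_2\theta\Tchi{m}{m'}$ is a Hilbert modular form of multi-weight $(1/2,3/2)$ with an $8$-th root of unity as character, and since there are $2^{g-1}(2^g-1)=6$ odd theta characteristics in genus $g=2$, the product $\PTD$ has the asserted weight $(3,9)$. For the modularity, the symplectic group $\Sp(4,\ZZ)$ acts on theta characteristics by a standard formula preserving parity; hence $\Psi(\SLOD)\subset\Sp(4,\ZZ)$ permutes the six factors of $\PTD$, and the individual $8$-th roots of unity combine into a single character of $\SLOD$ of finite order.

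The heart of the proof is identifying the vanishing locus. I will fix $\zz\in X_D$ such that $A_\zz=\Jac(C)$ for a smooth genus~$2$ curve $C$, in which case the six odd theta characteristics are in canonical bijection with the six Weierstrass points of $C$. The classical Riemann gradient formula asserts that for each such $(m,m')$ corresponding to a Weierstrass point $W$, the gradient $\nabla\theta\Tchi{m}{m'}(\zz)$, viewed via $(T_0A_\zz)^*\cong H^0(C,\Omega_C^1)$ as a holomorphic $1$-form on $C$, is (up to a nonzero scalar) the unique such form vanishing at~$W$. Expanding this $1$-form in the eigendifferential basis $(\om,\tom)$ of $H^0(C,\Omega_C^1)$ for the $\O_D$-action, its coefficients are precisely $(D_1\theta\Tchi{m}{m'}(\zz),D_2\theta\Tchi{m}{m'}(\zz))$, so the vanishing at $W$ gives the relation
\[ D_1\theta\Tchi{m}{m'}(\zz)\,\om(W) \+ D_2\theta\Tchi{m}{m'}(\zz)\,\tom(W) \= 0. \]
Generically $\nabla\theta\Tchi{m}{m'}(\zz)\ne0$, so $D_2\theta\Tchi{m}{m'}(\zz)=0$ is equivalent to $\om(W)=0$. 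An elementary local computation in hyperelliptic coordinates ($1$-forms have the shape $(ax+b)\,dx/y$ on $y^2=f(x)$, and $y$ is a local parameter at $(x_0,0)$ with $x-x_0\sim y^2$) shows that a holomorphic $1$-form vanishing at a Weierstrass point vanishes there to order~$2$. Combining these, $\PTD(\zz)=0$ will be equivalent to the first eigendifferential $\om$ having a double zero at some Weierstrass point of $C$, which by Theorem~\ref{thm:TMclassg2} is exactly the characterization of $W_D$ inside the Jacobian locus of~$X_D$.

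To complete the argument, I must separately rule out vanishing on the reducible locus $P_D\subset X_D$, where $A_\zz$ splits as a product of elliptic curves and the ``Weierstrass points of $C$'' interpretation breaks down. This disjointness is what Theorem~\ref{thm:disjred} (proved in Section~\ref{sec:disjointred}) provides, so once it is granted the proof is complete. The hard part will be the Riemann gradient identification together with the careful bookkeeping of conventions: one must verify that the derivative $D_2$ along the \emph{second} eigendirection detects vanishing of the \emph{first} eigendifferential $\om$, which is what reconciles the weight $(3,9)$ (rather than $(9,3)$) with the description of $W_D$ in Theorem~\ref{thm:TMclassg2}.
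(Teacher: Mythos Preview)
Your proof is correct and follows essentially the same route as the paper: both arguments reduce the identification of the vanishing locus to the classical fact that, at a Weierstrass point, the gradient of the odd theta function (in the eigendirection basis) is proportional to the unique holomorphic $1$-form vanishing there, and both defer the exclusion of the reducible locus to Theorem~\ref{thm:disjred}. The only cosmetic difference is that you invoke the Riemann gradient formula directly, whereas the paper phrases the same content via the projectivized tangent map of the Abel--Jacobi embedding together with the translation formula relating $\Theta\Tchi{0}{0}$ at a Weierstrass point to $\Theta\Tchi{m}{m'}$ at the origin; the paper also notes Bainbridge's weight-$(3,9)$ result as an alternative to preclude extra components.
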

\par
We will give \changed{additional properties of the modular form~$\PTD$} if the prime $(2)$ splits, i.e., if $D \equiv 1 \pmod 8$. 
In that case, as already mentioned in~\S\ref{sec:HMFdisc17}, the discussion of the numbering
of theta characteristics given there for $D=17$ always holds. In particular, 
one has a quartic character $v_0$ given by~\eqref{eq:defv0}. Here,
$\PTD$ is a product of two modular forms and its character is $v_0^2$. 
Indeed, using the shorthand notation introduced in \S\ref{sec:HMFdisc17} we calculate the action
of generators of $\SLOA{\O^\vee}$ on the functions $D_2\theta\Tchi{m}{m'}$. 
From Table~\ref{tab:reindex} in~\S\ref{sec:HMFdisc17}, we deduce that the products of theta derivatives 
\begin{equation} \label{eq:defDLDR}
D_L \= D_2\theta_{0X} \cdot D_2\theta_{1X} \cdot D_2\theta_{\infty X}
\quad \text{and} \quad D_R \= D_2\theta_{X0} \cdot D_2\theta_{X1} \cdot D_2\theta_{X\infty}.
\end{equation}
are modular forms for the full group $\SLOA{\O^\vee}$ of weight $(\tfrac 32, \tfrac 92)$ with 
a character of order $8$ according to Table~\ref{cap:onD2} below, in which
$\alpha = \tfrac{1 + \sqrt{D}}2$ as usual. The function $\PTD$ then equals
$D_L D_R$ and has character $v_0^2$ of order~$2$.
\begin{table}[h]
\begin{tabular}{|c|ccc|}
\hline
$f(z_1,z_2)$ & $f(z_1+1,z_2+1)$ & $f(z_1+\alpha,z_2+\s(\alpha))$ 
& $z_1^{-3/2} z_2^{-9/2} f(1/z_1,1/z_2)$ \\
\hline
$D_L$ & $-D_L$ & $ -\zeta_8      D_L$  & $-D_L$ \\
$D_R$ & $-D_R$ & $ \zeta_8^{-1} D_R$  & $-D_R$ \\
\hline
\end{tabular} \medskip
\caption{ The action of $\SL\O$ on the $D_2\theta_L$ and $D_2\theta_R$ } \label{cap:onD2}
\end{table}
This computation also shows that if $D \equiv 1 \pmod 8$, then the \Teichmuller\ curve $W_D$
has two components, the vanishing loci of $D_L$ and of $D_R$. This is our
equivalent of McMullen's spin invariant that we referred to after
Theorem~\ref{thm:TMclassg2}.
\par
Before proceeding to the proof of Theorem~\ref{thm:TeichviaDTH}, we write out the
Fourier expansions of the theta derivatives explicitly.
Fix a basis $(\omega_1,\omega_2)$ of $\O_D$ as we did in~\eqref{eq:SMEA}
for the construction of a Siegel modular embedding, say
$(\omega_1,\omega_2) = (1,\gamma_0 = \tfrac{D+\sqrt{D}}{2})$. Then the 
first row the matrix $B$ is the basis of $\O$ 
and the second row consists of the Galois conjugates. 
Given $m \in \frac{1}{2}\ZZ^2$, we let 
$$\rho(x) \= (x+m)\, \cdot\,  (1,\gamma_0 )^T$$ 
for $x \in \ZZ^2$. Then for $(m,m')$ odd the theta derivative is given by
\begin{equation} \label{eq:thetaexpl}
D_2\theta  \Tchi{m}{m'}(z_1,z_2) \= i^{4m(m')^T}\sum_{x \in \ZZ^2}
(-1)^{2x\cdot(m')^T} \s(\rho(x)) q_1^{\rho(x)^2/2} q_2^{\s(\rho(x))^2/2}\,,
\end{equation}
where $q_j = \e(z_j)$ as usual. 
\par
\begin{proof}[Proof of Theorem~\ref{thm:TeichviaDTH}]
The modularity of $\PTD$ for some level subgroup follows from the
corresponding property of the Siegel theta functions, as shown in 
\S\ref{sec:recalltheta}. It follows from the calculations above for 
$D \equiv 1 \pmod 8$ that the factor group permutes 
the set of theta characteristics, preserving their parity. Moreover 
we read off from the table that $\PTD$ has character $v_0^2$ in this case.  In
other cases \changed{$D \not\equiv 1 \pmod 8$, one checks similarly that
the factor group still permutes theta characteristics, preserving their parity.
In fact, classical calculations of Igusa show the full 
symplectic group $\Sp(4,\Z)$ preserves the parity.
Consequently,} $\PTD$ is still a modular form, with some character, for 
the full Hilbert modular group.
\par
For the second statement, let $C$ be a curve of genus two with period matrix~$Z$.
It is cut out in its Jacobian as the vanishing locus of 
$\Theta \Tchi{0}{0}(v,Z)$. \changed{See e.g.\
\cite{FaKra}, Chapter~VI and Chapter~VII.1 for standard properties of theta
functions and Weierstrass points.} 
\changed{The} two-torsion points in $\Jac(C)$ are
$Zm^T + (m')^T$ for $m,m' \in \left(\frac{1}{2}\ZZ\right)^2$.
The Weierstrass points of $C$ are precisely the $6$ points $Zm^T + (m')^T$
for $(m,m')$ odd. Moreover, a holomorphic differential 
form $\omega$ has a double zero on $C$ if and only if $\omega$ vanishes 
(automatically doubly) at a Weierstrass point.  
\par
Suppose the point $[C] \in\M_2$ lies on $W_D$. Then $C$ has real multiplication, 
so $Z = \psi(\zz)$ for some fixed Siegel modular embedding $\psi$.
We identify the universal covering $V$ of the Jacobian $J(C)$ with the 
dual to $H^0(C,\Omega^1_C)$. \changed{We claim that the characterizing 
condition on the existence of an eigenform with a double zero 
(see Theorem~\ref{thm:TMclassg2}) is equivalent to the vanishing of the
derivative of the Riemann theta-function in the second eigendirection $u_2$ 
at a Weierstrass point. In fact, the definition of the Abel-Jacobi
map $C \to J(C)$ implies that its projectivized tangent map is given
by $p \mapsto (\omega_1(p):\omega_2(p))$ in the chosen eigenbasis
of $H^0(C,\Omega^1_C)$ and so~$\omega_1$ vanishes at~$p$ if and
only if the $u_2$-derivative of the defining equation of~$C$ in~$J(C)$
vanishes at~$p$.}  Since we defined $\uu = B\vv$  
this just means that
\be \sum_{j=1}^2  \omega^\sigma_{j} \left(\frac{\d}{\d v_j} \label{eq:thetacond} 
\Theta \TchiP{0}{0}{D}(\vv,\psi(\zz))\right)|_{\vv = \psi(\zz)m^T + (m')^T} \= 0
\ee
for some even $(m,m')$.
\par
\changed{By the product rule, the derivative with respect to $v_i$ of the defining equation 
\bes 
\Theta \Tchi{0}{0}(\vv+Zm^T+(m')^T,Z)) \= \e(-\pi i mZm^T-2\pi i m(\vv+m')^T) \cdot\Theta\Tchi{m}{m'}(\vv,Z)
\ees
for $Z = \psi(\zz)$ has two summands. The factor given by differentiating the
exponential term vanishes by definition for every pair of a point 
$\zz \in \HH^2$ and $v \in \Jac(C_\zz)$. Since $\e(\cdot)$ is never zero, the vanishing
of the derivative~\eqref{eq:thetacond} is equivalent to $\zz$ being in the 
vanishing locus of $\PTD$.}
\par
To show that the modular form $M_{W_D}$  vanishes nowhere else there are
several options. The first is to remark that the above argument can
be inverted for Jacobians of smooth curves. So one has just to show
that the vanishing locus of $\PTD$ is disjoint from the reducible
locus. We give two proofs of this fact that do not rely on any \Teichmuller\ theory 
in Section~\ref{sec:disjointred}. Yet another way to conclude 
is to compare the weight of $\PTD$ with the modular form 
that cuts out $W_D$ in Bainbridge's theorem from~\cite{Ba07}, \changed{Theorem~10.2}. 
They are both of weight~$(3,9)$.
\end{proof}
\par

{\bf The example $D=17$ revisited.}  If we calculate the theta series and their derivatives for~$D=17$
as was done in Section~\ref{sec:HMFdisc17}, then we can verify that the product of $D_R(z_1,z_2)$ 
with $D_R(z_2,z_1)$ is indeed proportional to the function~$F_{17}^1$ given in~\eqref{eq:F171},
and similarly that the product of $D_L(z_1,z_2)$ with $D_L(z_2,z_1)$ is proportional to
the Galois conjugate function~$F_{17}^0$.

\section{Cusps and multiminimizers}
\label{sec:cuspTheta0}

Fix an invertible $\O_D$-ideal $\mathfrak{a}$. Our aim in this section is to list the
branches of the theta-derivative vanishing locus $W_D$ through
the cusp of the Hilbert modular surface $X_D$ determined by the 
class of $\mathfrak{a}$. Our Ansatz is to describe the
branch of $W_D$ defined by a Hilbert modular form in $(q_1,q_2)$ 
by $q_1 = q^\alpha$ and $q_2 = q^{\s(\alpha)}(1+P)$, where $q$ a suitable local parameter and $P$ 
a power series with positive valuation in~$q$. This will lead us to consider a minimization
problem on quadratic forms. The solutions are given by so-called ``multiminimizers.''

The main result of this section will be the following characterization of
cusps of $W_D$, derived from Fourier expansions only. We call an indefinite quadratic 
form $Q=[a,b,c]$ {\em \Pred}\footnote{There seems to be no standard terminology 
for these quadratic forms and quadratic irrationalities. They appear 
implicitly in~\cite{ChZa93} 
and~\cite{Ba07}. } if $a > 0 > c$ and $a + b+ c <0$. To a quadratic form $Q$
we associate the quadratic irrationality $\la_Q = \frac{-b + \sqrt{D}}{2a}$.
The quadratic form is \Pred\ if and only if $\la=\la_Q$ satisfies
\be \label{eq:defstd} \la \, > \, 1 \, > \, 0 \, > \la^\s \;. \ee
A quadratic irrationality $\la$ satisfying~\eqref{eq:defstd} will also be called {\em \Pred}.
\par
\begin{Thm} \label{thm:cuspsWD}
Suppose that $D$ is a fundamental discriminant or, more generally, that $\fraca$ is an invertible $\O_D$-ideal. 
Then there is a bijection between the cusps of $W_D$ mapping to the cusp $\fraca$ of $X_D$
and the set of pairs $(Q,\overline{r})$ consisting  of a \Pred\ quadratic form $Q=[a,b,c]$ of
discriminant $D$ such that $[\langle 1, \la_Q \rangle] = [\mathfrak{a}]$ together with a residue class
$\overline{r} \in \ZZ/(a,c)\ZZ$.
\end{Thm}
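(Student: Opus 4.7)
The plan is to determine the branches of the vanishing locus of $\PTD$ through the cusp corresponding to $\fraca$ by analyzing the Fourier expansion of $\PTD$ and imposing that it vanish identically in a local parameter. Concretely, I would make the Ansatz $q_1 = q^{\alpha_1}\exp(\varepsilon_1(q))$, $q_2 = q^{\alpha_2}\exp(\varepsilon_2(q))$ with $\varepsilon_i \in q\,\CC[[q]]$, set $\alpha = (\alpha_1,\alpha_2) \in \RR_+^2$, and substitute into the product~\eqref{eq:defofDtheta}. Using~\eqref{eq:thetaexpl}, the exponent of $q$ in a Fourier monomial attached to a lattice vector $\rho$ lying in a non-trivial coset of $\tfrac12\fraca/\fraca$ is the value of the quadratic form $x \mapsto \tr(\alpha x^2)$ at $\rho$, where we view $\alpha$ as an element of $(\fraca^2)^\vee\otimes\RR$ through the natural pairing.

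Step one is the leading-order analysis. Because $\PTD = D_L \cdot D_R$ factors as in~\eqref{eq:defDLDR} (or into the appropriate orbits of odd characteristics in the non-split case), and each factor involves three odd characteristics corresponding to the three non-trivial cosets of $\tfrac12\fraca/\fraca$, vanishing of the $q$-expansion forces the minimum of $\tr(\alpha x^2)$ on each relevant coset to be attained at least twice (since $x$ and $-x$ give equal exponents but the derivative factors in~\eqref{eq:thetaexpl} produce opposite signs and do not cancel them). A short convexity argument shows that the minimum over one coset is always strictly smaller than over the other two, so the condition reduces to: on the distinguished coset, the minimum is attained on at least two (and, as one then checks by a direct count, exactly two) pairs $\{\pm x\}$. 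Moreover the minimization condition forces $\alpha$ to be primitive in $(\fraca^2)^\vee$. This is exactly the definition of \emph{multiminimizer}, and I would record it as a lemma.

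Step two is the classical dictionary. Fix a $\ZZ$-basis $(\omega_1,\omega_2)$ of $\fraca$ and define a quadratic form by $Q(x,y) = \tr(\alpha(x\omega_1 + y\omega_2)^2)/N(\fraca)$; its discriminant is $D$. The primitivity of $\alpha$ corresponds to primitivity of $Q=[a,b,c]$, and I would show that the multiminimizer condition translates exactly into $a > 0 > c$ and $a+b+c<0$, i.e.\ $Q$ is \Pred. Conversely, given such a $Q$, the element $\alpha$ is recovered up to multiplication by the square of a unit (acting on $\fraca$), and the resulting ideal $\langle 1,\lambda_Q\rangle$ lies in the wide class of $\fraca$ by the classical correspondence between wide ideal classes and $\mathrm{SL}_2(\ZZ)$-classes of quadratic forms (cf.~\cite{Z81}). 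This yields a bijection between multiminimizers for $\fraca$ (modulo squared units) and \Pred\ forms of discriminant $D$ in the wide class of $\fraca$.

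Step three is to pass from formal to actual branches and extract the discrete parameter $\bar r$. For a fixed multiminimizer $\alpha$ I would solve recursively for the coefficients of $\varepsilon_1$ and $\varepsilon_2$ using the cancellation forced by each successive order of $q$. At the leading order there are two coset points achieving the minimum, say $\pm x_0$ and $\pm x_1$ with $x_1 - x_0 \in \fraca$, and at each subsequent order the cancellation equation is a linear equation in the next coefficient, solvable uniquely \emph{up to} adding an element compatible with the symmetry of the two minimizing pairs. The group of such ambiguities is readily identified with $\ZZ/(a,c)\ZZ$ (here $a,c$ are the outer coefficients of $Q$, and $\gcd(a,c)$ measures precisely the ``lattice of compatible shifts'' between the two minima). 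Finally, since by Corollary~\ref{Cor:estimateY} (or the Carath\'eodory estimate~\eqref{Carat}) the formal branches so constructed converge, they are genuine analytic branches of the vanishing locus.

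The main obstacle will be Step three: showing both convergence and that the recursive freedom in constructing $\varepsilon_i$ is exactly $\ZZ/(a,c)\ZZ$, no more and no less. The ``no less'' direction requires checking that the solvability condition at every order does not force an additional constraint, which will follow from the fact that the multiminimizer supports exactly two pairs of minima and the relevant linear maps are surjective. The ``no more'' direction requires identifying the full kernel, which I expect to reduce to a computation involving the stabilizer of the pair of minima under the group of units of $\O_D$ preserving the Ansatz. Once these are in place, the bijection of the theorem follows by combining Steps two and three.
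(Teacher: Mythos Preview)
Your overall strategy matches the paper's: parametrize a branch through the cusp~$\fraca$, observe that the lowest $q$-exponent in a factor $D_2\theta_{(m,m')}$ must repeat, deduce that the direction~$\alpha$ is a multiminimizer, invoke the bijection between multiminimizers and \Pred\ forms (your Step~2 is Proposition~\ref{prop:MMchar2}), and then count the branches over each multiminimizer. But Step~3 as written has a genuine gap in the mechanism producing the $\ZZ/g\ZZ$ freedom.

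The $g=\gcd(a,c)$ branches do \emph{not} arise from recursive ambiguities ``at each subsequent order.'' In the paper's argument one first fixes the distinguished basis $(\omega_1,\omega_2)$ of~$\fraca$ associated to the multiminimizer and then an adapted basis $(\alpha^*,\beta^*)$ of~$\fraca^2$ such that the dual element~$\alpha\in(\fraca^2)^\vee$ is exactly the multiminimizer (equation~\eqref{eq:alphaoverg}). In the coordinates $X=q_1^{\alpha^*}q_2^{\sigma(\alpha^*)}$ and $Y=q_1^{\beta^*}q_2^{\sigma(\beta^*)}$ the exponents of the theta derivative are governed by two binary forms $F=[a/g,0,-c/g]$ and $G=[-bt,2g,bs]$ (equation~\eqref{eq:defFG}). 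The branch is then $X\sim q$, $Y\sim S$ with $S$ a constant, and the \emph{leading-order} cancellation equation (comparing the two $F$-minimal terms $(\tfrac12,\tfrac12)$ and $(\tfrac12,-\tfrac12)$, whose $G$-values differ by exactly~$g$) reads
\[
S^g \;=\; \Bigl(\frac{\rho_\bfom(\tfrac12,-\tfrac12)}{\rho_\bfom(\tfrac12,\tfrac12)}\Bigr)^{\!\sigma}\,,
\]
an equation in~$K^\times$ with precisely~$g$ solutions. For each solution~$S$, the higher-order coefficients of $\ve(q)$ are determined by a \emph{triangular} system and hence uniquely; there is no further freedom. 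Your description (``solvable uniquely \emph{up to} adding an element compatible with the symmetry'') suggests an ambiguity at every order, which would either produce infinitely many branches or require an extra argument you have not supplied. The identification of the discrete parameter with $\ZZ/g\ZZ$ is thus a single leading-order phenomenon tied to the specific form of~$G$, not a cumulative one.

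Two smaller points. First, your Step~1 convexity claim (``the minimum over one coset is always strictly smaller than over the other two'') is both unnecessary and not what is used: the product $\PTD$ vanishes iff \emph{some} factor vanishes, and the $U_D^2$-action lets you normalize the coset to $m=(\tfrac12,\tfrac12)$; Proposition~\ref{prop:MMchar1} then tells you the multiple minimum occurs on exactly one coset. Second, invoking Corollary~\ref{Cor:estimateY} for convergence is circular, since that estimate assumes the equivariant map~$\varphi$ already exists; convergence here comes for free because the vanishing locus of a holomorphic function is analytic, so its local branches at a cusp are automatically convergent.
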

\par
This will be proved in~\S\ref{sec:cuspHMS}.  We will also comment on the case of non-invertible
ideals after a discussion of cusps of Hilbert modular surfaces in the same subsection.
\par
Given that $W_D$ is a \Teichmuller\ curve, as we showed in 
Theorem~\ref{thm:TeichviaDTH} using the eigenform definition and
will show again in the next section using the theta viewpoint only, 
this characterization reproves the list of cusps of~\cite{McM05}. 
(See also~\cite{Ba07}, \changed{Theorem~8.7(5)}.) 
For comparison, we will briefly sketch the approach based on flat surfaces in~\S\ref{sec:Cuspviaflat}.
\par
The proof of Theorem~\ref{thm:cuspsWD} can be applied verbatim to prove the following result
on the reducible locus~$P_D$ (see~\S\ref{sec:redloc}) from the theta function viewpoint. This
result was also proven by Bainbridge using the flat surface viewpoint on cusps.
\par
\begin{Thm} \label{thm:cuspsPD}
Let $D$ and $\fraca$ be as above. 
Then there is a bijection between the cusps of $P_D$ mapping to the cusp $\fraca$ of $X_D$
and the set of pairs $(Q,\overline{r})$ as in Theorem~\ref{thm:cuspsWD}.
In particular, for every given $\fraca$ the numbers of cusps of $P_D$ and $W_D$
mapping to the cusp $\fraca$ of $X_D$ coincide. 
\end{Thm}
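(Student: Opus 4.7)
The plan is to repeat verbatim the theta-function proof of Theorem~\ref{thm:cuspsWD}, with the only change being the choice of Hilbert modular form whose branches through a cusp of~$X_D$ one analyzes. Since a principally polarized abelian surface is a product of polarized elliptic curves if and only if one of its ten even theta constants vanishes, the product
\[
F_P(\zz) \= \prod_{(m,m')\ \text{even}}\theta\Tchi{m}{m'}(\zz)
\]
is a Hilbert modular form on the full Hilbert modular group whose vanishing locus is precisely~$P_D$, playing the role that~$\PTD$ plays for~$W_D$. The first step of the argument is to identify this form and to observe that its Fourier expansion at the cusp~$\fraca$ is formally the same as~\eqref{eq:thetaexpl}, except that the linear factor $\sigma(\rho(x))$ present for odd characteristics is absent for even ones.

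Next I would run the branch analysis exactly as in the proof of Theorem~\ref{thm:cuspsWD}. On a branch through the cusp parametrized near~$\fraca$ by $q_1 = q^\alpha e^{\varepsilon_1(q)}$ and $q_2 = q^{\sigma(\alpha)}e^{\varepsilon_2(q)}$, the lowest $q$-exponent contributed by a single factor~$\theta\Tchi{m}{m'}$ of~$F_P$ is $\tfrac12\operatorname{tr}(\alpha\rho(x)^2)$, and the vanishing of the branch forces this minimum to be attained at least twice. This is again the defining condition of a multiminimizer for~$\fraca$, the minimum being taken on the coset in $\tfrac12\fraca/\fraca$ singled out by the characteristic~$(m,m')$. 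Once~$\alpha$ is fixed as a multiminimizer, the higher-order matching conditions determining the power series~$\varepsilon_i(q)$ produce exactly the same residue class $\overline{r}\in\ZZ/(a,c)\ZZ$ as in the odd case. Invoking the bijection (developed earlier in the paper) between multiminimizers of~$\fraca$ modulo squares of units and \Pred\ quadratic forms of discriminant~$D$ in the wide ideal class of~$\fraca$ then yields the asserted parametrization of cusps.

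The main obstacle is purely combinatorial: one must verify that the ten even characteristics distribute over the three non-trivial cosets of $\tfrac12\fraca/\fraca$ in the same pattern as the six odd characteristics, so that each pair $(Q,\overline{r})$ is recovered exactly once from a factor of~$F_P$, just as it was from a factor of~$\PTD$. Concretely, the absence of the factor $\sigma(\rho(x))$ alters the signs and the identification of which two terms cancel, but leaves the underlying minimization equation $\operatorname{tr}(\alpha\rho(x)^2) = \operatorname{tr}(\alpha\rho(x')^2)$ untouched, so no new multiminimizers appear and none are lost. The final count of cusps over a fixed~$\fraca$ is then manifestly the same as for~$W_D$, proving in particular the last assertion of the theorem.
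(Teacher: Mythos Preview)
Your approach is the paper's own. The one point you flag as the ``main obstacle'' but do not resolve is precisely what the paper isolates, and your formulation of it is slightly off: the ten even characteristics do \emph{not} all lie over the three non-trivial cosets. Four of them have $m=(0,0)$, and these correspond to the \emph{trivial} coset of $\tfrac12\fraca/\fraca$. On that coset the form $x\mapsto\tr(\alpha x^2)$ has its minimum~$0$ attained uniquely at~$x=0$, so these four theta constants have leading term~$1$ on any branch through the cusp and never vanish there; they contribute no cusps of~$P_D$. The remaining six even characteristics have $m\in\{(\tfrac12,0),(0,\tfrac12),(\tfrac12,\tfrac12)\}$, two for each value, exactly matching the distribution of the six odd characteristics. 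From this point the proof is identical to that of Theorem~\ref{thm:cuspsWD}: only the coefficients on the right-hand side of~\eqref{eq:lowesttermeq} change (coming now from signs $(-1)^{2x\cdot(m')^T}$ rather than from $\sigma(\rho_\bfom(x))$), not the exponent~$g$, so the count of solutions~$S$---and hence of branches---is unaffected.
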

\par
We will end the section by giving an algorithm in \S\ref{sec:compMMM} to compute
multiminimizers using continued fractions.
\par
\subsection{Multiminimizers}
Let $F(x,y)=Ax^2+Bxy+Cy^2$ be a positive definite binary quadratic form with real coefficients.
On each of the cosets of $2\Z^2$ in $\Z^2$ this form assumes its minimum a finite number
of times.  Of course on the trivial coset the minimum is~0 and is assumed exactly once,
while on each of the other cosets the minimum is generically attained exactly twice,
by some non-zero vector and its negative.  We call the form $[A,B,C]$ {\it multiminimizing}
if on at least one of the three non-trivial cosets the minimum is attained more than twice.
These forms are classified by the following proposition.

\begin{Prop}  \label{prop:MMchar1}
A positive definite binary quadratic form is multiminimizing if and
only if it is diagonalizable over~$\Z$.  In this case, there is exactly one coset of $2\Z^2$
in $\Z^2$ on which the form has a multiple minimum; this minimum is assumed exactly twice
(up to sign) and is the sum of the minima in the other two cosets. 
\end{Prop}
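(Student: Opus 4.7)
The key observation is that both properties in the proposition --- being multiminimizing and being diagonalizable over~$\Z$ --- are invariant under the natural $\GL\Z$-action on binary quadratic forms, since the induced action on $\Z^2/2\Z^2$ via the reduction map $\GL\Z \twoheadrightarrow \GL{\mathbb{F}_2} \cong S_3$ simply permutes the three non-trivial cosets $\alpha = (1,0)+2\Z^2$, $\beta = (0,1)+2\Z^2$, and $\gamma = (1,1)+2\Z^2$. My plan is therefore to replace $F$ by a Gauss-reduced representative $[A,B,C]$ with $0 \leq B \leq A \leq C$ (combining $\SL\Z$-reduction with the sign change $y\mapsto -y$), and then to read off the minima on each non-trivial coset by direct computation.

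On the cosets $\alpha$ and $\beta$, the reduced-form inequality $B \leq A \leq C$ forces the minima to be attained uniquely up to sign: completing the square in the identity
\[
F(1+2m,\,2n) \,-\, A \;=\; 4Am(m+1) \,+\, 2B(1+2m)n \,+\, 4Cn^2
\]
shows that the right-hand side is non-negative, with equality only at $(m,n)\in\{(0,0),(-1,0)\}$, so $m_\alpha = F(\pm 1,0)=A$ is attained at a single sign-pair; the analogous computation on $\beta$ gives $m_\beta = F(0,\pm 1)=C$, again a single pair. Some care is needed at the boundary cases $B=A$ and $A=C$ (the most delicate being the hexagonal form $[A,A,A]$, whose six length-$A$ vectors split evenly as one pair into each of $\alpha,\beta,\gamma$), but in every such case uniqueness on $\alpha$ and $\beta$ survives.

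The heart of the proof is the evaluation on~$\gamma$. The four obvious candidates $(\pm1,\pm1)$ give $F(\pm1,\pm1)=A\pm B+C$, so $m_\gamma \le A+C-B$, and a short case analysis --- once more via completing the square and the inequalities $B\le A\le C$ --- shows that no vector $(1+2m,1+2n)$ with $|m|+|n|\ge 1$ can achieve a smaller value. Hence $m_\gamma = A+C-B$, attained at the single sign-pair $\pm(1,-1)$ when $B>0$, and at both sign-pairs $\pm(1,1),\,\pm(1,-1)$ when $B=0$. Consequently $F$ is multiminimizing precisely when $B=0$ in its reduced form, which is exactly the condition that $F$ be diagonalizable over~$\Z$.

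The remaining assertions then drop out immediately: in the diagonal case $[A,0,C]$, the coset supporting multi-minimization is unique (namely $\gamma$), the minimum $m_\gamma = A+C$ is attained at exactly two sign-pairs, and the identity $m_\gamma = m_\alpha + m_\beta$ is obvious. The only non-trivial step, where I would concentrate the bookkeeping, is the boundary case analysis in Gauss reduction --- ensuring in particular that no accidental multi-minimization sneaks in on $\alpha$ or $\beta$ that would bypass the reduction. Beyond this, the argument is a mechanical completing-the-square computation.
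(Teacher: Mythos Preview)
Your proposal is correct and follows essentially the same approach as the paper: pass to a reduced representative under the $\GL\Z$-action and then read off the minima on each non-trivial coset from the reduced inequalities. The only difference is technical---the paper bounds $F(x,y)$ below via the Cauchy inequality $|Bxy|\le A(x^2+y^2)/2$ and then checks four ranges of $(|x|,|y|)$, whereas you complete the square on each coset separately; both routes lead to the same conclusion that multiminimization occurs exactly when $B=0$ in the reduced form.
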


\begin{proof} This is proved using reduction theory. Suppose that the form $F$ is multi\-minimizing. 
Since this property is obviously $\SL\ZZ$-invariant, we can assume that $F$ is reduced,
i.e. $F=[A,B,C]$ with $C\ge A\ge|B|$.  By Cauchy's inequality we have $|Bxy|\le A(x^2+y^2)/2\,$, and hence 
$$F(x,y)\;\ge\;\frac A2\,x^2\+\Bigl(C-\frac A2\Bigr)\,y^2\,, $$
for all $(x,y)\in\R^2$.  In particular,
\bas  |x|\ge1,\;|y|\ge2\quad&\Rightarrow\quad F(x,y)\;\ge\; 4C\,-\,3A/2 \;\,>\;A\,, \\ 
|x|\ge2,\;|y|\ge1\quad&\Rightarrow\quad F(x,y)\;\ge\; C\+3A/2 \quad >\; C\,, \\ 
|x|\ge3,\;|y|\ge1\quad&\Rightarrow\quad F(x,y)\;\ge\;C\+4A \qquad >\; A+|B|+C\,, \\ 
|x|\ge1,\;|y|\ge3\quad&\Rightarrow\quad F(x,y)\;\ge\; 9C\,-\,4A \quad\;\,>\;A+|B|+C\,. \eas
These equations show that the smallest value of $F$ on the coset ``(odd,\,even)'' is attained only at
$(\pm1,0)$ and equals $A\,$, that the smallest value of $F$ on the coset ``(even,\,odd)'' is attained only at
$(0,\pm1)$ and equals $C\,$, and that the two smallest values of $F$ on the coset ``(odd,\,odd)'' are attained
only at $(\pm1,\pm1)$ and are equal to $A-B+C$ and $A+B+C\,$.  In particular, $F$ is multi-minimizing
if and only if $B$ vanishes, in which case the only coset on which it attains its minimum more than once is
``(odd,\,odd)'' and the minimum there is attained exactly twice (up to sign) and is
the sum of the unique minima in the other two cosets.  \end{proof}

Let $\fraca$ be a fractional $\O_D$-ideal  
in a real quadratic field~$K$ and $\xi$ a non-zero coset of $\fraca$ in $\frac12\fraca$. 
(Thus there are three possibilities for~$\xi$ given~$\fraca$.) We denote by 
$\widetilde{\rm{MM}}(\fraca, \xi)$ the set of non-zero $\a\in K$ such that the quadratic 
form  $F_\a(x) = \tr(\alpha x^2)$ is positive definite and assumes its minimum value on 
the coset $\xi$ more than twice (up to sign), and call the elements of this set {\it multiminimizers} for $\xi$.  
We denote by $\widetilde{\rm{MM}}(\fraca)$ the set of all {\em multiminimizers for $\fraca$}, 
i.e.\ the union of the sets $\widetilde{\rm{MM}}(\fraca, \xi)$ for all three cosets~$\xi$.
Clearly, $\widetilde{\rm{MM}}(\fraca)$ is invariant under multiplication by positive rational numbers and 
by the squares of elements of the group $U_D$ of units $\ve$ of $\O_D$. We set
\be
\label{eq:defMM}
{\rm{MM}}(\mathfrak{a}) \= \widetilde{\rm{MM}}(\mathfrak{a}) /\bigl(\QQ_+^\times \cdot U_D^2\bigr)\;.
\ee
Later we will often use the representatives of $\alpha \in {\rm{MM}}(\mathfrak{a})$
that are primitive in $(\fraca^2)^\vee$. They are unique up to multiplication by $U_D^2$.
\par
\begin{Prop} \label{prop:MMchar2} Let $\fraca$ be a fixed fractional $\O_D$ 
ideal of $K$. Then there is a bijection between ${\rm{MM}}(\mathfrak{a})$ 
and the set of \Pred\ quadratic forms $Q = [a,b,c]$ with $b^2 -4ac = D$ in the wide ideal class of~$\fraca$. 
\par
Moreover, given $\alpha \in \widetilde{\rm{MM}}(\fraca)$, there is a
unique basis  $(\om_1,\om_2)$ such that $\omega_2 > \omega_1 >0$ of $\fraca$ 
with respect to which the form $F_\a$ is diagonal, and the coset $\xi$ 
is then $\frac12(\om_1+\om_2)+\fraca$. 
\end{Prop}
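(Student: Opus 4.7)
The plan is to first establish the ``moreover'' part about the unique basis, and then use it to construct the bijection.

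For the moreover part, the hypothesis $\alpha\in\widetilde{\rm MM}(\fraca)$ tells me that $F_\alpha$ is positive definite on $\fraca\otimes\RR$ and multiminimizing, so Proposition~\ref{prop:MMchar1} produces a basis of $\fraca$ in which $F_\alpha$ is diagonal; the same proposition also pins down the minima on the other non-trivial cosets to single $\pm$-pairs, showing this diagonalizing basis is unique up to the eight sign/swap choices. Using the fixed real embedding $K\subset\RR$, I flip signs so that both basis vectors are positive and then swap to order them, obtaining the unique basis with $\omega_2>\omega_1>0$. The coset identification $\xi=\frac12(\omega_1+\omega_2)+\fraca$ follows at once from the ``odd--odd'' description of the multiminimizing coset in Proposition~\ref{prop:MMchar1}.

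For the bijection, I will send $\alpha$ to the primitive integer form $Q$ whose quadratic irrationality $\lambda_Q$ equals $\omega_2/\omega_1$. To see this makes sense and that $Q$ is \Pred, note that diagonality gives $\tr(\alpha\omega_1\omega_2)=0$, so $\alpha\omega_1\omega_2=r\sqrt D$ for some $r\in\QQ^\times$. Taking norms and using that positive definiteness of $F_\alpha$ forces $\alpha\gg 0$ (hence $N(\alpha)>0$), we obtain $N(\omega_1)N(\omega_2)=-r^2D/N(\alpha)<0$, so $\omega_1^\sigma$ and $\omega_2^\sigma$ have opposite signs. Together with $\omega_2>\omega_1>0$ this forces $\lambda:=\omega_2/\omega_1$ to satisfy $\lambda>1>0>\lambda^\sigma$, i.e., $\lambda$ is \Pred. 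The primitive form $Q=[a,b,c]$ with $\lambda_Q=\lambda$ then automatically satisfies $a>0$, $c/a=\lambda\lambda^\sigma<0$, and $(a+b+c)/a=(1-\lambda)(1-\lambda^\sigma)<0$. The identity $\fraca=\omega_1\cdot\langle 1,\lambda\rangle$ places $\langle 1,\lambda\rangle$ in the wide ideal class of $\fraca$, and the invertibility of $\fraca$ as an $\O_D$-ideal ensures that $Q$ has discriminant $D$.

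Well-definedness under the $\QQ_+^\times\cdot U_D^2$ action is short: positive rational scaling of $\alpha$ leaves the diagonalizing basis intact, and the substitution $\alpha\mapsto\eta^2\alpha$ for $\eta\in U_D$ satisfies $F_{\eta^2\alpha}(x)=F_\alpha(\eta x)$, so the basis changes to $(\eta^{-1}\omega_1,\eta^{-1}\omega_2)$ (after a possible sign adjustment) and the ratio $\lambda$ is preserved. For the inverse map, given a \Pred\ $Q$ with $\lambda=\lambda_Q$ in the wide class of $\fraca$, I would pick $\gamma\in K$ with $\gamma>0$ and $\fraca=\gamma\langle 1,\lambda\rangle$, set $(\omega_1,\omega_2)=(\gamma,\gamma\lambda)$, and take $\alpha=\sqrt D/(\omega_1\omega_2)$; the conditions $\lambda>0$ and $\lambda^\sigma<0$ translate into $\omega_2>\omega_1>0$ and $\alpha\gg 0$, and the two constructions are straightforwardly mutual inverses modulo the group action. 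The main obstacle will be the sign bookkeeping: verifying that the three conditions defining a \Pred\ form correspond cleanly to the geometric positivity constraints on the diagonalizing basis, as above. Once the signs are correctly tracked, the rest of the bijection reduces to the classical ideal--form dictionary recalled in~\cite{Z81}.
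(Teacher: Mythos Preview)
Your proof is correct and follows essentially the same approach as the paper's: both directions of the bijection go through the diagonalizing basis supplied by Proposition~\ref{prop:MMchar1}, and the key point that $\lambda=\omega_2/\omega_1$ is \Pred\ comes down to the trace-zero condition $\tr(\alpha\omega_1\omega_2)=0$ combined with positive definiteness. Your norm argument ($\alpha\omega_1\omega_2=r\sqrt D$, then $N(\alpha)N(\omega_1\omega_2)=-r^2D<0$) is a mild repackaging of the paper's computation $A=\tr(\alpha\omega_1^2)$, $C=\tr(\alpha\omega_2^2)$, $A\cdot N(\lambda)=-C$; both yield $N(\lambda)<0$ in one line. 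Your explicit observation that positive definiteness of $F_\alpha$ forces $\alpha\gg0$ is a nice clarification the paper leaves implicit, and your choice $\alpha=\sqrt D/(\omega_1\omega_2)$ for the inverse map differs from the paper's $\alpha=-ac/(\mu^2\lambda\sqrt D)$ only by the positive rational factor $-ac/D$, hence represents the same class in $\mathrm{MM}(\fraca)$.
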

\par
\begin{proof} Given $Q = [a,b,c]$ choose $\mu \in K^\times$ positive such that $\mu \langle 1, 
\la_Q \rangle = \mathfrak{a}$ and take 
\be \label{eq:def_alphaMM}
\alpha \= \frac{-ac}{\mu^2 \la_Q \sqrt{D}} \= \frac{a}{\mu^2}\cdot\frac{b+\sqrt{D}}{2\sqrt{D}}\,.
\ee 
This is positive definite if $Q$ is standard.
Since $Q$ is simple the basis $\omega_1 = \mu $, $\omega_2 = \mu \la_Q$ satisfies
the conditions stated. Moreover, in this basis the quadratic form $F_\alpha$ is 
$$F_\alpha = [A,0,C], \qquad A = \tr(\alpha \omega_1^2) = a, \quad C =  \tr(\alpha \omega_2^2) = -c$$
since $\tr(\alpha \omega_1 \omega_2) = \tr(-ac/\sqrt{D}) =0$. 
Thus $\alpha$ is a multiminimizer.
\par
Conversely, if $\a$ is a representative of a class in  
$\rm{MM}(\fraca)$ then it follows from 
Proposition~\ref{prop:MMchar1} that there exists a unique basis 
$(\om_1,\om_2)$ of $\fraca$ (up to interchanging the $\om_i$ and changing their signs)
of~$\fraca $ with respect to which $F_a(x)$ is diagonal, and the coset $\xi$ is then $\frac12(\om_1+\om_2)+\fraca$. 
This also proves the last statement. In this basis $F_\alpha = [A,0,C]$ with $A = \tr(\alpha \omega_1^2)$
and $C = \tr(\alpha \omega_2^2)$. We choose signs and order the basis such that
$\la = \omega_2 /\omega_1 > 1$ and claim that then $\s(\la)<0$, so that $\lambda$ is the 
root of a \Pred\ form. 
In fact, we have $\alpha = q/(\omega_1 \omega_2 \sqrt{D})$ for
some $q \in \QQ^\times_+$. Thus $C = \tr(q \la/\sqrt{D})$ and
$$A = \tr(q/(\la \sqrt{D})) = - N(\la) C.$$ Since $F_\alpha$ is positive
definite (by definition of a multiminimizer), the numbers $A$ and $C$ are positive, so this implies 
that $N(\la)<0$ as claimed. One checks immediately that $\la$ does not depend
on the representative of the multiminimizer in $\rm{MM}(\fraca,\xi)$
we have chosen. We take $Q = [a,b,c]$ so that $\lambda$ satisfies $a\lambda^2 + b\lambda + c = 0$ 
with $a>0$ and $a,\,b,\,c$ coprime integers, and since $\langle 1, \lambda \rangle$ is an 
invertible $\O_D$ module, we then have $b^2-4ac=D$.
\par
Obviously, the composition $\lambda_Q \mapsto \alpha(\la_Q) \mapsto \lambda(\alpha)$ 
is the identity. In the other direction, note that $\alpha$ is determined
by $(\omega_1,\omega_2)$ up to a positive rational number and that
$\mu$ with  $\mu \langle 1, \la_Q \rangle = \mathfrak{a}$
is determined up to a unit. Consequently, each of the distinguished 
basis elements $\omega_i$ is determined up to a unit and $\a$ is
determined up to a square of this unit. Since multiminimizers were
defined in \eqref{eq:defMM} by these two equivalence relations, 
this shows the bijection we claimed. 
\end{proof}

\subsection{Cusps of Hilbert modular surfaces} \label{sec:cuspHMS}
Classically, cusps of the Hilbert modular surface $X_D$ are defined to 
be the equivalence classes of points in $\PP^1(K)$ under the action of $\SLO$.
Equivalently, we may define a cusp as an exact sequence 
$$ 0 \to \fraca^\vee \to \O_D^\vee \oplus \O_D \to \fraca \to 0 $$
of torsion-free $\O$-modules up to the action of $\SLO$ on $\O^\vee \oplus \O$ 
and its sub $\O$-modules. The modules $\fraca$ arising in this way are
{\em quasi-invertible}, i.e.\ invertible $\O_E$-module for some order 
$\O_E \supseteq \O_D = \O$.
Yet another equivalent viewpoint to define a cusp is by the 
class of an invertible $\O_E$-ideal $\fraca$ together with an element
$r \in \ZZ/\sqrt{\tfrac DE}\ZZ$.
\par
We briefly recall how to see the equivalence of these definitions. For the equivalence 
of the first two definitions, intersect the line $L \subset K^2$ determined by a 
point in $\PP^1(K)$ with a fixed embedding of  $\O^\vee \oplus \O$ in $K^2$ to get 
the exact sequence and conversely tensor the exact sequence with $K$ over $\O$. 
That such an extension class is determined by $r \in \ZZ/\sqrt{\tfrac DE}\ZZ$ can be 
deduced from the calculation in \cite{Ba07}, Proposition~7.20 or \cite{BM12}, Theorem~2.1.
\par
\medskip
As a preparation for the proof of Theorem~\ref{thm:cuspsWD} we determine the 
Fourier series of $\PTD$ at a given cusp $\fraca$ of $X_D$.
For a basis $\bfom = (\omega_1,\omega_2)$ of 
the ideal $\fraca$ let  $\rho_\bfom(x) \= (x+m)\cdot \bfom^T$ for $x \in \ZZ^2$, 
with the dependence on $m$ suppressed in the notation.
Then for $(m,m') \in (\tfrac12 \ZZ)^2$ odd we define
\begin{equation} \label{eq:thetafraca}
D_2\theta  \TchiP{m}{m'}{\bfom}(z_1,z_2) \= \sum_{x \in \ZZ^2}
(-1)^{2x\cdot(m')^T} \s(\rho_\bfom(x)) q_1^{\rho_\bfom(x)^2/2} q_2^{\s(\rho_\bfom(x))^2/2} \, , 
\end{equation}
where $q_i = \e(z_i)$. Note that $D_2\theta  \TchiP{m}{m'}{\bfom}$
depends on the chosen basis $\bfom$,  but a base change can be
compensated for by letting the base change matrix also act \changed{on} the
characteristic $(m,m')$. Consequently, the product 
\be \label{eq:defPTDa}
\PTDa = \prod_{(m,m')\;\; {\rm odd}}  D_2\theta  \TchiP{m}{m'}{\bfom}
\ee
is of the form $\sum_{y \in \fraca} c(y) \exp(\tr(y^2z))$
and thus invariant under an upper triangular matrix in $\SLA$. 
\par
\begin{Lemma} \label{le:DthetaOtherCusp}
The Fourier expansion of~$\PTD$ at the cusp~$\fraca$
is proportional to $\PTDa$.
\end{Lemma}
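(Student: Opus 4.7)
The plan is to reduce the claim to the classical $\Sp(4,\ZZ)$-transformation formula for Siegel theta functions, using the observation that moving from the cusp $\infty$ of $X_D$ to the cusp $\fraca$ corresponds on the Siegel side to changing the $\ZZ$-basis of the underlying polarized lattice from the one built on $\O^\vee\oplus\O$ to the one built on $\fraca^\vee\oplus\fraca$.

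First, I would reinterpret the Fourier expansion of $\PTD$ at the cusp $\fraca$ as follows. The cusp of $X_D$ corresponding to the invertible $\O$-ideal $\fraca$ is represented by an isomorphism $g:X_{D,\fraca}\to X_D$, given by an element of $\GL K^+$ conjugating $\SLA$ into $\SLOA{\O^\vee}$, sending the cusp $\infty$ of $X_{D,\fraca}$ to the cusp $\fraca$ of $X_D$. The Fourier expansion in question is then by definition the expansion of $g^*\PTD$ at $\infty\in\HH^2$, viewed as a Hilbert modular form on $X_{D,\fraca}$. The Siegel modular embedding $\psi$ used in the definition of $\PTD$ is built from the basis $(1,\gamma_0)$ of $\O$, whereas the Siegel modular embedding $\psi_\fraca$ for $X_{D,\fraca}$ is built from the basis $\bfom$ of $\fraca$ as in \eqref{eq:Psi_fraca}; both compositions $\psi\circ g$ and $\psi_\fraca$ realize the same polarized lattice in two $\ZZ$-bases, so they differ by a uniquely determined element $M=\sm ABCD\in\Sp(4,\ZZ)$.

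Next, I would apply the classical transformation formula: for each odd characteristic $(m,m')$ there is an odd characteristic $(\widetilde m,\widetilde m')$ determined by $M$ and an $8$th root of unity $\zeta_{m,m'}$ with
\bes
\gr\Theta\Tchi{m}{m'}(0,M\cdot Z)\=\zeta_{m,m'}\,\det(CZ+D)^{1/2}\,(CZ+D)\,\gr\Theta\Tchi{\widetilde m}{\widetilde m'}(0,Z).
\ees
The assignment $(m,m')\mapsto(\widetilde m,\widetilde m')$ is a parity-preserving bijection on characteristics, so in particular restricts to a bijection on the six odd ones. Restricting to $Z=\psi_\fraca(\zz)$ and applying the base-change matrix $B_\fraca$ to pass to the eigendirection derivatives $D_1,D_2$, the operator $B_\fraca^{-1}(CZ+D)B_\fraca$ is diagonal because $g\in\GL K^+$ respects the real-multiplication splitting; hence extracting its second diagonal entry turns $D_2\theta\Tchi{m}{m'}\circ g$ into a scalar multiple of $D_2\theta\TchiP{\widetilde m}{\widetilde m'}{\bfom}$.

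Finally, taking the product over the six odd characteristics collapses all the roots of unity, the factors of $\det(CZ+D)^{1/2}$, and the scalar contributions from the diagonal of $B_\fraca^{-1}(CZ+D)B_\fraca$ into a single non-zero constant $c$, yielding $g^*\PTD=c\cdot\PTDa$. The main obstacle is the verification that $B_\fraca^{-1}(CZ+D)B_\fraca$ is diagonal, or equivalently that $M$ is block-diagonal with respect to the real-multiplication decomposition; this is a bookkeeping exercise using the definitions \eqref{eq:SME}--\eqref{eq:SMEG}, but it is the only non-formal step, since it is what guarantees that $D_2\circ g$ is really a scalar multiple of $D_2$ computed in the $\bfom$-basis rather than a non-trivial combination of $D_1$ and $D_2$.
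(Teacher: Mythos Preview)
Your approach is correct but takes a genuinely different route from the paper. The paper explicitly avoids the direct computation you carry out, writing ``To avoid the generally hard problem of finding the Fourier development of a modular form at a different cusp we use the fact that the vanishing locus of $\PTD$ has an intrinsic formulation in terms of eigenforms.'' Instead of tracking the $\Sp(4,\ZZ)$-transformation, the paper invokes Theorem~\ref{thm:TeichviaDTH}: the vanishing locus of the product of theta derivatives is \emph{intrinsically} the locus where the first eigendifferential has a double zero, and this characterization is independent of which Siegel modular embedding one uses. Applied to the embedding $\psi_\bfom$ built from the basis $\bfom$ of $\fraca$, it yields $\PTDa$ on $X_{D,\fraca}$, with the same vanishing locus as the pullback of $\PTD$; since both are forms of weight $(3,9)$ cutting out the same divisor, they are proportional. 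The paper then only has to check that the cusp $\infty$ of $X_{D,\fraca}$ corresponds to the cusp $\fraca$ of $X_D$, which it does by exhibiting a matrix in $\sm{\fraca^{-1}}{\fraca^\vee}{(\fraca^\vee)^{-1}}{\fraca}\cap\SL K$.

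Your direct approach via the classical transformation formula is more self-contained (it does not rely on the moduli interpretation from Theorem~\ref{thm:TeichviaDTH}) and makes the proportionality constant in principle computable. The conceptual core of your argument---that the isomorphism of abelian surfaces induced by $g\in\SL K$ commutes with real multiplication and is therefore diagonal in the eigendirection coordinates $u=Bv$---is exactly right, and this is what forces $D_2$ on one side to be a scalar multiple of $D_2$ on the other. One small correction: the matrix whose diagonality you need is $B_\O(CZ+D)B_\fraca^{-1}$ (relating $u$-coordinates for the $\O$-basis to $u'$-coordinates for the $\bfom$-basis), not $B_\fraca^{-1}(CZ+D)B_\fraca$; but your reasoning for why it is diagonal applies unchanged. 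The paper's approach buys brevity and avoids the bookkeeping entirely, at the cost of importing the moduli-theoretic input.
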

\par
\begin{proof} To avoid the generally hard problem of finding the Fourier development
of a modular form at a different cusp we use the fact that the vanishing locus
of $\PTD$ has an intrinsic formulation in terms of eigenforms. In Theorem~\ref{thm:TeichviaDTH} we
proved that this vanishing locus corresponds to the set of principally polarized 
abelian varieties with real multiplication such that the first eigenform has a double zero. 
This proof works for any Siegel modular embedding, for example the one given
at the end of~\S\ref{sec:HMGHMS}, where the
locus of real multiplication is $X_{D,\fraca}$ and $\psi$ is constructed with the
help of the matrix $B$ as in~\eqref{eq:SMEA} having $\bfom = (\omega_1,\omega_2)$ as its first column.
The restriction of the Siegel theta function with characteristic $(m,m')$ 
via this modular embedding is just $D_2\theta\TchiP{m}{m'}{\bfom}$. To 
complete the proof, 
we note that the cusp at $\infty$ of $X_{D,\fraca}$ is just the cusp $\fraca$
of $X_D$. To see this, take a matrix $\sm{\fraca^{-1}}{\fraca^\vee}
{(\fraca^\vee)^{-1}}{\fraca} \cap \SL{K}$. It 
conjugates $\SLO$ into $\SLA$, since $\fraca^\vee = \fraca^{-1} \O^\vee$ and the line
at infinity in $\PP^1_K$ intersects $\fraca^\vee \oplus \fraca$ in the submodule
$\fraca^\vee$, as required. 
\end{proof}
\par
\begin{proof}[Proof of Theorem~\ref{thm:cuspsWD}]
We first determine the cusps of the vanishing locus of $\PTDa$, as
defined in~\eqref{eq:defPTDa},
that map to the cusp $\infty$
of the Hilbert modular surface $X_{D,\fraca}$. 
Because functions in a neighborhood of the cusp 
$\fraca$ have the form $\sum_{\nu} c_\nu \e(\tr(\nu z))$ with 
$\nu \in (\fraca^\vee \fraca^{\i})^\vee = \fraca^2$, we can choose a local parameter $q$
at a branch of this locus, with $q=0$ at the cusp, of the form
$q = \e(z/\alpha)$ with $\alpha \in (\fraca^2)^\vee$ primitive. 
A lift of the vanishing locus to $\HH \times \HH$ looks locally like
$z_2 = \varphi(z_1)$, where $\varphi(z) = \frac{\alpha^\sigma}{\alpha} z + C 
+ \tfrac{\ve(q)}{2\pi i}$ as $\Im(z) \to \infty$ for some $C \in \CC$ and some 
power series $\ve$ in~$q$ with no constant term.  Then we have
\begin{equation} \label{eq:branch}
\e(\nu z_1 + \nu^\sigma z_2)|_{\text{locus}} \= \e(C \nu^\sigma) \, q^{\tr(\alpha \nu)} e^{\sigma(\nu)\ve(q)} \qquad
\text{for all} \quad \nu \in \fraca^2\,,
\end{equation}
so that the restriction of any Hilbert modular form becomes a power series in~$q$.
Making a different choice of the lifting would change $C$ by  an integral multiple
of $N(\fraca)^2\sqrt{D}/\alpha$, so that the quantity
\be \label{eq:newS}
S \= \e \Bigl( \frac{C\alpha}{N(\fraca)^2\sqrt{D}} \Bigr)\; \in \, \CC^*
\ee
is independent of the choice of the lifting. We will show below that $S$ is in
fact a rational power of an element of~$K^*$.
\par
The resulting $q$-exponents after plugging~\eqref{eq:branch} into~\eqref{eq:thetafraca}
are of the form $\tr(\alpha\rho_\om(x)^2)$. In order for the theta derivative to vanish,
the smallest exponent of~$q$ must occur twice, so this quadratic form has to take
its minimum twice (with $x$ and $-x$ not distinguished).  Hence 
$\alpha$ is a multiminimizer for $\fraca$. Recall that this specifies
$\alpha$ only up to $\QQ_+^\times \cdot U_D^2$, but here the fact that $q = \e(z/\alpha)$
is a local parameter (or equivalently, that $\alpha$ is primitive) eliminates
the $\QQ_+^\times$-ambiguity. The $U_D^2$-ambiguity corresponds to the fact
that this  group (considered as diagonal matrices in the Hilbert modular
group) stabilizes the given cusp $\alpha$ of the Hilbert modular surface. 
Since $U_D^2$ acts transitively on the three non-trivial cosets 
$\zeta$ of $\tfrac12 \fraca/\fraca$  and since a multiminimizer has the multiple
minimum property on exactly one of the three cosets by 
Proposition~\ref{prop:MMchar1}, 
we may suppose from now on that the branch is chosen such that 
$D_2\theta  \TchiP{m}{m'}{\bfom}$ for $m = (\tfrac12, \tfrac12)$ vanishes on that branch.
\par
We next determine $\alpha$ and exhibit for this purpose a convenient 
basis of $\fraca^2$. We suppose that $\omega_1, \omega_2$ was from the
beginning of the discussion the distinguished
basis of $\fraca$ associated with a multiminimizer in Proposition~\ref{prop:MMchar2}.
We let $g= {\rm gcd}(a,c)$, where $ax^2 + bx + c=0$ is the minimal polynomial
of $\lambda =\omega_2/\omega_1$. Choose $s,t \in \ZZ$ such that $sa + tc = g$.
It is easily verified that
$$\fraca^2 \= \langle \omega_1^2, \omega_1\omega_2, \omega_2^2 \rangle 
\= \langle \alpha^*, \beta^* \rangle,$$
where
$$\alpha^* \= \frac ga \omega_1^2 + \frac{tb}{a} \omega_1\omega_2 \quad
\text{and} \quad
\beta^* \= \frac 1g \omega_1\omega_2$$
since ${\rm gcd}(a,b,c)=1$. (In fact, $\fraca^2$ contains 
$\tfrac ca \omega_1^2  + \tfrac ba \omega_1 \omega_2$ using the minimal
polynomial, so $\tfrac bg \omega_1\omega_2$ and finally 
$\tfrac 1g \omega_1\omega_2$ by the gcd condition. The ideal also contains 
$\frac {ct}a \omega_1^2 + \frac{tb}{a} \omega_1\omega_2$ and, since
${\rm gcd}(a,tc) = g$, also $\alpha^*$. The converse inclusion follows from
the line below~\eqref{eq:defFG}.) This basis is chosen such that 
the dual basis is $\{\alpha, \beta\}$ where
\be \label{eq:alphaoverg}
\alpha \= \frac{-ac}{g\omega_1^2 \la \sqrt{D}} \= \frac{a}{g\omega_1^2}\cdot\frac{b+\sqrt{D}}{2\sqrt{D}}\,.
\ee
As local coordinates on the Hilbert modular surface we now pick 
$X = q_1^{\alpha^*}q_2^{\sigma(\alpha^*)}$ and $Y =  q_1^{\beta^*}q_2^{\sigma(\beta^*)}$,
which are power series in the local coordinate~$q$ beginning with $q$ and $S$, respectively.
In these coordinates the factors of $\PTDa$ have the expansion  
\begin{equation} \label{eq:thetafracaFG}
D_2\theta  \TchiP{m}{m'}{\bfom}(z_1,z_2) \= \sum_{x \in \ZZ^2}
(-1)^{2x\cdot(m')^T} \rho_\bfom(x)^\s\,X^{F(\tx_1, \tx_2)/2} Y^{G(\tx_1, \tx_2)/2}  \, , 
\end{equation}
where $\tx_i = x_i + \h$ and where 
\be \label{eq:defFG}
F \= \Bigl[\frac ag, 0, \frac {-c}g \Bigr], \quad 
G \= \Bigl[-bt , 2g, bs \Bigr]\,.
\ee
(This follows from $\tfrac ag \alpha^* -bt \beta^* = \omega_1^2$, 
$2g \,\beta^* = 2\omega_1\omega_2$ and $-\tfrac cg \alpha^* - bs \beta^* 
= \omega_2^2$.) 
Then $D_2\theta  \TchiP{m}{m'}{\bfom}/X^{1/8}Y^{1/8}$ has an
expansion in integral powers of $X$ and $Y^{1/2}$, but we actually need the product 
$\cFF=D_2\theta\TchiP{(\h,\h)}{(\h,0)}{\bfom}D_2\theta\TchiP{(\h,\h)}{(0,\h)}{\bfom}$,
and this product, divided by $X^{1/4}Y^{1/4}$, has integral powers of~$Y$ as well.

\par
Finally we need to show that map from cusps to multiminimizers is onto
and that the fibers have cardinality $g$. 
Analyzing the lowest order coefficient in $q$ of~$\cFF$, to which precisely
the summands $\tx_1 = \pm \tfrac 12$ and $\tx_2 = \pm \tfrac 12$
contribute, and noting that $G(\tfrac12, \tfrac12) - G(\tfrac12, -\tfrac12) =g$, 
we find that the $S$ of~\eqref{eq:newS} has to be a solution of
\be  \label{eq:lowesttermeq}
S^g \= \biggl( \frac{\rho_\bfom(\tfrac12, -\tfrac12)}
{\rho_\bfom(\tfrac12, \tfrac12)} \biggr)^\sigma \, .\ee
This equation has precisely $g$ solutions, differing by $g$th roots of unity.
For each such solution of the lowest order term in $q$ there is a unique
power series $\ve(q)$ such that~\eqref{eq:branch} is in the vanishing
locus of $\PTDa$, since the coefficients of $\ve(q)$ are recursively
determined by a triangular system of equations. (We will discuss the arithmetic
properties of $\ve(q)$ in \S\ref{sec:phiarithmetic}.)
\par
This completes the proof for the vanishing locus of $\PTDa$ at 
the cusp $\infty$ and by Lemma~\ref{le:DthetaOtherCusp} also
the proof of Theorem~\ref{thm:cuspsWD}.
\end{proof}
\par
A statement like Theorem~\ref{thm:cuspsWD} can certainly be proven along the same lines
also if $\fraca$ is not an invertible $\O_D$-ideal (which can of course only 
happen for non-fundamental~$D$).  This has an effect in Lemma~\ref{le:DthetaOtherCusp},
and the fact that ${\rm gcd}(a,b,c)>1$ changes the computation of the 
basis~$\{\alpha^*, \beta^*\}$ that was used in the proof. Since our 
aim is just to demonstrate the method of reproving the properties of 
genus two \Teichmuller\ curves using theta functions, we do 
not carry this out in detail.
\par
\begin{proof}[Proof of Theorem~\ref{thm:cuspsPD}]
The reducible locus is the vanishing locus of the product of 
all $10$ even theta functions. Thus branches of the vanishing
locus have to be parametrized as in \eqref{eq:branch} with
$\alpha$ a multiminimizer. For $m = (0,0)$ the forms are never
multiminimizing and in all other cases the proof proceeds as
the proof of Theorem~\ref{thm:cuspsWD}. Only the coefficients of
the equation~\eqref{eq:lowesttermeq} change, but not the exponents.
This does not affect the number of cusps for each multiminimizer.
\end{proof}

\subsection{Cusps of $W_D$  via flat surfaces and prototypes} 
\label{sec:Cuspviaflat}

The cusps of the \Teichmuller\ curve $W_D$ were first determined in~\cite{McM05}
based on the following observations.
It was discovered by Veech along with the definition of \Teichmuller\ curves that
cusps correspond to directions (considered as elements of $\RR^2/\RR^*$) 
of {\em saddle connections} (i.e.\ geodesics for the flat metric $|\omega|$ starting and ending 
at a zero of~$\omega$). Applying a rotation to the flat surface, we may 
suppose that the direction is horizontal, so that the subgroup of $\GL\RR$
stabilizing the direction consists of upper triangular matrices. McMullen discovered 
that flat surfaces parametrized by $W_D$ always decompose in saddle connection 
directions into two cylinders as indicated in Figure~\ref{cap:Prototype}. The action
of the upper triangular group allows us to assume the upper cylinder to be a 
square, while still having the freedom to normalized by the action of  
$T=\left(\begin{smallmatrix} 1 & 1\\0 & 1 \end{smallmatrix}\right)$.
McMullen further deduces from real multiplication that the two cylinders must 
be isogenous with a homothety in a real quadratic field. We thus may suppose that 
$(a,c,q)$ are integral and $\lambda$ is real quadratic. A more careful analysis 
of the isogeny (see~\cite{McM05}, Equation (2.1), or \cite{Ba07}, Proposition~7.20), imposing 
moreover that $\ord_D$ is the exact endomorphism ring of a generic abelian surface 
parametrized by the \Teichmuller\ curve, implies that $\lambda = [a,b,c]$ with $a$ and~$c$ 
as in  Figure~\ref{cap:Prototype} and such that $(a,b,c,r)=1$. 
  \begin{figure}[h]
    \begin{center}  
\begin{tikzpicture}
\draw[thick] (0,0) node(1){} -- ++(60:2cm) node(2){} -- ++(0cm,2cm) node(3){} -- ++(2cm,0cm) node(4){} -- ++(0,-2cm)
node(5){} -- ++(1cm,0cm) node(6){} -- ++(-120:2cm) node(7){} -- ++(-1cm,0cm) node(8){} -- cycle;
\draw[thick,dashed] (2) -- (5) -- (8);
\foreach \x in {1,...,8}
\node[draw=black] at (\x) {};
\foreach \from/\to/\name in {1/2/1,2/3/2}
  \path (\from) -- (\to) node[midway,left] {\name};
\foreach \from/\to/\name in {3/4/3,5/6/4}
  \path (\from) -- (\to) node[midway,above] {\name};
\foreach \from/\to/\name in {6/7/1}
  \path (\from) -- (\to) node[midway,right] {\name};
\foreach \from/\to/\name in {1/8/3,8/7/4}
  \path (\from) -- (\to) node[midway,below] {\name};
\draw (-.8,0) node {$(0,0)$};
\draw (4,0) node {$(-c,0)$};
\fill (2,2.73) circle (1.5pt);
\fill (3,2.73) circle (1.5pt); 
\draw (3.2,2.73) node {2};
\fill (1.5,.87) circle (1.5pt);
\fill (3,.87) circle (1.5pt);
\fill (3.5,1.73) circle (1.5pt);
\draw [thick,decorate,decoration={brace,amplitude=10},xshift=-.2cm,yshift=0cm] (1,1.73) -- (1,3.72)
 node [black,midway, left, xshift=-0.3cm, yshift=0cm] {$-a\lambda^\sigma$};
\node at (0.2,1.73) {$(r,a)$};
\draw [thick,decorate,decoration={brace,amplitude=10},xshift=0cm,yshift=.2cm] (1,3.72) -- (3,3.72)
 node [black,midway, above, xshift=0cm, yshift=0.4cm] {$-a\lambda^\sigma$};
\end{tikzpicture}
    \caption{Prototype of a flat surface parametrizing cusps of the \Teichmuller\ curves $W_{D}$. Sides with the same label are identified.} 
    \label{cap:Prototype}
    \end{center}
  \end{figure}
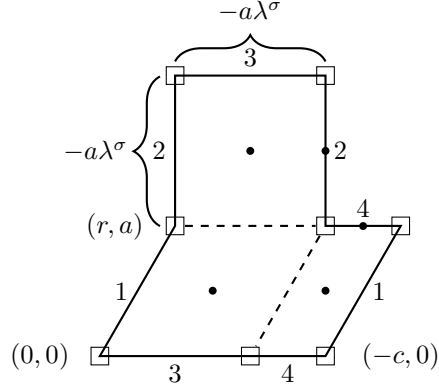 
\par
Obviously $a>0$, $c<0$ and the side length inequality $-a\lambda^\sigma < -c$ is equivalent
to $a+b+c<0$. By the action of $T$ we may reduce mod ${\rm gcd}(a,c)$.
Altogether this implies that cusps of $W_D$ correspond bijectively to pairs
$(\lambda=[a,b,c],q)$ with $\lambda$~\Pred\  
and $q \in \ZZ/{\rm (a,c)}$, as in Theorem~\ref{thm:cuspsWD}.
\par

\subsection{Computing multiminimizers} \label{sec:compMMM}
There are many variants of how to expand real numbers into continued
fractions. One may subtract (slowly) one at each step or group these
steps together, one may subtract one while being greater than one, 
greater than zero, or until becoming negative. We will need two of them, one
here and one in \S\ref{sub:HirzComp}.
\par
The {\em slow plus greater-than-one continued fraction expansion} of $x \in \RR_{>1}$ is defined
by $x_0 = x$ and then inductively by 
$$ x_{n+1} = \left\{ \begin{array}{ccc} x_n -1 \quad & \text{if}& x_n > 2 \\
1/(x_n-1) \quad &  \text{if}& 2 > x_n > 1 \\ \end{array} \right., $$
so that
$$x  = 1+\cdots+1+\left(1 + \frac{1}{1+\cdots+1+\left(1 + 
\frac{1}{\ddots}\right)}\right).$$
\par 
The class of the ideal $[\fraca] = [\langle 1,x_n \rangle]$ is unchanged under this continued
fraction algorithm, since $\langle 1,x_n \rangle = \langle 1,x_n-1 \rangle= (x_n-1) \langle 1, \tfrac1{x_n-1} \rangle$. 
\par
\begin{Lemma}\label{slowLag}  The slow plus greater-than-one continued fraction expansion of any quadratic 
irrational~$\lambda$ is periodic, and is pure periodic if and only if $\lambda$ is \Pred. 
\end{Lemma}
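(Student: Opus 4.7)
My approach is to reduce everything to the two basic observations about how the slow algorithm transforms $x$ and its Galois conjugate $\sigma(x)$. Let $T$ denote the map $x \mapsto x_{n+1}$. First I would record the invariants: on the subtraction branch ($x>2$) one has $T(x)=x-1$ and $\sigma(T(x)) = \sigma(x)-1$, while on the reciprocal branch ($1<x<2$) one has $T(x)=1/(x-1)$ with $\sigma(T(x)) = 1/(\sigma(x)-1)$. An easy induction then shows that the condition ``$x>1$ and $\sigma(x)<0$'' (i.e.\ \Pred ness) is preserved under $T$, and that immediately after any reciprocal step one actually has the stronger bound $\sigma(x)\in(-1,0)$.

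For periodicity in general, I would compare the slow algorithm with the classical (fast) continued fraction algorithm: a single fast step at $z$ corresponds to $\lfloor z\rfloor-1$ slow subtraction steps followed by one slow reciprocal step. Since Lagrange's theorem gives that the fast expansion of any quadratic irrational is eventually periodic, and since the number of slow steps packaged into each fast step is a function of the current value only, the slow expansion is automatically eventually periodic.

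For the harder ``if'' direction of pure periodicity, the central point is to show that $T$ restricted to the set $\mathcal S$ of \Pred\ quadratic irrationals in $K$ is a \emph{bijection}. Given $\mu\in\mathcal S$, there are two abstract preimages under $T$, namely $\mu+1$ (if the last step was a subtraction) and $1+1/\mu$ (if it was a reciprocal). A direct check of the Galois conjugates shows that exactly one of these lies in $\mathcal S$: $\mu+1$ does iff $\sigma(\mu)<-1$, while $1+1/\mu$ does iff $\sigma(\mu)\in(-1,0)$ (the boundary case $\sigma(\mu)=-1$ being excluded by irrationality). Hence $T|_{\mathcal S}$ is invertible. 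Since the forward orbit of a \Pred\ $\lambda$ is eventually periodic by the previous paragraph and stays inside $\mathcal S$, applying $T|_{\mathcal S}^{-1}$ to the relation $T^{n+p}(\lambda)=T^n(\lambda)$ repeatedly gives $T^p(\lambda)=\lambda$, i.e.\ pure periodicity.

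The ``only if'' direction is a short rigidity argument. If the expansion of $\lambda$ is purely periodic of length $p$, then $T^p$ is a composition of $p$ M\"obius transformations of the form $x\mapsto x-1$ or $x\mapsto 1/(x-1)$, all of which have non-negative integer entries with positive determinant; hence $T^p$ is represented by a matrix $\binom{A\ B}{C\ D}\in\mathrm{GL}_2(\Z_{\ge0})$ that is not upper triangular (at least one reciprocal step occurs, since otherwise $\lambda$ would be driven to $(1,2]$ and then forced to reciprocate). The fixed point equation $\lambda=(A\lambda+B)/(C\lambda+D)$ with $C>0$ then has one root $>1$ and the other $<0$, and $\lambda$ must be the larger root; reading off the signs gives $\lambda>1>0>\sigma(\lambda)$, which is \Pred ness. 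The main obstacle in the whole argument is the bookkeeping in the third paragraph---verifying cleanly that the two preimage formulas really give a well-defined inverse of $T$ on $\mathcal S$, as this is what distinguishes the slow algorithm (where pure periodicity holds under the weak condition $\sigma(\lambda)<0$) from the classical fast algorithm (where one needs $-1<\sigma(\lambda)<0$).
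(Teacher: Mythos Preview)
Your plan is sound and in fact considerably more detailed than the paper's own proof, which for eventual periodicity invokes Lagrange (as you do) and for the pure periodicity characterization simply cites~\cite{ChZa93}. Your bijection argument for the ``if'' direction---showing that $T|_{\mathcal S}$ has a unique preimage on the set of standard quadratic irrationals, so that eventual periodicity within $\mathcal S$ forces pure periodicity---is exactly the right idea and is the natural adaptation of Galois's classical argument to this slow algorithm. Your closing remark also correctly identifies the key distinction: the slow algorithm is purely periodic under the weak condition $\sigma(\lambda)<0$, whereas the fast algorithm needs the stronger $-1<\sigma(\lambda)<0$.

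There is, however, a concrete slip in your ``only if'' paragraph. The forward maps $x\mapsto x-1$ and $x\mapsto 1/(x-1)$ are represented by $\sm1{-1}01$ and $\sm011{-1}$, which have negative entries (and the second has determinant $-1$, not $+1$). What you want is the \emph{inverse} direction, expressing $\lambda_0$ in terms of $\lambda_p$ via $x\mapsto x+1$ and $x\mapsto 1+1/x$, with matrices $S=\sm1101$ and $R=\sm1110$. The relevant product $M=\sm ABCD$ with $M(\lambda)=\lambda$ is then a word in $S$ and $R$ with non-negative entries, and a short induction (or the identification $S^aR=\sm{a+1}110$ with the standard convergent matrices) shows that if at least one $R$ occurs then both off-diagonal entries $B$ and $C$ are strictly positive. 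The fixed-point equation $C\lambda^2+(D-A)\lambda-B=0$ then gives $\lambda\,\sigma(\lambda)=-B/C<0$, and since $\lambda>1$ by hypothesis you conclude $\sigma(\lambda)<0$. With this correction your argument goes through cleanly.
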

\par
\begin{proof} The classical proof of Lagrange (for the more usual fast continued fraction 
algorithm with plus sign, where $\la$ is sent to~$\la-\lfloor\la\rfloor$ if $\la>1$ and 
to $1/\lambda$ if $0<\lambda<1$) applies here as well. 
The slow algorithm here just introduces intermediate steps and combines two 
steps when $1<\lambda <2$. The second statement is proven in~\cite{ChZa93}.
\end{proof}
\par
This observation gives us an algorithm to compute all multiminimizers for a given
ideal $\fraca$. Write $\fraca = \mu_0 \langle \lambda_0,1 \rangle$ and apply the
continued fraction algorithm to $\lambda_0$ until it becomes pure periodic, say at
a \Pred\ quadratic irrational $\lambda_1$ with  $\fraca = \mu_1 \langle \lambda_1,1 \rangle$.
The first multiminimizer is then the $U_D^2\cdot \QQ^*$-class of 
$\alpha_1 = \tfrac{-1}{\mu_1^2\lambda_1\sqrt{D}}$ as in~\eqref{eq:def_alphaMM}.
Proceeding in this way with $\alpha_k = \tfrac{-1}{\mu_k^2\lambda_k\sqrt{D}}$ 
where $\fraca = \mu_k \langle \lambda_k,1 \rangle$
for $\lambda_2,\ldots,\lambda_n$  along the period of $\lambda_1$ gives 
all the multiminimizers for $\fraca$.
\par
In each of the steps we may choose $\alpha$ in its $\QQ^*$-class to be primitive in
some fixed $\ord_D$-module. We will use this normalization for the
Bainbridge compactification in the next section.

\section{The Hirzebruch and the Bainbridge compactification} \label{sec:HirzBain}

Hirzebruch's minimal smooth compactification is a toroidal compactification.
In this section we reinterpret the Bainbridge compactification of
Hilbert modular surfaces as a toroidal compactification, 
see Theorem~\ref{thm:bainviamultimin}.
This compactification was defined originally via the Deligne-Mumford
compactification of the moduli space of one-forms on Riemann surfaces, 
We will see that both Hirzebruch's and Bainbridge's 
compactification can be computed using continued fraction algorithms.
\par
We start with a review of toroidal compactifications from the viewpoint
of curve degenerations and we recall Hirzebruch's continued fraction
algorithm. The procedure presented in \S\ref{sec:compMMM}
to compute multiminimizers is very similar to Hirzebruch's, but the continued
fraction algorithms are different. The geometry of Bainbridge's compactification 
has been described in~\cite{Ba07}. We recall this material in \S\ref{sec:propbain}, 
since it is part of the main Theorem~\ref{thm:bainviamultimin}. Once
this is proven, all geometric properties will follow from general facts
about toroidal compactifications. (See Proposition~\ref{prop:TorGeneral}.)
The proof of the main theorem will be given in \S\ref{sec:proofBviaMM}.
\par
The description of 
toroidal compactifications will make it easy to determine (in~\S\ref{sec:intFnWDcomp})
where the cusps of modular curves $F_N$ or \Teichmuller\ curves $W_D$
intersect the cusp resolution cycle both of the Hirzebruch compactification
and the Bainbridge compactification.
Finally, in \S\ref{sec:RelBainHirz} we compare the two continued fraction
algorithms governing the two compactifications and give formulas 
for the total lengths or the cusp resolution cycles in both cases.

\subsection{Toroidal compactifications } \label{sec:toroidal}

\changed{The reader may consult the textbook by Ash-Mumford-Rapoport-Tai (\cite{amrt})
as general reference on toroidal compactifications. We give a self-contained
treatment for the case of Hilbert modular surfaces.}
Locally near $(\infty,\infty) \in \HH^2$ a Hilbert modular surfaces is $\HH^2/G(M,V)$, 
where $M$ is a complete submodule of $K$ (i.e., an additive subgroup of $K$ that 
is free abelian of rank two) and $G(M,V) \subset \SLOD$ is the semidirect product 
of $M$ and some subgroup $V$ of totally positive units (see e.g.~\cite{vG87}).
\par
We study the limiting behavior of a complex curve $C$ in $\HH^2$ 
parametrized by $\tau \in \HH$ and 
having the asymptotic form
\begin{equation} \label{eq:curveasym}
 \begin{aligned}
z_1 & \= \,\gamma\,\tau + A_0 + A_1q + A_2 q^2 + \cdots \\
z_2 & \= \gamma^\sigma \tau + B_0 + B_1q + B_2 q^2 + \cdots \\
\end{aligned}
\end{equation}
for $\Im(\tau) \to \infty$, where $\gamma \in K$ is totally
positive, $q = \e(\tau)$ and $A_i, B_i \in \CC$. If this curve
descends to an algebraic curve in the Hilbert modular surface $X_D$,
so that the intersection with the subgroup $M$ of the cusp stabilizer
is not trivial, we can always assume that the leading coefficients
are of that form. \changed{In fact, such a curve is always given
as (part of) the vanishing locus of some Hilbert modular form. Looking
at the $q$-expansion of this Hilbert modular form, we deduce~\eqref{eq:curveasym}
by the same argument as given at the beginning of the proof of 
Theorem~\ref{thm:cuspsWD}.}
\par
To each totally positive $\alpha \in M$ we associate a copy
$\PP^1_\alpha$ of $\PP^1(\CC)$. We have a map
$$X_\alpha: \left\{ \begin{array}{lcl}
\HH^2 / M & \to & \CC^*_\alpha \subset \PP^1_\alpha \\
(z_1,z_2) & \mapsto & \e\left(\tr(\alpha^\vee \ul{z}) \right)  = 
\e\left(\frac{\alpha^\sigma z_1 - \alpha z_2}{N(M)\sqrt{D}}) \right)  \\
\end{array} \right.\,,
$$
where $N(M)$ denotes the norm of $M$. Using this identification, we glue $\PP^1_\alpha$ at the
cusp $(\infty,\infty)$ of $\HH^2$ topologically as follows. 
A sequence of points $\ul{z}= (z_1,z_2) \in \HH^2$ with imaginary
parts $y_1, y_2$ both tending to $\infty$ with limiting value of
$y_1/y_2$ being equal to $t \in \RR_+$ converges to the point $0 \in 
\PP^1_\alpha$ if $t < \alpha/\alpha^\sigma$, to the point $\infty \in 
\PP^1_\alpha$ if $t > \alpha/\alpha^\sigma$ and to a finite point
$X= \e(\theta/N(M)\sqrt{D})$ for some $\theta \in \CC$ if $t = \alpha/\alpha^\sigma$ and if
$\alpha^\sigma z_1-\alpha z_2 = \theta + o(1)$.
\par
Consequently, the curve $C$ meets $\PP^1_\alpha$ at $\infty$ if $\tr(\alpha^\vee \gamma) < 0$,
at $0$ if $\tr(\alpha^\vee \gamma) > 0$, and at a finite, non-zero, point if $\gamma$ is a rational multiple of $\alpha$. 
\par
To any oriented $\QQ$-basis $(\alpha, \beta)$, i.e.\ with
$ \frac{\alpha}{\alpha^\sigma} < \frac{\beta}{\beta^\sigma}$, corresponds
a pair of projective lines $\PP^1_\alpha$ and $\PP^1_\beta$
meeting at one point $(\infty,0) \in \PP^1_\alpha \times \PP^1_\beta$.
Suppose that  $\gamma \in K$ has the property
$$ \frac{\alpha}{\alpha^\sigma} < \frac{\gamma}{\gamma^\sigma} < \frac{\beta}{\beta^\sigma}$$ 
holds, so that the curve $C$ passes through the point $(\infty,0)$. If we 
write $\gamma = p\alpha + q\beta$ with $p,q \in \QQ_+$
then, near the point $(\infty,0)$ the curve \eqref{eq:curveasym} looks like
$$ (X_\alpha^{-1})^p = (X_\beta)^q.$$
\par
We will, of course,  apply this in particular to the curves
$F_N$ and to \Teichmuller\ curves.
\par
A $V$-invariant partial compactification $\HH^2 / M$ of a Hilbert modular 
surface is defined by adding  not just one $\PP^1_\alpha$, but 
an appropriate sequence of $\PP^1_\alpha$'s.
\par
\begin{Defi} \label{def:fan}
A sequence of numbers $(\alpha_n)_{n\in\NN}$ in 
$M$ forms a {\em fan} if i) the $\alpha_n$ are all totally positive, ii)
$V\cdot\{\alpha_n\}_{n\in\NN}=\{\alpha_n\}_{n\in\NN}$ and iii) 
the ratios $\alpha_n^\sigma/\alpha_n$ are a strictly decreasing sequence.
\end{Defi}
\par
Our definition includes that the fan is $V$-invariant. If $V = \langle
\v \rangle$, then ii) is equivalent to the existence of some $k$ such
that $\v \alpha_n = \alpha_{n+k}$. The minimal positive $k$ with 
that property will be called the {\em length} of the fan. 
\par
\begin{Prop} \label{prop:TorGeneral}
A fan $(\alpha_n)_{n\in\NN}$ determines a partial compactification 
$\overline{\HH^2/ M}$ of $\HH^2 / M$ with the following
properties. For each $n$ there is an irreducible curve $\PP^1_{\alpha_n}$ in 
$\overline{\HH^2  / M} \ssm \HH^2 / M$. The curves
$\PP^1_{\alpha_n}$ and $\PP^1_{\alpha_{n+1}}$ intersect in one point. 
For $k>1$ the curves $\PP^1_{\alpha_n}$ and $\PP^1_{\alpha_{n+k}}$ 
are disjoint in  $\HH^2 / M$.
\par
The action of $V$ on $\HH^2 / M$ extends to an action on
$\overline{\HH^2  / M}$. 
Hence a fan determines a partial compactification
$\overline{\HH^2/G(M,V)}$ of $\HH^2/G(M,V)$.
\par
The partial compactification is always smooth along $\PP^1_{\alpha_n}$ 
minus the intersection points with the $\PP^1_{\alpha_{n\pm1}}$. 
At the intersection point of $\PP^1_{\alpha_n}$ and
$\PP^1_{\alpha_{n+1}}$ the compactification is smooth if and only 
$M = \ZZ \alpha_n + \ZZ \alpha_{n+1}$. More generally, if 
$\ZZ \alpha_n + \ZZ \alpha_{n+1}$ has index $k$
in $M$, then $\overline{\HH^2  / M}$ has a cyclic quotient singularity of
order $k$ at that point. In particular, the compactification is normal.
\end{Prop}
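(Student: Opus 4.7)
The plan is to carry out the standard toric/toroidal construction attached to a rational fan in a two-dimensional real vector space, adapted to the Hilbert-modular cusp by means of the trace pairing on~$K$. For each consecutive pair $(\alpha_n,\alpha_{n+1})$ of generators in the fan, take the rational polyhedral cone $C_n\subset M\otimes\RR$ spanned by $\alpha_n$ and $\alpha_{n+1}$ and the dual cone $C_n^\vee\subset M^\vee\otimes\RR$ with respect to~$\tr$, and introduce the affine chart
\[
U_n \,=\, \text{Spec}\,\CC[M^\vee\cap C_n^\vee]\,,
\]
where $M^\vee$ is normalised so that its characters are precisely the functions $X_\alpha(\ul z)=\e(\tr(\alpha^\vee\ul z))$ of the preceding paragraphs. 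The first step is to verify that the map $\ul z\mapsto(X_{\alpha_n}(\ul z),X_{\alpha_{n+1}}(\ul z))$ identifies the ``sector'' in the cusp neighbourhood of $\HH^2/M$ where $\Im(z_1)/\Im(z_2)\in(\alpha_{n+1}^\sigma/\alpha_{n+1},\,\alpha_n^\sigma/\alpha_n)$ with an open subset of~$U_n$, sending the two limiting boundary slopes to the coordinate hyperplanes $X_{\alpha_n}=0$ and $X_{\alpha_{n+1}}=0$.

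Next I would glue the charts $U_{n-1}$ and $U_n$ along their common one-dimensional face $\RR_+\alpha_n$: the shared ray assembles the corresponding boundary divisors into a single rational curve $\PP^1_{\alpha_n}$ with exactly two marked points, the corners of $U_{n-1}$ and of $U_n$. This immediately gives the incidence assertions of the proposition, namely that $\PP^1_{\alpha_n}$ meets $\PP^1_{\alpha_{n+1}}$ in the single corner point of~$U_n$, while for $k\ge2$ the cones $C_n$ and $C_{n+k}$ share no one-dimensional face (by the strict monotonicity in Definition~\ref{def:fan}), so $\PP^1_{\alpha_n}$ and $\PP^1_{\alpha_{n+k}}$ are disjoint in~$\HH^2/M$. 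Since the fan is $V$-invariant, multiplication by a generator $\v\in V$ carries $C_n$ to $C_{n+\ell}$ (with $\ell$ the length of the fan) and induces a toric isomorphism $U_n\to U_{n+\ell}$ compatible with all the gluings, so the $V$-action extends to the partial compactification and descends to one of $\HH^2/G(M,V)$.

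For the singularity analysis, restrict to a single corner. Set $N=\ZZ\alpha_n+\ZZ\alpha_{n+1}\subseteq M$ and let $(\alpha_n^*,\alpha_{n+1}^*)$ be the $\QQ$-basis of $N\otimes\QQ$ that is $\tr$-dual to $(\alpha_n,\alpha_{n+1})$, with $N^*=\ZZ\alpha_n^*+\ZZ\alpha_{n+1}^*$. Then $M^\vee\subseteq N^*$ with index $[N^*:M^\vee]=[M:N]=k$, and $\text{Spec}\,\CC[N^*\cap C_n^\vee]\cong\CC^2$ with coordinates $X_{\alpha_n^*}$ and $X_{\alpha_{n+1}^*}$. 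The chart $U_n$ is the quotient of this $\CC^2$ by the finite abelian group $N^*/M^\vee$ of order~$k$ acting through characters. Every two-dimensional affine toric singularity is a cyclic quotient singularity, so $U_n$ has a cyclic quotient singularity of order $k$ at its corner, is smooth exactly when $k=1$, and is normal in all cases; away from the corner each $\PP^1_{\alpha_n}$ sits inside the smooth open chart $\CC\times\CC^*$.

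The main obstacle will be the careful bookkeeping of the trace-pairing normalisation: one must ensure that the lattice of characters entering the toric chart coincides with the lattice $M^\vee$ implicit in the analytic formula for $X_\alpha$ (including the factor $N(M)\sqrt D$), and that $V$-invariance of the fan really produces compatible isomorphisms between distant charts rather than merely pointwise identifications. Once this dictionary is made precise, all the assertions of the proposition reduce to standard facts about two-dimensional affine toric varieties attached to strictly convex rational cones.
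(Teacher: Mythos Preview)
Your proposal is correct and follows essentially the same approach as the paper: both set up the affine toric charts $U_n=\text{Spec}\,\CC[M^\vee\cap C_n^\vee]$ for the two-dimensional cones $C_n=\langle\alpha_n,\alpha_{n+1}\rangle$, glue them along the rays, identify the open torus with $\HH^2/M$ via the characters $X^\vee_b(\ul z)=\e(bz_1+b^\sigma z_2)$, and read off the incidence, $V$-equivariance, and singularity statements from standard two-dimensional toric geometry (the paper simply cites Fulton~\S2.2 for the last part, whereas you spell out the cyclic-quotient description via $N^*/M^\vee$). One small bookkeeping point: the monotonicity of the ratios $\alpha_n^\sigma/\alpha_n$ forces $\tr(\alpha_n^\vee\alpha_{n+1})<0$, so the actual affine coordinates at the corner of $U_n$ are $X_{\alpha_n}^{-1}$ and $X_{\alpha_{n+1}}$ rather than $X_{\alpha_n}$ and $X_{\alpha_{n+1}}$ as you wrote; this is exactly the normalisation issue you flag in your final paragraph, and once fixed everything goes through as you describe.
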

\par
\begin{proof} We first recall the general setup of two-dimensional
toric varieties, \changed{e.g.\ the book of Fulton \cite{Fu93} provides
an introduction to this topic.} For each two-dimensional cone $\sigma$ 
given, say,  as the span of $\alpha_n$ and $\alpha_{n+1}$ we let $U_{\sigma}$ be the 
variety with
coordinate ring $\CC[\sigma^\vee \cap M^\vee]$. For each
one-dimensional cone $\tau$ at the boundary of $\sigma_2$, 
say spanned by $\alpha_n$ we let $V_{\tau} \subset U_{\sigma}$ be 
the variety with coordinate ring $\CC[\sigma^\vee \cap M^\vee]$.
If $\tau$ is contained both in $\sigma_1$ and $\sigma_2$ we
may glue $U_{\sigma_1}$ and $U_{\sigma_2}$ along the open
set $V_\tau$. In particular $a_n$ (or $\tau = \langle a_n \rangle$)
determines a rational curve $\PP^1_{\alpha_n} = 
(U_{\sigma_1} \cup U_{\sigma_2}) \ssm
V_\tau$. The zero-dimensional cone ${0}$ corresponds to the
variety $W$ with coordinate ring $\CC[M^\vee]$ and sits as open part
in all the $V_\tau$ and $U_\sigma$. Here $M^\vee$ is the dual $\ZZ$-module, which
we will identify from now on as a submodule of~$K$ using the trace pairing.
\par
In our situation we want to identify $W$ with $\CC^2 / M$.
On the level of local coordinate rings this is done by assigning to
$b \in M^\vee$ the coordinate function 
$$X^\vee_b(z_1,z_2) = \e(bz_1 + b^\sigma z_2).$$ 
(This is the same coordinate as $X_{\alpha^\vee}$ associated above with
$\alpha \in M$, using the identification of $M$ with $M^\vee$ via
$\alpha \mapsto \alpha^\vee = \frac{\alpha^\sigma}{N(M)\sqrt{D}}$. We
refer to $\alpha^\vee$ as the {\em trace dual} of $\alpha$.)
Since  $\HH^2 / M$ sits inside $\CC^2 / M$, the partial compactification of $W$
by the $U_\sigma$ defines the desired partial compactification of $\HH^2 / M$.
\par
Given a cone $\tau$ generated by $\alpha_n$, the element 
\changed{$\alpha_n^\vee$} is the unique (up to sign)
primitive element of $M^\vee$ with $\tr(\alpha_n \alpha_n^\vee)=0$. 
If we complete $\alpha_n^\vee$ to a basis of $M^\vee$ using $\beta_n$, whose
sign we may choose such that $\tr(\alpha_n^\vee \beta_n) > 0$, then
the curve $\{X^\vee_{\beta_n}=0\}$ is independent of the choice of $\beta_n$
and coincides with the curve $\PP^1_{\alpha_n}$ defined in the
text preceding the proposition. 
\par
Given two consecutive elements $\alpha_n$ and $\alpha_{n+1}$ of the
fan, the monotonicity of the ratios implies that 
$$ \tr(\alpha_n^\vee \alpha_{n+1}) < 0 \quad \text{and}
\quad  \tr(\alpha_{n+1}^\vee \alpha_{n}) > 0. $$
Hence $\alpha_n^\vee$ and $-\alpha_{n+1}^\vee$ can play the role
of $\beta_n$ above. Consequently, the intersection point of 
$\PP^1_{\alpha_n}$ and $\PP^1_{\alpha_{n+1}}$ is the point
$\{X^\vee_{\alpha_n^\vee}=0, (X^\vee_{\alpha_{n+1}})^{-1}=0 \}$. Since
$X_{\alpha_j}$ is a coordinate on $\PP^1_{\alpha_j}$,
we retrieve that the intersection 
point of $\PP^1_{\alpha_n}$ and $\PP^1_{\alpha_{n+1}}$
is $(\infty,0)$ in that coordinate system.
\par
The disjointness of $\PP^1_{\alpha_n}$ and $\PP^1_{\alpha_{n+k}}$ 
is obvious from the 
gluing procedure. The statement on the $V$-action is an obvious 
consequence of the $V$-invariance of a fan.
\par
The singularity statements are described in detail in \cite{Fu93}, Section 2.2.
\end{proof}
Replacing the $\QQ$-basis $(\alpha, \beta)$ by $(\alpha + \beta, \beta)$
resp.\ by $(\alpha, \alpha+\beta)$ corresponds to performing a 
sigma-transformation (or blowup) defined by the coordinate changes
$$ (X_\alpha^{-1}, X_\beta) \mapsto  (X_\alpha^{-1}X_\beta^{-1}, X_\beta)
\quad \text{resp.} \quad (X_\alpha^{-1}, X_\beta) \mapsto 
(X_\alpha^{-1}, X_\alpha X_\beta) $$
at that point. It follows that the partial compactifications defined by any two
fans are related by repeated blowing up and blowing down.
\par
\subsection{Hirzebruch's compactification} \label{sub:HirzComp}
We describe the fan of Hirzebruch's compactification. There are many
detailed expositions of this, in particular \cite{Hi73} and Chapter~II of~\cite{vG87}.
The aim is to compare the fans of Hirzebruch and Bainbridge below. 
We will use the letters $A_k$ for Hirzebruch's fan (which is 
consistent with~\cite{vG87}) and subscripts $k$ for the indexing
that produces an increasing sequences of slopes. (Thus if we replace $A_n$ by $A_{k_0-n}$ 
for some $k_0$ we fit exactly the Definition~\ref{def:fan}.)
\par
Suppose we want to compactify the Hilbert modular surface $X_D$ at the 
cusp $\fraca$ or equivalently the cusp at $\infty$ in $X_{D,\fraca}$.
Then, in the notation of \S\ref{sec:toroidal} the module 
$M = \fraca^\vee(\fraca^{-1})= (\fraca^2)^\vee$.
We choose the $A_k$ to be the set of extreme points
in $M^+ = M \cap (\RR_+)^2$, i.e.\ the points lying on the
convex hull ${\rm conv}(M^+)$ of $M^+$ in $(\RR_+)^2$, indexed
by increasing slope then the $A_k$ form a fan 
since ${\rm conv}(M^+)$
is $V$-invariant. This compactification is smooth, because any two adjacent points on the boundary
of ${\rm conv}(M^+)$ form a $\ZZ$-basis of $M$ (\cite{Hi73} or
\cite{vG87}, II)~Lemma~2.1). Since the $A_k$ lie on the boundary of the convex hull, we can write
\begin{equation}\label{eq:Basis}
A_{k-1} \+ A_{k+1} \= p_k A_k \quad \text{with} 
\quad p_k \in \ZZ,\;p\ge2\,. \end{equation}
\par
On this smooth compactification the self intersection number of
$\PP^1_{A_k}$ is $-p_k$ (\cite{vG87}, \S~II.2). Consequently,
the compactification using at a cusp with stabilizer $G(M,V)$ 
the fan given by the boundary points of  
${\rm conv}(M^+)$ is the minimal smooth compactification.
\par
We call $x$ {\em reduced} \changed{(or a {\em reduced quadratic irrationality})} 
(see \cite{vG87}, Section 2.5 or \cite{Z81}) if $x$ is real quadratic and 
\be  \label{eq:defreduced}
x > 1 > x' >0. 
\ee
The {\em fast-minus continued fraction algorithm} of $x \in \RR>1$ is
defined by $x_0 = x$ and then inductively by 
\be \label{eq:deffmcf}
x_{k+1} \= 1/(p_k -x_{k})\,, \quad \changed{\text{where} \quad p_k \= \lceil x_k \rceil\,,}
\ee
so that 
$$ x_k \= p_k \,-\, \cfrac{1}{p_{k+1} - \cfrac{1}{\ddots}}\quad. $$
Note that the narrow class of the ideal $[M] = [\langle 1,x \rangle]$ is unchanged
under this continued fraction algorithm, since $\langle 1,x \rangle = (p-x)\langle 1,\tfrac1{p-x} \rangle$.  
The following lemma is the analogue of Lemma~\ref{slowLag} above.
\par
\begin{Lemma} The fast minus continued fraction expansion of any quadratic 
irrational~$x$ is periodic, and is pure periodic if and only if $x$ is reduced. 
\end{Lemma}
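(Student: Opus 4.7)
The plan is to follow Lagrange's classical argument for the simple continued fraction algorithm, adjusted for the minus-sign recursion.

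First, for periodicity, I would write each $x_k$ as the larger root of an integer quadratic $f_k(t) = a_k t^2 + b_k t + c_k$ with $a_0 > 0$ and discriminant $D$. Substituting $t = p_k - 1/s$ and clearing denominators gives the recursion $(a_{k+1}, b_{k+1}, c_{k+1}) = (a_k p_k^2 + b_k p_k + c_k,\,-(2 a_k p_k + b_k),\,a_k)$, which preserves the discriminant by direct computation. Since $p_k = \lceil x_k\rceil > x_k > x_k'$, the value $a_{k+1} = f_k(p_k) = a_k(p_k - x_k)(p_k - x_k')$ is positive, so $a_k > 0$ for all $k$ by induction. The standard Lagrange argument then bounds $|a_k|$, $|b_k|$, $|c_k|$ in terms of $D$, so only finitely many triples occur and the sequence is eventually periodic.

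For the characterization of pure periodicity, the two key properties are (i) \emph{preservation of reducedness}: if $x_k > 1 > x_k' > 0$ and $p_k = \lceil x_k \rceil \ge 2$, then $p_k - x_k \in (0,1)$ forces $x_{k+1} > 1$, and $p_k - x_k' > p_k - 1 \ge 1$ forces $x_{k+1}' \in (0,1)$; and (ii) \emph{invertibility on reduced irrationals}: given a reduced $x_{k+1}$, the requirement $x_k' = p_k - 1/x_{k+1}' \in (0,1)$ combined with $1/x_{k+1}' > 1$ uniquely determines $p_k = \lceil 1/x_{k+1}' \rceil$, after which $x_k = p_k - 1/x_{k+1}$. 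Together with eventual periodicity these immediately give the implication ``reduced $\Rightarrow$ pure periodic'': if $x$ is reduced then all $x_k$ are reduced, the sequence is eventually periodic, and applying the inverse map back to the start yields $x_0 = x_n$.

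For the converse, suppose $x_0 = x_n$ is pure periodic. Then $x_0 > 1$ is automatic, and $c_{k+1} = a_k > 0$ together with $x_k x_k' = c_k/a_k$ gives $x_k' > 0$ for every $k \ge 1$, hence (using $x_0' = x_n'$) for every $k$. It remains to rule out $x_k' \ge 1$; assume for contradiction that this happens for some, and therefore by periodicity for every, $k$. Then $p_k - x_k' \in (0,1]$, and combined with $p_k - x_k \in (0,1)$ the factorisation $a_{k+1} = a_k(p_k - x_k)(p_k - x_k')$ forces the strict inequality $a_{k+1} < a_k$, contradicting $a_0 = a_n$. Hence $x_k' < 1$ for some $k$, and propagation by (i) together with periodicity yields $0 < x_0' < 1$, completing the proof.

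The only non-routine step is the contradiction in the last paragraph, whose crux is the identity $a_{k+1} = a_k(p_k - x_k)(p_k - x_k')$; everything else is standard computation or a direct transcription of Lagrange's classical argument.
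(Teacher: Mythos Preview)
Your argument is correct and considerably more detailed than what the paper provides: the paper simply declares the statement ``well known'' and refers to \S2.5 of Hirzebruch's paper, where the relation between the fast plus and fast minus continued fraction algorithms is given, so there is no proof in the paper to compare with yours.

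Two small comments on exposition. First, in the converse direction, the inequality $p_k - x_k' \in (0,1]$ does not follow from $x_k' \ge 1$ alone (e.g.\ $p_k=3$, $x_k'=1$ gives $p_k-x_k'=2$); what you are actually using is $x_{k+1}' \ge 1$, since $p_k - x_k' = 1/x_{k+1}'$. Your hypothesis that \emph{every} $x_j' \ge 1$ makes this available, and then the product $(p_k-x_k)(p_k-x_k') < 1$ gives the strict decrease $a_{k+1}<a_k$ as you claim; it would help the reader to make that one-line justification explicit. Second, the appeal to the ``standard Lagrange argument'' for eventual periodicity is acceptable as a sketch, but note that the usual route is to first show the complete quotients eventually become reduced and then invoke the finiteness of reduced forms of discriminant~$D$, rather than bounding $|a_k|,|b_k|,|c_k|$ directly from the recursion.
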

\par
\begin{proof} This is well known and is stated, for instance, in \S2.5 of~\cite{Hi73},
where the relation of the fast plus continued fraction expansion
and the fast minus continued fraction expansion is also given.
\end{proof}
\par
This observation gives us an algorithm to compute the convex hull.
Write $M = \mu_0\langle 1,x_0\rangle$ and apply the continued 
fraction algorithm until it becomes pure periodic, say at a reduced
quadratic irrational $x_1$ with  $M = \mu_1 \langle 1,x_1\rangle$.
Take $A_0 = \mu_1$ and $A_1 = \mu_1 x_1$ and then $A_k = A_{k-1}/x_k$
for $k \geq 2$, where $x_1,x_2,\ldots,x_n$ form the 
cycle of the continued fraction algorithm. The recursive definition
of the $x_k$ in~\eqref{eq:deffmcf} then is equivalent to~\eqref{eq:Basis}.
\par
\par
{\bf Conclusion:}  The Hirzebruch compactification is given by
the fan stemming from the lower convex hull or, equivalently,
triggered by the fast minus continued fraction algorithm.
\par

\subsection{Bainbridge's  compactification} \label{sec:bainconstr}

Bainbridge's compactification of a Hilbert modular surface is
defined using the Deligne-Mumford compactification of the moduli space
of curves of genus two. For the details we recall facts about
various bundles of one-forms over moduli spaces.
\par
In Section \ref{sec:defTeich} we introduced together with \Teichmuller\ curves
the vector bundle $\Omega \M_g$ of holomorphic one-forms over the
moduli space  $\M_g$ of curves of genus $g$. The moduli space of curves comes
with the Deligne-Mumford compactification by stable curves 
$\overline{\M_g}$ and the vector bundle  $\Omega \M_g$ extends
to a vector bundle $\overline{\Omega \M_g}$, whose sections are
{\em stable forms}. A stable form is a differential form on the
normalization of the stable curves, holomorphic except for at
most simple poles at the preimages of nodes and such that the
residues at the two branches of a node add up to zero. We refer to
$\overline{\Omega \M_g}$ and to the corresponding projective bundle
$\PP\overline{\Omega \M_g}$ as the {\em Deligne-Mumford compactification}
of $\Omega \M_g$ (resp.\ of $\PP\Omega \M_g$).
\par
Strictly contained between $\M_g$ and $\overline \M_g$ is the partial compactification
$\widetilde{\M_g}$ of stable curves of compact type, i.e.\ stable
curves whose Jacobian is compact or equivalently of arithmetic genus
$g$.
\par
For $g=2$ the Torelli map $t: \widetilde{\M_2} \to \cAA_2$ is an
isomorphism. It extends to an isomorphism of the bundles of
stable one-forms $t: \widetilde{\Omega \M_2} \to \Omega \cAA_2$
and also to the projectivized bundles 
$t: \PP \widetilde{\Omega \M_2} \to \PP\Omega \cAA_2$.
\par
On the other hand, we have seen in Section~\ref{sec:HMGHMS} that a Hilbert modular
surface $X_D$ parametrizes principally polarized abelian varieties 
with real multiplication. There is a unique holomorphic one-form
on such an abelian variety that is an eigenform for the action
of real multiplication (with the embedding $K \to \RR$ that
we fixed throughout). The quotient map of a Siegel modular embedding by
the action of $\SL{\ord_D}$ defines a map $X_D \to \cAA_2$
and the choice of an eigenform lifts this map to an injection
$$ \psi: X_D \to  \PP\Omega \cAA_2. $$ 
We thus use the same letter for this map as for the modular embedding. The 
image $t^{-1}(\psi(X_D))$ is called the {\em eigenform locus}
(maybe the {\em projectivized eigenform locus} would be more precise).
It parametrizes stable curves of genus two of compact type with
real multiplication by $\ord_D$.  We denote by $\overline{X_D}^{DM}$ the 
closure of the eigenform locus in the Deligne-Mumford
compactification $\PP \overline{\Omega \M_2}$.
\par 
\begin{Defi}
The {\em Bainbridge compactification} {\em (called the {\em geometric  
compactification} in~\cite{Ba07})} $\overline{X_D}^B$  of 
a Hilbert modular surface $X_D$ 
is the normalization of~$\overline{X_D}^{DM}$. 
\end{Defi}
\par
We now identify this compactification as a toroidal compactification. We
restrict to the case $D$ fundamental for simplicity and since the preparations
in Section~\ref{sec:cuspTheta0} have been carried out for this case only.
In~\S\ref{sec:proofBviaMM} we will prove:
\begin{Thm} \label{thm:bainviamultimin}
Suppose that $D$ is a fundamental discriminant.
For each cusp given by the ideal class $\fraca$ the sequence of multiminimizers for $\fraca$, as  
derived from the continued fraction algorithm in \S~\ref{sec:compMMM}, forms a fan. 
The Bainbridge compactification is the toroidal compactification 
of the Hilbert modular surface obtained by using this sequence of multiminimizers.
\end{Thm}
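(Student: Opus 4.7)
The plan has two parts: verify that the sequence of primitive multiminimizers constitutes a fan, and then identify the resulting toroidal compactification with $\overline{X_D}^B$.

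For the fan axioms, fix primitive representatives $\alpha_n \in (\fraca^2)^\vee$ of the $U_D^2$-orbits of multiminimizers for $\fraca$, ordered by decreasing $\alpha_n^\sigma/\alpha_n$, and extend periodically by multiplication by the square of the fundamental unit. Total positivity of each $\alpha_n$ is the defining positive-definiteness of $F_\alpha(x) = \tr(\alpha x^2)$. For $U_D^2$-invariance: given $\ve \in U_D$ one has $F_{\ve^2\alpha}(x) = F_\alpha(\ve x)$, and since $\ve$ induces an $\ord_D$-module automorphism of $\fraca$ that permutes the three nontrivial cosets of $\fraca/2\fraca$, the form $\ve^2\alpha$ is again a multiminimizer. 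For strict monotonicity of $\alpha_n^\sigma/\alpha_n$, I invoke Proposition~\ref{prop:MMchar2}: the primitive multiminimizers correspond bijectively to \Pred\ quadratic forms, and by~\eqref{eq:def_alphaMM} the ratio $\alpha^\sigma/\alpha$ is determined by $\lambda_Q$; the ordering produced by the slow-plus continued fraction algorithm of \S\ref{sec:compMMM} yields strictly monotone $\lambda$-values around the cycle, and hence strictly monotone ratios.

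For the identification with Bainbridge's compactification, I take as input his description (from~\cite{Ba07}) of the boundary divisors of $\overline{X_D}^B$ over a cusp $\fraca$: a cycle of rational curves indexed by \Pred\ quadratic forms in the wide ideal class of $\fraca$, with intersection pattern determined by his local analysis. By Proposition~\ref{prop:MMchar2} this indexing is the same as the set of $U_D^2$-orbits of primitive multiminimizers. The remaining work is to identify each Bainbridge boundary divisor $D_\alpha$ with the toric divisor $\PP^1_\alpha$ produced by Proposition~\ref{prop:TorGeneral} for the multiminimizer fan. Approaching the cusp along a one-parameter family of the form~\eqref{eq:curveasym} with $\gamma$ a positive rational multiple of the trace dual $\alpha^\vee$ places the limit on $\PP^1_\alpha$ in the toroidal picture; using the distinguished basis $(\omega_1, \omega_2)$ of $\fraca$ from Proposition~\ref{prop:MMchar2}, I would show that the limiting abelian surface acquires nodes corresponding precisely to the two minimizing vectors of $F_\alpha$ on the distinguished coset, producing a stable genus-2 curve in $\overline{\M_2}$ rather than an unstable degeneration.

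The main obstacle is this last local geometric step: verifying that the cycles pinched at a generic point of a Bainbridge boundary divisor are precisely the pair of minimizing vectors associated with a multiminimizer, and not, say, a shorter but non-minimizing vector. One route is a direct period calculation: express the period matrix of the degenerating abelian surface in terms of $z_1, z_2$ via~\eqref{eq:latticeemb}, read off the stable limit in the eigenform basis $(u_1, u_2)$ as $\Im(\tau) \to \infty$, and match with Bainbridge's local coordinates. Once this is established for one divisor, the $U_D^2$-symmetry extends the identification around the full cycle, and matching the total number of components (equal on both sides by Proposition~\ref{prop:MMchar2}) closes the argument; consecutive-divisor intersections then match on the nose since both are controlled by the same adjacent pairs of \Pred\ quadratic forms in the continued-fraction cycle.
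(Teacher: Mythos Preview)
Your fan argument is essentially right in spirit, but the monotonicity step is asserted, not shown. Saying ``the slow-plus continued fraction yields monotone $\lambda$-values, hence monotone ratios $\alpha^\sigma/\alpha$'' hides the actual computation. The paper does this explicitly: writing $\fraca = \mu_k\langle \lambda_k,1\rangle$ at step $k$, one has $\alpha_k^\sigma/\alpha_k = \lambda_k\mu_k^2/\sigma(\lambda_k\mu_k^2)$ and the ratio of consecutive slopes is $\bigl(1+\tfrac1{\lambda_k-1}\bigr)\bigl(1-\tfrac1{\sigma(\lambda_k)}\bigr) > 1$, using that $\lambda_k$ is \Pred. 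This is short but you need to actually do it. (Also a small slip: to land on $\PP^1_\alpha$ at a finite point you want $\gamma$ a rational multiple of $\alpha$, not of the trace dual $\alpha^\vee$; see \S\ref{sec:toroidal}.)

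The identification step has a genuine gap. Matching the indexing sets of boundary components on both sides and invoking $U_D^2$-symmetry does not produce an isomorphism; you must construct a map extending the identity on $X_D$ and control its local behavior on each boundary divisor. The paper's method is concrete and different from your period-matrix sketch: one realizes the map $m\colon \overline{X_D}^{MM,*}\to\overline{X_D}^{DM,*}$ by sending a point to the pair (vanishing locus of the restricted Siegel theta function, first eigenform). In the toric coordinates $X,Y$ adapted to a multiminimizer $\alpha$, the theta function degenerates as $X\to 0$ to an explicit rational curve $(S-Z)(T+Z)=1-Z^2$ with $Z=Y^{g/2}$; the stable eigenform on this rational curve has four simple poles whose cross-ratio---which is Bainbridge's period coordinate $x^{(E)}$ up to a M\"obius transformation---turns out to be a function of $Y^{\gcd(a,c)}$. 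This is the crucial point you miss entirely: the map $m$ is not an isomorphism onto $\overline{X_D}^{DM,*}$ but an unramified cyclic cover of degree $g=\gcd(a,c)$ on each boundary component, exactly matching the non-normality described in Proposition~\ref{prop:fromBain}. Only after factoring through the normalization $\overline{X_D}^{B,*}$ does one get an isomorphism, which then extends across the codimension-two locus by normality. Your proposal to verify that ``the pinched cycles are the minimizing vectors'' is related intuition, but it does not by itself yield the degree-$g$ statement, and without that you cannot conclude that the multiminimizer compactification agrees with the \emph{normalization} rather than with $\overline{X_D}^{DM}$ itself. (Minor: abelian surfaces do not ``acquire nodes''; the genus-two curves do.)
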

\par
Comparing with \S\ref{sub:HirzComp}, we can summarize this theorem and
the algorithm in \S\ref{sec:compMMM} as follows.
\par
{\bf Conclusion:}  The Bainbridge compactification is given by
the fan stemming from the multiminimizers or, equivalently,
triggered by the slow plus greater-than-one  continued fraction algorithm.
\par
In \S\ref{sec:propbain} we review the properties of the compactification
by Bainbridge. For the proof of Theorem~\ref{thm:bainviamultimin} we will need only part of 
these properties. The local structure at the intersection points of the compactification curves
is forced by normality and could also be derived from Proposition~\ref{prop:TorGeneral}.
 
\subsection{Period coordinates and properties of the Bainbridge compactification.} 
\label{sec:propbain}
In order to identify the Bainbridge compactification and
to prove Theorem~\ref{thm:bainviamultimin} we use the part
of the work of Bainbridge, where he gives a coordinate
system of $\overline{\Omega \M_2}$ and describes $\overline{X_D}^{DM}$ 
in there. He uses a lift by choosing a scaling of the one-form $\omega$ 
by fixing locally a loop $\alpha_1$ and by imposing that $\int_{\alpha_1} \omega 
= r_1$ for some $r_1 \in K$.
We will then compare the coordinates introduced for toroidal compactifications to 
this coordinate system and thereby prove the claimed isomorphism.
\par
There are two relevant types of coordinate systems, both called  
{\em period coordinates} depending on the type of the stable curves. 
We follow \cite{Ba07}, Section~6.6. If $(X,\omega)$ is
in the boundary of  $\overline{X_D}^{DM}$, then $g(X)$ is zero.
The first type of coordinate system is around a stable
curve $X$ with two non-separating nodes. We moreover suppose that $\omega$ has two
simple zeros. We let $\pm r_1$
and $\pm r_2$ be the residues of $\omega$. 
Choose loops $\alpha_1,\alpha_2$ around the punctures such that
$\int_{\alpha_i} \omega = r_i$. For a smooth surfaces in a neighborhood
choose loops $\beta_1,\beta_2$ that complete the $\alpha_i$-curves
to a symplectic basis. Finally, choose a path $I$ joining the two zeros
of $\omega$. Then on a neighborhood of $(X,\omega)$ in $\Omega \M_2$ the 
functions 
\begin{flalign} \label{eq:per_edge}
v^{(E)} &\= \int_{\alpha_1} \omega, \quad \quad w^{(E)} \= \int_{\alpha_2} \omega, \\
y^{(E)} &\= \e\left(\int_{\beta_1}
\omega/v\right),  \quad z^{(E)} \= \e\left(\int_{\beta_2}
\omega/w\right), \quad x^{(E)} \= \int_I \omega \nonumber
\end{flalign}
are well-defined, i.e.\ independent of ambiguity in the choice of $\beta_i$
given by Dehn twists around the corresponding $\alpha_i$. 
These five functions form a system of coordinates on $\Omega \M_2$.
We provide them with a superscript $E$ (edge) to distinguish them. They
will correspond to edges of the boundary of  $\overline{X_D}^{DM}$.
Note that $x=x^{(E)}$ is only well-defined up to an additive constant depending on
the path of integration and that its sign depends on the choice of
ordering of the two zeros. 
\par 
From the geometry of the Deligne-Mumford compactification Bainbridge derives the following 
proposition that we use to prove Theorem~\ref{thm:bainviamultimin}.
\par
\begin{Prop}[{\cite{Ba07}, Proposition~7.18, Theorem~7.17 and Theorem~7.22}] 
\label{prop:fromBain}
A pair $(r_1,r_2)$ appears as residues of an eigenform of $X_D$ 
if and only if $\lambda = r_2/r_1 \in \QQ(\sqrt{D})$ and $N(\lambda) < 0$. More precisely,
the irreducible components of the boundary of $\overline{X_D}^{DM}$
are in bijection with the unordered projective tuples $(r_1:r_2)$,
or equivalently to \Pred\ quadratic irrationals $\lambda$ in~$\ord_D$.
\par
Near a boundary component labeled by $\lambda$, the Hilbert modular
surface $\overline{X_D}^{DM}$ is cut out in $\Omega \M_g$ by the equations
\begin{equation} \label{eq:EigLocal}
 v^{(E)} \=r_1, \quad w^{(E)} \= r_2 \quad \text{and} \quad (y^{(E)})^a \= (z^{(E)})^{-c}.
\end{equation}
and the boundary curve is given in these coordinates as $ \{y^{(E)}=z^{(E)}=0\}.$
\par
The boundary of $\overline{X_D}^B$ is a union of rational curves
$C_\lambda$ where $\lambda=r_1/r_2$.
\end{Prop}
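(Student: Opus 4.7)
My plan is to work on the generalized Jacobian of a stable curve of compact type and track the real multiplication action as the curve degenerates. Let $X_0$ be a rational nodal curve with two non-separating nodes, so that its generalized Jacobian is an algebraic torus $(\C^*)^2$ and the lattice $H_1(X_0,\Z)$ is freely generated by the vanishing cycles $\alpha_1,\alpha_2$. A stable form $\omega$ on $X_0$ has simple poles with opposite residues $\pm r_1,\pm r_2$ at the two node-pairs, and the pairing $\gamma\mapsto\oint_\gamma\omega$ sends $\alpha_i$ to $r_i$. The crucial observation is that the real multiplication by $\O_D$ extends from nearby smooth fibres across the normal-crossing boundary, so in the limit $\O_D$ acts on $H_1(X_0,\Z)$ preserving the sub-lattice $\Z\alpha_1\oplus\Z\alpha_2$.

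For the first statement, the defining property of the first eigendifferential is that $\mu\in\O_D$ pulls $\omega$ back to $\mu\cdot\omega$ (with $\mu$ viewed in $\R$ via the fixed embedding $K\subset\R$). Integrating, this forces the residue map $\alpha_i\mapsto r_i$ to be $\O_D$-linear, hence $K$-linear after tensoring with $K$. Choosing the generator of the cyclic $K$-module $K\alpha_1+K\alpha_2$ to be $\alpha_1$, we get $\alpha_2=\lambda\alpha_1$ for some $\lambda\in K$, and then $r_2=\lambda r_1$. The sign condition $N(\lambda)<0$ must come from the second eigendifferential $\wt\omega$: since $\wt\omega$ extends across the boundary of the same Hilbert modular surface, the pairing it induces on $H_1$ is the Galois conjugate, and the requirement that it too is a genuine stable form (with finite residues on the correct sides of the upper/lower half-plane convention of $X_D=\H\times\Hm/\SL\O$) forces $\lambda$ and $\lambda^\s$ to have opposite signs. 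After acting by the cusp stabilizer in $\SL\O_D$, which contains negatives of units of norm $+1$ and the translations by $\O_D$, one can uniquely normalize $\lambda$ to be \Pred, as in~\eqref{eq:defstd}; this yields the claimed bijection between boundary components and \Pred\ quadratic irrationals.

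For the second assertion (the local equations~\eqref{eq:EigLocal}), I would work in the period coordinates $(v^{(E)},w^{(E)},y^{(E)},z^{(E)},x^{(E)})$ on a neighborhood in $\Omega\M_2$. The conditions $v^{(E)}=r_1$ and $w^{(E)}=r_2$ are simply the normalization of residues, so the essential content is the third relation. Writing the minimal polynomial of $\lambda$ as $a\lambda^2+b\lambda+c=0$ with $\gcd(a,b,c)=1$, the element $a\alpha_2^2+b\alpha_1\alpha_2+c\alpha_1^2$ (viewed in $\mathrm{Sym}^2 H_1$ via the polarization) must annihilate the eigenform pairing. Dually, after taking exponentials, this translates the integer linear relation $a\log y^{(E)}+c\log z^{(E)}=0$ (obtained by pairing the class $a\beta_1-c\beta_2$ with $\omega$ and using the real multiplication compatibility) into the monomial relation $(y^{(E)})^a=(z^{(E)})^{-c}$. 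I expect the main technical nuisance here to be bookkeeping signs and orientations so that the minus sign in the exponent $-c$ is forced by the $N(\lambda)<0$ condition.

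For the last statement, the normalization of $\overline{X_D}^{DM}$ at a point on the boundary component where $\gcd(a,c)=g$ has a local model given by normalizing $u^a=v^{-c}$: this is $\mathrm{Spec}\,\C[s,t]/(s^{a/g}t^{c/g})$, i.e.\ an ordinary node or a pair of smooth branches meeting at one point when $a/g$ and $c/g$ are coprime. After normalization, each boundary component becomes smooth in a neighborhood of this intersection point, with local parameter $s=(y^{(E)})^{1/g}$. Taking the disjoint union over all such intersection points on a fixed component $C_\lambda$, and using that away from these points the boundary component is smooth by the period-coordinate description, we see $C_\lambda$ is a smooth curve. Its compactness and the fact that it is a limit of orbits of the parabolic stabilizer (which is isomorphic to $\Z^2$, modulo torsion units) forces it to be a $\P^1$, giving the final assertion. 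The main obstacle, as I see it, is the careful comparison between the orientation conventions in the period coordinates and the Galois/sign conventions of the Hilbert modular surface; once that is fixed, all four claims follow from the above analysis.
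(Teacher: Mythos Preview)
The paper does not prove this proposition at all: it is quoted verbatim from Bainbridge~\cite{Ba07} (Proposition~7.18, Theorems~7.17 and~7.22), with only a short explanatory paragraph afterwards about why the preimage of the boundary curve in the normalization is still connected. So there is no ``paper's own proof'' to compare against; your sketch is effectively an attempt to reconstruct Bainbridge's argument.

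Your overall strategy --- extend real multiplication to the nodal fibre, read off $\lambda\in K$ from the residue map, then extract the monomial relation from the $\O_D$-module structure on $H_1$ --- is the right one. But the execution has real gaps. First, the derivation of $(y^{(E)})^a=(z^{(E)})^{-c}$ does not work as written: by definition $\log y^{(E)}=2\pi i\int_{\beta_1}\omega/r_1$ and $\log z^{(E)}=2\pi i\int_{\beta_2}\omega/r_2$, so $a\log y^{(E)}+c\log z^{(E)}$ is \emph{not} the period of $a\beta_1-c\beta_2$ (the residues in the denominators and the sign are both off). The vanishing of $a\lambda^2+b\lambda+c$ is a relation in the $\alpha$-sublattice, and you need the full symplectic $\O_D$-module structure on $H_1$ (pairing $\alpha$'s against $\beta$'s) to transport it to a relation among the $\beta$-periods; your ``$\mathrm{Sym}^2 H_1$'' remark does not do this. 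Second, your local normalization model is wrong: the normalization of $y^a=z^{-c}$ in $\C^3_{x,y,z}$ with $g=\gcd(a,-c)>1$ is $g$ \emph{disjoint} smooth sheets, not $\mathrm{Spec}\,\C[s,t]/(s^{a/g}t^{c/g})$ (which is not even well-formed since $c/g<0$), and normalized branches never ``meet at one point''. The connectedness of $C_\lambda$ is a global fact --- the $g$ local sheets get glued at the adjacent intersection points of the cusp cycle --- which is exactly the content of the paper's explanatory paragraph. Finally, ``limit of orbits of the parabolic stabilizer forces $\P^1$'' is not an argument; rationality is seen by exhibiting the relative period $x^{(E)}$ (suitably compactified) as a global coordinate, as in \S\ref{sec:proofBviaMM}.
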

\par
We explain the last statement. If ${\rm gcd}(a,c)>1$, then~\eqref{eq:EigLocal} shows 
that the compactification is not normal near $\{y^{(E)}=z^{(E)}=0\}\,$: the normalization
has ${\rm gcd}(a,c)$ local branches. Nevertheless, the preimage of $\{y^{(E)}=z^{(E)}=0\}$ 
in the normalization $\overline{X_D}^B$  is a connected curve $C_\lambda$. The normalization
map is a cyclic covering of order ${\rm gcd}(a,c)$ ramified precisely over the
intersection points of $C_\lambda$ with its two adjacent curves in the cusp
resolution.
\par
We include the following proposition for a complete description of the
Bainbridge compactification. It is not needed for the proof of
Theorem~\ref{thm:bainviamultimin}.
\par
The second type of coordinate system is around a stable
curve $X$ consisting of two irreducible components joined at three 
non-separating nodes. Here we choose $\alpha_1,\alpha_2,\alpha_3$
to be the loops around the punctures and let $r_i = \int_{\alpha_i} \omega$
be the residue. We may orient the $\alpha_i$ so that $r_1-r_2+r_3=0$.
The one-form $\omega$ necessarily has one zero on each of the irreducible
components.  We let $\gamma_i$ be a path joining these two zeros that
crosses $\alpha_i$ once with positive intersection and no other $\alpha_j$.
Then the five functions 
\begin{flalign} 
&v^{(V)} \= \int_{\alpha_1} \omega, \quad \quad w^{(V)} = \int_{\alpha_2} \omega,  
\quad x^{(V)} \= \e\left(\int_{\gamma_3}
\omega/(w^{(V)} -v^{(V)})\right), \nonumber \\
& y^{(V)} \= \e\left(\int_{\gamma_1}
\omega/v^{(V)}\right), \quad z^{(V)} \= \e\left(\int_{\gamma_2}\omega/w^{(V)}\right) 
\end{flalign}
form a system of coordinates on \changed{$\Omega \M_2$} 
near $(X,\omega)$.  We provide them with a 
superscript $V$ (vertex of the compactification cycle) to distinguish them. We also have to say how
the coordinates $v^{(V)},\dots,z^{(V)}$ and $v^{(E)},\dots,z^{(E)}$ are related near a vertex.
Suppose we unpinch the node corresponding to $\alpha_3$. Then on this nearby surface 
$$\beta_1 \= \gamma_1 - \gamma_3, \quad \beta_2 \= \gamma_2 + \gamma_3, \quad I \= \gamma_3.$$
Consequently, we have
$$v^{(E)} \= v^{(V)} =: v\,, \qquad w^{(E)} \= w^{(V)} =: w $$
and
$$ x^{(V)} \= \e\bigl(x^{(E)}/(w-v)\bigr), \quad 
y^{(V)} \= y^{(E)} \cdot \e\bigl(x^{(E)}/v\bigr),  \quad z^{(V)} \= z^{(E)}\cdot \e\bigl(x^{(E)}/w\bigr)\,.$$
\par
The following proposition describes the intersection points of boundary curves
of $\overline{X_D}^B$. It is proven in \cite{Ba07},~Theorem~7.27 but it is also an immediate
consequence of Theorem~\ref{thm:bainviamultimin} and Proposition~\ref{prop:TorGeneral}, 
in particular its last statement.
\par
\begin{Prop}
If $\lambda^+$ denotes the successor of $\lambda$ for the slow plus
greater-than-one continued fraction, then the curves $C_\lambda$ and
$C_{\lambda^+}$ have exactly one point $c_\lambda$ in common.
Near this intersection point the Hilbert modular
surface $\overline{X_D}^B$ is cut out in \changed{$\overline{\Omega \M_g}$} 
by the equations
$$ v^{(V)} \=r_1, \quad w^{(V)} \= r_2 \quad \text{and} 
\quad \bigl(y^{(V)}\bigr)^a \= \bigl(z^{(V)}\bigr)^{-c}\,\bigl(x^{(V)}\bigr)^{-a-b-c}\,.$$
The point $c_\la$ is a cyclic quotient singularity of order
$$m_\lambda =  \frac{a}{\gcd(a,c)\gcd(a,a+b+c)}\, .$$
\end{Prop}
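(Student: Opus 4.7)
The plan is to deduce the statement directly from Theorem~\ref{thm:bainviamultimin} and Proposition~\ref{prop:TorGeneral}, supplemented by a short coordinate change.

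First I would invoke Proposition~\ref{prop:MMchar2}: the \Pred\ quadratic irrational $\lambda$ with form $[a,b,c]$ corresponds to a primitive multiminimizer $\alpha(\lambda)\in M=(\fraca^2)^\vee$ (with $[\fraca]=[\langle 1,\lambda\rangle]$), and its successor $\lambda^+$ under the slow plus greater-than-one continued fraction algorithm corresponds to the next element $\alpha(\lambda^+)$ of the fan of multiminimizers. Theorem~\ref{thm:bainviamultimin} identifies $\overline{X_D}^B$ at the cusp $\fraca$ with the toroidal compactification attached to this fan, so by Proposition~\ref{prop:TorGeneral} the boundary curves $C_\lambda=\PP^1_{\alpha(\lambda)}$ and $C_{\lambda^+}=\PP^1_{\alpha(\lambda^+)}$ meet in a single point $c_\lambda$, and the local structure of $\overline{X_D}^B$ at $c_\lambda$ is a cyclic quotient singularity of order
\[
m_\lambda \,=\,[\,M:\ZZ\alpha(\lambda)+\ZZ\alpha(\lambda^+)\,].
\]

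For the explicit local equation in vertex coordinates, I would substitute the transition formulas from the excerpt, namely $x^{(V)}=\e(x^{(E)}/(w-v))$, $y^{(V)}=y^{(E)}\,\e(x^{(E)}/v)$, $z^{(V)}=z^{(E)}\,\e(x^{(E)}/w)$ (together with $v=v^{(V)}=r_1$ and $w=w^{(V)}=r_2$), into the edge equation $(y^{(E)})^a=(z^{(E)})^{-c}$ of Proposition~\ref{prop:fromBain}. After cancellation using this equation, the desired identity $(y^{(V)})^a=(z^{(V)})^{-c}(x^{(V)})^{-a-b-c}$ reduces to the vanishing of the single exponent
\[
\frac{a}{v}+\frac{c}{w}+\frac{a+b+c}{w-v}\,,
\]
which in turn follows from the correspondence between $(r_1,r_2)$ and $[a,b,c]$ in the labelling of Proposition~\ref{prop:fromBain} (ultimately reflecting the cocycle $r_1-r_2+r_3=0$ at the vertex together with the fact that the three cylinder moduli there are proportional to $a$, $-c$ and $-(a+b+c)$).

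Finally, to verify the formula for $m_\lambda$, I would compute the index $[M:\ZZ\alpha(\lambda)+\ZZ\alpha(\lambda^+)]$ directly from~\eqref{eq:alphaoverg}. The normalizing denominator making $\alpha(\lambda)$ primitive in $M$ involves $g=\gcd(a,c)$, while the successor form under the CFA is $[a,2a+b,a+b+c]$ in the subtract-one branch and, up to sign, $[-(a+b+c),-(2a+b),-a]$ in the inversion branch, so in both cases the corresponding normalizing gcd for $\alpha(\lambda^+)$ is $g^+=\gcd(a,a+b+c)$. Writing $\alpha(\lambda^+)$ in the basis $\{\alpha(\lambda),\beta(\lambda)\}$ of $M$ introduced in the proof of Theorem~\ref{thm:cuspsWD}, the absolute value of the determinant of the resulting change-of-basis matrix simplifies, using $b^2-4ac=D$ and the explicit transformation of $(\omega_1,\omega_2)$ under the CFA, to $a/(gg^+)=a/(\gcd(a,c)\gcd(a,a+b+c))$.

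The main obstacle will be this last step: tracking the primitive representatives of $\alpha(\lambda)$ and $\alpha(\lambda^+)$ in a common $\ZZ$-basis of $M$ through both branches of the CFA, and checking that the numerators simplify correctly in each case. The other two steps are routine bookkeeping applications of the previously developed machinery.
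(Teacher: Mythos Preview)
Your proposal follows exactly the route the paper indicates: the paper does not give a detailed proof of this proposition, but simply remarks that it ``is proven in \cite{Ba07},~Theorem~7.27 but it is also an immediate consequence of Theorem~\ref{thm:bainviamultimin} and Proposition~\ref{prop:TorGeneral}, in particular its last statement.'' Your outline is precisely a fleshing-out of that remark, and the overall architecture is correct.

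A few comments on the details you sketch. For the local equation in vertex coordinates, your substitution strategy is right, but the verification that the combined exponent vanishes is sensitive to the sign conventions in the transition formulas $y^{(V)}=y^{(E)}\e(x^{(E)}/v)$, $z^{(V)}=z^{(E)}\e(\pm x^{(E)}/w)$ and to which ratio $r_1/r_2$ or $r_2/r_1$ is identified with $\lambda$; you should pin these down carefully rather than appeal vaguely to ``the cocycle $r_1-r_2+r_3=0$.'' For the index computation, your identification of the successor form as $[a,2a+b,a+b+c]$ (subtract-one branch) and $[-(a+b+c),-(2a+b),-a]$ (inversion branch), and hence $g^+=\gcd(a,a+b+c)$ in both cases, is correct. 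The remaining determinant calculation---writing both primitive multiminimizers in a common $\ZZ$-basis of $M$ and simplifying---is indeed the bookkeeping core; your plan to use the explicit basis $\{\alpha,\beta\}$ from the proof of Theorem~\ref{thm:cuspsWD} together with~\eqref{eq:alphaoverg} is the natural one, and you are right to flag it as the step requiring the most care.
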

\par

\subsection{\bf The proof of  Theorem~\ref{thm:bainviamultimin}} 
\label{sec:proofBviaMM}
The first step is to show that the 
multiminimizers form a fan. For the cusp $\fraca$
we may choose the normalization as in~\eqref{eq:alphaoverg} and suppose that
the multiminimizers are primitive elements in $(\fraca^2)^\vee$.  We
next examine the slopes. 
Suppose that at some step of the slow plus greater-than-one continued fraction we have
the representation $\fraca = \mu_k \langle \lambda_k,1 \rangle$. Then 
$$ \frac{ \sigma(\alpha_k)}{\alpha_k} \= 
\frac{ \lambda_k \mu_k^2}{\sigma(\lambda_k \mu_k^2) } 
\quad \text{and} \quad 
\frac{ \sigma(\alpha_{k+1})}{\alpha_{k+1}}  \= 
\frac{ (\lambda_k-1) \mu_k^2}{\sigma((\lambda_k-1) \mu_k^2) }
$$
in both cases of the continued fraction algorithm. 
The ratio of these two fractions is 
$$ \frac{ \sigma(\alpha_k)}{\alpha_k} /\frac{ \sigma(\alpha_{k+1})}{\alpha_{k+1}}  = 
\Bigl(1 + \frac1{\lambda_k-1}\Bigr)\Bigl(1-\frac1{\sigma(\lambda_k)}\Bigr) > 1$$
since $\lambda_k$ is \Pred. Consequently the slopes of multiminimizers 
are 
decreasing. Since the continued fraction algorithm is periodic this shows 
that the sequence 
multiminimizers in continued fraction order forms a fan.
We denote the compactification of the Hilbert modular surface using these
fans of multiminimizers by $\overline{X_D}^{MM}$.
\par
We let $\overline{X_D}^{MM,*}$,  $\overline{X_D}^{B,*}$, resp.\  $\overline{X_D}^{DM,*}$
be the complement of the intersection points of the cusps resolution curves
in the three compactifications, i.e.\ obtained by removing the codimension 
two boundary strata. 
Our aim to show that there is a map $m: \overline{X_D}^{MM,*} \to \overline{X_D}^{DM,*}$, 
which is an isomorphism over $X_D$ and which maps the boundary components
labeled by a \Pred\ quadratic irrational $\lambda$ onto the component 
with the same label by an unramified cyclic covering of degree $g = {\rm gcd}(a,c)$. 
Since the Bainbridge compactification is normal, the map $m$ factors through  a map
$\widehat{m}: \overline{X_D}^{MM,*} \to \overline{X_D}^{B,*}$, which is an isomorphism
by the local description of the map $\overline{X_D}^{B,*} \to \overline{X_D}^{DM,*}$.
Since the multiminimizer compactification and the Bainbridge compactification are
normal, the codimension two indeterminacy of $\widehat{m}$ 
(on domain an range) can be resolved to a global isomorphism.
\par
We want to define $m$ in a neighborhood of a point on the component of
the boundary of  $\overline{X_D}^{MM,*}$ given by the multiminimizer $\alpha$ with 
corresponding \Pred\ quadratic irrational $\lambda$. 
Since $\alpha^\vee$ is the unique (up to sign) primitive element in $(\fraca^2)^\vee$ with 
$\tr(\alpha\alpha^\vee) = 0$, local coordinates near this point as 
defined in the proof of Proposition~\ref{prop:TorGeneral} are just
the coordinates $X = q_1^{\alpha^*}q_2^{\sigma(\alpha^*)}$ and 
$Y = q_1^{\beta^*}q_2^{\sigma(\beta^*)}$ used in the proof of Theorem~\ref{thm:cuspsWD}.
\par
The map $m$ is given by assigning to a point in $X_D$ the curve given
as the vanishing locus of the Siegel theta function restricted to $X_D$
together with the first eigenform. In order to understand the local behavior
of this map near the boundary we may choose any convenient translate of
the theta function. As in the previous proofs we take the characteristic
$((\tfrac12,\tfrac12),(\tfrac12,0)$ and the basis $\bfom = (\omega_1, \omega_2)$
of $\fraca$ that is distinguished by the multiminimizer. Moreover, we define elliptic coordinates
$$ S = \e(\tr(\omega_1^\vee \uu)), \quad T=\e(\tr(\omega_2^\vee \uu))\,.$$
In the coordinates $X,\,Y,\,S,\,T$,  the theta function is
\begin{flalign} \label{eq:thetaFG}
& \phantom{=} \theta\TchiP{m}{m'}{\bfom}(z_1,z_2) \= \sum_{x \in \ZZ^2}
(-1)^{2x\cdot(m')^T}  X^{F(\tx_1, \tx_2)/2} Y^{G(\tx_1, \tx_2)/2}S^{x_1}T^{x_2}  \\
&=2 X^{\frac{a+|c|}2}S^{-1/2}T^{-1/2}\left(Y^{G(\tfrac12,\tfrac12)/2}(ST-1)
+ Y^{G(\tfrac12,-\tfrac12)/2}(S-T)+O(X)  \right) \,, \nonumber
\end{flalign}
where $\tx_i = x_i + \tfrac12$ and where 
\be \label{eq:AGAINdefFG}
F \= \bigl[a/g, 0,-c/g \bigr], \quad G \= \bigl[-bt , 2g, -bs \bigr]\,.
\ee
\par
Consequently, since the boundary is given by $X=0$ the vanishing locus of $\theta$
degenerates there. As in \S\ref{UnFa} the limiting curve \changed{is} 
a rational curve.  We abbreviate
$Z = Y^{G(\tfrac12,\tfrac12)/2}/ Y^{G(\tfrac12,-\tfrac12)/2} = Y^{g/2}$,
and obtain (for $Z$ fixed) the equation 
$$Z^{-1}(ST-1) + (S-T) \= 0 \quad \text{or, equivalently} \quad (S-Z)(T+Z)\= 1-Z^2$$
for the rational curve. A parametrization with coordinate $t$ is given by
$S = t+Z$, $T = (1-Z^2)/t - Z$. 
\par
On a boundary component of the genus two Deligne-Mumford compactification
parametrizing rational curves with two nodes, the cross-ratio of the
four points is a coordinate. This is also the relative period $x_E$  that
was used by Bainbridge as coordinate (cf.~Proposition~\ref{prop:fromBain}), 
up to a M\"obius transformation depending on $r_1$ and $r_2$, as one
checks easily be integrating the limiting stable form.
\par
Our last task is therefore to express the cross-ratio in terms of the coordinate~$X$. 
The first eigenform is 
\begin{equation} \label{eq:stableomega}
 \omega \= du_1  \= (\omega_2^\vee)^\sigma \frac{dS}{S} - (\omega_1^\vee)^\sigma \frac{dT}{T}
\= \left(\frac{(\omega_2^\vee)^\sigma}{t + Z} + \frac{(\omega_1^\vee)^\sigma}{t(1-\frac{Z}{1-Z^2}t)} \right)dt.
\end{equation}
in the given rational parametrization.
The cross-ratio of the four poles $0,\infty, -Z,$ and $\tfrac{1-Z^2}{Z}$  is given by
a M\"obius transformation of $Z^{-2} = Y^{{\rm gcd}(a,c)}$. 
This completes the proof of the properties claimed about the map $m$, and 
of Theorem~\ref{thm:bainviamultimin}.
\par
\subsection{\bf Intersection of $F_N$ with Bainbridge's and Hirzebruch's 
boundary.} \label{sec:intFnWDcomp}
We can now answer the question about the intersection of the modular
and \Teichmuller\ curves with the two compactifications. 
For the Hirzebruch compactification the statement in 
Theorem~\ref{thm:bdresintersect} is of course already in the literature.
\par
Fix the cusp to be infinity and let $F_{N}$ be one of the modular 
curves, as defined in \S\ref{sec:redloc}, passing 
through the cusp at infinity. Then $F_{N}$ can be given by an equation
of the form 
\be\label{eqFn} \lambda z_1 + \lambda^\sigma z_2 + B \= 0 \qquad (B \in \ZZ,\;\lambda \in \O,\;
\lambda\lambda^\sigma = -N). \ee
\par
The following result now follows immediately from 
Proposition~\ref{prop:TorGeneral}
and the descriptions of the Hirzebruch and Bainbridge compactifications
given in this section.
\par
\begin{Thm} \label{thm:bdresintersect}
The curve $F_{N}$ passes through an interior point of the cusp resolution
cycle of the Bainbridge compactification (resp.\ of the Hirzebruch
compactification) if there is a multiminimizer $\alpha_n$ for this
cusp such that $\alpha_n /\changed{\lambda} \in \QQ$ (resp.\ if there is an element $A_n$
of Hirzebruch's lower convex hull fan such that $A_n/\changed{\lambda} \in \QQ$). 
\par
Otherwise, if
$$ \frac{\alpha_n}{\alpha^\sigma_n} > \frac{\changed{\lambda}}{\changed{\lambda}^\sigma} 
 > \frac{\alpha_{n+1}}{\alpha^\sigma_{n+1}},
    \quad \text{resp.} \quad 
\frac{A_n}{A^\sigma_n} > \frac{\changed{\lambda}}{\changed{\lambda}^\sigma} > \frac{A_{n+1}}{A^\sigma_{n+1}}, $$
then the curve $F_{N_\mu}$ passes through the node corresponding
to the intersection of the curves associated with $\alpha_n$ and
$\alpha_{n+1}$ (resp.\ with $A_n$ and $A_{n+1}$).
\end{Thm}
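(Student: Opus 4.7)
The plan is to apply the local toric description of both compactifications to the linear curve $F_N$. By Theorem~\ref{thm:bainviamultimin} the Bainbridge compactification of $X_D$ at the cusp~$\mathfrak a$ is the toroidal compactification associated with the multiminimizer fan $(\alpha_n)$ in $M=(\mathfrak a^2)^\vee$, and by \S\ref{sub:HirzComp} the Hirzebruch compactification is likewise toroidal via the lower-convex-hull fan $(A_n)$ in the same lattice. Hence Proposition~\ref{prop:TorGeneral} and the discussion preceding it apply uniformly to both, and a single argument will treat the two cases with $(\alpha_n)$ or $(A_n)$ playing the role of the generic fan of Definition~\ref{def:fan}.

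First I would put $F_N$ into the asymptotic form \eqref{eq:curveasym}. Solving \eqref{eqFn} for $z_2$ yields $z_2=-(\lambda z_1+B)/\lambda^\sigma$; since $\lambda\lambda^\sigma=-N<0$, the coefficient $-\lambda/\lambda^\sigma$ is positive, so writing $z_1=\gamma\tau$ for a suitable totally positive $\gamma\in K$ gives $z_2=\gamma^\sigma\tau+\text{const}$. The requirement on $\gamma$ is the single linear relation $\gamma^\sigma/\gamma=-\lambda/\lambda^\sigma$, equivalently $\tr(\gamma\lambda)=0$, which determines the direction of approach uniquely up to positive rational scalar. Higher-order terms in \eqref{eq:curveasym} vanish because $F_N$ is linear in $(z_1,z_2)$, and the constants $A_0,B_0$ play no role in the boundary question.

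Second, I would invoke the trichotomy stated just before Proposition~\ref{prop:TorGeneral}: the limit of the curve lies on $\mathbb P^1_{\alpha}$ at $\infty$, at $0$, or at an interior finite non-zero point according as the slope $\alpha^\sigma/\alpha$ is smaller than, greater than, or equal to the slope $\gamma^\sigma/\gamma$ of the curve. Since the slopes $\alpha_n^\sigma/\alpha_n$ are strictly monotone in~$n$, either $\gamma^\sigma/\gamma$ coincides with $\alpha_n^\sigma/\alpha_n$ for a unique~$n$---in which case $F_N$ meets $\mathbb P^1_{\alpha_n}$ at an interior point---or it falls strictly between $\alpha_n^\sigma/\alpha_n$ and $\alpha_{n+1}^\sigma/\alpha_{n+1}$, in which case the boundary limit of $F_N$ jumps from $\infty$ on $\mathbb P^1_{\alpha_n}$ to $0$ on $\mathbb P^1_{\alpha_{n+1}}$ and hence $F_N$ passes through their common node. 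The same reasoning applies verbatim to the Hirzebruch fan after replacing $\alpha_n$ by $A_n$.

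The remaining step is to convert the slope equality $\gamma^\sigma/\gamma=\alpha_n^\sigma/\alpha_n$ and its strict versions into the statements of the theorem phrased in terms of~$\lambda$, using the characterization $\tr(\gamma\lambda)=0$ of the direction~$\gamma$. This is a short algebraic manipulation. The only substantive step is the identification of~$\gamma$ in the first paragraph; once that is in place the theorem follows immediately from the general toric picture, so I do not expect a genuine obstacle beyond routine bookkeeping with slopes.
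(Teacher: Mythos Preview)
Your approach is exactly the paper's: the paper's proof is a single sentence pointing to Proposition~\ref{prop:TorGeneral} together with the identifications of the Bainbridge and Hirzebruch compactifications as toroidal (Theorem~\ref{thm:bainviamultimin} and \S\ref{sub:HirzComp}), and you have correctly unpacked that pointer by putting $F_N$ in the asymptotic form~\eqref{eq:curveasym} and reading off the trichotomy from the slope comparison. One small caution on the step you call routine: the relation $\tr(\gamma\lambda)=0$ means $\gamma\in\QQ\cdot\sqrt{D}/\lambda$, so the slope equality $\gamma/\gamma^\sigma=\alpha_n/\alpha_n^\sigma$ translates to $\alpha_n\lambda\in\QQ\sqrt{D}$ (equivalently $\tr(\alpha_n\lambda)=0$), and you should check that this is what is meant by ``$\alpha_n/\lambda\in\QQ$'' in the statement rather than leaving the identification implicit.
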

\par
This result together with Theorem~\ref{thm:cuspsWD} and 
Theorem~\ref{thm:cuspsPD}
reproves from the theta viewpoint another result of Bainbridge.
\par
\begin{Cor}
The curves $W_D$ and the components of the reducible locus $P_D$ intersect the 
boundary of the Bainbridge compactification only in interior points of the boundary curves.
\end{Cor}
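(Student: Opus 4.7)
My plan is to match the cusps of $W_D$ and of each irreducible component of $P_D$ with specific boundary components of the Bainbridge compactification using the parallel classification of both in terms of \Pred\ quadratic forms, and then read off from explicit local coordinates that the intersection lies in the interior of a single boundary component. The key observation is that Theorems~\ref{thm:cuspsWD} and~\ref{thm:cuspsPD} classify cusps by pairs $(Q,\bar r)$ with $Q$ \Pred, while Theorem~\ref{thm:bainviamultimin} together with Proposition~\ref{prop:MMchar2} shows that the boundary components $C_\lambda$ are indexed by exactly the same \Pred\ quadratic forms (via multiminimizers). So there is a natural candidate component associated with each cusp, and it remains to verify that the branch hits the interior of that candidate and not one of the nodes.

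Concretely, I would use the parametrizations developed in the proofs of Theorems~\ref{thm:cuspsWD} and~\ref{thm:cuspsPD}: each branch at the cusp $\fraca$ has the form $q_1 = q^{\alpha}$, $q_2 = q^{\sigma(\alpha)}(1+\varepsilon(q))$, where $\alpha$ is a primitive multiminimizer in $(\fraca^2)^\vee$ in the class corresponding to $Q$. In the toroidal coordinates $(X,Y) = (q_1^{\alpha^*}q_2^{\sigma(\alpha^*)},\,q_1^{\beta^*}q_2^{\sigma(\beta^*)})$ introduced in that proof, the branch takes the form $X = q + O(q^2)$, $Y = S + O(q)$, where $S \in \CC^\times$ is one of the $g$ non-zero, finite solutions of equation~\eqref{eq:lowesttermeq}. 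By Theorem~\ref{thm:bainviamultimin}, the boundary component $C_{\lambda_Q}$ is precisely the divisor $\{X=0\}$ in these coordinates, and by Proposition~\ref{prop:TorGeneral} its two intersection points with the adjacent components $C_{\lambda^{\pm}}$ in the resolution cycle sit at $Y=0$ and $Y=\infty$. Since $S$ is neither zero nor infinity, the branch meets $C_{\lambda_Q}$ at an interior point.

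For $P_D$ the same local-coordinates argument applies via Theorem~\ref{thm:cuspsPD}, but there is also a cleaner alternative route: Proposition~\ref{prop:redunionFN} writes each irreducible component as $\lambda z_1 + \lambda^\sigma z_2 + B = 0$ with $\lambda \in \mathcal{O}$, and Theorem~\ref{thm:bdresintersect} then reduces interior intersection to the condition $\alpha_n/\lambda \in \QQ$ for some multiminimizer $\alpha_n$, i.e.\ $\lambda$ lies in the $\QQ_+^\times\cdot U_D^2$-class of a multiminimizer. Unpacking the definition $\lambda=\frac{r+\sqrt{D}}{2}$ of the components $P_{D,\nu}$ and the construction~\eqref{eq:def_alphaMM} of $\alpha$ from $Q=[a,b,c]$ shows that for every such $\lambda$ (at the cusp $\fraca$) the required multiminimizer exists, so the intersection is interior.

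The main obstacle I expect is purely bookkeeping: one must verify that the constant $S$ in~\eqref{eq:lowesttermeq} is indeed non-zero and finite (so the branch does not degenerate onto a node), and that the bijection between cusps of $W_D$ (resp.\ components of $P_D$) and multiminimizers from Proposition~\ref{prop:MMchar2} matches the indexing of boundary components in Theorem~\ref{thm:bainviamultimin}. The non-vanishing of $S$ is immediate from the explicit form~\eqref{eq:thetafracaFG} of the Fourier expansion, since the two coset representatives $(\tfrac12,\pm\tfrac12)$ contributing to the lowest-order coefficient give non-zero values of $\rho_\bfom$; and the matching of indexings is already built into the proofs of Theorems~\ref{thm:cuspsWD} and~\ref{thm:bainviamultimin}, both of which single out the distinguished basis of $\fraca$ associated with $\alpha$ by Proposition~\ref{prop:MMchar2}. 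Once these identifications are in place, the corollary follows from a direct reading of the local coordinates.
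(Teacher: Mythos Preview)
Your proposal is correct and follows essentially the same approach as the paper. The paper's own proof is a single sentence pointing to Theorem~\ref{thm:bdresintersect} combined with Theorems~\ref{thm:cuspsWD} and~\ref{thm:cuspsPD}; you have simply unpacked how that combination works, namely that those theorems force the leading coefficient~$\gamma$ of each branch (in the asymptotic form~\eqref{eq:curveasym}) to be a multiminimizer, which by Theorem~\ref{thm:bainviamultimin} is exactly one of the fan elements~$\alpha_n$, so the criterion ``$\gamma$ is a rational multiple of $\alpha_n$'' from \S\ref{sec:toroidal} is trivially satisfied. Your explicit verification that the constant~$S$ of~\eqref{eq:lowesttermeq} lies in~$\CC^\times$ is the concrete local-coordinate incarnation of this same statement.
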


Note that the component $P_{D,\nu}$  of $P_D$ as defined
in Proposition~\ref{prop:redunionFN} is given by equation~\eqref{eqFn} 
with $\la=\nu\sqrt D$, $B=0$.

\subsection{\bf Examples} \label{sec:CompExamples}

{\rm {\em Case $D=17$}. We consider the cusp at $\infty$ for  $\SLOA{\O^\vee}$.
In this case $M = \O_{17}^\vee$ and the \Pred\ quadratic forms are
$[2,-3,-1]$, $[2,-1,-2]$, $[1,-3,-2]$,$[1,-1,-4]$ and $[1,1,-4]$. 
The following table contains the corresponding multiminimizers $\alpha \in \O_D^\vee$,
scaled by the factor~$\sq$.
\par
$$
\begin{array}{|c|c|c|c|c|c|}
\hline
\text{\Pred} &  [1,-3,-2] & [1,-1,-4] & [1,1,-4] & [2,-3,-1] & [2,-1,-2] \\
n & 1 & 2 & 3 & 4 & 5 \\
\lambda &  \frac{3+\sq}{2} & \frac{1+\sq}{2}  & 
\frac{-1+\sq}{2} & \frac{3+\sq}{4} & 
\frac{1+\sq}{4} \\
\sq\alpha & \frac{-3+\sq}{2} &\frac{-1+\sq}{2} & \frac{1+\sq}{2}  & 
\frac{3+\sq}{2} & 4+\sq   \\
\hline
\sq A_k & \frac{-3+\sq}{2} & \frac{-1+\sq}{2} & \frac{1+\sq}{2}  & 
\frac{3+\sq}{2} & 4+\sq   \\
x_k & \frac{7+\sq}{4} & \frac{9+\sq}{8} & \frac{7+\sq}{8} & 
\frac{5+\sq}{4} & \frac{5+\sq}{2} \\
k & 5 & 4 & 3 & 2 & 1 \\
\hline
\end{array} \medskip
$$
By the singularity criterion Proposition~\ref{prop:TorGeneral} this
compactification is smooth. 
\par
A reduced quadratic \changed{irrationality} is $x_1 = (5+\sq)/2$. Its fast minus 
continued fraction is listed in the lower part of that table.  The point 
$A_1 = \frac{1}{\sq} (4+\sq)$ lies on the lower convex
hull of $\ord_{\sq}^\vee \cap (\RR_+)^2$. By the algorithm for
Hirzebruch's compactification the subsequent points on the lower
convex hull are defined by $A_{k+1} = A_k/x_k$, as listed in the table. 
\par
The Bainbridge compactification has no singularities at the points $c_\lambda$
and the number of reduced quadratic forms equals the number of \Pred\
quadratic forms. Hence the Hirzebruch and the Bainbridge compactification 
coincide in this case. This is consistent with the table listing the
same values for $\sq\alpha_n$ and for $\sq A_k$.
\par
{\em Case $D=41$.}  There are 11 \Pred\ and also 11 reduced quadratic forms. But here the
Hirzebruch and the Bainbridge compactification do not coincide.
$$
\begin{array}{|c|c|c|c|c|c|c|c|}
\hline
\text{\Pred} &   \scriptstyle[1, -5, -4]  & \cdots & \scriptstyle[4, -5, -1] & 
\scriptstyle[2, -3, -4] & 
 \scriptstyle[2, 1, -5] & \cdots &  \scriptstyle [4, -3, -2] \\
n & 1 &  \cdots & 6 & 7 & 8 & \cdots & 11 \\ 
\lambda_n &  \frac{5+\sqrt{41}}{2}  & \cdots & \frac{5+\sqrt{41}}{8}  & 
\frac{3+\sqrt{41}}{4} & \frac{-1+\sqrt{41}}{4} & \cdots & 
\frac{3+\sqrt{41}}{8} \\
\sqrt{41}\alpha_n & \tfrac{-5+\sqrt{41}}2  & \cdots & \tfrac{5+\sqrt{41}}2 & \frac{19+3\sqrt{41}}{2} & \frac{83+13\sqrt{41}}2 
&  \cdots   &  \frac{429+67\sqrt{41}}{2} \\ 
\hline
\sqrt{41}A_k & \tfrac{-5+\sqrt{41}}2   & \cdots &  \tfrac{5+\sqrt{41}}2    
& 6+\sqrt{41} &
\frac{19+3\sqrt{41}}2  &   \cdots &   {826+129\sqrt{41}} \\
x_k & \frac{13+\sqrt{41}}{8}  & \cdots & \frac{11+\sqrt{41}}{8} & \frac{9+\sqrt{41}}{10} & 
\frac{7+\sqrt{41}}{4} &  \cdots &  \frac{11+\sqrt{41}}{10} \\
k & 11  & \cdots & 6 & 5 & 4 &  \cdots & 1 \\
\hline
\end{array} \medskip
$$
At the intersection points of the curves $C_{\lambda_7}$ and $C_{\lambda_8}$
and also at the intersection points of $C_{\lambda_8}$ and $C_{\lambda_9}$
the Bainbridge compactification is smooth, but 
$$\alpha_7+\alpha_9 = \alpha_8 \quad \text{and also} \quad  
\alpha_9+\alpha_{11} = \alpha_{10} $$
and hence this compactification is not minimal. 
In fact, $C_{\lambda_8}$ (and also $C_{\lambda_{10}}$) is a $(-1)$-curve and the corresponding 
values $\alpha_8$ and $\alpha_{10}$ do not show up in the list of $A_k$. 
On the other hand, at the 
intersection point of  $C_{\lambda_6}$ and $C_{\lambda_7}$ the Bainbridge compactification
has a quotient singularity of order two, since $\alpha_{6}$ and $\alpha_{7}$ generate
an index two subgroup of $\O_{41}^\vee$.  It can be resolved by blowing up, adding
a $(-2)$-curve, corresponding to the value $\sqrt{41}A_5 = 6 + \sqrt{41}$ that 
does not show up in the list of $\sqrt{41}\alpha_n$. The singularity can also be read off
from the quadratic form  $[4,5,-1]$ and Proposition~\ref{prop:fromBain}.
In terms of the convex hull of $\O_{41}^\vee$, the multiminimizer fan has 
two interior points and skips two boundary points, as is shown in 
Figure~\ref{cap:fancomparison} below. 
\medskip
\begin{figure}
\begin{tikzpicture}
\draw[->,thick] (-0.1,0)--(9,0) node [right] {$$};
\draw[->,thick] (0,-0.1)--(0,10) node [above] {$$};
\foreach \x in {1,...,6}
   \draw (\x,0.1) -- (\x,-0.1);
\foreach \y in {1,...,6}
   \draw (0.1,\y) -- (-0.1,\y);


\node (11) at (7.444363547, 0.007280748940){};
\node (10) at (8.555203590, 0.03167694459){};
\node (9) at   (1.110840042, 0.02439619565){};
\node (8) at   (1.442356791, 0.1878888162){};
\node (7) at   (0.3315167487, 0.1634926206){};
\node (6) at  (0.09893715608, 1.095655953){};
\node (5) at  (0.08158451587, 2.657393572){};
\node (4) at  (0.06423187566, 4.219131191){};
\node (3) at  (0.04687923545, 5.780868809){};
\node (2) at  (0.02952659524, 7.342606428){};
\node (1) at  (0.01217395503, 8.904344047){};

\path [draw=red,thick]  (11)--(10)--(9)--(8)--(7) --(6) -- (5) -- (4) -- (3) --  (2) -- (1);
\draw[fill] 
(11) circle (2pt)
(10) circle (2pt)
(9) circle (2pt)
(8) circle (2pt)
(7) circle (2pt)
(6) circle (2pt)
(5) circle (2pt)
               (4) circle (2pt)
		  (3) circle (2pt)
		  (2) circle (2pt)
		  (1) circle (2pt);

\node (102) at (7.444363547, 0.007280748940){};
\node (103) at (1.110840042, 0.02439619565){};
\node (104) at (0.3315167487, 0.1634926206){};
\node (105) at (0.2152269524, 0.6295742867){};
\node (106) at (0.09893715608, 1.095655953){};
\node (107) at (0.08158451587, 2.657393572){};
\node (108) at (0.06423187566, 4.219131191){};
\node (109) at (0.04687923545, 5.780868809){};
\node (110) at (0.02952659524, 7.342606428){};
\node (111) at (0.01217395503, 8.904344047){};

\path [draw=black,thick] (102)--(103)--(104)--(105)--(106)--(107)--(108)--(109)--(110)--(111);
\draw 
(102) circle (4pt)
(103) circle (4pt)
(104) circle (4pt)
(105) circle (4pt)
(106) circle (4pt)
(107) circle (4pt)
(108) circle (4pt)
(109) circle (4pt)
(110) circle (4pt)
(111) circle (4pt);

\draw[fill] 
(0.01217395503, 8.904344047) circle (1pt)
(0.02952659524, 7.342606428) circle (1pt)
(0.04687923545, 5.780868809) circle (1pt)
(0.06423187566, 4.219131191) circle (1pt)
(0.08158451587, 2.657393572) circle (1pt)
(0.1284637513, 8.438262381) circle (1pt)
(0.09893715608, 1.095655953) circle (1pt)
(0.1458163915, 6.876524762) circle (1pt)
(0.1631690317, 5.314787143) circle (1pt)
(0.1805216720, 3.753049524) circle (1pt)
(0.1978743122, 2.191311906) circle (1pt)
(0.2447535476, 7.972180715) circle (1pt)
(0.2152269524, 0.6295742867) circle (1pt)
(0.2621061878, 6.410443096) circle (1pt)
(0.2794588280, 4.848705477) circle (1pt)
(0.2968114682, 3.286967858) circle (1pt)
(0.3141641085, 1.725230239) circle (1pt)
(0.3610433439, 7.506099049) circle (1pt)
(0.3315167487, 0.1634926206) circle (1pt)
(0.3783959841, 5.944361430) circle (1pt)
(0.3957486243, 4.382623811) circle (1pt)
(0.4131012645, 2.820886192) circle (1pt)
(0.4599805000, 8.601755002) circle (1pt)
(0.4304539047, 1.259148573) circle (1pt)
(0.4773331402, 7.040017383) circle (1pt)
(0.4946857804, 5.478279764) circle (1pt)
(0.5120384206, 3.916542145) circle (1pt)
(0.5293910608, 2.354804526) circle (1pt)
(0.5762702963, 8.135673336) circle (1pt)
(0.5467437010, 0.7930669073) circle (1pt)
(0.5936229365, 6.573935717) circle (1pt)
(0.6109755767, 5.012198098) circle (1pt)
(0.6283282169, 3.450460479) circle (1pt)
(0.6456808571, 1.888722860) circle (1pt)
(0.6925600926, 7.669591669) circle (1pt)
(0.6630334973, 0.3269852412) circle (1pt)
(0.7099127328, 6.107854051) circle (1pt)
(0.7272653730, 4.546116432) circle (1pt)
(0.7446180132, 2.984378813) circle (1pt)
(0.7914972486, 8.765247622) circle (1pt)
(0.7619706534, 1.422641194) circle (1pt)
(0.8088498889, 7.203510003) circle (1pt)
(0.8262025291, 5.641772385) circle (1pt)
(0.8435551693, 4.080034766) circle (1pt)
(0.8609078095, 2.518297147) circle (1pt)
(0.9077870449, 8.299165956) circle (1pt)
(0.8782604497, 0.9565595278) circle (1pt)
(0.9251396851, 6.737428337) circle (1pt)
(0.9424923254, 5.175690718) circle (1pt)
(0.9598449656, 3.613953100) circle (1pt)
(0.9771976058, 2.052215481) circle (1pt)
(1.024076841, 7.833084290) circle (1pt)
(0.9945502460, 0.4904778617) circle (1pt)
(1.041429481, 6.271346671) circle (1pt)
(1.058782122, 4.709609052) circle (1pt)
(1.076134762, 3.147871433) circle (1pt)
(1.123013997, 8.928740243) circle (1pt)
(1.093487402, 1.586133815) circle (1pt)
(1.140366638, 7.367002624) circle (1pt)
(1.110840042, 0.02439619565) circle (1pt)
(1.157719278, 5.805265005) circle (1pt)
(1.175071918, 4.243527386) circle (1pt)
(1.192424558, 2.681789767) circle (1pt)
(1.239303794, 8.462658577) circle (1pt)
(1.209777198, 1.120052148) circle (1pt)
(1.256656434, 6.900920958) circle (1pt)
(1.274009074, 5.339183339) circle (1pt)
(1.291361714, 3.777445720) circle (1pt)
(1.308714354, 2.215708101) circle (1pt)
(1.355593590, 7.996576911) circle (1pt)
(1.326066995, 0.6539704823) circle (1pt)
(1.372946230, 6.434839292) circle (1pt)
(1.390298870, 4.873101673) circle (1pt)
(1.407651511, 3.311364054) circle (1pt)
(1.425004151, 1.749626435) circle (1pt)
(1.471883386, 7.530495245) circle (1pt)
(1.442356791, 0.1878888162) circle (1pt)
(1.489236026, 5.968757626) circle (1pt)
(1.506588667, 4.407020007) circle (1pt)
(1.523941307, 2.845282388) circle (1pt)
(1.570820542, 8.626151197) circle (1pt)
(1.541293947, 1.283544769) circle (1pt)
(1.588173182, 7.064413578) circle (1pt)
(1.605525823, 5.502675960) circle (1pt)
(1.622878463, 3.940938341) circle (1pt)
(1.640231103, 2.379200722) circle (1pt)
(1.687110339, 8.160069531) circle (1pt)
(1.657583743, 0.8174631029) circle (1pt)
(1.704462979, 6.598331912) circle (1pt)
(1.721815619, 5.036594293) circle (1pt)
(1.739168259, 3.474856675) circle (1pt)
(1.756520899, 1.913119056) circle (1pt)
(1.803400135, 7.693987865) circle (1pt)
(1.773873540, 0.3513814368) circle (1pt)
(1.820752775, 6.132250246) circle (1pt)
(1.838105415, 4.570512627) circle (1pt)
(1.855458055, 3.008775008) circle (1pt)
(1.902337291, 8.789643818) circle (1pt)
(1.872810696, 1.447037390) circle (1pt)
(1.919689931, 7.227906199) circle (1pt)
(1.937042571, 5.666168580) circle (1pt)
(1.954395212, 4.104430961) circle (1pt)
(1.971747852, 2.542693342) circle (1pt)
(2.018627087, 8.323562152) circle (1pt)
(1.989100492, 0.9809557235) circle (1pt)
(2.035979727, 6.761824533) circle (1pt)
(2.053332368, 5.200086914) circle (1pt)
(2.070685008, 3.638349295) circle (1pt)
(2.088037648, 2.076611676) circle (1pt)
(2.134916883, 7.857480486) circle (1pt)
(2.105390288, 0.5148740574) circle (1pt)
(2.152269524, 6.295742867) circle (1pt)
(2.169622164, 4.734005248) circle (1pt)
(2.186974804, 3.172267629) circle (1pt)
(2.233854040, 8.953136439) circle (1pt)
(2.204327444, 1.610530010) circle (1pt)
(2.251206680, 7.391398820) circle (1pt)
(2.221680085, 0.04879239129) circle (1pt)
(2.268559320, 5.829661201) circle (1pt)
(2.285911960, 4.267923582) circle (1pt)
(2.303264600, 2.706185963) circle (1pt)
(2.350143836, 8.487054772) circle (1pt)
(2.320617241, 1.144448344) circle (1pt)
(2.367496476, 6.925317154) circle (1pt)
(2.384849116, 5.363579535) circle (1pt)
(2.402201756, 3.801841916) circle (1pt)
(2.419554397, 2.240104297) circle (1pt)
(2.466433632, 8.020973106) circle (1pt)
(2.436907037, 0.6783666780) circle (1pt)
(2.483786272, 6.459235487) circle (1pt)
(2.501138913, 4.897497869) circle (1pt)
(2.518491553, 3.335760250) circle (1pt)
(2.535844193, 1.774022631) circle (1pt)
(2.582723428, 7.554891440) circle (1pt)
(2.553196833, 0.2122850119) circle (1pt)
(2.600076069, 5.993153821) circle (1pt)
(2.617428709, 4.431416202) circle (1pt)
(2.634781349, 2.869678584) circle (1pt)
(2.681660585, 8.650547393) circle (1pt)
(2.652133989, 1.307940965) circle (1pt)
(2.699013225, 7.088809774) circle (1pt)
(2.716365865, 5.527072155) circle (1pt)
(2.733718505, 3.965334536) circle (1pt)
(2.751071145, 2.403596917) circle (1pt)
(2.797950381, 8.184465727) circle (1pt)
(2.768423786, 0.8418592986) circle (1pt)
(2.815303021, 6.622728108) circle (1pt)
(2.832655661, 5.060990489) circle (1pt)
(2.850008301, 3.499252870) circle (1pt)
(2.867360942, 1.937515251) circle (1pt)
(2.914240177, 7.718384061) circle (1pt)
(2.884713582, 0.3757776325) circle (1pt)
(2.931592817, 6.156646442) circle (1pt)
(2.948945458, 4.594908823) circle (1pt)
(2.966298098, 3.033171204) circle (1pt)
(3.013177333, 8.814040014) circle (1pt)
(2.983650738, 1.471433585) circle (1pt)
(3.030529973, 7.252302395) circle (1pt)
(3.047882614, 5.690564776) circle (1pt)
(3.065235254, 4.128827157) circle (1pt)
(3.082587894, 2.567089538) circle (1pt)
(3.129467129, 8.347958347) circle (1pt)
(3.099940534, 1.005351919) circle (1pt)
(3.146819770, 6.786220729) circle (1pt)
(3.164172410, 5.224483110) circle (1pt)
(3.181525050, 3.662745491) circle (1pt)
(3.198877690, 2.101007872) circle (1pt)
(3.245756926, 7.881876681) circle (1pt)
(3.216230331, 0.5392702530) circle (1pt)
(3.263109566, 6.320139062) circle (1pt)
(3.280462206, 4.758401444) circle (1pt)
(3.297814846, 3.196663825) circle (1pt)
(3.344694082, 8.977532634) circle (1pt)
(3.315167487, 1.634926206) circle (1pt)
(3.362046722, 7.415795015) circle (1pt)
(3.332520127, 0.07318858694) circle (1pt)
(3.379399362, 5.854057396) circle (1pt)
(3.396752002, 4.292319777) circle (1pt)
(3.414104643, 2.730582159) circle (1pt)
(3.460983878, 8.511450968) circle (1pt)
(3.431457283, 1.168844540) circle (1pt)
(3.478336518, 6.949713349) circle (1pt)
(3.495689159, 5.387975730) circle (1pt)
(3.513041799, 3.826238111) circle (1pt)
(3.530394439, 2.264500493) circle (1pt)
(3.577273674, 8.045369302) circle (1pt)
(3.547747079, 0.7027628736) circle (1pt)
(3.594626315, 6.483631683) circle (1pt)
(3.611978955, 4.921894064) circle (1pt)
(3.629331595, 3.360156445) circle (1pt)
(3.646684235, 1.798418826) circle (1pt)
(3.693563471, 7.579287636) circle (1pt)
(3.664036875, 0.2366812075) circle (1pt)
(3.710916111, 6.017550017) circle (1pt)
(3.728268751, 4.455812398) circle (1pt)
(3.745621391, 2.894074779) circle (1pt)
(3.792500627, 8.674943589) circle (1pt)
(3.762974032, 1.332337160) circle (1pt)
(3.809853267, 7.113205970) circle (1pt)
(3.827205907, 5.551468351) circle (1pt)
(3.844558547, 3.989730732) circle (1pt)
(3.861911188, 2.427993113) circle (1pt)
(3.908790423, 8.208861923) circle (1pt)
(3.879263828, 0.8662554942) circle (1pt)
(3.926143063, 6.647124304) circle (1pt)
(3.943495704, 5.085386685) circle (1pt)
(3.960848344, 3.523649066) circle (1pt)
(3.978200984, 1.961911447) circle (1pt)
(4.025080219, 7.742780256) circle (1pt)
(3.995553624, 0.4001738281) circle (1pt)
(4.042432860, 6.181042638) circle (1pt)
(4.059785500, 4.619305019) circle (1pt)
(4.077138140, 3.057567400) circle (1pt)
(4.124017375, 8.838436209) circle (1pt)
(4.094490780, 1.495829781) circle (1pt)
(4.141370016, 7.276698590) circle (1pt)
(4.158722656, 5.714960971) circle (1pt)
(4.176075296, 4.153223353) circle (1pt)
(4.193427936, 2.591485734) circle (1pt)
(4.240307172, 8.372354543) circle (1pt)
(4.210780577, 1.029748115) circle (1pt)
(4.257659812, 6.810616924) circle (1pt)
(4.275012452, 5.248879305) circle (1pt)
(4.292365092, 3.687141686) circle (1pt)
(4.309717733, 2.125404068) circle (1pt)
(4.356596968, 7.906272877) circle (1pt)
(4.327070373, 0.5636664487) circle (1pt)
(4.373949608, 6.344535258) circle (1pt)
(4.391302248, 4.782797639) circle (1pt)
(4.408654889, 3.221060020) circle (1pt)
(4.426007529, 1.659322401) circle (1pt)
(4.472886764, 7.440191211) circle (1pt)
(4.443360169, 0.09758478258) circle (1pt)
(4.490239405, 5.878453592) circle (1pt)
(4.507592045, 4.316715973) circle (1pt)
(4.524944685, 2.754978354) circle (1pt)
(4.571823920, 8.535847164) circle (1pt)
(4.542297325, 1.193240735) circle (1pt)
(4.589176561, 6.974109545) circle (1pt)
(4.606529201, 5.412371926) circle (1pt)
(4.623881841, 3.850634307) circle (1pt)
(4.641234481, 2.288896688) circle (1pt)
(4.688113717, 8.069765498) circle (1pt)
(4.658587121, 0.7271590693) circle (1pt)
(4.705466357, 6.508027879) circle (1pt)
(4.722818997, 4.946290260) circle (1pt)
(4.740171637, 3.384552641) circle (1pt)
(4.757524278, 1.822815022) circle (1pt)
(4.804403513, 7.603683831) circle (1pt)
(4.774876918, 0.2610774032) circle (1pt)
(4.821756153, 6.041946213) circle (1pt)
(4.839108793, 4.480208594) circle (1pt)
(4.856461434, 2.918470975) circle (1pt)
(4.903340669, 8.699339784) circle (1pt)
(4.873814074, 1.356733356) circle (1pt)
(4.920693309, 7.137602165) circle (1pt)
(4.938045949, 5.575864547) circle (1pt)
(4.955398590, 4.014126928) circle (1pt)
(4.972751230, 2.452389309) circle (1pt)
(5.019630465, 8.233258118) circle (1pt)
(4.990103870, 0.8906516899) circle (1pt)
(5.036983106, 6.671520499) circle (1pt)
(5.054335746, 5.109782880) circle (1pt)
(5.071688386, 3.548045262) circle (1pt)
(5.089041026, 1.986307643) circle (1pt)
(5.135920262, 7.767176452) circle (1pt)
(5.106393666, 0.4245700237) circle (1pt)
(5.153272902, 6.205438833) circle (1pt)
(5.170625542, 4.643701214) circle (1pt)
(5.187978182, 3.081963595) circle (1pt)
(5.234857418, 8.862832405) circle (1pt)
(5.205330822, 1.520225977) circle (1pt)
(5.252210058, 7.301094786) circle (1pt)
(5.269562698, 5.739357167) circle (1pt)
(5.286915338, 4.177619548) circle (1pt)
(5.304267979, 2.615881929) circle (1pt)
(5.351147214, 8.396750739) circle (1pt)
(5.321620619, 1.054144310) circle (1pt)
(5.368499854, 6.835013120) circle (1pt)
(5.385852494, 5.273275501) circle (1pt)
(5.403205135, 3.711537882) circle (1pt)
(5.420557775, 2.149800263) circle (1pt)
(5.467437010, 7.930669073) circle (1pt)
(5.437910415, 0.5880626443) circle (1pt)
(5.484789651, 6.368931454) circle (1pt)
(5.502142291, 4.807193835) circle (1pt)
(5.519494931, 3.245456216) circle (1pt)
(5.536847571, 1.683718597) circle (1pt)
(5.583726807, 7.464587407) circle (1pt)
(5.554200211, 0.1219809782) circle (1pt)
(5.601079447, 5.902849788) circle (1pt)
(5.618432087, 4.341112169) circle (1pt)
(5.635784727, 2.779374550) circle (1pt)
(5.682663963, 8.560243359) circle (1pt)
(5.653137367, 1.217636931) circle (1pt)
(5.700016603, 6.998505740) circle (1pt)
(5.717369243, 5.436768122) circle (1pt)
(5.734721883, 3.875030503) circle (1pt)
(5.752074524, 2.313292884) circle (1pt)
(5.798953759, 8.094161693) circle (1pt)
(5.769427164, 0.7515552649) circle (1pt)
(5.816306399, 6.532424074) circle (1pt)
(5.833659039, 4.970686455) circle (1pt)
(5.851011680, 3.408948837) circle (1pt)
(5.868364320, 1.847211218) circle (1pt)
(5.915243555, 7.628080027) circle (1pt)
(5.885716960, 0.2854735988) circle (1pt)
(5.932596195, 6.066342408) circle (1pt)
(5.949948836, 4.504604789) circle (1pt)
(5.967301476, 2.942867170) circle (1pt)
(6.014180711, 8.723735980) circle (1pt)
(5.984654116, 1.381129552) circle (1pt)
(6.031533352, 7.161998361) circle (1pt)
(6.048885992, 5.600260742) circle (1pt)
(6.066238632, 4.038523123) circle (1pt)
(6.083591272, 2.476785504) circle (1pt)
(6.130470508, 8.257654314) circle (1pt)
(6.100943912, 0.9150478855) circle (1pt)
(6.147823148, 6.695916695) circle (1pt)
(6.165175788, 5.134179076) circle (1pt)
(6.182528428, 3.572441457) circle (1pt)
(6.199881068, 2.010703838) circle (1pt)
(6.246760304, 7.791572648) circle (1pt)
(6.217233709, 0.4489662194) circle (1pt)
(6.264112944, 6.229835029) circle (1pt)
(6.281465584, 4.668097410) circle (1pt)
(6.298818225, 3.106359791) circle (1pt)
(6.345697460, 8.887228601) circle (1pt)
(6.316170865, 1.544622172) circle (1pt)
(6.363050100, 7.325490982) circle (1pt)
(6.380402740, 5.763753363) circle (1pt)
(6.397755381, 4.202015744) circle (1pt)
(6.415108021, 2.640278125) circle (1pt)
(6.461987256, 8.421146934) circle (1pt)
(6.432460661, 1.078540506) circle (1pt)
(6.479339896, 6.859409316) circle (1pt)
(6.496692537, 5.297671697) circle (1pt)
(6.514045177, 3.735934078) circle (1pt)
(6.531397817, 2.174196459) circle (1pt)
(6.578277053, 7.955065268) circle (1pt)
(6.548750457, 0.6124588400) circle (1pt)
(6.595629693, 6.393327649) circle (1pt)
(6.612982333, 4.831590031) circle (1pt)
(6.630334973, 3.269852412) circle (1pt)
(6.647687613, 1.708114793) circle (1pt)
(6.694566849, 7.488983602) circle (1pt)
(6.665040254, 0.1463771739) circle (1pt)
(6.711919489, 5.927245983) circle (1pt)
(6.729272129, 4.365508364) circle (1pt)
(6.746624769, 2.803770746) circle (1pt)
(6.793504005, 8.584639555) circle (1pt)
(6.763977410, 1.242033127) circle (1pt)
(6.810856645, 7.022901936) circle (1pt)
(6.828209285, 5.461164317) circle (1pt)
(6.845561926, 3.899426698) circle (1pt)
(6.862914566, 2.337689079) circle (1pt)
(6.909793801, 8.118557889) circle (1pt)
(6.880267206, 0.7759514606) circle (1pt)
(6.927146441, 6.556820270) circle (1pt)
(6.944499082, 4.995082651) circle (1pt)
(6.961851722, 3.433345032) circle (1pt)
(6.979204362, 1.871607413) circle (1pt)
(7.026083598, 7.652476223) circle (1pt)
(6.996557002, 0.3098697945) circle (1pt)
(7.043436238, 6.090738604) circle (1pt)
(7.060788878, 4.529000985) circle (1pt)
(7.078141518, 2.967263366) circle (1pt)
(7.125020754, 8.748132176) circle (1pt)
(7.095494158, 1.405525747) circle (1pt)
(7.142373394, 7.186394557) circle (1pt)
(7.159726034, 5.624656938) circle (1pt)
(7.177078674, 4.062919319) circle (1pt)
(7.194431314, 2.501181700) circle (1pt)
(7.241310550, 8.282050509) circle (1pt)
(7.211783955, 0.9394440811) circle (1pt)
(7.258663190, 6.720312891) circle (1pt)
(7.276015830, 5.158575272) circle (1pt)
(7.293368471, 3.596837653) circle (1pt)
(7.310721111, 2.035100034) circle (1pt)
(7.357600346, 7.815968843) circle (1pt)
(7.328073751, 0.4733624150) circle (1pt)
(7.374952986, 6.254231224) circle (1pt)
(7.392305627, 4.692493606) circle (1pt)
(7.409658267, 3.130755987) circle (1pt)
(7.456537502, 8.911624796) circle (1pt)
(7.427010907, 1.569018368) circle (1pt)
(7.473890142, 7.349887177) circle (1pt)
(7.444363547, 0.007280748940) circle (1pt)
(7.491242783, 5.788149558) circle (1pt)
(7.508595423, 4.226411939) circle (1pt)
(7.525948063, 2.664674321) circle (1pt)
(7.572827299, 8.445543130) circle (1pt)
(7.543300703, 1.102936702) circle (1pt)
(7.590179939, 6.883805511) circle (1pt)
(7.607532579, 5.322067892) circle (1pt)
(7.624885219, 3.760330273) circle (1pt)
(7.642237859, 2.198592655) circle (1pt)
(7.689117095, 7.979461464) circle (1pt)
(7.659590500, 0.6368550356) circle (1pt)
(7.706469735, 6.417723845) circle (1pt)
(7.723822375, 4.855986226) circle (1pt)
(7.741175015, 3.294248607) circle (1pt)
(7.758527656, 1.732510988) circle (1pt)
(7.805406891, 7.513379798) circle (1pt)
(7.775880296, 0.1707733695) circle (1pt)
(7.822759531, 5.951642179) circle (1pt)
(7.840112172, 4.389904560) circle (1pt)
(7.857464812, 2.828166941) circle (1pt)
(7.904344047, 8.609035751) circle (1pt)
(7.874817452, 1.266429322) circle (1pt)
(7.921696687, 7.047298132) circle (1pt)
(7.939049328, 5.485560513) circle (1pt)
(7.956401968, 3.923822894) circle (1pt)
(7.973754608, 2.362085275) circle (1pt)
(7.991107248, 0.8003476562) circle (1pt);
\end{tikzpicture}
\caption{The multiminimizer fan (thick black points) and the Hirzebruch fan 
(circles) for $D=41$, \changed{connected by red respectively by black 
lines}. The rightmost thick black point is an interior point
of the convex hull. Another point of the Hirzebruch fan skipped by the
multiminimizer fan is not drawn (far to the right, close to the horizontal axis)}
\label{cap:fancomparison}
\end{figure}
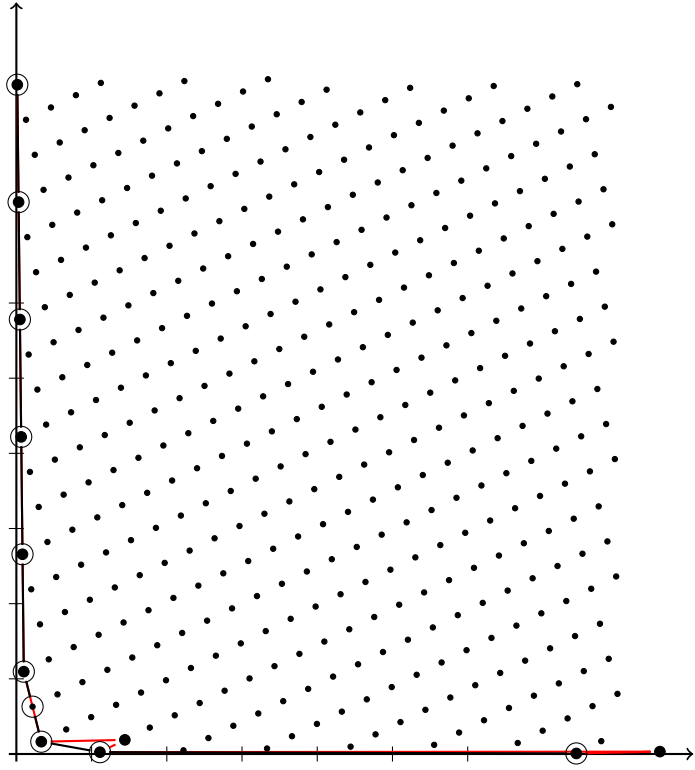
\par
\subsection{\bf Relating the two compactifications and two 
continued fraction algorithms} \label{sec:RelBainHirz}

The preceding examples show that the Bainbridge and the Hirzebruch compactification
may sometimes agree, but that they are different in general. They also illustrate the
general algorithm how to go from Bainbridge's compactification to Hirzebruch's:
blow down curves corresponding to interior points and blow up points where
boundary points of the lower convex hull have been omitted by the multiminimizers.
\par 
At first glance the previous examples suggest that at least the length of the 
boundary cycles in the Bainbridge and the Hirzebruch compactification agree.
This is true for class number one, but otherwise the truth is more subtle,
as we now explain.
\par
To determine the geometry of the cusp $\fraca$ in the Bainbridge compactification, 
we need to run the multiminimizer algorithm given in \S\ref{sec:compMMM} for $\lambda$ such that
$\fraca = \langle 1, \lambda \rangle$ and get as an output $\alpha \in (\fraca^2)^\vee$.
The geometry of Hirzebruch's minimal smooth resolution, however, depends only
on the square of $\fraca^2$, since we need to determine by the algorithm in 
\S\ref{sub:HirzComp} the lower convex hull of $(\fraca^2)^\vee \cap \RR^2_+$. 
\par
Consequently, whenever the squaring map is not an
isomorphism on the ideal class group of $K=\QQ(\sqrt{D})$, only the reduced quadratic
irrationalities $x$ such that $(\langle 1,x \rangle)^\vee$ is a square appear as label
in the cusp resolution of the Hilbert modular surface $X_D$ in the focus of
our interest. However all the curves labeled by \Pred\ quadratic forms appear 
on $X_D$. An example to illustrate this is $D=65$, where the class group 
(both in the narrow and wide sense) is
of order two, so the multiminimizers for both cusps lie in an ideal equivalent
to $\O_{65}^\vee$. Representatives of the multiminimizers are
\bas
{\rm MM}(\O_{65}) & \= \bigl\{\tfrac{5+\sqrt{65}}{2\sqrt{65}}, \tfrac{7+\sqrt{65}}{2\sqrt{65}},  
\tfrac{23+3\sqrt{65}}{2\sqrt{65}}, 
\tfrac{8 + \sqrt{65}}{\sqrt{65}},  
\tfrac{153+19\sqrt{65}}{2\sqrt{65}}, \bigr. \\ 
& \phantom{\= m} \bigl. \tfrac{137+17\sqrt{65}}{2\sqrt{65}},  \tfrac{395+49\sqrt{65}}{2\sqrt{65}},
\tfrac{1685+209\sqrt{65}}{2\sqrt{65}}, \tfrac{1943+241\sqrt{65}}{2\sqrt{65}} \bigr\} \\
\tfrac{7-\sqrt{65}}{2}\,{\rm MM}(\langle 1, \tfrac{-3+\sqrt{65}}{4} \rangle) & \= \bigl\{
 \tfrac{5+\sqrt{65}}{2\sqrt{65}},  \tfrac{7+\sqrt{65}}{2\sqrt{65}},  
\tfrac{8 + \sqrt{65}}{\sqrt{65}}, \tfrac{137+17\sqrt{65}}{2\sqrt{65}} 
\bigr. \\ 
& \phantom{\= m} \bigl. \tfrac{395+49\sqrt{65}}{2\sqrt{65}},  \tfrac{653+81\sqrt{65}}{2\sqrt{65}}, 
 \tfrac{911+113\sqrt{65}}{2\sqrt{65}} \bigr\}
\eas
while the elements on the convex hull, up to multiplication by $U_{65}^2$, are given by
\bas
\bigl\{
 \tfrac{5+\sqrt{65}}{2\sqrt{65}},  \tfrac{7+\sqrt{65}}{2\sqrt{65}},  
\tfrac{8 + \sqrt{65}}{\sqrt{65}}, \tfrac{137+17\sqrt{65}}{2\sqrt{65}}, 
 \tfrac{395+49\sqrt{65}}{2\sqrt{65}},  \tfrac{653+81\sqrt{65}}{2\sqrt{65}},  \\
 \tfrac{911+113\sqrt{65}}{2\sqrt{65}}, \tfrac{1169+145\sqrt{65}}{2}, \tfrac{1427+177\sqrt{65}}{2} \bigr\}
\eas
\par
\medskip
There is however an equality of total lengths of cusp resolution cycles, if one takes all 
Hilbert modular surfaces  $\HH^2/{\rm SL}(\fracb \oplus \O_D)$ into account.
We start our considerations on the level of  quadratic irrationalities.
\par
Reduced quadratic irrationalities as defined in~\eqref{eq:defreduced} 
are well-known and have been used to label the boundary curves of 
Hirzebruch's compactification. Standard quadratic irrationalities as defined 
in~\eqref{eq:defstd} have been used to label the boundary
curves of Bainbridge's compactification. There is an obvious bijection
between these two classes, given by
\par
\begin{tikzpicture}[shorten >=6pt,shorten <=6pt,auto]
\draw (0,3)  node(A) {};
\draw (2,3) node(B)[inner sep=5pt,draw] {\parbox{2.6cm}{$\text{reduced}\\ x>1>x^{\sigma} >0$}};
\draw (9,3) node(D)[inner sep=5pt,draw] {\parbox{2.5cm}{$\text{standard}\\ \lambda>1>0>\lambda^\sigma$}};
\draw[thick,<->](B) -- (D)  node [pos=0.5,above] {$x=\frac{\lambda}{\lambda -1}$};
\draw[thick,<->](B) -- (D)  node [pos=0.5,below] {$\lambda=\frac{x}{x -1}$};
\end{tikzpicture}
\par
\medskip
In order to pass from this correspondence of quadratic irrationalities
to cusp resolutions, we define 
$$ C^{\rm std}(\fraca) \= \{(\mu,\lambda) \in K^+/U_D^2 \times K\,: 
\fraca = \mu \langle 1,\lambda \rangle,\, \,\lambda\,\, \text{\Pred}\}$$
and
$$ C^{\rm red}(\fraca) \= \{(\rho,x) \in K^+/U_D^2 \times K\,: 
\fraca = \rho \langle 1,x \rangle,\,\,x \,\, \text{reduced}\}.$$
Then the map
$$\phi: C^{\rm std}(\fraca) \to  C^{\rm red}(\fraca), \quad
(\mu,\lambda) \mapsto (\rho,x) = (\mu(\lambda-1),\tfrac{\lambda}{\lambda-1})$$
is obviously a bijection.
\par
\begin{Prop} For any $\fraca$, the length of the cycle of the 
Bainbridge compactification for the cusp corresponding to $\fraca^2$ is equal to 
the length of the cycle of the Hirzebruch compactification for the 
cusp corresponding to $\fraca$.
\end{Prop}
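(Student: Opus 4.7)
The plan is to reduce the equality of cycle lengths to the bijection $\phi$ displayed just above the proposition combined with a standard involution on reduced quadratic irrationalities coming from Galois conjugation. First, I would re-express each side as a count of quadratic irrationalities in a given wide ideal class: by Theorem~\ref{thm:bainviamultimin} together with Proposition~\ref{prop:MMchar2}, the length of the Bainbridge cycle at the cusp $\fraca^2$ equals the number of standard quadratic forms $Q$ whose associated ideal $\langle 1,\lambda_Q\rangle$ lies in the wide ideal class $[\fraca^2]$, while by the description in \S\ref{sub:HirzComp} together with the identification $M=(\fraca^2)^\vee$ the length of the Hirzebruch cycle at the cusp $\fraca$ equals the number of reduced quadratic irrationalities $x$ with $[\langle 1,x\rangle]=[(\fraca^2)^\vee]$. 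Since $\O^\vee=\df^{-1}$ and $\df=(\sqrt{D})$ is principal, this latter class coincides with $[\fraca^{-2}]$.

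Second, applying the bijection $\phi:C^{\rm std}(\fraca^2)\to C^{\rm red}(\fraca^2)$ translates the Bainbridge count into the number of reduced $x$ with $[\langle 1,x\rangle]=[\fraca^2]$. The remaining step is then to produce a bijection between reduced quadratic irrationalities lying in the class $[\fraca^2]$ and those lying in the inverse class $[\fraca^{-2}]$. Such a bijection is given by the involution $x\mapsto 1/\sigma(x)$: if $x$ is reduced, i.e.\ $x>1>\sigma(x)>0$, then $1/\sigma(x)>1$ while its Galois conjugate $1/x$ lies in $(0,1)$, so $1/\sigma(x)$ is again reduced; and since $\langle 1,1/\sigma(x)\rangle=(1/\sigma(x))\,\sigma(\langle 1,x\rangle)$ differs from $\sigma(\langle 1,x\rangle)$ by multiplication by an element of $K^\times$, its wide ideal class equals $[\sigma(\langle 1,x\rangle)]=[\langle 1,x\rangle]^{-1}$, using $\fraca\cdot\overline{\fraca}=N(\fraca)\,\O$ for any invertible $\O$-ideal $\fraca$.

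The main bookkeeping issue, which I expect to be the principal obstacle, is to verify carefully that the raw counts of standard and reduced irrationalities in a fixed wide class really compute the lengths of the two fans, since by construction the cycle length is the number of orbits under the action of the group $V$ of totally positive units rather than the cardinality of ${\rm MM}(\fraca)$ or of $\{x\text{ reduced}\}$ modulo $U_D^2$. When a unit of negative norm exists one has $V=U_D^2$ and the distinction disappears; in the opposite case $[V:U_D^2]=2$, and one must check that this extra factor of~$2$ appears symmetrically on the Bainbridge and Hirzebruch sides and is preserved both by $\phi$ and by the involution $x\mapsto 1/\sigma(x)$. Once this normalization is settled, the three identifications above chain together to yield the proposition.
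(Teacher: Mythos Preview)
Your proposal is correct in spirit and rests on the same key ingredient as the paper, namely the bijection $\phi$ between standard and reduced quadratic irrationalities displayed just before the proposition. However, you take a longer route than the paper does.

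The paper's proof is three lines: it identifies the boundary curves of the Bainbridge compactification at the cusp $\fraca^2$ with $C^{\rm std}(\fraca^2)$ (via Theorem~\ref{thm:bainviamultimin} and Proposition~\ref{prop:MMchar2}), identifies the boundary curves of the Hirzebruch compactification at the cusp $\fraca$ with $C^{\rm red}(\fraca^2)$ (via the algorithm of~\S\ref{sub:HirzComp}), and then applies $\phi$ directly. The point is that the sets $C^{\rm std}$ and $C^{\rm red}$ record not only the irrationality $\lambda$ or $x$ but also the scaling datum $\mu$ or $\rho$ in $K^+/U_D^2$, and it is this pair that corresponds bijectively to a boundary curve. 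You instead strip off the scaling datum, work with raw counts of irrationalities in a wide ideal class, and are then forced to introduce the extra involution $x\mapsto 1/\sigma(x)$ to pass from $[\fraca^2]$ to $[\fraca^{-2}]$ after tracking the Hirzebruch module $M=(\fraca^2)^\vee$ explicitly. The paper's formulation in terms of the pairs $(\mu,\lambda)$ and $(\rho,x)$ absorbs this step.

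On your bookkeeping concern about $V$ versus $U_D^2$: this worry dissolves once you note that for the groups $\SLA$ the diagonal part of the stabilizer of $\infty$ consists of matrices $\sm u00{u^{-1}}$ with $u\in U_D$, acting on $\HH^2$ by $(z_1,z_2)\mapsto(u^2z_1,\sigma(u)^2z_2)$; hence $V=U_D^2$ in every case, independently of the sign of the norm of the fundamental unit. So there is no extra factor of~$2$ to chase, and the identification of fan lengths with the cardinalities of $C^{\rm std}(\fraca^2)$ and $C^{\rm red}(\fraca^2)$ is exact.
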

\par
\begin{proof} Curves in the Bainbridge compactification for $\fraca^2$
are in bijection to  $C^{\rm std}(\fraca^2)$ by Theorem~\ref{thm:bainviamultimin} 
together with Proposition~\ref{prop:MMchar2}. Curves in the Hirzebruch 
compactification for $\fraca$ are in bijection to  $C^{\rm red}(\fraca^2)$
by the algorithm in~\S\ref{sub:HirzComp}. We now can use the identification $\phi$.
\end{proof} 
\par
This explains the initial observations for $D=17$ and $D=41$ and in general for
$\fraca = \fraca^2 = \O_D$. For a more symmetric formulation we recall that the isomorphism class of the 
Hilbert modular surface $\HH^2/{\rm SL}(\fracb \oplus \O_D)$ depends on $\fracb$
only up to squares of ideals  and multiplication by a totally positive element in~$K$. 
Consequently, there are $2^{t-1} = |{\rm Ker}(\rm{Sq}: {\mathcal Cl}^+(D) \to {\mathcal Cl}^+(D))|$
Hilbert modular surfaces for a given $D$, where $t$ is the number of distinct
prime factors of $D$. This also implies that every ideal class $\fraca$
(in the wide sense) appears $2^{t-1}$ times as the module $M$ associated
with a cusp $\HH^2/G(M,V)$ on the total collection of Hilbert modular 
surfaces for a given $D$. Altogether, this implies that the number 
$$ \ell(D) \= \sum_{[a,b,c] \, \text{reduced} \atop b^2-4ac = D} 1  \= 
\sum_{[a,b,c] \, \text{\Pred} \atop b^2-4ac = D} 1$$
appears in two incarnations.
\par
\begin{Prop} The total number of boundary curves of the
Bainbridge compactification of $X_D$ is equal to $\ell(D)$.
\par
The total number of boundary curves of the
Hirzebruch compactifications of the Hilbert modular surfaces
$\HH^2/{\rm SL}(\fracb_i \oplus \O_D)$ for $\fracb_i$ in a set of
representatives of $ {\mathcal Cl}^+(D)/{\rm Sq}({\mathcal Cl}^+(D))$ 
is equal to $2^{t-1}\ell(D)$.
\end{Prop}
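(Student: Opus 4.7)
The plan is to sum cycle lengths over cusps, applying Theorem~\ref{thm:bainviamultimin} and Proposition~\ref{prop:MMchar2} for the Bainbridge count and the algorithm of~\S\ref{sub:HirzComp} together with the bijection~$\phi$ for the Hirzebruch count. Both assertions are then essentially repackagings of the boundary-curve descriptions already established.

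For the first assertion, I recall that the cusps of $X_D=\HH^2/\SLOD$ are parametrized by the wide ideal classes of~$\O_D$, each occurring exactly once. Theorem~\ref{thm:bainviamultimin} identifies the boundary curves of $\overline{X_D}^B$ over the cusp $[\fraca]$ with the set~${\rm MM}(\fraca)$, and Proposition~\ref{prop:MMchar2} puts ${\rm MM}(\fraca)$ in bijection with the \Pred\ quadratic forms of discriminant~$D$ whose wide ideal class is~$[\fraca]$. Summing over cusps therefore counts every \Pred\ form of discriminant~$D$ exactly once, giving $\ell(D)$ boundary curves in total.

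For the second assertion, the length of the Hirzebruch cycle at a cusp with module~$M$ equals the period of the fast-minus continued fraction applied to any~$x_0$ with $M=\mu_0\langle1,x_0\rangle$; this period is the number of reduced quadratic irrationalities~$x$ in the cycle, or equivalently, by the bijection~$\phi$, the number of \Pred\ (equivalently, reduced) quadratic forms of discriminant~$D$ in the wide ideal class of~$M$. The multiplicity remark stated just before the proposition asserts that, as $\fracb_i$ runs over representatives of ${\mathcal Cl}^+(D)/{\rm Sq}({\mathcal Cl}^+(D))$ and as the cusp varies over each surface $\HH^2/{\rm SL}(\fracb_i\oplus\O_D)$, the wide ideal class of the attached module~$M$ hits each wide class exactly~$2^{t-1}$ times. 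Grouping the total sum by this class gives
$$\sum_{\fracb_i}\sum_{\text{cusps}}|\{\text{reduced forms in class of }M\}| \;=\; 2^{t-1}\sum_{[M]\in{\mathcal Cl}(D)}|\{\text{reduced in }[M]\}| \;=\; 2^{t-1}\,\ell(D).$$

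I anticipate no serious obstacle, since everything is bookkeeping over already-proved results. The one detail requiring a moment of thought is the multiplicity claim above, which is a genus-theoretic fact resting on $|\mathrm{Ker}(\mathrm{Sq}\colon{\mathcal Cl}^+(D)\to{\mathcal Cl}^+(D))|=2^{t-1}$ and on the observation that the isomorphism class of $\HH^2/{\rm SL}(\fracb\oplus\O_D)$ depends on~$\fracb$ only modulo ${\rm Sq}({\mathcal Cl}^+(D))$; both are recalled in the paragraph preceding the proposition.
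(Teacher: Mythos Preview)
Your proof is correct and follows exactly the approach implicit in the paper: the proposition there is stated without a separate proof, as an immediate consequence of the preceding discussion (Theorem~\ref{thm:bainviamultimin} with Proposition~\ref{prop:MMchar2} for the Bainbridge count, and the multiplicity remark about the module~$M$ appearing $2^{t-1}$ times for the Hirzebruch count). Your write-up makes the bookkeeping explicit, which is fine.
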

\par
We could presumably make this statement even more symmetrical if we compactified
all Hilbert modular surfaces  $\HH^2/{\rm SL}(\fracb \oplus \O_D)$
using multiminimizers defined by theta functions, but we leave it to
the reader to explore this.

\section{Uniformization and disjointness from the reducible locus}
\label{sec:disjointred}

The aim of this section is to give an independent proof of ``$W_D$ is a \Teichmuller\ 
curve'' using the definition via theta functions and applying 
Theorem~\ref{thm:charTeich}. 
We emphasize that we derive all properties of $W_D$ {\em ab ovo},  
i.e., using just the definition~\changed{\eqref{eq:defofDtheta}} 
as the vanishing locus of $\PTD$ and without using anything that follows from the ``geodesic"
definition. Here again we simplify our counting task by restricting to fundamental discriminants.
\subsection{Transversality of $W_D$ to the foliation $\cFF_1$}
Recall that in~\S\ref{sec:defTeich} we defined the ``first Hilbert modular foliation" $\cFF_1$ 
of the Hilbert modular surface~$X_D$ to be the foliation defined by the constancy of the first 
cooordinate in the uniformization (i.e., by the equation $dz_1=0$, which is invariant under 
the action of the Hilbert modular group). Let $\varphi: \HH \to \HH$ 
be a holomorphic map such that $z \mapsto (z,\varphi(z))$ defines a branch 
of the vanishing locus of~$\PTD$, and suppose that  the corresponding 
component $W_D$ of the vanishing locus has the uniformization $W_D = \HH/\Gamma$.
\par
\begin{Thm}  \label{thm:transversal}
Suppose that $D$ is a fundamental discriminant.  Then the restriction to $W_D$ of the
derivative $\PTDDD = \frac{\partial}{\partial z_2} \PTD$ is a 
$\varphi$-twisted modular form of weight $(3,11)$ for $\Gamma$ that
vanishes only at the cusps of $W_D$. 
\end{Thm}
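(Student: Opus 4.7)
My plan is to prove the two claims in the theorem separately: first the transformation law giving bi-weight $(3,11)$, and then the vanishing only at cusps.

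For the transformation law, I would differentiate the modular transformation of $\PTD$ with respect to $z_2$. For any Hilbert modular form $F$ of bi-weight $(k,\ell)$, the identity $F(\gamma\zz) = (cz_1+d)^k(c^\sigma z_2+d^\sigma)^\ell F(\zz)$ combined with $(g^\sigma)'(z_2) = (c^\sigma z_2+d^\sigma)^{-2}$ yields
\begin{equation*}
\partial_2 F(\gamma\zz) \= (cz_1+d)^k\Bigl[\ell\,c^\sigma(c^\sigma z_2+d^\sigma)^{\ell+1}\,F(\zz) \+ (c^\sigma z_2+d^\sigma)^{\ell+2}\,\partial_2 F(\zz)\Bigr]\,.
\end{equation*}
On the vanishing locus $\{F = 0\}$ the first bracketed term drops out and $\partial_2 F$ transforms with bi-weight $(k, \ell+2)$. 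Applied to $\PTD$ with $(k,\ell) = (3,9)$ from Theorem~\ref{thm:TeichviaDTH} and composed with $z_2 = \varphi(z)$, this shows that $\PTDDD|_{W_D}$ is a $\varphi$-twisted modular form of bi-weight $(3,11)$ on $\Gamma$.

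For the vanishing statement I plan a degree-counting argument. Using Theorem~\ref{thm:dimmodgeneral} together with Theorem~\ref{thm:topoTeich} (which gives $\lambda_2 = 1/3$ and $\chi(W_D) = -\tfrac{9}{2}\chi(X_D)$), and observing that the characteristic numbers $b_x(3,11)$ vanish at all cusps (because $-I\in\Gamma$ and $k+\ell$ is even) and at all order-two elliptic fixed points (because $\{(-3+11)/4\} = 0$), one obtains $\deg(\cLL_{3,11}) = 15\,\chi(X_D)$, with the obvious modification per component when $D\equiv 1\pmod 8$. Independently, at each cusp of $W_D$ labeled by a \Pred\ quadratic form $[a,b,c]$ and a residue class $r\in\ZZ/\gcd(a,c)\ZZ$ (Theorem~\ref{thm:cuspsWD}), I would compute the local order of vanishing of $\PTDDD|_{W_D}$ from the factorization $\PTD = \cFF\cdot G$ near the cusp, where $\cFF$ is the product of the two theta-derivative factors with $m = (\tfrac12,\tfrac12)$ whose simultaneous vanishing defines the branch and $G$ is the product of the remaining four factors, nonzero on the branch; by the product rule, $\PTDDD|_{W_D}$ equals $(\partial_2\cFF)\cdot G$ restricted to the branch, and its $q$-order can be read off from the $(X,Y)$-expansion~\eqref{eq:thetafracaFG} derived in the proof of Theorem~\ref{thm:cuspsWD}.

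The main obstacle will be the final arithmetic identity: the sum over all pairs $([a,b,c],r)$ enumerated in Theorem~\ref{thm:cuspsWD} of the local vanishing orders from the previous step must equal $15\chi(X_D)$. Using the explicit formula~\eqref{eq:chiformula} for $\chi(X_D)$ as $\tfrac1{30}\sum a$ over solutions of $D = b^2+4ac$, this should reduce to a combinatorial identity over \Pred\ quadratic forms of discriminant $D$. Once the two totals match, $\PTDDD|_{W_D}$ has no interior zeros; this is precisely the asserted transversality, and at the level of geometry it rules out self-intersections of $W_D$ as well as interior crossings of the two components when $D\equiv 1\pmod 8$, since such an intersection would force both $\partial_1 \PTD$ and $\partial_2 \PTD$ to vanish simultaneously via the identity $\partial_1\PTD(z,\varphi(z)) + \varphi'(z)\partial_2\PTD(z,\varphi(z)) = 0$ obtained by differentiating $\PTD(z,\varphi(z))\equiv 0$.
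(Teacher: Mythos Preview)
Your plan matches the paper's architecture: chain rule for the bi-weight $(3,11)$, then a global degree count balanced against local vanishing orders at the cusps.

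The one genuine issue is in your degree computation. You feed $\lambda_2=\tfrac13$ and $\chi(W_D)=-\tfrac92\chi(X_D)$ from Theorem~\ref{thm:topoTeich} into Theorem~\ref{thm:dimmodgeneral}. Both inputs are established in the cited references (\cite{Ba07}, \cite{BM07}) \emph{using} the \Teichmuller-curve structure of~$W_D$, whereas the declared purpose of \S\ref{sec:disjointred} is to re-derive that structure \emph{ab ovo} from the theta definition alone; Theorem~\ref{thm:transversal} is precisely one half of the input to the criterion Theorem~\ref{thm:charTeich}. So as written your degree step is circular for the argument it is meant to serve. The paper sidesteps this by computing the total number of zeros extrinsically on~$X_D$: the vanishing divisor of a weight-$(k_1,\ell_1)$ form meets a section of weight $(k_2,\ell_2)$ in $\tfrac14(k_1\ell_2+k_2\ell_1)\,|\chi(X_D)|$ points, which for $(3,9)$ against $(3,11)$ gives $15\,|\chi(X_D)|$ using only that $\PTD$ has weight~$(3,9)$. (If you tried to justify $\lambda_2=\tfrac13$ from the theta side, you would be led back to exactly this intersection computation.)

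The ``main obstacle'' you flag is in fact short. For $a>0>c$ exactly one of $[a,b,c]$ and $[-c,-b,-a]$ is \Pred\ (the sign of $a+b+c$ flips and cannot vanish for nonsquare~$D$), so $\sum_{a>0>c}a=\sum_{\text{\Pred}}(a+|c|)$; combined with~\eqref{eq:chiformula} this yields $15\,|\chi(X_D)|=\tfrac12\sum_{\text{\Pred}}(a+|c|)$. For the local orders there is no need to isolate the vanishing factor~$\cFF$: every summand of $\PTDDDa$ is a product of six functions (five $D_2\theta$'s and one $D_2^3\theta$), and each factor has $q$-order at least $\tfrac12\min_{\tx} F(\tx)$ with $F=[a/g,0,-c/g]$; summing these minima over the three values of~$m$ (each occurring twice among the odd characteristics) already gives $(a+|c|)/2g$ at each of the $g$ cusps attached to~$[a,b,c]$, and $g\cdot(a+|c|)/2g$ summed over \Pred\ forms matches the global count.
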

Since the vanishing locus of a holomorphic function $F$ is transversal to $\cFF_1$
at a point $p$ if and only if $\tfrac{\partial}{\partial z_2} F(p) \neq 0$, the
theorem immediately implies the transversality we want to prove.
\par
For all $(m,m') \in (\tfrac12\ZZ^2)^2$ and for any basis
$\bfom = (\omega_1, \omega_2)$ of an $\O$-ideal $\fraca$, define
$$D_2^3\theta\TchiP{m}{m'}{\bfom} (\zz) \=  
 \frac{\d^3}{\d u_2^3 }\,\Bigl(\Theta \Tchi{m}{m'}(u,\psi_\bfom(\zz)) \Bigr)\Bigr|_{u=0}\,,  $$
where the Siegel modular embedding $\psi_\bfom$ is defined using $\bfom$.
If we drop the index $\bfom$ we tacitly assume that we have
chosen some basis of $\O$, as we did in~\eqref{eq:thetaexpl}. It follows that 
$$D_2^3\theta \changed{\Tchi{m}{m'}}(z_1,z_2) \= \frac{\d}{\d z_2}
\,\Bigl(D_2\theta\Tchi{m}{m'}(z_1,z_2)\Bigr)$$
(by the heat equation or by direct computation). Consequently, 
$$\PTDDD(z_1,z_2)  \= \sum_{(m_0,m_0')\, {\rm odd}} D_2^3\theta \Tchi{m_0}{m_0'}(z_1,z_2) 
\prod_{(m,m')\, {\rm odd} \atop (m,m') \neq (m_0, m_0')} D_2\theta \Tchi{m}{m'}(z_1,z_2), $$
explaining also the name given to this twisted modular form. 
\par
\begin{proof}
By applying the chain rule one sees that the restriction of the $z_i$-derivative 
of a Hilbert modular form to the vanishing locus of the form satisfies a 
modular transformation property with respect to 
the subgroup stabilizing this vanishing locus. 
This calculation also shows that the $z_i$-derivative increases the weight 
in the $i$th component by two, proving the first claim.
\par
To prove the second claim we will show that the vanishing orders of $\PTDDD$
at the cusps sum up to the total vanishing order of a twisted modular form of this bi-weight.
Since $\PTDDD$ is holomorphic, it cannot then have any zeros at finite points.
On a minimal compactification of the Hilbert modular surface $X_D$ the number 
of intersection points of two modular forms of bi-weights $(k_1,\ell_1)$ and $(k_2,\ell_2)$ is
$\tfrac14(k_1\ell_2 + k_2\ell_1)|\chi(X_D)|$. This follows e.g.\ from 
\cite{vG87}, Section~IV.2. This calculation is still valid when intersecting
the vanishing locus $W_D$ of a modular form with a section of the bundle
of modular forms of bi-weight  $(k_2,\ell_2)$ to $W_D$. Since here 
$(k_1,\ell_1) = (3,9)$ and $(k_2,\ell_2)=(3,11)$, the function $\PTDDD_a$ 
has ${15}|\chi(X_D)|$ zeros on the closure of $W_D$. We have to show that they all lie at the cusps.
\par
Note that precisely one of the two quadratic forms $[a,b,c]$ and
$[-c,-b,-a]$ of discriminant $D$ with $a>0$ and $c<0$ satisfies the
additional condition $a+b+c < 0$ required to make it a \Pred\ quadratic form,
\changed{since $a+b+c=0$ would imply that~$D$ is a square}.
From~\eqref{eq:chiformula} we consequently deduce  that 
\bes {15}\chi(X_D)| 
\= \frac 12 \sum_{D = b^2 -4ac \atop a>0,\ c<0 } a  
 \= \frac 12 \sum_{[a,b,c]\, \text{\Pred} \atop D = b^2 -4ac} \bigl(a + |c|\bigr) \,.
\ees
To complete the proof it thus suffices to show that at each of the $g$
cusps of~$W_D$ corresponding to $\lambda$, the vanishing order of 
$\PTDDD$ is at least (and hence precisely) equal to $(a+|c|)/2g$. 
Here $\lambda$ is a zero of the \Pred\ form $[a,b,c]$ and $g = {\rm gcd}(a,c)$.
(The order of zero may indeed be half-integral, in accordance with the
fact that $\PTD$ is a modular form with a quadratic character.) 
\par 
By Lemma~\ref{le:DthetaOtherCusp} the vanishing orders of $\PTDDD$ 
at the cusps of $W_D$ mapping to $\alpha$ can be computed as
the vanishing orders of 
$$\PTDDDa(z_1,z_2)  = \sum_{(m_0,m_0')\, {\rm odd}} D_2^3\theta \TchiP{m_0}{m_0'}{\bfom}
(z_1,z_2) 
\prod_{(m,m')\, {\rm odd} \atop (m,m') \neq (m_0, m_0')} D_2\theta \TchiP{m}{m'}{\bfom}
(z_1,z_2), $$
on the Hilbert modular surface $X_{D,\fraca}$ 
at the cusps mapping to infinity. By Theorem~\ref{thm:cuspsWD} we associate
a multiminimizer $\alpha$ to such a cusp and we may assume that 
$\bfom = (\omega_1,\omega_2)$ has been chosen to be the distinguished bases
for this multiminimizer. If $t$ is a local parameter of a cusps of $W_D$, 
as in the proof of Theorem~\ref{thm:cuspsWD} then the terms appearing
in the expansion of  $D_2\theta \TchiP{m_0}{m_0'}{\bfom}$ and 
$D_2^3\theta \TchiP{m_0}{m_0'}{\bfom}$ are $t^{F(\tx_1,\tx_2)/2}$, where
$\tx_i = x_i +  m_i$ with $x_i \in \ZZ$ and where 
$$ F \= \Bigl[\frac ag, 0, \frac {-c}g \Bigr] $$
as in~\eqref{eq:defFG}. The minimal $t$-exponents are greater or equal to
$(a+|c|)/8g$, $a/8g$, $|c|/8g$ in case $m$ is equal to 
$(\tfrac12,\tfrac12)$, $(\tfrac12,0)$ and $(0,\tfrac12)$
respectively, both for $D_2\theta$ and $D_2^3\theta$. Since for each of these $m$ 
there are precisely two $m'$ such that $(m,m')$ is odd, the total vanishing
order is at least $(a+|c|)/2g$, which is what we wanted to show.
\end{proof}

\subsection{Disjointness of ${W_D}$ from the reducible locus (by counting zeros)}

In this and the following subsection we give two completely different proofs of the following
result, which is the second half of what we need to apply the criteria of Theorem~\ref{thm:charTeich}
and show that the vanishing locus of $\PTD$ is a \Teichmuller\ curve.  The first proof is similar to
the one used for Theorem~\ref{thm:transversal}, by comparing the number of known zeros of
a twisted modular form with its total number of zeros.
\begin{Thm} \label{thm:disjred}
The vanishing locus $W_D$ of $\PTD$ is disjoint in $X_D$ from the reducible locus $P_D$.
\end{Thm}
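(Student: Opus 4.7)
The plan is to imitate the counting argument used in the proof of Theorem~\ref{thm:transversal}. Namely, restrict the holomorphic modular form $\PTD$ to each irreducible component $P_{D,\nu}$ of the reducible locus and count its zeros in two ways: globally, via a degree formula on $P_{D,\nu}$, and locally, from the known vanishing at the cusps. If the two counts agree, $\PTD$ has no zeros on the interior of $P_{D,\nu}$, which is exactly the claim.

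First I would carry out the global count. Fix $\nu \in \df^{-1}$ with $\nu\gg0$ and $\tr(\nu)=1$; the embedding $z\mapsto(\nu z,\nu^\s z)$ identifies $P_{D,\nu}$ with $\HH/\G_\nu$, and the pull-back of a Hilbert modular form of bi-weight $(k,\l)$ is an ordinary modular form of weight $k+\l$ on $\G_\nu$, because the two automorphy factors coming from the action of $\G_\nu\subset\SL K$ combine along the diagonal $A=\diag(\nu,\nu^\s)$. Thus $\PTD|_{P_{D,\nu}}$ is a holomorphic weight~$12$ modular form on $\G_\nu$, whose degree of zeros is $12\cdot\vol(\G_\nu\bs\HH)/4\pi$ up to contributions from elliptic fixed points and cusps, all of which are explicit from the description of $P_{D,\nu}$ as a modular curve in~\S\ref{sec:redloc}. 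Summing over all components $\nu$ of $P_D$ gives the global intersection number $W_D\cdot P_D$ on the compactified surface $X_D$, and using the formula $\chi(X_D)=2\z_K(-1)=\tfrac1{30}\sum_{D=b^2+4ac}a$ this sum should collapse to the same quantity $\tfrac12\sum_{[a,b,c]\text{ \Pred}}(a+|c|)\;=\;15\,|\chi(X_D)|$ that appeared in the proof of Theorem~\ref{thm:transversal}.

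Second, I would produce a matching lower bound from the cusps. By Theorems~\ref{thm:cuspsWD} and~\ref{thm:cuspsPD}, the cusps of $P_D$ lying over a cusp $\fraca$ of $X_D$ are in bijection with pairs $(Q,\overline r)$, $Q=[a,b,c]$ \Pred\ of discriminant~$D$ and $\overline r\in\Z/\gcd(a,c)\Z$, and the branch through such a cusp is parametrised by $q_1=q^\a e^{\ve_1(q)}$, $q_2=q^{\s(\a)}e^{\ve_2(q)}$ with $\a$ the multiminimizer attached to $Q$, exactly as on $W_D$. Substituting this parametrisation into the local expansion $\PTDa$ of $\PTD$ at $\fraca$ and repeating the exponent estimate from the proof of Theorem~\ref{thm:transversal}, each factor $D_2\theta\TchiP{m}{m'}{\bfom}$ vanishes on the branch to order at least $a/(8g)$, $|c|/(8g)$ or $(a+|c|)/(8g)$ according as $m$ equals $(\tfrac12,0)$, $(0,\tfrac12)$ or $(\tfrac12,\tfrac12)$, so that the total vanishing order of $\PTD$ on the branch is at least $(a+|c|)/(2g)$. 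Multiplying by $g$ branches above each $Q$ and summing over all \Pred\ $Q$ reproduces $\tfrac12\sum_{Q\text{ \Pred}}(a+|c|)$.

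The main obstacle is the final bookkeeping: one must check that the global intersection number from step one and the sum of cusp contributions from step two are genuinely the same number, and that nothing extra is contributed by the boundary of the compactification itself. For the latter point, the corollary to Theorem~\ref{thm:bdresintersect} shows that $W_D$ and $P_D$ meet the Bainbridge boundary only at interior points of the resolution curves, so all intersection between $W_D$ and $P_{D,\nu}$ in the compactified surface comes either from the interior of $X_D$ or from the cusps themselves, and the cusp contribution is precisely the vanishing order computed in step two. Matching the two counts then forces the interior intersection to be empty, i.e.\ $W_D\cap P_D=\emptyset$.
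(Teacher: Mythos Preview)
Your proposal is correct, but it is the mirror image of the paper's first proof rather than the same argument. The paper restricts the product of the ten \emph{even} theta constants (the defining form of~$P_D$, of bi-weight $(5,5)$) to~$W_D$ and counts zeros there; you restrict $\PTD$ (the defining form of~$W_D$, of bi-weight $(3,9)$) to~$P_D$ and count zeros there. Both work because the intersection formula $\tfrac14(k_1\ell_2+k_2\ell_1)|\chi(X_D)|$ is symmetric in the two forms, and because Theorems~\ref{thm:cuspsWD} and~\ref{thm:cuspsPD} give~$W_D$ and~$P_D$ identical cusp structures, so that the local exponent estimate $(a+|c|)/(2g)$ carries over verbatim to your setting.

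Your global count takes an unnecessary detour. Once you have recognised that summing over the components $P_{D,\nu}$ yields the intersection number $W_D\cdot P_D$ on the compactified surface, you can quote the formula $\tfrac14(3\cdot5+9\cdot5)|\chi(X_D)|=15|\chi(X_D)|$ directly (this is exactly what the paper does, with the roles of the two forms interchanged); there is no need to express it via $\vol(\G_\nu\backslash\HH)$ and then re-sum. The phrase ``up to contributions from elliptic fixed points and cusps'' is also slightly off: the total degree of the divisor of a holomorphic modular form of weight~$k$ is exactly $k\cdot\vol/4\pi$, with the fractional contributions at elliptic points already included.

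Finally, note that the paper gives a second, entirely different proof (Theorem~\ref{thm:DTHrestFN}): on each component $P_{D,\nu}$ every factor $D_2\theta\Tchi{m}{m'}$ splits explicitly as a product of two Jacobi theta functions in one variable, each of which is nowhere zero on~$\HH$ by the Jacobi product formulas. That argument avoids any counting and gives the disjointness pointwise.
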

\par
\begin{proof}
Recall that the reducible locus is the vanishing locus of the product
of all $10$ even theta functions. This product is a Hilbert
modular form of weight $(5,5)$, so its restriction to $W_D$
is a modular form for $W_D$ of bi-weight $(5,5)$. As in the preceding
proof we deduce that the degree of its divisor (on a compactification of~$W_D$)
is $\tfrac14 (5\cdot3 + 5\cdot9)\,|\chi(X_D)|\,=\,15\,|\chi(X_D)|\,$. 
\par
As in the preceding proof it suffices to show that the restriction of the product
$\prod_{(m,m')\, {\rm even}}  \theta\Tchi{m}{m'}$  to $W_D$ vanishes at each of
the $g$ cusps of $W_D$ corresponding to $\lambda$ to the order at least $(a+|c|)/2g$.
Here again we can work at the cusp $\infty$ of $X_{D,\fraca}$. There, by the
same argument as in Lemma~\ref{le:DthetaOtherCusp}, the product of the ten even theta 
functions is  given by $\prod_{(m,m')\, {\rm even}}  \theta\TchiP{m}{m'}{\bfom}$, where $\bfom$
is some basis of $\fraca$ and where 
$\theta\TchiP{m}{m'}{\bfom}(\zz)  = \Theta\TchiP{m}{m'}{\bfom}(0,\psi_\bfom(\zz))$
with the modular embedding $\psi_\bfom$ defined using $\bfom$. The rest
of the proof proceeds as above. To each cusp we associate its multiminimizer
and take $\bfom$ to be the distinguished basis. The terms appearing in 
the expansion of $\theta\TchiP{m}{m'}{\bfom}$ at such a cusp are
$t^{F(\tx_1,\tx_2)/2}$, where
$\tx_i = x_i + m_i$ with $x_i \in \ZZ$ and where 
$F \= \Bigl[\frac ag, 0, \frac {-c}g \Bigr] $. The minimal $t$-exponents are 
greater or equal to
$(a+|c|)/8g$, $a/8g$, $|c|/8g$ and $0$ in case $m$ is equal to 
$(\tfrac12,\tfrac12)$, $(\tfrac12,0)$,  $(0,\tfrac12)$ and $(0,0)$
respectively. Each of the first three cases occurs twice among the ten
even theta characteristics (and the irrelevant last case four times). 
Summing up these contributions gives again the vanishing order  at least 
$(a+|c|)/2g$ that we claimed.
\end{proof}
\par

\subsection{Disjointness of ${W_D}$ from the reducible locus (via theta products)}

In this subsection we give a proof of Theorem~\ref{thm:disjred} based on a completely different idea,
by establishing a formula for the restriction of theta derivatives to the reducible locus. Let
$$
\begin{aligned}
\theta_{00}(z) = \sum_{n \in \ZZ} q^{n^2/2}\,, \quad 
&\theta_{\frac{1}{2}0}(z) = \sum_{n \in \ZZ+1/2} q^{n^2/2}\,, \\
\theta_{0\frac{1}{2}}(z) = \sum_{n \in \ZZ} (-1)^n q^{n^2/2}\,, \quad
&\theta_{\frac{1}{2}\frac{1}{2}} (z)= \sum_{n \in \ZZ+1/2}(-1)^{n-1/2}n q^{n^2/2}\,. \\
\end{aligned}
$$
(Here $\theta_{\frac{1}{2}\frac{1}{2}}$ should perhaps be called 
$\theta'_{\frac{1}{2}\frac{1}{2}}$, since the corresponding Jacobi Thetanull\-wert 
vanishes identically, but with these notations it will be easier to 
write a closed expression.) The product formulas found by Jacobi for 
these four functions, which in a modern notation say that they are 
equal to $\eta(2z)^5/\eta(z)^2\eta(4z)^2$,  $2\eta(4z)^2/\eta(2z)$, 
 $\eta(z)^2/\eta(2z)$ and $\eta(z)^3$, respectively, show that none of 
these functions vanish anywhere in the upper half-plane. 
Recall from Proposition~\ref{prop:redunionFN} that the reducible locus is 
the  union of irreducible curves $P_{D,\nu} = F_N(\nu)$ for $\nu = \frac{r + \sqrt{D}}{2\sqrt{D}}$,
where $D=r^2+4N$ with $N \in \NN$. Therefore Theorem~\ref{thm:disjred} follows 
from the following theorem, which for simplicity we formulate only for $D$ odd, 
the case of even~$D$ being similar.
\par
\begin{Thm} \label{thm:DTHrestFN}
Let $D \equiv 1\pmod4$ be a fundamental discriminant.
Then for any odd theta characteristic $(m,m')$ the restriction of the 
modular form $D_2\theta\Tchi{m}{m'}$ to the curve 
$P_{D,\nu}$ for $\nu$ as above has the factorization
$$D_2\theta\Tchi{m}{m'}(\nu z ,\nu^\sigma z) = \left\{ 
\begin{matrix} 
\phantom{\sqrt{D} \nu}-\, \theta_{\hm_1 m'_1}(z) \theta_{m_2 \hm'_2}(Nz) \quad & \text{if}\;\; m_1 =
m_1' = 1/2 \\ 
- \sqrt{D} \nu^\sigma\, \theta_{\hm_1 m'_1}(z) \theta_{m_2 \hm'_2}(Nz), \quad & \text{if}\;\;
m_2 = m'_2 = 1/2 \\
\end{matrix}
\right.
$$
as a product of Jacobi theta functions, where $\hm_1$ and $\hm'_2$ are defined by
$$
\begin{array}{lll}
\hm_1 = m_1 + m_2 \mod (1), \quad & \hm'_2 = m'_1 + m'_2 \mod (1)
\quad & \text{if} \;\; r \equiv 1\mod 4 \\
\hm_1 = m_1, \quad &\hm_2 = m_2, 
\quad & \text{if} \;\; r \equiv 3\mod 4. \\
\end{array}
$$
for $m = (m_1, m_2)$, $m' = (m'_1,m'_2)$. In particular, this restriction vanishes only at cusps.
\end{Thm}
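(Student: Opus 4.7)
\textbf{Proof proposal for Theorem~\ref{thm:DTHrestFN}.}

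The plan is to substitute $z_1 = \nu z$, $z_2 = \nu^\sigma z$ directly into the Fourier expansion~\eqref{eq:thetaexpl} of $D_2\theta\Tchi{m}{m'}$ and to exhibit the resulting series as a product of two Jacobi theta functions. After substitution, each summand acquires the exponential factor $q_1^{\rho(x)^2/2}q_2^{\sigma(\rho(x))^2/2}=q^{\tr(\nu\,\rho(x)^2)/2}$, where $q=\e(z)$. Since $\nu=(r+\sqrt D)/(2\sqrt D)$ satisfies $\tr(\nu)=1$, $\tr(\nu\sqrt D)=r$ and $N(\nu)=N/D$, a direct computation shows that for any $y=a+b\sqrt D\in K$ we have $\tr(\nu y^2)=(a+br)^2+4Nb^2$. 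Introducing the new coordinates $u=a+br$, $v=2b$ will diagonalise this quadratic form as $u^2+Nv^2$, which is precisely what is needed to split the sum into a $z$-part and an $Nz$-part.

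The next step is to track how the lattice $\rho(\ZZ^2+m)$ looks in the $(u,v)$-coordinates. Writing $\rho(x)=(x_1+m_1)+(x_2+m_2)\gamma_0$ with $\gamma_0=(D+\sqrt D)/2$, one finds $b=(x_2+m_2)/2$ and $u=(x_1+m_1)+\tfrac{D+r}{2}(x_2+m_2)$. Here the assumptions $D\equiv1\pmod4$ fundamental and $D=r^2+4N$ force $r$ odd, so $(D+r)/2$ is an integer whose parity is exactly what distinguishes the two cases in the theorem: odd when $r\equiv1\pmod4$ and even when $r\equiv3\pmod4$. A short modular arithmetic check shows that this is precisely the reason the fractional part $\hm_1$ of $u$ equals $m_1+m_2\pmod1$ in the first case and $m_1$ in the second, while the fractional part of $v$ is always $m_2$. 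At this stage the sum has already acquired a product structure in $(u,v)$ with the expected characteristics $(\hm_1,m_1')$ and $(m_2,\hm_2')$ on the two factors.

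It remains to analyse the linear factor $\sigma(\rho(x))$ and the sign $(-1)^{2x\cdot(m')^T}$ in the $(u,v)$-coordinates. The factor $\sigma(\rho(x))=a-b\sqrt D$ is affine in $(u,v)$ and splits as (a constant times $u$) plus (a constant times $v$); the precise constants give the prefactor $-\sqrt D\nu^\sigma$ in the theorem. The sign becomes a product of a sign depending on the integer part of $u$ and one depending on that of $v$, matching the characters of the corresponding Jacobi theta functions. Now the oddness condition $4m(m')^T\equiv1\pmod2$ forces exactly one of the pairs $(m_1,m_1')$, $(m_2,m_2')$ to equal $(1/2,1/2)$: in the other coordinate the corresponding one-variable theta sum is even, so the linear term in that coordinate contributes zero. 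Thus only one of the two summands from $\sigma(\rho(x))$ survives, and one reads off the claimed factorization into $\theta_{\hm_1 m_1'}(z)\,\theta_{m_2\hm_2'}(Nz)$ (with the appropriate sign and, in the second case, the factor $-\sqrt D\,\nu^\sigma$). The final claim about the vanishing locus is immediate from the classical product formulas of Jacobi showing that each $\theta_{\ast\ast}$ is nowhere zero in $\HH$.

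The main obstacle will be the careful sign bookkeeping in the last step: translating the character $(-1)^{2x\cdot(m')^T}$ and the prefactor $i^{4m(m')^T}$ through the affine change of variables $(x_1,x_2)\leftrightarrow(u,v)$ so that the two cases $r\equiv1$ and $r\equiv3\pmod4$ produce the correct $\hm_1,\hm_2'$ and match the stated sign conventions of the Jacobi theta functions (including the unusual definition of $\theta_{\frac12\frac12}$). Once that accounting is done correctly, everything else is straightforward manipulation of the Fourier expansion.
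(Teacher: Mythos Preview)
Your proposal is correct and follows essentially the same route as the paper's proof. The paper also substitutes into the Fourier expansion~\eqref{eq:thetaexpl}, performs a $\ZZ$-linear change of variables $\ty_1=\tx_1+\tfrac{r+1}{2}\tx_2$, $\ty_2=-\tx_2$ (which is your $(u,v)$ up to a sign and an even-integer shift, since $(D+r)/2$ and $(r+1)/2$ have the same parity), obtains $\tr(\nu\rho^2)=\ty_1^2+N\ty_2^2$, and then kills one of the two pieces of the linear factor by the parity argument under $\ty_i\mapsto-\ty_i$; the sign bookkeeping you flag as the main obstacle is exactly what the paper leaves as ``one checks the formula by multiplying the unary theta functions.''
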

\par
\begin{proof}  
As above we use a tilde to denote elements of the shifted lattice, 
i.e., $\tx_i = x_i + m_i$. The restriction of the theta derivative 
to $P_{D,\nu}$ is 
$$ 
\begin{aligned}
D_2\theta\Tchi{m}{m'}(\nu z ,\nu^\sigma z) &\= \sum_{(x_1,x_2) \in \ZZ^2} 
(-1)^{2(x_1,x_2)(m')^{T}} \rho(\tx_1,\tx_2)^\sigma  q^{\tr(\nu\rho(\tx_1,\tx_2)^2)/2}\,,
\end{aligned}
$$
where $\rho(x_1,x_2) = x_1 + \tfrac{1+\sqrt{D}}2 x_2$. 
With the $\ZZ$-linear transformation  $\ty_1= \tx_1 + \frac{r+1}2 \tx_2$ 
$\ty_2 = -\tx_2$ we obtain
$$\rho(\tx_1,\tx_2) \= \ty_1 + \frac{r-\sqrt{D}}{2} \ty_2 
\= \ty_1 + \sqrt{D}\nu^\sigma \ty_2 \,=:\,\tau(\ty_1,\ty_2)$$ 
and 
$$ 
\begin{aligned}D_2\theta\Tchi{m}{m'}(\nu z,\nu^\sigma z) 
& = \sum_{(\ty_1,\ty_2) \in \ZZ^2 + (\hm_1,m_2)} 
\epsilon(\ty_1,\ty_2) \tau(\ty_1,\ty_2) q^{(\ty_1^2+N\ty_2^2)/2},
\end{aligned}
$$
where 
$$ \epsilon(\ty_1,\ty_2) = (-1)^{2(\ty_1-\hm_1)m_1'+2(\ty_2-m_2)\hm'_2}$$ 
and where we have used 
$\tr(\nu \tau(y_1,y_2)^2) = y_1^2+Ny_2^2$. The $q$-exponent is invariant under both
$\ty_1\mapsto -\ty_1$ and $\ty_2 \mapsto -\ty_2$.  Under, say, $\ty_2\mapsto -\ty_2$ the sign of
$\epsilon(\ty_1,\ty_2)$ is unchanged unless $m_2 = \hm_2' = 1/2$. Hence, 
unless $m_2 = \hm_2' = 1/2$, the $\sqrt{D}\nu^\sigma \ty_2$-contribution
of $\tau(\ty_1,\ty_2)$ cancels and one checks the formula by multiplying the
unary theta functions. For an odd theta constant $(m,m')$ precisely
one of the cases $m_1 = \hm_1' = 1/2$ or $\hm_2 = m'_2 = 1/2$ happens
and a similar cancellation gives the formula in the second case, too.
Notice that in each case of the theorem, one of the theta series in the decomposition
is the function $\theta_{\frac{1}{2}\frac{1}{2}}$ of weight~$3/2$, so that the
total weight is always~$2$.
\end{proof}
\par
\paragraph{\bf Open problem:} Can one reprove the irreducibility, 
stated in Theorem~\ref{thm:TMclassg2} and proved by McMullen using combinatorial
number theory of the set of cusps, exclusively with techniques
of (Hilbert) modular forms? 
\par

\section{Applications} \label{sec:applications}

\subsection{The modular embedding via theta functions} \label{sec:phiarithmetic}

Using the description of $W_D$ as vanishing locus of $\PTD$, we can now give
the modular embedding $\varphi$ as in the ``Fourier expansion'' as defined 
in~\eqref{eq:phiatinfinity} for any cusp of $W_D$. In fact most of this was already
achieved in the proof of Theorem~\ref{thm:cuspsWD}. There we identified a 
cusp of $W_D$ with a \Pred\ quadratic form $[a,b,c]$ (and hence a multiminimizer 
$\alpha$) together with an element in $r \in \ZZ/g\ZZ$ where $g = {\rm gcd}(a,c)$ 
(equivalently, a solution $S$ of~\eqref{eq:lowesttermeq}). Recall from this 
proof that for each such solution there is a unique branch, given 
by~\eqref{eq:branch}, of the locus
$ D_2\theta  \TchiP{m}{m'}{\fraca}(z_1,z_2) = 0.$
The map $z \mapsto (z,\varphi(z))$ describing this branch was given by
\be \label{eq:varphiagain}
 \varphi(z) =\frac{\alpha^\sigma}{\alpha}z + C + \frac{\ve(q)}{2\pi i}.
\ee
We can now describe the arithmetic properties of this expansion.
\par
\begin{Thm} \label{thm:arithphi}
The coefficients of the modular embedding describing the branch determined by the 
quadratic form $[a,b,c]$ and number $S$ as in~\eqref{eq:lowesttermeq} through 
the cusp $\fraca$ of $X_D$ have the following properties.
\begin{itemize}
\item[i)] The constant $C$ in~\eqref{eq:varphiagain} belongs to $\frac1{2\pi i}K^\times\log K^\times$.  
In fact, $\e\Bigl( \frac{g\alpha\,C}{N(\fraca)^2\sqrt{D}}\Bigr) \in K\ssm\QQ$.
\item[ii)] The number $S$ lies in $K^{1/g}$. 
\item[iii)] For each $\beta$ completing the multiminimizer $\alpha$ determined by $[a,b,c]$
to a basis of $(\fraca^2)^\vee$ there exists $A\in \CC^*$ such that in the
local parameter $Q = Aq$ of the cusp 
\bes
\e(\nu z + \nu^\sigma \varphi(z))  \= S^{\tr(\beta\nu)} \, 
Q^{\tr(\alpha \nu)} e^{\sigma(\nu)\ve(q)} \qquad
\text{for all} \quad \nu \in \fraca^2\,.
\ees
The scalar $A$ is transcendental of Gelfond-Schneider type, more
precisely of the form $x^y$ with $x$ and $y$ in $K \ssm \QQ$.
\item[iv)] The coefficients $a_n$ of the power series 
$\ve(q) = \sum_{n \geq 1} a_n Q^n$ expanded in a local parameter~$Q$ as in~ii)
lie in the number field~$K(S)$.
\end{itemize}
\end{Thm}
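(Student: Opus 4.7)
The plan is to refine the local analysis already carried out in the proof of Theorem~\ref{thm:cuspsWD}, systematically tracking the field of definition of each Fourier datum. The two arithmetic inputs used throughout are that the Hilbert theta Fourier coefficients (cf.~\eqref{eq:thetaexpl}) lie in~$K$ and that the multiminimizer basis $\bfom$ of~$\fraca$ consists of elements of~$K$.

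Parts~(i) and~(ii) follow directly from equation~\eqref{eq:lowesttermeq}: its right-hand side is the $\sigma$-conjugate of a quotient of two expressions $\rho_\bfom(\tfrac12,\pm\tfrac12)$, each a $K$-linear combination of the basis~$\bfom$, hence lies in $K^\times$. Therefore $S^g \in K^\times$ and $S \in K^{1/g}$, which is~(ii). Solving the defining identity $S = \e\bigl(C\alpha/(N(\fraca)^2\sqrt{D})\bigr)$ of~\eqref{eq:newS} for~$C$ gives
\[  C \;=\; \frac{N(\fraca)^2\sqrt{D}}{2\pi i\,g\,\alpha}\,\log(S^g)\;\in\;\frac{1}{2\pi i}\,K^\times\log K^\times\,,  \]
proving~(i), once one checks that $S^g \notin \Q$; but this reduces to the $\Q$-linear independence of $\rho_\bfom(\tfrac12,\tfrac12)$ and $\rho_\bfom(\tfrac12,-\tfrac12)$, which holds because the basis~$\bfom$ contains an irrational element of~$K$.

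For~(iii), match the parameter~$Q$ of~\eqref{eq:branch} with the geometric cusp uniformizer~$q$: plugging~\eqref{eq:varphiagain} into the expression $\e(\tr(\nu\,\ul z))$ on the branch and reading off the leading terms yields $Q = Aq$ with~$A$ expressed as a power of~$S$ (algebraic in~$K(S)$) times an exponential of a $K$-linear combination of logarithms of algebraic numbers, in perfect parallel with the explicit formula~\eqref{eq:A} from the $D=17$ case. The transcendence of~$A$ of Gelfond--Schneider type (i.e., of the form $\beta_1\beta_2^{\beta_3}$ with all $\beta_i$ algebraic and $\beta_3 \in K \smallsetminus \Q$ arising from the ratio $\alpha^\sigma/\alpha$) then follows from the Gelfond--Schneider theorem.

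The bulk of the work, and the main obstacle, lies in~(iv). Substitute $\ve(q)=\sum_{n\ge 1} a_n Q^n$ into the local equation $\PTDa(z_1,z_2)=0$, re-express in the coordinates $X,Y$ of the proof of Theorem~\ref{thm:cuspsWD}, and expand as a power series in~$Q$. Because the Fourier coefficients of the theta derivatives lie in~$K$, because the data $\bfom$, $\alpha$, and the auxiliary $\beta$ defining $X$ and~$Y$ lie in~$K$, and because the choice of branch specializes these to $K(S)$-valued expansions in~$Q$ (the transcendental factor~$A$ being introduced in~(iii) precisely so that this specialization is $K(S)$-rational, as one sees from the worked example), the vanishing of the resulting series gives for each $n\ge1$ a linear equation $L_n\,a_n = P_n(a_1,\ldots,a_{n-1})$ with $L_n \in K(S)^\times$ and $P_n\in K(S)[t_1,\ldots,t_{n-1}]$. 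Induction on~$n$ then yields $a_n \in K(S)$. The main technical difficulty is to verify the $K(S)$-rationality of~$L_n$ at each step, which amounts to showing that after the substitution $Q=Aq$ no transcendental factor slips into the recursion --- equivalently, that the choice of~$A$ made in~(iii) is precisely the one that algebraizes the entire Fourier development, not merely its leading term.
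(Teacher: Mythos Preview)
Your approach is correct and essentially matches the paper's, but you are vaguer on~(iii) and over-worry about~(iv).

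For~(iii), where you argue loosely about ``reading off leading terms'', the paper simply tests the identity of~(iii) on the dual basis $\{\alpha^*,\beta^*\}\subset\fraca^2$ of $\{\alpha,\beta\}$. Plugging in $\nu=\beta^*$ (so $\tr(\alpha\beta^*)=0$, $\tr(\beta\beta^*)=1$) recovers the $S$ of~\eqref{eq:newS}; plugging in $\nu=\alpha^*$ (so $\tr(\alpha\alpha^*)=1$, $\tr(\beta\alpha^*)=0$) gives directly
\[
  A \;=\; \e\Bigl(\frac{\beta\,C}{N(\fraca)^2\sqrt D}\Bigr) \;=\; S^{\beta/\alpha} \;=\; (S^g)^{\beta/(g\alpha)}\,,
\]
with base $S^g\in K\smallsetminus\Q$ by~(i) and exponent $\beta/(g\alpha)\in K\smallsetminus\Q$ since $\alpha,\beta$ form a $\Z$-basis. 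Your assertion that the Gelfond--Schneider exponent arises from the ratio $\alpha^\sigma/\alpha$ is not right; it is $\beta/\alpha$.

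This explicit formula also dissolves your ``main obstacle'' in~(iv). Once $A$ is fixed so that $X|_{\text{branch}}=Q\,e^{\sigma(\alpha^*)\ve}$ and $Y|_{\text{branch}}=S\,e^{\sigma(\beta^*)\ve}$ with leading coefficients exactly~$1$ and~$S$, the theta expansion~\eqref{eq:thetafracaFG} restricted to the branch becomes a power series in~$Q$ whose coefficients are manifestly polynomials in~$S^{\pm1}$ over~$K$ (the paper writes out $a_1$ explicitly as $S^{g+2bs}$ times a ratio of $K$-elements built from $\rho_\bfom$, $F$, $G$). The triangular recursion for the~$a_n$ never sees~$A$ at all --- the coordinates $X,Y$ are defined without reference to it --- so no transcendental factor can slip in.
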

\par
\begin{proof} Statement~i) obviously follows from~\eqref{eq:lowesttermeq}
and~\eqref{eq:newS}. For ii) it suffices to test for $\nu$ the dual basis  
$\{\alpha^*, \beta^*\} \subset \fraca^2$ of $\{\alpha,\beta\}$. Plugging in 
$\beta^*$ confirms that $S$ here is the same as in~\eqref{eq:newS}, since
$\beta^* = \alpha^\sigma/N(\fraca)^2\sqrt{D}$. Plugging in 
$\alpha^* = \beta^\sigma/N(\fraca)^2\sqrt{D}$ implies that
$$ A\= \e\Bigl( \frac{\beta\,C}{N(\fraca)^2\sqrt{D}} \Bigr) \= S^{\beta/\alpha}\;\in\,K^{\b/g\a}\,.$$
The last statement iii) follows since the coefficients of Fourier expansion
of the theta function lie in $K(S)$ and solving recursively for
the $a_n$ involves only these coefficients and integral powers of $S$. 
For concreteness, we perform the first step in this
procedure in the case $|c| < a$. The summands $(\tx_1,\tx_2) = (\pm \tfrac 12,
\pm \tfrac 12)$ contribute to the lowest order term (in $Q$) of the
restriction of the theta derivative to the branch $(z,\varphi(z)$. 
The next lowest order term is determined by 
$(\tx_1,\tx_2) = (\pm \tfrac 12,\pm \tfrac 32)$
and we can solve for
$$ a_1 \ = S^{g + 2bs} \frac{\rho_\omega(\tfrac 12, \tfrac 32) \bigl(\sigma(\alpha^*)F(\tfrac 12, \tfrac 32)
   + \sigma(\beta^*)G(\tfrac 12, \tfrac 32)\bigr)}
 {\rho_\omega(\tfrac 12, \tfrac 12)\bigl(\sigma(\alpha^*)F(\tfrac 12, \tfrac 12)
 + \sigma(\beta^*)G(\tfrac 12, \tfrac 12)\bigr)}$$
using the notation $\rho_\omega$ introduced in the lines before~\eqref{eq:thetafraca},
where $F$ and $G$ given in~\eqref{eq:defFG} and $g = sa+tc$.
\end{proof} 
We remark that the ``$A$'' of \S\ref{sec:DEmodular} is not quite the same as the
one above, and that the numerical value given in~\eqref{eq:A} is not of the
form $x^y$ with $x,\,y\in K$, but an algebraic multiple of this.  This is because
we normalized our~$Q$ in~\S\ref{sec:DEmodular} so that the expansion of $t=Q+\cdots$
had leading coefficient~1.  If we changed~$Q$ by the algebraic factor, then $t$ and $y$
would still have~$Q$-expansions with coefficients in~$K$, so that this would 
be an equally good choice in the special case~$D=17$, but for the general
statement it seems best to normalize~$Q$ as above.

\subsection{Fourier coefficients of twisted modular forms} 
\label{sec:Fouriercoeff}

Fix $D$, a fractional $\O_D$-ideal $\fraca$, and a branch
of the vanishing locus of $\PTD$ through the cusp at $\infty$
of the Hilbert modular surface $X_{D,\fraca}$ given by the quadratic form
$[a,b,c]$ and $S$ as above. Let $\Gamma \subset \SLA$
be the subgroup stabilizing this branch. It is the Fuchsian group
uniformizing the curve $W_D$, normalized so that the cusp labeled
with ($[a,b,c],S)$ is the cusp at~$\infty$.
\par
The first twisted modular form for $\Gamma$ we encountered was the form $\varphi'(z)$ of
bi-weight $(2,-2)$ in the case $D=\sqrt{17}$. By the preceding theorem we
know that this modular form has a Fourier expansion of the form
$$\varphi'(z) = \sum_{n \geq 0} b_n (Aq)^n $$
with $A$ transcendental of Gelfond-Schneider type and $b_n$ in the field $K(S)$ 
(= $K$ in this case, since $g=1$).
We will show that such a statement holds for all twisted modular forms.
\par
This section is inspired by work of Wolfart in the case of non-compact Fuchsian triangle groups.
Let $\Delta(\infty,q,r)$ be such a group. We normalize it so that 
$\Delta(\infty,q,r) \subset \SL{\bar{\QQ}}$ and that $\infty$
is a cusp of $\Delta(\infty,q,r)$. 
In this situation Wolfart shows in~\cite{Wo83}) that there exists some $A \in \CC$
such that the space of (ordinary, i.e.\ of 
bi-weight~$(k,0)$) modular forms for $\Delta(\infty,q,r)$ admits a basis given
by forms $f_m$ with Fourier expansions
$$ f_m(z) = \sum_{n \geq 0} r_{m,n} (Aq)^n \quad \text{with}\,\, r_{m,n} \,\, \text{rational}\,, $$
where $q = \e(z/a_0)$. The constant $A$ is transcendental of Gelfond-Schneider 
type if the Fuchsian group is non-arithmetic and is algebraic in the finitely 
many other cases.
\par
The uniformizing group of the curves $W_D$ is non-arithmetic and the
following result extends Wolfart's non-algebraicity result to this class
of curves. 
\par
\begin{Thm} \label{thm:FCtwistedmodgeneral}
The space of twisted modular form of all bi-weights for the \changed{uniformizing
group~$\Gamma$ of~$W_D$} has 
a basis of forms with Fourier expansions $ \sum_{n \geq 0} a_n Q^n $ with $a_n \in K(S)$
with $Q = A \e(z/\alpha)$ and~$S$ as in Theorem~\ref{thm:arithphi}~ii). The number~$A$,
and also the radius of convergence~$|A|$ of this series, 
is transcendental of Gelfond-Schneider type.
\end{Thm}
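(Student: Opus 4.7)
The strategy is to realize every twisted modular form for $\Gamma$ as the restriction to $W_D$ of a Hilbert modular form on $X_{D,\fraca}$ whose Fourier coefficients at the cusp lie in~$K$, possibly after multiplication by a suitable power of $\varphi'$. Since the pair $(z,\varphi(z))$ parametrizing $W_D$ has, by Theorem~\ref{thm:arithphi}, a Fourier development with coefficients in $K(S)$ in the variable $Q = A\,q$, any such restriction will automatically have the stated form, and the transcendence of~$A$ of Gelfond-Schneider type will be immediate from Theorem~\ref{thm:arithphi}~iii).

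The key arithmetic input is Theorem~\ref{thm:arithphi}~iii): substituting the parametrization $(z,\varphi(z))$ into a Fourier monomial $\e(\tr(\nu\zz))$ with $\nu \in \fraca^2$ produces $S^{\tr(\beta\nu)}\,Q^{\tr(\alpha\nu)}\exp(\sigma(\nu)\ve(q))$ with $\ve(q) \in q\,K(S)[[q]]$. Hence any Hilbert modular form whose Fourier coefficients lie in $K$ (or in any algebraic extension $L$) restricts to a power series in $Q$ with coefficients in $K(S)$ (respectively $L\cdot K(S)$). An ample supply of such Hilbert modular forms is provided by the theta constants $\theta\TchiP{m}{m'}{\fraca}$ with $(m,m')$ even, the theta derivatives $D_i\theta\TchiP{m}{m'}{\fraca}$ with $(m,m')$ odd, the Hilbert-Eisenstein series $\GG_k^D$ and their products; all of these have Fourier expansions with coefficients in $\Z[\sD]\subset K$ by formula~\eqref{eq:thetafraca} and the explicit formulas in Section~\ref{sec:MEviaES}.

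To reach an arbitrary bi-weight $(k,l)$, I would use that $\varphi'$ is a nowhere vanishing twisted modular form of bi-weight $(2,-2)$ whose $Q$-expansion lies in $K(S)((Q))$, as one sees by termwise differentiation of the expansion of~$\varphi$ supplied by Theorem~\ref{thm:arithphi}. Multiplication by $(\varphi')^m$ is an isomorphism of $M_{k,l}(\Gamma,\varphi)$ with $M_{k+2m,l-2m}(\Gamma,\varphi)$ that preserves the property of admitting a basis with $K(S)$-rational $Q$-expansion, so it suffices to exhibit such a basis for a single bi-weight on each antidiagonal $k+l=N$ with $N$ large. In that range the dimension formula of Theorem~\ref{thm:dimmodWD} pins down the number of independent restrictions needed, and I would match it against the image of the restriction map from Hilbert modular forms, whose kernel is precisely the ideal generated by~$\PTD$ by Theorem~\ref{thm:TeichviaDTH}.

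The principal obstacle will be establishing surjectivity of the restriction map from Hilbert modular forms of bi-weight $(k,l)$ on $X_{D,\fraca}$ onto $M_{k,l}(\Gamma,\varphi)$ for $k+l$ sufficiently large, so that the explicit $K$-rational generators on the source actually suffice. I would reduce this to a Serre-type vanishing statement on the toroidal compactification described in Section~\ref{sec:HirzBain}, namely the vanishing of $H^1$ of the ideal sheaf of~$W_D$ tensored with $\cLL^{\otimes k}\otimes\wt\cLL^{\otimes l}$, which becomes available once these line bundles are sufficiently positive on the normal compact surface. Given this vanishing, the $K$-rational generating set of Hilbert modular forms restricts to a basis of $M_{k,l}(\Gamma,\varphi)$ with the claimed arithmetic properties, and the transcendence statement on~$A$ follows without further work.
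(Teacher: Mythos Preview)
Your overall architecture matches the paper's proof: restrict $K$-rational Hilbert modular forms to $W_D$ using Theorem~\ref{thm:arithphi} to get the $K(S)$-rationality of the $Q$-expansions, establish surjectivity of the restriction map for large bi-weights by a cohomology vanishing argument, and then reduce arbitrary bi-weights to this range. The paper carries out exactly these three steps, even giving the explicit range $k\ge4$, $\ell\ge10$ via Kodaira vanishing for $\cLL_1^{k-3}\otimes\cLL_2^{\ell-9}$.

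The gap is in your reduction step. You propose to shift bi-weights using multiplication by powers of $\varphi'$, which has bi-weight $(2,-2)$ and therefore moves only \emph{within} an antidiagonal $k+\ell=N$. This leaves all antidiagonals with $N$ small (where no representative has both $k$ and $\ell$ in the surjective range) untreated; but such antidiagonals contain nontrivial spaces, e.g.\ $M_{0,0}=\CC$ and $M_{2,-2}\ni\varphi'$ itself. More seriously, your claim that $\varphi'$ is nowhere vanishing, needed for multiplication by $(\varphi')^m$ to be an \emph{isomorphism}, is false. By Theorem~\ref{thm:dimmodgeneral} the line bundle $\cLL_{2,-2}$ has degree $(2-\tfrac23)(g-1+\tfrac s2+\dots)>0$ whenever $\chi(W_D)<0$; for instance for $D=17$ one computes $\deg\cLL_{2,-2}=1$ on $\H/\G$, so $\varphi'$ has exactly one zero there (it is nonvanishing at the cusps by the expansion~\eqref{eq:phiatinfinity}). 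Geometrically, $W_D$ is everywhere transversal to $\cFF_1$ but \emph{not} to $\cFF_2$; the Higgs field on $\wt\LL$ is not maximal since $\lambda_2=\tfrac13<1$.

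The paper's reduction avoids both problems at once: instead of $\varphi'$, multiply by a fixed restriction $g$ of \emph{positive} bi-weight $(k_0,\ell_0)$ large enough and with nonzero constant term at the chosen cusp. Then $f\mapsto fg$ embeds $M_{k,\ell}$ into $M_{k+k_0,\ell+\ell_0}$, which lies in the surjective range for any $(k,\ell)$ once $k_0,\ell_0$ are large enough; since both $g$ and $1/g$ have $Q$-expansions in $K(S)[[Q]]$, the $K(S)$-rationality transfers back. This is the one-line observation ``with $f$ and $g$ also $1/f$ and $fg$ have Fourier expansions as claimed,'' which replaces your $\varphi'$-trick and closes the argument.
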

\par
Wolfart's proof of $A$ being of Gelfond-Schneider 
type relies on properties of $\Gamma$-functions and trigonometric 
calculations. It overlaps with our result in the few cases ($D=5,8,12$)
where the uniformizing group of $W_D$ is a triangle group. The rationality 
of $r_n$ in Wolfart's result follows easily from the rationality of the
coefficients of the Picard-Fuchs differential operators. This rationality does
not hold for general $D$, as can be seen from our example~\eqref{AB}. The
statement of the theorem can indeed not be strengthened to rationality of the
coefficients, even in the case $k=\ell = 0$, since the modular function~$t$ 
in~\eqref{twrtQ} admits no rescaling that has rational coefficients.
\par
\begin{proof}
For each $(k,\ell)$ the space of Hilbert modular forms of weight $(k,\ell)$
has a basis of forms whose Fourier expansions have rational coefficients.
Suppose that $f = \sum_{\nu \in \fraca^2} c_\nu \e(\nu z_1 + \nu^\sigma z_2)$ is such a 
basis element. Then the restriction to $(z_1,z_2)  = (z,\varphi(z))$ 
with $\varphi$ as in~\eqref{eq:varphiagain} is 
$$f(q) = \sum_{\nu\in \fraca^2} c_\nu S^{\tr(\beta\nu)} \, Q^{\tr(\alpha \nu)} e^{\sigma(\nu)\ve(q)}$$
and so for this twisted modular form the claim directly follows from Theorem~\ref{thm:arithphi}.
\par
For $k$ and $\ell$ sufficiently large the restriction map is surjective since  
$\cLL_1 \otimes \cLL_2$ is ample on $X_D$. Here  $\cLL_1$ and $\cLL_2$ are the 
natural line bundles on $X_D$ such that Hilbert modular forms of weight $(k,\ell)$ 
are sections of $\cLL_1^k \otimes \cLL_2^\ell$. (The surjectivity in fact holds
already for $k \geq 4$ and $\l \geq 10$. To see this, we tensor the 
structure sequence for $W_D \subset X_D$ with $\cLL_1^k \otimes \cLL_2^\ell$ 
and note that the cokernel of the restriction map
lies in $H^1(X_D, {\mathcal I}_{W_D} \otimes \cLL_1^k \otimes \cLL_2^\ell)$.
Since ${\mathcal I}_{W_D} \cong {\mathcal O}_{X_D}(-W_D) \cong  
\cLL_1^{-3} \cLL_2^{-9}$ and since $\cLL_1 \otimes \cLL_2$ is ample on $X_D$, 
Kodaira's vanishing theorem implies the claim.)
\par
For the remaining cases, note that with $f$ and $f$ also $1/f$ and $fg$
have Fourier expansions as claimed in the theorem. It thus suffices
to consider the products of modular forms of bi-weight $(k,\ell)$
with a given modular form of large enough weight and then to
apply the restriction argument.
\end{proof}

\subsection{The foliation by constant absolute periods} \label{sec:folperiods}

As quotients of $\HH \times \HH$, Hilbert modular surfaces come with
two natural holomorphic foliations, which we called $\cFF_1$ and $\cFF_2$.
They also admit an interesting foliation defined using the $\SL\RR$-action
on the space $\Omega \M_2$. The leaves of this foliation are upper half-planes, 
but the foliation is not holomorphic. It is studied in detail in~\cite{McM07}.
In the context of studying the $\SL\RR$-action a natural local coordinate
system on $\Omega \M_2$ is given by period coordinates, i.e.\ by integrating
the holomorphic one-form $\omega$ along a chosen basis of the first homology
relative to the zeros of $\omega$. In the special case of genus two, we
have used this coordinate system in~\eqref{eq:per_edge}.
\par
We may identify the (Torelli preimage of a) Hilbert modular surface as a subset of 
$\PP \Omega\M_2 = (\Omega\M_2)/\CC^*$ by mapping $X$ to the class of $(X,\omega)$, 
where $\omega$ is the eigenform for the first embedding of $K$ (in the order that
we have chosen once and for all). It follows from~\cite{Ba07} or~\cite{McM07}
that the foliation  by constant absolute periods, where only $x_E$ defined 
in~\eqref{eq:per_edge} is allowed to vary, is the {\em first} foliation $\cFF_1$.
The function $x_E$ is not globally well-defined: its sign depends on the choice of
an orientation and it may also be altered by a constant by adding the period of
a closed loop. However $q = (dx_E)^2$ is a well-defined quadratic differential on each leaf
of $\cFF_1$, independent of these choices.
\par
The horizontal trajectories of this quadratic differential exhibit beautiful
structures on the leaves of $\cFF_1$. They have been determined by McMullen 
in~\cite{McM12}, using the following theorem.
\par
\begin{Thm} \label{QuadDiff} The quadratic differential $q$ is proportional to the restriction
of the meromorphic modular form
$$ Q(z_1,z_2) \= \Biggl(\prod_{(m,m')\, {\rm odd} } D_2\theta \Tchi{m}{m'}(z_1,z_2)\Biggr)\bigg/ 
\Biggl(\prod_{(m,m')\, {\rm even} }\theta \Tchi{m}{m'}(z_1,z_2) \Biggr) $$
of weight $(-2,4)$ to the leaf where $z_1$ is constant. 
\end{Thm}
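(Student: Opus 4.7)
The plan is to establish the equality (up to a multiplicative constant) by showing that both $Q$ and $q$ are sections of the same line bundle on $X_D$ with the same divisor. Since a meromorphic function on the compactified Hilbert modular surface with no zeros or poles must be constant, matching divisors will give the result.

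First I would verify the modularity and weight of $Q$. The numerator $\prod_{(m,m')\text{ odd}} D_2 \theta\Tchi{m}{m'}$ is a modular form of weight $(3,9)$ by Theorem~\ref{thm:TeichviaDTH}, and the denominator, a product of ten even theta constants each of weight $(\h,\h)$, has weight $(5,5)$. Hence $Q$ has weight $(-2,4)$, with characters cancelling compatibly (up to a quadratic character that is immaterial for the assertion). Since a weight $(-2,4)$ meromorphic form pairs naturally with $(dz_1)^{-1}\otimes (dz_2)^2$ to give an invariant tensor, its restriction to a leaf of $\cFF_1$ (after trivializing the normal direction locally) is a well-defined meromorphic quadratic differential. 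Similarly $q=(dx_E)^2$ is a canonically defined quadratic differential on each leaf, so both sides are sections of the same relative pluricanonical bundle of $\cFF_1$ over $X_D$.

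Next, I would compare the divisors on $X_D$. The divisor of $Q$ consists of zeros along $W_D$ (since $\PTD$ vanishes precisely on $W_D$ by Theorem~\ref{thm:TeichviaDTH}) and poles along the reducible locus $P_D$ (where the even theta constants vanish, as discussed in \S\ref{sec:disjointred}). On the other side, $q$ vanishes exactly where $dx_E=0$, i.e.\ where the two simple zeros of the eigenform $\omega$ collide into a double zero; by Theorem~\ref{thm:TMclassg2} this is precisely the locus $W_D$. And $q$ develops poles where the curve degenerates, namely on $P_D$, with the logarithmic divergence of $x_E$ giving a pole of $(dx_E)^2$ of the appropriate order. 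The multiplicity matching along $W_D$ can be read off from the local parametrization used in the proof of Theorem~\ref{thm:transversal}, while the multiplicity matching along the components $P_{D,\nu}$ follows from the product-formula restrictions computed in Theorem~\ref{thm:DTHrestFN} for the odd thetas (numerator) together with the analogous formulas for the ten even theta constants (denominator), which determine exactly how many factors vanish on each $P_{D,\nu}$ and at what order.

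Once the divisors match on $X_D$, the ratio $Q/q$ extends to a holomorphic nowhere-vanishing function on $X_D$; to finish I would check that it extends across the cusps of the Bainbridge/Hirzebruch compactification, using the boundary description of $\overline{X_D}^B$ in \S\ref{sec:propbain} together with the vanishing-order computations of theta derivatives at cusps already performed in \S\ref{sec:cuspTheta0}. The ratio is then a global constant.

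The main obstacle is the precise order-matching along the reducible locus $P_D$, since the logarithmic behavior of the relative period $x_E$ near a nodal degeneration must be compared against the combined orders of the ten even theta factors vanishing there. A cleaner alternative, which I would also consider, is to bypass the global divisor calculation at $P_D$ by evaluating both sides directly at a single well-chosen generic point via classical theta identities (for instance Jacobi's derivative formula $\theta_{\h\h}'=-\pi\theta_{00}\theta_{0\h}\theta_{\h 0}$, which already appears implicitly in Theorem~\ref{thm:DTHrestFN}); since both $Q$ and $q$ are globally defined sections of the same line bundle, agreement at one point would suffice.
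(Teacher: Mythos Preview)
Your overall strategy---compare divisors of $Q$ and $q$ as meromorphic sections of the weight $(-2,4)$ bundle and conclude that their ratio is constant---is exactly the paper's approach. The paper, however, handles the two points you flag as obstacles quite differently, and in each case more simply.

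First, the orders of $q$ along $W_D$ and $P_D$. You propose to extract these from the local computations in Theorem~\ref{thm:transversal} and Theorem~\ref{thm:DTHrestFN}, but those results concern the theta side (the numerator and denominator of $Q$), not the period-coordinate side. What is actually needed is that $q=(dx_E)^2$, regarded as a meromorphic modular form of weight $(-2,4)$, has a \emph{simple} zero along $W_D$ and a \emph{simple} pole along $P_D$. The paper does not rederive this: it cites Bainbridge (\cite{Ba07}, Theorem~10.2), who proved precisely this statement by a direct analysis of the relative period near the collision and degeneration loci. Your sketch does not supply this computation, and the references you give do not contain it.

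Second, the extension across the boundary. You propose checking extension over the toroidal (Bainbridge or Hirzebruch) boundary using the cusp analysis of \S\ref{sec:cuspTheta0}. The paper instead passes to the Baily--Borel compactification, where the boundary has codimension two, and applies Hartog's theorem to the holomorphic function $Q/q$ on $X_D$. This avoids any boundary computation whatsoever.

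Finally, your proposed alternative---matching both sides at a single point via Jacobi's derivative formula---does not work as stated. Agreement at one point establishes proportionality only once you already know the ratio is a global holomorphic function with no zeros, i.e.\ only after the divisor comparison is complete. It cannot replace the order computation along $P_D$.
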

\par
\begin{proof}
This follows directly from~\cite{Ba07}, \changed{Theorem~10.2}, 
and Theorem~\ref{thm:TeichviaDTH}. More
precisely, Bainbridge has determined the dependence on $z_1$. Since 
the quadratic differential depends on the choice of a holomorphic one-form
on each Riemann surface, there exists a linear map $Q_1$ from the first
eigenform bundle to quadratic differentials on the leaves of $\cFF_1$ that is locally
defined by $q = (dx_E)^2$. Such a map is the same object as a  meromorphic modular form
of weight $(-2,4)$. This modular form $Q_1$ vanishes at $W_D$, where the zeros collide,
and acquires a pole at the reducible locus $P_D$, where the zeros are infinitely
far apart. It is shown moreover in~\cite{Ba07}, \changed{Theorem~10.2}, 
that both the vanishing order and the
pole order are equal to one. Consequently, by Theorem~\ref{thm:TeichviaDTH} and
the fact that the even theta characteristics vanish precisely at the reducible locus,
$Q/Q_1$ is a holomorphic function on $X_D$.  
It then extends holomorphically to the Baily-Borel compactification by Hartog's theorem, 
since the boundary has codimension two. We deduce that $Q/Q_1$ has to be constant,
which proves the claim.
\end{proof}
\par


\end{document}